\ifdefined\pdfoutput \pdfoutput=1 \fi

\documentclass[11pt]{amsart}

\usepackage[margin=1in]{geometry}

\usepackage{amssymb}
\usepackage{amsmath}
\usepackage{amsthm}

\usepackage{graphicx}
\usepackage{subfigure}
\usepackage{booktabs}
\usepackage{multirow}
\usepackage{float}

\usepackage[hidelinks]{hyperref}

\usepackage{microtype}

\newtheorem{theorem}{Theorem}
\newtheorem{problem}{Problem}
\newtheorem{lemma}{Lemma}
\newtheorem{remark}{Remark}

\title[An iterative approach for inverse acoustic obstacle scattering]{A highly efficient iterative approach for inverse acoustic obstacle scattering problems in three dimensions}

\author{Zhiyong Cheng}
\address{Jilin University, Qianjin Street 2699, Changchun, Jilin 130012, China}
\email{chengzy22@mails.jlu.edu.cn}

\author{Heping Dong}
\address{Jilin University, Qianjin Street 2699, Changchun, Jilin 130012, China}
\email{dhp@jlu.edu.cn}
\thanks{Corresponding author: Heping Dong (dhp@jlu.edu.cn)}

\author{Lu Zhao}
\address{Civil Aviation University of China, 2898 Jinbei Road, Tianjin 300300, China}
\email{zhaol@cauc.edu.cn}

\date{}

\subjclass[2020]{78A46}
\keywords{Helmholtz equation, boundary integral equation, iterative approach, inverse acoustic obstacle scattering, phaseless data}

\begin{document}

\begin{abstract}
This paper concerns a three-dimensional inverse acoustic obstacle scattering problem from scattered field or phased/phaseless far-field data. Based on the boundary integral defined on a homothetic surface, we propose a highly efficient iterative approach for obstacle reconstruction that completely avoids dealing with any singularity. Here, the injectivity and dense-range property of the Fr\'echet derivative have been proved to ensure the solvability of the linearized equivalent data equation. We also prove that the scattered field generated by the homothetic surface can arbitrarily approximate the exact one. Numerical experiments are presented to verify the superiority and robustness of the proposed approach.
\end{abstract}

\maketitle

\section{Introduction}
In this paper, we consider an inverse acoustic scattering problem of reconstructing a bounded obstacle from scattered field or phased/phaseless far-field data. This type of inverse problem is an important research topic with many practical applications such as geophysical exploration \cite{Borden}, biomedical imaging \cite{Ammari1} and nondestructive testing \cite{Ammari2}.

We begin by presenting the mathematical formulation of the direct acoustic scattering problem in three dimensions. Assume that $D \subset \mathbb{R}^3$ is a bounded, simply connected domain with sufficiently smooth boundary $\Gamma$, and that the exterior region $\mathbb{R}^3 \setminus \overline{D}$ is filled with a homogeneous medium.
 We further assume that the incident wave can be either a plane wave $u^{\rm inc}(\pmb{x},\pmb{d})=e^{\mathrm{i}\kappa\pmb{x}\cdot\pmb{d}}$ or a point source $u^{\rm inc}(\pmb{x},\pmb{z})=G(\pmb{x},\pmb{z}):=\frac{e^{\mathrm{i}\kappa|\pmb{x}-\pmb{z}|}}{4\pi|\pmb{x}-\pmb{z}|}$, where $\pmb{d}$ is an incident direction, $\pmb{z}\in\mathbb{R}^3\setminus \overline{D}$ is a location of the point source, and $G$ is the Green function of Helmholtz equation in three dimensions. Let $u^{\rm tot}$ denote the total field, that means $u^{\rm sc}=u^{\rm tot}-u^{\rm inc}$.
Then the direct scattering problem is to find the scattered field $u^{\rm sc}$ that satisfies the following Helmholtz system
\begin{equation}\label{helmholtz system}
	\left\lbrace 
	\begin{aligned}
		&\Delta u^{\rm sc}+\kappa^2u^{\rm sc}=0&&{\rm in}~\mathbb{R}^3\setminus\overline{D},\\
		&\mathcal{B}u^{\rm sc}=-\mathcal{B}u^{\rm inc}&&{\rm on}~\Gamma,\\
		&\lim\limits_{r\to\infty}r\left( \partial_r u^{\rm sc}-\mathrm{i}\kappa u^{\rm sc}\right) =0,&&r=|\pmb{x}|,
	\end{aligned}
	\right.
\end{equation}
where $\kappa>0$ is the real wavenumber. For any sufficiently smooth function $u$ defined on $\Gamma$, the boundary operator $\mathcal{B}$ is given by
\begin{equation}\label{boundary condition}
	\mathcal{B}u=\left\lbrace 
	\begin{aligned}
		&u, && {\rm Dirichlet~boundary~condition},\\
		&\partial_{\pmb{\nu}}u+\mathrm{i}\eta u, && {\rm impedance~boundary~condition}.
	\end{aligned}
	\right.
\end{equation}
Here, $\pmb{\nu}$ is the outward unit normal vector to $\Gamma$, and $\eta$ denotes the impedance function with $\Re(\eta)\geq 0$. The impedance condition reduces to the Neumann case when $\eta=0$.
The scattered field admits the following asymptotic behavior \cite{Colton}
\[
u^{\rm sc}(\pmb{x})=\frac{e^{\mathrm{i}\kappa|\pmb{x}|}}{|\pmb{x}|}\left\lbrace u^{\infty}\left(\hat{\pmb{x}} \right) +O\left( \frac{1}{|\pmb{x}|}\right) \right\rbrace ,\quad \hat{\pmb{x}}=\frac{\pmb{x}}{|\pmb{x}|}\in\mathbb{S}^2,\quad |\pmb{x}|\to\infty,
\]
where $u^{\infty}$ denotes the far-field pattern of $u^{\rm sc}$,  and $\mathbb{S}^2:=\{\pmb{x}\in\mathbb{R}^3:\ |\pmb{x}|=1\}$ is the unit sphere centered at the origin.

It is worth mentioning that the modulus of the far-field data has translation invariance for a shift domain, when an incident plane wave is employed \cite{ivanshyn10}. That is, the location of the obstacle cannot be determined from phaseless far-field data. 
To address this issue, we choose a point source as incident wave, and collect the phaseless far-field data $|u^{\mathrm{tot},\infty}|=|G^{\infty}+u^{\infty}|$,  where $u^{\mathrm{tot},\infty}$ denotes the far-field pattern of the total field $u^{\rm tot}$, and $G^{\infty}(\hat{\pmb{x}},\pmb{z})=\frac{1}{4\pi}e^{-\mathrm{i}\kappa \hat{\pmb{x}}\cdot \pmb{z}}$ is the far-field pattern of the point source incident field $G(\pmb{x},\pmb{z})$ with $\hat{\pmb{x}}\in\mathbb{S}^2$.
In this work, we concern with three kinds of inverse problems:
\begin{problem}\label{problem1}
	Given the scattered field $u^{\rm sc}$ measured on 
	\begin{align}\label{Gamma_B}
		\Gamma_{B_R}:=\big\lbrace \pmb{x}\in\mathbb{R}^3:|\pmb{x}|=R\big\rbrace
	\end{align}
	for the incident plane waves $u^{\rm inc}$, determine the shape and location of the obstacle under Dirichlet or impedance boundary conditions.
\end{problem}
\begin{problem}\label{problem2}
	Given the far-field data $u^{\infty}$ measured on $\mathbb{S}^2$ for the incident plane waves $u^{\rm inc}$,
	determine the shape and location of the obstacle under Dirichlet or impedance boundary conditions.
\end{problem}
\begin{problem}\label{problem3}
	Given the phaseless far-field data $|u^{\mathrm{tot},\infty}|$ measured on $\mathbb{S}^2$ for the incident point source $u^{\rm inc}$, determine the shape and location of the obstacle under Dirichlet or impedance boundary conditions.
\end{problem}

In recent years, a variety of numerical reconstruction methods have been developed for the above inverse obstacle scattering problems. Broadly speaking, these approaches can be categorized into: qualitative and quantitative methods. Qualitative techniques, such as linear sampling method \cite{Colton_ls}, factorization method \cite{Kirsch_f}, singular source method \cite{Potthast_book}, direct sampling method \cite{Zou_ds} and reverse time migration method \cite{RTM}, require no a priori information about the geometry or physical properties of the obstacle, and have been further developed and analyzed in various works \cite{Cakoni,kang,Louer1,Louer2,liukeyi24}. Specifically, they employ an appropriate indicator function $f(\pmb x)$, $\pmb{x}\in\mathbb{R}^3$, which can determine whether the point $\pmb x$ lies inside or outside the obstacle. However, these methods typically rely on scattered field or far-field data for a large number of incident waves. 

In contrast, the scattered field or far field generated by only one or few incident waves is required in quantitative methods, which are based primarily on linearization and optimization strategies to reconstruct the parameters of unknown obstacles.
Decomposition method \cite{Colton,dec} involves reconstructing the scattered wave from its far-field pattern based on an auxiliary surface, and then determining the obstacle's boundary by solving an optimization problem subject to the boundary condition. 
A regularized Newton method was introduced in \cite{farhat02,Hohage} to recover the obstacle's shape from phased or phaseless far-field data. Building upon the characteristics of Newton iterations and decomposition strategies, a hybrid method \cite{serranho07} was later proposed by using the far-field data for single incident wave. This method alternates between reconstructing the scattered field as a layer potential over the current boundary approximation and updating the boundary by solving a linearized equation, until a stopping criterion is fulfilled. Particularly, a method, named nonlinear integral equation method, was proposed in \cite{ivanshyn10}.
Given an initial approximation of the boundary, the density is solved from a field equation. By substituting this density into linearized data equation, the update of boundary can be obtained.
The process continues iteratively until the relative error of the scattered field (or far field) meets a predetermined tolerance. For the recursive linearization algorithm applied to inverse medium scattering of the Maxwell equations and to the inverse acoustic scattering by the axis-symmetric obstacles, we refer to \cite{bao05,borges20}. Additionally, the DeepSDF-based iterative method using phased/phaseless data is discussed in \cite{chen2024}. Moreover, for other quantitative reconstruction methods in three-dimensional elastic or electromagnetic inverse scattering problems, we refer to \cite{gauss-newton,chang,Hagemann,yuan}.

In this work, inspired by the iterative framework of the nonlinear integral equation and the decomposition methods, we focus on proposing a highly efficient iterative approach. Specifically, given an initial guess of boundary, a homothetic surface that contains complete information on the current boundary is constructed by using a fixed geometric contraction factor. Based on analytic continuation and the homothetic surface, we establish an equivalent field equation and solve it for density. The boundary update is then obtained by solving the corresponding equivalent linearized data equation.

The superiorities of our method can be described in the following three aspects. Firstly, it improves the classical nonlinear integral equation method by constructing the single-layer potential on a homothetic surface, so that only smooth kernels arise and no additional quadrature formula for handling singularities is required. Consequently, the proposed approach possesses high efficiency and accuracy, as well as simplicity in implementation. Secondly, compared with decomposition method, our method does not require a priori knowledge about the auxiliary surface being located strictly inside the true obstacle, and may therefore be better suited to the practical requirements of the reconstruction. Thirdly, our method can be applied to more types of measurement data, particularly suited for phaseless data, and its reconstruction is comparable with the phased case. In contrast, the hybrid and decomposition methods are difficult to extend to the phaseless case, due to their inability to reconstruct the scattered field from phaseless data directly. 
The goal of this work is threefold:
\begin{itemize}
	\item[(1)] propose a highly efficient iterative approach based on the homothetic surface to reconstruct the obstacle's shape and location by using scattered field or phased/phaseless far-field data;
	\item[(2)] prove that the scattered field generated by the homothetic surface can arbitrarily approximate the exact scattered field;
	\item[(3)] present the injectivity and dense-range property of the Fr$\acute{\rm e}$chet derivative to ensure the solvability of the inverse problem.
\end{itemize}

The paper is organized as follows. In Section 2, we propose a highly efficient iterative approach, and establish the approximation property for the scattered field generated by the homothetic surface. Section 3 presents the parametrization of the equivalent field and data equations, along with a discussion of the injectivity and dense-range property of the Fr$\acute{\rm e}$chet derivative. Numerical implementation and experiments are conducted in section 4 and 5 to demonstrate the effectiveness and robustness of the proposed approach. Finally, the paper is concluded in section 6. 

\section{A novel iterative approach}\label{novel approach} 
Building on the Helmholtz scattering formulation established in \eqref{helmholtz system}, we introduce a homothetic surface and propose a highly efficient iterative method to reconstruct both the shape and location of the obstacle without dealing with singularities. A key technique employed is the analytic continuation, which allows the scattered field generated by the homothetic surface to approximate the exact field with arbitrary accuracy. In the following, we denote the exact scattered field and far-field pattern by $u_D^{\rm sc}$ and $u_D^{\infty}$, respectively.

\subsection{Homothetic surface}
For simplicity, assume that the obstacle \(D\) is star-shaped with respect to a center \(\pmb c=(c_1,c_2,c_3)^\top\in\mathbb{R}^3\), such that
\begin{align}\label{parametrization}
	\Gamma
	=\bigl\{\pmb{p}_{D}(\hat{\pmb{x}})
	=\pmb{c}+r(\hat{\pmb{x}}) \hat{\pmb{x}}
	: \hat{\pmb{x}}\in\mathbb{S}^2\bigr\},
	\quad r(\hat{\pmb{x}})>0
\end{align}
with
\[\hat{\pmb{x}}(\theta,\phi)=(\sin\theta\cos\phi,\sin\theta\sin\phi,\cos\theta)^\top, ~\theta\in[0,\pi], ~\phi\in[0,2\pi).\]
Choose a geometric contraction factor $0<\varsigma<1$ and define the bounded domain $D'\subset D $ with homothetic boundary
\[
\Gamma'
=\bigl\{\pmb{p}_{D'}(\hat{\pmb{x}})
=\pmb{c}+\varsigma r(\hat{\pmb{x}}) \hat{\pmb{x}}
: \hat{\pmb{x}}\in\mathbb{S}^2\bigr\}.
\]
The scattered field $u_{D',g}^{\rm sc}$ and its far-field pattern $u_{D',g}^{\infty}$ can be expressed as
\begin{align}
u_{D',g}^{\rm sc}(\pmb{x})
&=\int_{\Gamma'} G(\pmb{x},\pmb{y})g(\pmb{y})\mathrm{d}s(\pmb{y}),\quad \pmb{x}\in\mathbb{R}^3\setminus\overline{D},\label{scattered}\\
u_{D',g}^{\infty}(\hat{\pmb{x}})
&=\frac{1}{4\pi}\int_{\Gamma'}e^{-\mathrm{i}\kappa\hat{\pmb{x}}\cdot\pmb{y}}
g(\pmb{y})\mathrm{d}s(\pmb{y}),\quad
 \hat{\pmb{x}}\in\mathbb{S}^2.\nonumber
\end{align}
According to the boundary conditions \eqref{boundary condition}, the unknown density $g$ on $\Gamma'$ is determined by the boundary integral equations
{\small
\begin{align}
	\int_{\Gamma'} G(\pmb{x},\pmb{y})g(\pmb{y})\mathrm{d}s(\pmb{y})
	&=f_1(\pmb{x}),
	&&\pmb{x}\in\Gamma,\quad\text{(Dirichlet)},\quad\label{dirichlet novel}\\
	\int_{\Gamma'}\bigl(\partial_{\nu(\pmb{x})} G(\pmb{x},\pmb{y})
	+\mathrm{i}\eta G(\pmb{x},\pmb{y})\bigr)
	g(\pmb{y})\mathrm{d}s(\pmb{y})
	&=f_2(\pmb x),
	&&\pmb{x}\in\Gamma,\quad\text{(impedance)}.\quad\label{impedance novel}
\end{align}}where $f_1(\pmb{x})=-u^{\mathrm{inc}}(\pmb{x})$ and $f_2(\pmb{x})=-\partial_{\nu(\pmb{x})}u^{\mathrm{inc}}(\pmb{x})
-\mathrm{i}\eta u^{\mathrm{inc}}(\pmb{x})$. 

The following two theorems demonstrate that the scattered field generated by the homothetic surface can arbitrarily approximate the exact one.
\begin{theorem}\label{injective and dense range}
	Let $\kappa^2$ not be a Dirichlet eigenvalue for $-\Delta$ in $D'$. Then, the boundary integral operator  $T_D: L^2(\Gamma')\to L^2(\Gamma)$, defined by
	\begin{equation*}\label{boundary integral operator T}
		\left( T_{D} g\right) (\pmb x):=\int_{\Gamma'}\left(\partial_{\nu(\pmb{x})} G(\pmb{x},\pmb{y})
		+\mathrm{i}\eta G(\pmb{x},\pmb{y})\right)
		g(\pmb{y})\mathrm{d}s(\pmb{y}), \quad \pmb x\in \Gamma,
	\end{equation*}
	is injective and has dense range.
\end{theorem}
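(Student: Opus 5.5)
We will prove injectivity and denseness separately, in both cases working with the single-layer potential
\[
v(\pmb x):=\int_{\Gamma'}G(\pmb x,\pmb y)g(\pmb y)\,\mathrm{d}s(\pmb y),\qquad \pmb x\in\mathbb{R}^3\setminus\Gamma',
\]
so that $T_Dg=\partial_{\nu}v+\mathrm{i}\eta v$ on $\Gamma$. Since $\Gamma$ and $\Gamma'$ are disjoint ($\varsigma<1$ and $r>0$), the kernel is smooth on $\Gamma\times\Gamma'$. Hence $T_D$ is a well-defined (indeed compact) operator from $L^2(\Gamma')$ to $L^2(\Gamma)$, and $v$ is a radiating Helmholtz solution in $\mathbb{R}^3\setminus\overline{D'}$.

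\textbf{Injectivity.} Suppose $T_Dg=0$. Then $v$ solves the exterior impedance problem in $\mathbb{R}^3\setminus\overline D$ with zero data and the Sommerfeld radiation condition. Since $\Re(\eta)\ge 0$, uniqueness of the exterior impedance problem (Rellich's lemma plus Green's theorem, \cite{Colton}) gives $v=0$ in $\mathbb{R}^3\setminus\overline D$. By analytic continuation, $v=0$ in $\mathbb{R}^3\setminus\overline{D'}$. The single-layer potential is continuous across $\Gamma'$ for $L^2$ densities, in the sense of nontangential limits or a.e. traces. Therefore $v$ solves the Dirichlet problem in $D'$ with zero boundary values, and the assumption that $\kappa^2$ is not a Dirichlet eigenvalue gives $v=0$ in $D'$. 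The jump relation $\partial_\nu v_- - \partial_\nu v_+ = g$ then yields $g=0$.

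\textbf{Dense range.} We show that the adjoint $T_D^*:L^2(\Gamma)\to L^2(\Gamma')$ is injective. Let $h\in L^2(\Gamma)$ satisfy $\int_\Gamma (T_Dg)\bar h\,\mathrm{d}s=0$ for all $g$. Exchanging the order of integration, this means
\[
w(\pmb y):=\int_\Gamma\Bigl(\partial_{\nu(\pmb x)}\overline{G(\pmb x,\pmb y)}-\mathrm{i}\bar\eta\,\overline{G(\pmb x,\pmb y)}\Bigr)h(\pmb x)\,\mathrm{d}s(\pmb x)=0,\qquad \pmb y\in\Gamma'.
\]
It is cleaner to take complex conjugates and set
\[
W(\pmb y)=\int_\Gamma\bigl(\partial_{\nu(\pmb x)}G(\pmb x,\pmb y)+\mathrm{i}\eta G(\pmb x,\pmb y)\bigr)\bar h(\pmb x)\,\mathrm{d}s(\pmb x),
\]
a combined double- and single-layer potential on $\Gamma$ with densities $\bar h$ and $\mathrm{i}\eta\bar h$. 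Then $W$ solves the Helmholtz equation in $D$, vanishes on $\Gamma'$, and the non-eigenvalue assumption on $D'$ forces $W=0$ in $D'$. Analytic continuation gives $W=0$ in all of $D$.

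\textbf{Main obstacle.} The hard step is to deduce $h=0$ from $W=0$ in $D$. Using the jump relations on $\Gamma$ for the combined potential (interior limits $W_-=0$ and $\partial_\nu W_-=0$), we get
\[
W_+=\bar h,\qquad \partial_\nu W_+=\partial_\nu W_- + \mathrm{i}\eta\bar h\ \text{(up to sign conventions)}.
\]
The latter holds because the normal derivative of the double layer is continuous while the single layer's normal derivative jumps by the density. Hence the radiating field $W$ in $\mathbb{R}^3\setminus\overline D$ satisfies an impedance-type relation $\partial_\nu W_+ - \mathrm{i}\eta W_+=0$ or similar. This should be combined with the exterior uniqueness argument, in which the sign of $\Re\eta$ matters, to get $W=0$ outside and hence $\bar h=W_+-W_-=0$.

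I would first verify the exact signs carefully. If the resulting boundary condition has the wrong sign for uniqueness, I would instead take the adjoint in the bilinear (not sesquilinear) pairing and use reciprocity of $G$. Alternatively, I would identify the potential as the solution of the adjoint (interior) impedance problem and invoke Green's formula directly. The regularity of $L^2$ densities for the hypersingular normal-derivative traces also needs care. I would handle it by first noting that $W$ vanishes in $D$, so the interior traces are trivially regular, and then transferring regularity through the jump relations.
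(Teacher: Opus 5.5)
Your injectivity argument, and the first half of your dense-range argument (the combined potential $W$ vanishes on $\Gamma'$, hence in $D'$, and by unique continuation in $D$), coincide with the paper's proof. The gap is the decisive last step. You leave it open, and the sign you propose for it is wrong.

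By the convention you yourself use in the injectivity part, $\partial_\nu v_- - \partial_\nu v_+ = g$, the single layer with density $\mathrm{i}\eta\bar h$ gives $\partial_\nu W_+ = \partial_\nu W_- - \mathrm{i}\eta\bar h$. The double layer has continuous normal derivative and jumps by $W_+-W_-=\bar h$. With $W_-=\partial_\nu W_-=0$ this yields $W_+=\bar h$ and $\partial_\nu W_+=-\mathrm{i}\eta\bar h$, i.e. $\partial_\nu W_+ + \mathrm{i}\eta W_+ = 0$ on $\Gamma$. This is the \emph{same} homogeneous exterior impedance condition as in the direct problem, for which $\Re(\eta)\ge 0$ gives uniqueness.

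The paper closes the argument with Green's formula in $D_r$:
$\int_\Gamma \mathrm{i}\eta|h|^2\,\mathrm{d}s = -\int_{S_r}\overline{W}\partial_\nu W\,\mathrm{d}s + \int_{D_r}(|\nabla W|^2-\kappa^2|W|^2)\,\mathrm{d}\pmb{x}$.
Taking imaginary parts and using the radiation condition gives $\int_\Gamma\Re(\eta)|h|^2\,\mathrm{d}s = -\kappa\lim_{r\to\infty}\int_{S_r}|W|^2\,\mathrm{d}s$. Both sides must vanish, so Rellich's lemma gives $W=0$ in $\mathbb{R}^3\setminus\overline D$, and hence $\bar h = W_+-W_-=0$.

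The sign is not cosmetic. With the relation you wrote, $\partial_\nu W_+ - \mathrm{i}\eta W_+ = 0$, exterior uniqueness can fail even when $\Re(\eta)>0$. For the ball of radius $a$ and constant $\eta=\kappa+\mathrm{i}/a$, the radiating field $e^{\mathrm{i}\kappa|\pmb{x}|}/|\pmb{x}|$ satisfies it, so your argument could not be completed along that line.

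Your fallback would not help either. The adjoint with respect to the bilinear pairing is just your $W$ with $h$ in place of $\bar h$, so it has the identical kernel $\partial_{\nu(\pmb{x})}G+\mathrm{i}\eta G$ and the identical boundary relation. Conjugation acts only on the density, never on $\eta$, and $G$ is already symmetric, so reciprocity changes nothing.

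As written, the dense-range half is not proved. What is missing is the correct jump computation, followed by the energy identity and Rellich's lemma. Your concern about traces of the double layer for $L^2$ densities is legitimate and is passed over in the paper. It can be settled as you suggest, by bootstrapping the regularity of $h$ from the vanishing interior trace $W_-=0$.
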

\begin{proof}
	To establish the injectivity of $T_D: L^2(\Gamma')\to L^2(\Gamma)$, 
	we assume that $T_Dg=0$ in $L^2(\Gamma)$ for $g\in L^2(\Gamma')$. Define the function
	\[
	U(\pmb{x})=\int_{\Gamma'} G(\pmb{x},\pmb{y})g(\pmb{y})\mathrm{d}s(\pmb{y}), \quad \pmb x\in \mathbb{R}^3\setminus\Gamma'.
	\]
	Noting that the function \( U(\pmb{x}) \in L^2(\mathbb{R}^3\setminus\overline{D}) \) satisfies the Helmholtz equation and the Sommerfeld radiation condition. From the assumption that $T_Dg=0$ in $L^2(\Gamma)$, it follows that $\partial_{\pmb{\nu}}U(\pmb{x})+\mathrm{i}\eta U(\pmb{x})$ vanishes on $\Gamma$. By the uniqueness of the exterior impedance problem for the Helmholtz equation, we conclude that \( U \) vanishes identically in \( \mathbb{R}^3\setminus\overline{D} \). The unique continuation principle then implies that \( U \) vanishes in \( \mathbb{R}^3\setminus\overline{D'} \) as well. Applying the jump relation for the single-layer potential yields \( U = 0 \) on \( \Gamma' \). Furthermore, \( U \in L^2(D') \) satisfies the Helmholtz equation in \( D' \). Since \( \kappa^2 \) is not an interior Dirichlet eigenvalue for $-\Delta$ in \( D' \), the uniqueness of the interior Dirichlet problem implies that \( U = 0 \) in \( D' \). Finally, the jump relation gives $g=0$ on $\Gamma'$, which yields the injectivity of $T_D$.
	
	Let $T_D^*:L^2(\Gamma)\to L^2(\Gamma')$ denote the $L^2$-adjoint of $T_D$, i.e.,
	\begin{equation}\label{TDadjoint}
		\left( T_D^*\varphi\right) (\pmb{y})=\int_{\Gamma}\overline{\left(\partial_{\nu(\pmb{x})} G(\pmb{x},\pmb{y})+\mathrm{i}\eta G(\pmb{x},\pmb{y})\right)}\varphi(\pmb{x})\mathrm{d}s(\pmb{x}),\quad \pmb{y}\in\Gamma'.
	\end{equation}
	Assume that $T_D^*\varphi=0$ in $L^2(\Gamma')$ and define
	\[
	V(\pmb{y}):=\int_{\Gamma}\left(\partial_{\nu(\pmb{x})} G(\pmb{x},\pmb{y})+\mathrm{i}\eta G(\pmb{x},\pmb{y})\right)\overline{\varphi(\pmb{x})}\mathrm{d}s(\pmb{x}),\quad \pmb{y}\in\mathbb{R}^3\setminus\Gamma.
	\]
	The function $V$ represents a combined double- and single-layer potential that vanishes on $\Gamma'$. Consequently, $V$ satisfies the homogeneous interior Dirichlet problem in $D'$, which implies that $V=0$ in $D'$. By the unique continuation principle, $V$ also vanishes identically in $D$. The jump relations for double- and single- layer potentials \cite{Colton} yield
	\[
	V_+=\overline{\varphi},\quad \partial_{\pmb{\nu}}V_+=-\mathrm{i}\eta \overline{\varphi},\quad \text{on}~\Gamma,
	\]
	where
	\[
	V_+(\pmb{y}):=\lim\limits_{h\to +0}V(\pmb{y}+h\pmb{\nu}(\pmb{y})).
	\]
	Let $S_r$ denote the sphere of radius $r$ and center at the origin. For $r$ sufficiently large such that $D\subset S_r$, applying Green's formula \cite{Colton} in \[D_r:=\left\{\pmb{y}\in\mathbb{R}^3\setminus\overline{D}:|\pmb{y}|<r\right\},\] we have
	\begin{align*}
		\begin{aligned}
		\int_{\Gamma}\mathrm{i}\eta|\varphi|^2\mathrm{d}s&=-\int_{\Gamma}\overline{V}_+\partial_{\pmb{\nu}}V_+\mathrm{d}s\\
		&=-\int_{S_r}\overline{V}\partial_{\pmb{\nu}}V\mathrm{d}s(\pmb{x})+\int_{D_r}\left\{|\nabla V|^2-{\kappa}^2|V|^2\right\}\mathrm{d}\pmb{x}.
		\end{aligned}
	\end{align*}
	Taking the imaginary part of the above equation and employing the Sommerfeld radiation condition gives
	\begin{align*}
		\int_{\Gamma}\Re(\eta)|\varphi|^2\mathrm{d}s=-\kappa\lim\limits_{r\to\infty}\int_{S_r}|V|^2\mathrm{d}s.
	\end{align*}	
Under the assumption $\Re(\eta)\ge 0$ in the impedance boundary condition \eqref{boundary condition}, the left-hand side is non-negative while the right-hand side is non-positive, so both sides must vanish. In particular,
	\[
		\lim\limits_{r\to\infty}\int_{S_r}|V|^2\,\mathrm{d}s=0.
	\]
	By Rellich's lemma, $V$ vanishes in $\mathbb{R}^3 \setminus \overline{D}$. Consequently, the jump relations for double- and single-layer potentials yield $\varphi=0$ on $\Gamma$.
	Then $T_D^*$ is injective, which implies that $T_D$ has dense range.
	
\end{proof}

\begin{theorem}\label{approximation}
	Let $u_D^{\rm sc}$ denote the scattered field associated with a domain $D$ under the Dirichlet or impedance boundary conditions. Assume that $\kappa^2$ is not an interior Dirichlet eigenvalue for $-\Delta$ in $D'$. Then, $\forall\,\epsilon>0$, there exists $g\in L^2(\Gamma')$ such that
	\[
	\left\|u_{D',g}^{\rm sc}-u_D^{\rm sc}\right\| _{L^2(\Gamma_{B_R})}<\epsilon,
	\]
	where \(\Gamma_{B_R}\) is defined by \eqref{Gamma_B}.
\end{theorem}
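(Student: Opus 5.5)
The plan is to combine the dense-range property of Theorem \ref{injective and dense range} (and its Dirichlet analogue) with the well-posedness of the exterior boundary value problem. Stability is what converts a small boundary residual on $\Gamma$ into a small error on $\Gamma_{B_R}$. I treat the impedance case first. Let $f_2=-\partial_{\pmb\nu}u^{\rm inc}-\mathrm{i}\eta u^{\rm inc}\in L^2(\Gamma)$. By Theorem \ref{injective and dense range}, for any $\delta>0$ there is $g\in L^2(\Gamma')$ with $\|T_Dg-f_2\|_{L^2(\Gamma)}<\delta$.

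The key observation is that $w:=u_{D',g}^{\rm sc}-u_D^{\rm sc}$ is a radiating solution of the Helmholtz equation in $\mathbb{R}^3\setminus\overline D$. This holds because $\Gamma'\subset D$, so the single-layer potential on $\Gamma'$ is smooth (indeed analytic) up to $\Gamma$ from outside. Its impedance data is $\partial_{\pmb\nu}w+\mathrm{i}\eta w=T_Dg-f_2$ on $\Gamma$. Hence $w$ solves the exterior impedance problem with boundary data $h:=T_Dg-f_2$.

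I will then invoke the well-posedness of the exterior impedance problem with $L^2(\Gamma)$ boundary data (Colton--Kress). The solution operator $h\mapsto w|_{\Gamma_{B_R}}$ is bounded from $L^2(\Gamma)$ into $L^2(\Gamma_{B_R})$, and in fact into $C(\Gamma_{B_R})$, since $\Gamma_{B_R}$ has positive distance from $\Gamma$. One way to see this is to represent $w$ by a combined potential on $\Gamma$ whose density depends boundedly on $h$ in $L^2$ via Riesz--Fredholm theory, and then use that the kernel restricted to $\Gamma\times\Gamma_{B_R}$ is smooth. This gives $\|w\|_{L^2(\Gamma_{B_R})}\le C\|h\|_{L^2(\Gamma)}<C\delta$, and choosing $\delta=\epsilon/C$ finishes the impedance case.

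For the Dirichlet case I need the analogue of Theorem \ref{injective and dense range} for the operator $S_D:L^2(\Gamma')\to L^2(\Gamma)$, $(S_Dg)(\pmb x)=\int_{\Gamma'}G(\pmb x,\pmb y)g(\pmb y)\,\mathrm ds(\pmb y)$. Its dense range follows by the same adjoint argument. Suppose $S_D^*\varphi=0$. Then $V(\pmb y)=\int_\Gamma G(\pmb x,\pmb y)\overline{\varphi(\pmb x)}\,\mathrm ds(\pmb x)$ vanishes on $\Gamma'$, hence in $D'$ by the non-eigenvalue assumption, and hence in $D$ by unique continuation. Continuity of the single layer across $\Gamma$ gives $V_+=0$. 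Uniqueness of the exterior Dirichlet problem then yields $V=0$ outside $D$, and the jump of the normal derivative gives $\varphi=0$. The stability step is then identical, using bounded solvability of the exterior Dirichlet problem with $L^2(\Gamma)$ data.

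The main obstacle is the stability estimate with merely $L^2$ boundary data, since the classical theory is stated for continuous or $H^{s}$ data. I would handle it by the combined-potential representation on $\Gamma$, for instance $w=(K-\mathrm{i}S)\psi$ in the Dirichlet case. The associated second-kind boundary integral operator is invertible on $L^2(\Gamma)$ for smooth $\Gamma$, because its kernel is weakly singular and it is injective. Boundedness of $\psi\mapsto w|_{\Gamma_{B_R}}$ is then immediate from the smooth kernel away from $\Gamma$. If this is regarded as known from \cite{Colton}, the step reduces to a citation.
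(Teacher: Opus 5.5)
Your proposal is correct and follows essentially the same route as the paper. In both, the dense range of $T_D$ (resp.\ $S_D$) makes the $L^2(\Gamma)$ boundary residual small, and the continuous dependence of the exterior impedance (resp.\ Dirichlet) problem on its boundary data transfers that smallness to $\Gamma_{B_R}$. The differences are only in detail: you prove the dense range of $S_D$ yourself by the adjoint argument where the paper cites \cite[Theorem 5.21]{Colton}, and you sketch the $L^2$-data stability via a second-kind combined-potential equation where the paper simply cites \cite[Theorem~3.16]{Colton} and \cite[Theorem 5.17]{Colton}.
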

\begin{proof}
	We first prove the case of impedance boundary condition. By Theorem \ref{injective and dense range}, for any $\epsilon>0$, there exists $g\in L^2(\Gamma')$ such that
	\[
	\left\| T_Dg+\partial_{\pmb{\nu}}u^{\rm inc}+\mathrm{i}\eta u^{\rm inc}\right\| _{L^2(\Gamma)}<\frac{\epsilon}{c}.
	\]
	where \(c>0\) is a constant independent of \(\epsilon\).
	Since the radiating solution of the Helmholtz equation depends continuously on the boundary data subject to the impedance boundary condition \cite[Theorem~3.16]{Colton}, we obtain the estimate
	\begin{align*}
		\left\| u_{D',g}^{\rm sc}-u_D^{\rm sc}\right\| _{L^2(\Gamma_{B_R})}&\leq c\left\| \left( \partial_{\pmb{\nu}}u_{D',g}^{\rm sc}+\mathrm{i}\eta u_{D',g}^{\rm sc}\right) |_{\Gamma}-\left( \partial_{\pmb{\nu}}u_{D}^{\rm sc}+\mathrm{i}\eta u_{D}^{\rm sc}\right) |_{\Gamma}\right\| _{L^2(\Gamma)}\\
		&= c\left\| T_Dg-\left( \partial_{\pmb{\nu}}u_{D}^{\rm sc}+\mathrm{i}\eta u_{D}^{\rm sc}\right) |_{\Gamma}\right\| _{L^2(\Gamma)},
	\end{align*}
	for some constant $c$. From $\partial_{\pmb{\nu}}u_{D}^{\rm sc}+\mathrm{i}\eta u_{D}^{\rm sc}=-\left( \partial_{\pmb{\nu}}u^{\rm inc}+\mathrm{i}\eta u^{\rm inc}\right) $ on $\Gamma$, it follows that
	\[
	\left\|u_{D',g}^{\rm sc}-u_D^{\rm sc}\right\| _{L^2(\Gamma_{B_R})}<\epsilon.
	\]
	
	For the case of Dirichlet boundary condition, we define the operator  
	\begin{equation}\label{single layer operator}
			\left( S_Dg\right) (\pmb{x}) := \int_{\Gamma'} G(\pmb{x}, \pmb{y}) g(\pmb{y})  \mathrm{d}s(\pmb{y}), \quad \pmb{x} \in \Gamma.
	\end{equation}
	By \cite[Theorem 5.21]{Colton}, the operator \(S_D: L^2(\Gamma') \to L^2(\Gamma)\) is injective and has dense range. Combined with the stability of exterior Dirichlet problem \cite[Theorem 5.17]{Colton}, the result follows analogously to the impedance boundary condition case.
	
\end{proof}

\subsection{Iterative framework}
We have demonstrated the approximation property of the scattered field generated by the homothetic surface, which provides the mathematical foundation for the novel iterative method to be proposed. 

It should be emphasized that the equivalent field equations \eqref{dirichlet novel}-\eqref{impedance novel} are ill-posed.
To address this issue, we employ Tikhonov regularization by solving the following regularized equivalent field equations:
\begin{align}
	\left( \alpha I+S_D^*S_D\right) g_\alpha&=S_D^*f_1,&& \text{(Dirichlet),} \label{field equation diri}\\
	\left( \alpha I+T_D^*T_D\right) g_\alpha&=T_D^*f_2,&& \text{(impedance),}\label{field equation imp}
\end{align}
where $\alpha$ is a regularization parameter, $I$ is an identity operator, and the $L^2$-adjoint operators $T^*_D$ and $S_D^*$ are defined in \eqref{TDadjoint} and 
\[
(S_D^* g_\alpha)(\pmb{y}) := \int_{\Gamma} \overline{G(\pmb{x},\pmb{y})}  g_\alpha(\pmb{x})  \mathrm{d}s(\pmb{x}), \quad \pmb{y} \in \Gamma',
\]
respectively.

The corresponding equivalent data equations for scattered field and far-field data are given by
\begin{align}
	{S}g_{\alpha}(\pmb{x}):=&\int_{\Gamma'} G(\pmb{x},\pmb{y})g_{\alpha}(\pmb{y})\mathrm{d}s(\pmb{y})=u_D^{\rm sc}(\pmb{x}), ~\quad\quad\pmb{x}\in\Gamma_{B_R},\label{data equation novel regularization}\\
	{S}_{\infty}g_{\alpha} (\hat{\pmb{x}}):=&\frac{1}{4\pi}\int_{\Gamma'}e^{-\mathrm{i}\kappa\hat{\pmb{x}}\cdot\pmb{y}}g_{\alpha}(\pmb{y})\mathrm{d}s(\pmb{y})=u_D^{\infty}(\hat{\pmb{x}}),\quad\hat{\pmb{x}}\in\mathbb{S}^2.\label{far data equation novel regularization}
\end{align}

Therefore, we present the iterative approach as follows:
\begin{itemize}
	\item Given an initial guess for $\Gamma$, solve the regularized equivalent field equation \eqref{field equation diri} or \eqref{field equation imp} to compute $g_\alpha$;
	\item For a fixed $g_\alpha$, linearize the equivalent data equation \eqref{data equation novel regularization} or \eqref{far data equation novel regularization} with respect to $\Gamma$, since the homothetic surface $\Gamma'$ contains the geometric information of $\Gamma$. The linearized equation yields a boundary update $\Delta \Gamma$, and the boundary is updated by \( \Gamma^{(k+1)} = \Gamma^{(k)} + \Delta \Gamma^{(k)}\).
\end{itemize}
The two steps are iterated alternately until the relative error of the scattered field (or far field) meets a predetermined tolerance. A complete description of the algorithmic procedure is given in Table~\ref{algorithm} of Section \ref{sec3}.

\section{The property of Fr\'echet derivative} \label{sec3}
In this section, we will establish the injectivity and dense-range property of the associated Fr\'echet derivative to guarantee the solvability of the linearized equivalent data equation. 
Then, we provide a detailed presentation of the iterative algorithm.

 For clarity, we denote the parametrized integral operators by $\mathcal{S}_D$, $\mathcal{T}_D$, $\mathcal{S}$ and $\mathcal{S}_\infty$, corresponding to the original operators $S_D$, $T_D$, $S$ and $S_\infty$, respectively.
\begin{align*}
	(\mathcal{S}_D \psi_\alpha)(\hat{\pmb{x}}) 
	&= \int_{\mathbb{S}^2} \frac{e^{\mathrm{i}\kappa|\pmb p_D(\hat{\pmb{x}}) - \pmb p_{D'}(\hat{\pmb{y}})|}}{4\pi|\pmb p_D(\hat{\pmb{x}}) - \pmb p_{D'}(\hat{\pmb{y}})|} J_{D'}(\hat{\pmb{y}})\psi_{\alpha}(\hat{\pmb{y}}) \mathrm{d}s(\hat{\pmb{y}}), \\
	(\mathcal{T}_D \psi_\alpha)(\hat{\pmb{x}}) 
	&= \int_{\mathbb{S}^2} \left( \widetilde{K}(\hat{\pmb{x}},\hat{\pmb{y}}) + \mathrm{i}\eta \frac{e^{\mathrm{i}\kappa|\pmb p_D(\hat{\pmb{x}}) - \pmb p_{D'}(\hat{\pmb{y}})|}}{4\pi|\pmb p_D(\hat{\pmb{x}}) - \pmb p_{D'}(\hat{\pmb{y}})|} \right) J_{D'}(\hat{\pmb{y}})\psi_{\alpha}(\hat{\pmb{y}}) \mathrm{d}s(\hat{\pmb{y}}), \\
	(\mathcal{S}(\pmb{p}_D)\psi_\alpha)(\hat{\pmb{x}}) 
	&= \int_{\mathbb{S}^2} \frac{e^{\mathrm{i}\kappa|\pmb p_{B_R}(\hat{\pmb{x}}) - \pmb p_{D'}(\hat{\pmb{y}})|}}{4\pi|\pmb p_{B_R}(\hat{\pmb{x}}) - \pmb p_{D'}(\hat{\pmb{y}})|} J_{D'}(\hat{\pmb{y}})\psi_{\alpha}(\hat{\pmb{y}}) \mathrm{d}s(\hat{\pmb{y}}), \\
	(\mathcal{S}_\infty(\pmb{p}_D) \psi_\alpha)(\hat{\pmb{x}}) 
	&= \int_{\mathbb{S}^2} \frac{1}{4\pi} e^{-\mathrm{i}\kappa \hat{\pmb{x}} \cdot \pmb{p}_{D'}(\hat{\pmb{y}})} J_{D'}(\hat{\pmb{y}})\psi_{\alpha}(\hat{\pmb{y}}) \mathrm{d}s(\hat{\pmb{y}}),
\end{align*}
where $\pmb p_{B_R}(\hat{\pmb{x}}) = R\hat{\pmb{x}}, ~\hat{\pmb{x}} \in \mathbb{S}^2$, $J_{D'}(\hat{\pmb{x}})$ is the Jacobian of transformation and
{\small\[
\widetilde{K}(\hat{\pmb{x}},\hat{\pmb{y}}) =  (\pmb{p}_D(\hat{\pmb{x}}) - \pmb{p}_{D'}(\hat{\pmb{y}})) \cdot \pmb{\nu}(\hat{\pmb{x}})  \left( \mathrm{i}\kappa - \frac{1}{|\pmb{p}_D(\hat{\pmb{x}}) - \pmb{p}_{D'}(\hat{\pmb{y}})|} \right) \frac{e^{\mathrm{i}\kappa|\pmb p_D(\hat{\pmb{x}}) - \pmb p_{D'}(\hat{\pmb{y}})|}}{4\pi|\pmb p_D(\hat{\pmb{x}}) - \pmb p_{D'}(\hat{\pmb{y}})|^2}
\]}with $\psi_\alpha(\hat{\pmb{y}}) = g_\alpha(\pmb{p}_{D'}(\hat{\pmb{y}}))$. Here, we define $\pmb{\nu}(\hat{\pmb{x}}): = \pmb{\nu}(\pmb{p}_D(\hat{\pmb{x}}))=\pmb{\nu}(\pmb{p}_{D'}(\hat{\pmb{x}}))$. In addition, since $\pmb{p}_{D'}$ is uniquely determined by $\pmb{p}_D$ through the homothetic relation, we use $\pmb{p}_D$ as the primary variable in the parametrized operator notation.

Consequently, the equivalent field equations \eqref{field equation diri}-\eqref{field equation imp} can be reformulated as parametrized integral equations
\begin{align}
	(\alpha I + \mathcal{S}_D^* \mathcal{S}_D) \psi_\alpha &= \mathcal{S}_D^* \tilde{f_1}, \label{parametrized field equation diri}\\
	(\alpha I + \mathcal{T}_D^* \mathcal{T}_D) \psi_\alpha &= \mathcal{T}_D^* \tilde{f_2},\label{parametrized field equation imp}
\end{align}
where $\tilde{f_1}(\hat{\pmb{x}}) = f_1(\pmb{p}_D(\hat{\pmb{x}}))$, $\tilde{f_2}(\hat{\pmb{x}}) = f_2(\pmb{p}_D(\hat{\pmb{x}}))$. The parametrized adjoint operators $\mathcal{S}_D^*$ and $\mathcal{T}_D^*$ are given by
\begin{align*}
	\left( \mathcal{S}_D^*\varphi_\alpha\right) (\hat{\pmb{y}})&=\int_{\mathbb{S}^2}\frac{e^{-\mathrm{i}\kappa|\pmb p_D(\hat{\pmb{x}}) - \pmb p_{D'}(\hat{\pmb{y}})|}}{4\pi|\pmb p_D(\hat{\pmb{x}}) - \pmb p_{D'}(\hat{\pmb{y}})|} J_{D}(\hat{\pmb{x}})\varphi_{\alpha}(\hat{\pmb{x}}) \mathrm{d}s(\hat{\pmb{x}}),\\
	\left( \mathcal{T}_D^*\varphi_\alpha\right) (\hat{\pmb{y}})&=\int_{\mathbb{S}^2}\left( \overline{\widetilde{K}(\hat{\pmb{x}},\hat{\pmb{y}})}-\mathrm{i}\overline{\eta}\frac{e^{-\mathrm{i}\kappa|\pmb p_D(\hat{\pmb{x}}) - \pmb p_{D'}(\hat{\pmb{y}})|}}{4\pi|\pmb p_D(\hat{\pmb{x}}) - \pmb p_{D'}(\hat{\pmb{y}})|} \right)J_{D}(\hat{\pmb{x}})\varphi_{\alpha}(\hat{\pmb{x}}) \mathrm{d}s(\hat{\pmb{x}}),
\end{align*}
where $J_D$ is the Jacobian. Similarly, the equivalent data equations \eqref{data equation novel regularization}-\eqref{far data equation novel regularization} in parametrized form are shown by
\begin{align}
	\mathcal{S}(\pmb{p}_D)\psi_\alpha &= w_D^{\mathrm{sc}}, \label{data equation sc} \\
	\mathcal{S}_\infty(\pmb{p}_D) \psi_\alpha &= w_D^\infty \label{data equation far}
\end{align}
with $w_D^{\rm sc}(\hat{\pmb{x}})=u_D^{\rm sc}(\pmb p_{B_R}(\hat{\pmb{x}}))$ and $w_D^{\infty}(\hat{\pmb{x}})=u_D^\infty(\hat{\pmb{x}})$.

The linearization of \eqref{data equation sc} and \eqref{data equation far} with respect to $\pmb{p}_D$ requires the Fr$\acute{\rm e}$chet derivative of the parametrized integral operators $\mathcal{S}$ and $\mathcal{S}_\infty$, which can be deduced explicitly as
\begin{align}
\left( \mathcal{S}'[\pmb{p}_D,\psi_\alpha]\pmb{q}\right) (\hat{\pmb{x}})&=\int_{\mathbb{S}^2}\widetilde{S}(\hat{\pmb{x}},\hat{\pmb{y}})J_{D'}(\hat{\pmb{y}})\psi_\alpha(\hat{\pmb{y}})\mathrm{d}s(\hat{\pmb{y}}),\label{frechet sc}\\
\left( \mathcal{S}_\infty'[\pmb{p}_D,\psi_\alpha]\pmb{q}\right) (\hat{\pmb{x}})&=\frac{-\mathrm{i}\kappa}{4\pi}\int_{\mathbb{S}^2}e^{-\mathrm{i}\kappa\hat{\pmb{x}}\cdot\pmb{p}_{D'}(\hat{\pmb{y}})}\hat{\pmb{x}}\cdot\tilde{\pmb{q}}(\hat{\pmb{y}})J_{D'}(\hat{\pmb{y}})\psi_\alpha(\hat{\pmb{y}})\mathrm{d}s(\hat{\pmb{y}}),\label{frechet far}
\end{align}
where
\begin{equation}\label{updateq}
	\pmb{q}(\hat{\pmb{x}})=\Delta\pmb{c}+\Delta r(\hat{\pmb{x}})\hat{\pmb{x}},
\qquad
\tilde{\pmb{q}}(\hat{\pmb{x}})=\Delta\pmb{c}+\varsigma \Delta r(\hat{\pmb{x}})\hat{\pmb{x}},
\end{equation}
denote the updates of $\pmb p_D$ and $\pmb p_{D'}$, respectively, with $\Delta \pmb{c}$ and $\Delta r$ being the updates of the center $\pmb{c}$ and the radial function $r$ in the parametrization \eqref{parametrization} of $\pmb p_D$. Moreover,
\[
\widetilde{S}(\hat{\pmb{x}},\hat{\pmb{y}})=(\pmb{p}_{D'}(\hat{\pmb{y}})-\pmb p_{B_R}(\hat{\pmb{x}}))\cdot\tilde{\pmb{q}}(\hat{\pmb{y}})\left( \mathrm{i}\kappa-\frac{1}{|\pmb p_{B_R}(\hat{\pmb{x}})-\pmb{p}_{D'}(\hat{\pmb{y}})|}\right) \frac{e^{\mathrm{i}\kappa|\pmb p_{B_R}(\hat{\pmb{x}})-\pmb{p}_{D'}(\hat{\pmb{y}})|}}{4\pi|\pmb p_{B_R}(\hat{\pmb{x}})-\pmb{p}_{D'}(\hat{\pmb{y}})|^2}.
\]

For the shape reconstruction, we assume that $\Delta \pmb c=\pmb 0$, which implies $\tilde{\pmb{q}}=\varsigma\pmb{q}$. Then, we will establish the injectivity of the Fr$\acute{\rm e}$chet derivative $\mathcal{S}'$. The following lemma can be found in \cite{Ivanyshyn2010}.

\begin{lemma}\label{lemma 1}
Let $\Gamma$ be a smooth closed surface, and let 
$\pmb p_D$ be a parametrization of  $\Gamma$ as in \eqref{parametrization}. Suppose that
\(
\pmb p_{D}^{new}(\hat{\pmb{x}}) = \pmb p_D(\hat{\pmb{x}})+\pmb q(\hat{\pmb{x}})
\)
is a sufficiently small smooth perturbation of $\pmb p_D$. Then, there exists a smooth scalar function $q(\hat{\pmb{x}})$ such that
\[
\pmb p_{D}^{new}(\hat{\pmb{x}})
=
\pmb p_D(\hat{\pmb{x}})
+
q(\hat{\pmb{x}})\pmb\nu(\hat{\pmb{x}}).
\]
\end{lemma}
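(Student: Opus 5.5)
The plan is to prove the statement via the tubular neighbourhood theorem. Because the right-hand side is a \emph{normal} perturbation while $\pmb q$ is arbitrary, the identity can only hold after a reparametrization of $\mathbb{S}^2$. I therefore read the lemma as an equality of surfaces: there are a diffeomorphism $\sigma$ of $\mathbb{S}^2$ close to the identity and a smooth scalar $q$ with $\pmb p_D^{new}(\sigma^{-1}(\hat{\pmb y}))=\pmb p_D(\hat{\pmb y})+q(\hat{\pmb y})\pmb\nu(\hat{\pmb y})$ for all $\hat{\pmb y}\in\mathbb{S}^2$. This is all that is needed later, since the Fr\'echet derivative only sees the perturbed surface.

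\textbf{Step 1: a tubular neighbourhood of $\Gamma$.} Define
\[
\Phi:\mathbb{S}^2\times(-\delta,\delta)\to\mathbb{R}^3,\qquad \Phi(\hat{\pmb x},t)=\pmb p_D(\hat{\pmb x})+t\,\pmb\nu(\hat{\pmb x}).
\]
At $t=0$ the differential of $\Phi$ maps the tangent directions onto $T\Gamma$ (because $\pmb p_D$ is an immersion) and $\partial_t$ onto $\pmb\nu$. Hence $D\Phi$ is invertible along $\mathbb{S}^2\times\{0\}$. Since $\Gamma$ is compact, smooth and embedded, the standard argument (inverse function theorem plus a compactness/injectivity argument using a uniform bound on the curvature) gives the following: for $\delta>0$ small enough, $\Phi$ is a smooth diffeomorphism onto an open neighbourhood $U_\delta$ of $\Gamma$.

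\textbf{Step 2: decomposition of the perturbed surface.} Suppose $\|\pmb q\|_{C^1(\mathbb{S}^2)}$ is sufficiently small. Then $\pmb p_D^{new}(\mathbb{S}^2)\subset U_\delta$, so we may write
\[
\Phi^{-1}\bigl(\pmb p_D^{new}(\hat{\pmb x})\bigr)=\bigl(\sigma(\hat{\pmb x}),\,t(\hat{\pmb x})\bigr),
\]
with $\sigma:\mathbb{S}^2\to\mathbb{S}^2$ and $t:\mathbb{S}^2\to(-\delta,\delta)$ both smooth. For $\pmb q=0$ we have $\sigma=\mathrm{id}$ and $t=0$. Because $\Phi^{-1}$ is smooth with bounded derivatives on a compact subset of $U_\delta$, the map $\sigma$ is $C^1$-close to the identity and $t$ is $C^1$-small, both controlled by $\|\pmb q\|_{C^1}$.

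\textbf{Step 3: $\sigma$ is a diffeomorphism; this is the main obstacle.} Local invertibility is easy: $D\sigma$ is close to the identity on each tangent plane, so it is invertible everywhere, and $\sigma$ is a local diffeomorphism. Global bijectivity is what needs an argument. A local diffeomorphism from the compact manifold $\mathbb{S}^2$ to the connected manifold $\mathbb{S}^2$ is proper, hence a finite covering map. Since $\mathbb{S}^2$ is simply connected, this covering has a single sheet, so $\sigma$ is bijective and therefore a diffeomorphism. Alternatively, one can argue via degree: $\sigma$ is homotopic to $\mathrm{id}$, so it has degree $1$, and it has positive Jacobian everywhere, so every point has exactly one preimage.

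\textbf{Step 4: conclusion.} Set $q:=t\circ\sigma^{-1}$, which is smooth and small. For $\hat{\pmb y}=\sigma(\hat{\pmb x})$ we then have
\[
\pmb p_D^{new}(\sigma^{-1}(\hat{\pmb y}))=\Phi\bigl(\hat{\pmb y},q(\hat{\pmb y})\bigr)=\pmb p_D(\hat{\pmb y})+q(\hat{\pmb y})\,\pmb\nu(\hat{\pmb y}).
\]
So the perturbed surface, after the reparametrization $\sigma$, is exactly the normal perturbation $\pmb p_D+q\pmb\nu$, which is the claim of the lemma. As a by-product, to first order $q\approx\pmb q\cdot\pmb\nu$; this is consistent with the later use of the lemma in the injectivity argument for $\mathcal{S}'$.
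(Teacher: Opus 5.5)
Your proof is correct. It also supplies something the paper does not: the paper gives no argument for this lemma at all. It simply attributes the statement to Ivanyshyn, Kress and Serranho, and uses it in the proof of Theorem~\ref{Frechet_injective} only to justify writing the update as $\pmb q=q\pmb\nu$.

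Your opening observation is the essential one. Read literally, with the same parameter $\hat{\pmb x}$ on both sides, the lemma fails whenever $\pmb q(\hat{\pmb x})$ has a nonzero tangential component. So the only tenable reading is equality of surfaces up to a reparametrization, which is exactly what you establish. The three ingredients are all sound: the tubular-neighbourhood diffeomorphism $\Phi$, the splitting $\Phi^{-1}\circ\pmb p_D^{new}=(\sigma,t)$, and the covering-space (or degree) argument that a $C^1$-small perturbation of the identity of $\mathbb{S}^2$ is a diffeomorphism.

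Compared with the bare citation, your route makes two things explicit. First, ``sufficiently small'' must be measured in $C^1$: a merely $C^0$-small, rapidly oscillating tangential $\pmb q$ can make $\sigma$ non-injective. Second, $q$ agrees with $\pmb q\cdot\pmb\nu$ only to first order. The citation buys brevity, at the price of a statement that is not literally correct.

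One caution about your justifying aside, which does not affect the proof of the lemma itself. It is not true that $\mathcal{S}'[\pmb p_D,\psi_\alpha]$ ``only sees the perturbed surface''. The density $J_{D'}\psi_\alpha$ is frozen as a function on $\mathbb{S}^2$, so a tangential $\tilde{\pmb q}$ contributes $\int_{\mathbb{S}^2}\nabla_{\pmb y}G(\pmb p_{B_R}(\hat{\pmb x}),\pmb p_{D'}(\hat{\pmb y}))\cdot\tilde{\pmb q}(\hat{\pmb y})J_{D'}(\hat{\pmb y})\psi_\alpha(\hat{\pmb y})\,\mathrm{d}s(\hat{\pmb y})$. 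This is in general nonzero. What the lemma actually guarantees is that restricting updates to normal fields loses no reachable surfaces. It does not give $\mathcal{S}'[\pmb p_D,\psi_\alpha]\pmb q=\mathcal{S}'[\pmb p_D,\psi_\alpha](q\pmb\nu)$.
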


We now turn to establishing the injectivity of the operator $\mathcal{S}'\left[ \pmb{p}_D, \psi_{\alpha}\right]$.
\begin{theorem} \label{Frechet_injective}
Let \(\kappa^2\) not be an interior Neumann eigenvalue for $-\Delta$ in \(D'\). Then the Fr\'echet operator $\mathcal{S}'\left[ \pmb{p}_D, \psi_{\alpha}\right]$, defined by \eqref{frechet sc}, is injective.
\end{theorem}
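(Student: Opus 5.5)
The plan is to read $\mathcal{S}'[\pmb p_D,\psi_\alpha]\pmb q$ as the restriction to $\Gamma_{B_R}$ of a radiating layer potential generated on $\Gamma'$. Then a chain of uniqueness results (exterior Dirichlet problem, unique continuation, interior Neumann problem) and a jump relation force its density to vanish, which in turn forces $\pmb q=\pmb 0$.

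\textbf{Step 1 (reduction to a double-layer potential).} Since $\Delta\pmb c=\pmb 0$, we have $\tilde{\pmb q}=\varsigma\pmb q$. By Lemma \ref{lemma 1} we may replace $\pmb q$ by $q\,\pmb\nu$ with a smooth scalar $q$. Tangential components of $\pmb q$ only reparametrize the surface, so they do not affect the first-order variation of the surface integral. Because $\Gamma'$ is homothetic to $\Gamma$, the normal to $\Gamma'$ at $\pmb p_{D'}(\hat{\pmb y})$ coincides with $\pmb\nu(\hat{\pmb y})$.

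The kernel $\widetilde S(\hat{\pmb x},\hat{\pmb y})$ is exactly $\nabla_{\pmb y}G(\pmb x,\pmb y)\cdot\tilde{\pmb q}(\hat{\pmb y})$ evaluated at $\pmb x=\pmb p_{B_R}(\hat{\pmb x})$ and $\pmb y=\pmb p_{D'}(\hat{\pmb y})$. Hence
\[
W(\pmb x):=\int_{\Gamma'}\frac{\partial G(\pmb x,\pmb y)}{\partial\pmb\nu(\pmb y)}\,\rho(\pmb y)\,\mathrm{d}s(\pmb y),\qquad \rho:=\varsigma\,q\,g_\alpha ,
\]
is a double-layer potential on $\Gamma'$ whose trace on $\Gamma_{B_R}$ equals $\mathcal{S}'[\pmb p_D,\psi_\alpha]\pmb q$. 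Here $\rho$ is written on $\Gamma'$ via the parametrization.

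\textbf{Step 2 (vanishing outside $D'$).} Assume $\mathcal{S}'[\pmb p_D,\psi_\alpha]\pmb q=0$. Then $W$ is a radiating solution of the Helmholtz equation outside $B_R$ with vanishing Dirichlet data on $\Gamma_{B_R}$. Uniqueness of the exterior Dirichlet problem gives $W=0$ for $|\pmb x|>R$. Unique continuation then gives $W=0$ in $\mathbb{R}^3\setminus\overline{D'}$, so $W_+=0$ and $\partial_{\pmb\nu}W_+=0$ on $\Gamma'$.

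\textbf{Step 3 (vanishing inside $D'$).} The normal derivative of a double-layer potential is continuous across $\Gamma'$ (for densities of sufficient regularity, e.g.\ $\rho$ Hölder continuously differentiable, which holds since $q$ is smooth and $g_\alpha$ solves a smoothing regularized equation). So $\partial_{\pmb\nu}W_-=0$ and $W$ solves the homogeneous interior Neumann problem in $D'$. Since $\kappa^2$ is not a Neumann eigenvalue of $-\Delta$ in $D'$, $W=0$ in $D'$. The jump relation $W_+-W_-=\rho$ then yields $\rho=\varsigma q g_\alpha=0$ on $\Gamma'$.

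\textbf{Step 4 (main obstacle).} It remains to pass from $q\,\psi_\alpha=0$ to $q=0$. This needs a non-degeneracy property of $\psi_\alpha$; I would assume $\psi_\alpha\neq0$ almost everywhere, or on a dense set. If instead one only knows $\psi_\alpha\not\equiv0$, I would try to exploit the analyticity of $g_\alpha$: the field equation expresses $g_\alpha=\alpha^{-1}(T_D^*f-T_D^*T_Dg_\alpha)$ with kernels analytic in $\pmb y$ on $\Gamma'$ (since $\Gamma\cap\Gamma'=\emptyset$), so the zero set of $g_\alpha$ has empty interior. Continuity of $q$ then gives $q=0$, hence $\pmb q=\pmb 0$. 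I expect this step, together with rigorously justifying the replacement of a general $\pmb q$ by its normal component in Step 1, to be the delicate part. The potential-theoretic core (Steps 2–3) is standard.
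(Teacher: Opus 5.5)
Your route is the same as the paper's, step for step: reduce to $\pmb q=q\pmb\nu$ via Lemma~\ref{lemma 1}, recognize a double-layer potential on $\Gamma'$, then exterior uniqueness and unique continuation, continuity of the normal derivative with interior Neumann uniqueness, the jump relation giving $q\,J_{D'}\psi_\alpha=0$, and non-vanishing of $\psi_\alpha$ on open sets. Steps 2--3 coincide with the paper's proof.

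One statement in Step 1 is wrong. You claim that tangential components of $\pmb q$ do not affect the first-order variation. This fails for the operator \eqref{frechet sc}: there $J_{D'}\psi_\alpha$ is frozen on the parameter sphere, so moving the nodes tangentially transports the density. For tangential $\pmb q$ one gets
\[
\bigl(\mathcal{S}'[\pmb p_D,\psi_\alpha]\pmb q\bigr)(\hat{\pmb x})=\varsigma\int_{\Gamma'}\nabla_{\Gamma'}G(\pmb x,\cdot)\cdot\pmb q\,g_\alpha\,\mathrm{d}s=-\varsigma\int_{\Gamma'}G(\pmb x,\pmb y)\,\mathrm{div}_{\Gamma'}(\pmb q\,g_\alpha)(\pmb y)\,\mathrm{d}s(\pmb y),\qquad \pmb x=\pmb p_{B_R}(\hat{\pmb x}),
\]
which is a single-layer potential and nonzero in general. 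Moreover, if your claim were true, every nonzero tangential field would lie in the kernel, and injectivity on general $\pmb q$ would fail outright. Lemma~\ref{lemma 1} only says that the perturbed surface, as a set, admits a normal parametrization. It does not give $\mathcal{S}'[\pmb p_D,\psi_\alpha]\pmb q=\mathcal{S}'[\pmb p_D,\psi_\alpha](q\pmb\nu)$. So $\pmb q=q\pmb\nu$ has to be imposed as a restriction on the domain of $\mathcal{S}'$, not derived as a reduction. This is how the paper treats it: right after the proof it notes that the argument depends on this assumption.

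In Step 4, the paper simply takes the non-vanishing of $J_{D'}\psi_\alpha$ on open subsets from \cite{Ivanyshyn2010}. Your analyticity argument goes further, but as written it needs $\Gamma$ to be real-analytic. Indeed, $g_\alpha=\alpha^{-1}T_D^*(f_2-T_Dg_\alpha)$ (or $\alpha^{-1}S_D^*(f_1-S_Dg_\alpha)$ in the Dirichlet case) is the trace on $\Gamma'$ of a Helmholtz solution in $D$. Such a solution can vanish on an open patch of a merely smooth surface without vanishing identically, e.g.\ $\sin(\kappa y_3)$ on a flat piece of $\{y_3=0\}$. So "the zero set of $g_\alpha$ has empty interior" does not follow from analyticity in $\pmb y$ alone. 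For analytic $\Gamma$, and once you know $g_\alpha\not\equiv0$, your argument closes the step.
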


\begin{proof}
To prove the injectivity of $\mathcal{S}'\left[ \pmb{p}_D, \psi_{\alpha}\right]$, we assume that \(\mathcal{S}'\left[ \pmb{p}_D, \psi_{\alpha}\right]\pmb{q} = 0\) on \(\mathbb{S}^2\). 
Given the boundary update parametrization
\[
\pmb{q}(\hat{\pmb{x}}) = q(\hat{\pmb{x}}) \pmb{\nu}(\hat{\pmb{x}}),
\]
where \(q\) is a continuous scalar function, the Fr$\acute{\rm e}$chet derivative $\mathcal{S}'\left[ \pmb{p}_D, \psi_{\alpha}\right]\pmb{q}$ is reduced to
\[
\left( \mathcal{S}'\left[ \pmb{p}_D, \psi_{\alpha}\right]\pmb{q} \right)(\hat{\pmb{x}}) = 
\varsigma \int_{\mathbb{S}^2}
\frac{\partial G(\pmb p_{B_R}(\hat{\pmb{x}}), \pmb{p}_{D'}(\hat{\pmb{y}}))}{\partial \pmb{\nu}(\hat{\pmb{y}})} 
q(\hat{\pmb{y}})  J_{D'}(\hat{\pmb{y}})\psi_{\alpha}(\hat{\pmb{y}})   \mathrm{d}s(\hat{\pmb{y}}).
\]

Define the potential function
\[
U(\pmb{x}) =\int_{\mathbb{S}^2}
\frac{\partial G(\pmb{x}, \pmb{p}_{D'}(\hat{\pmb{y}}))}{\partial \pmb{\nu}(\hat{\pmb{y}})} 
q(\hat{\pmb{y}})  J_{D'}(\hat{\pmb{y}})\psi_{\alpha}(\hat{\pmb{y}})   \mathrm{d}s(\hat{\pmb{y}}), 
\quad \pmb{x} \in \mathbb{R}^3 \setminus \Gamma'.
\]
This function $U(\pmb{x})$ is a standard double layer potential, which satisfies the Helmholtz equation in \(\mathbb{R}^3 \setminus \overline{D'}\) and the Sommerfeld radiation condition.

Since $\mathcal{S}'[\pmb{p}_D,\psi_{\alpha}]\pmb{q}=0$ on $\mathbb{S}^2$, it is clear that 
$U(\pmb p_{B_R}(\hat{\pmb{x}}))=0$ on $\mathbb{S}^2$, and hence $U=0$ on $\Gamma_{B_R}$. 
By the uniqueness of the exterior Helmholtz problem, we deduce that $U$ vanishes in the exterior of $\Gamma_{B_R}$. The unique continuation principle shows that \(U = 0\) throughout \(\mathbb{R}^3 \setminus \overline{D'}\).

Furthermore, the jump relation for the normal derivative of the double-layer potential yields 
\[
\partial_{\pmb{\nu}} U = 0\quad \text{on}~ \Gamma'.
\]
Since \(U\) satisfies the Helmholtz equation in \(D'\) and \(\kappa^2\) is not an interior Neumann eigenvalue for $-\Delta$ in \(D'\), the uniqueness of interior problem implies that \(U = 0\) in \(D'\).

Applying the jump relation for the double layer potential yields
\[
q(\hat{\pmb{y}})J_{D'}(\hat{\pmb{y}})\psi_{\alpha}(\hat{\pmb{y}}) = 0 \quad \text{on }~ \mathbb{S}^2.
\]
By Lemma~\ref{lemma 1} and the fact that $J_{D'}\psi_{\alpha}$ cannot vanish identically on any open subset of \(\mathbb{S}^2\) \cite{Ivanyshyn2010}, we conclude that \(\pmb{q} = \pmb{0}\) on $\Gamma'$, which completes the proof.

\end{proof}

Noting that the proof of Theorem \ref{Frechet_injective} depends on the assumption \(\pmb{q}(\hat{\pmb{x}}) = q(\hat{\pmb{x}}) \pmb{\nu}(\hat{\pmb{x}})\). In fact, the assumption can be omitted in establishing the dense-range property of the Fr\'echet derivative $\mathcal{S}'$.

\begin{theorem}\label{Frechet_dense}
Let \(\kappa^2\) not be an interior Neumann eigenvalue for $-\Delta$ in \(D'\). Then the Fr\'echet operator
\[
\mathcal{S}'\left[ \pmb{p}_D, \psi_{\alpha}\right] : \left(L^2(\mathbb{S}^2)\right)^3 \to L^2(\mathbb{S}^2)
\] 
has dense range.
\end{theorem}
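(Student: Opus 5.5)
The natural route is to show that the $L^2$-adjoint of $\mathcal{S}'[\pmb p_D,\psi_\alpha]$ is injective, since $\overline{\mathrm{Range}(A)}=\ker(A^*)^\perp$ for any bounded operator $A$ between Hilbert spaces. With $\Delta\pmb c=\pmb 0$ we have $\tilde{\pmb q}=\varsigma\pmb q$. Writing the kernel as $\nabla_{\pmb y}G(\pmb p_{B_R}(\hat{\pmb x}),\pmb y)|_{\pmb y=\pmb p_{D'}(\hat{\pmb y})}\cdot\tilde{\pmb q}(\hat{\pmb y})$, the derivative takes the form
\[
\left(\mathcal{S}'[\pmb p_D,\psi_\alpha]\pmb q\right)(\hat{\pmb x})=\varsigma\int_{\mathbb{S}^2}\nabla_{\pmb y}G(\pmb p_{B_R}(\hat{\pmb x}),\pmb p_{D'}(\hat{\pmb y}))\cdot\pmb q(\hat{\pmb y})\,J_{D'}(\hat{\pmb y})\psi_\alpha(\hat{\pmb y})\,\mathrm{d}s(\hat{\pmb y}).
\]
The adjoint $(\mathcal{S}')^*:L^2(\mathbb{S}^2)\to (L^2(\mathbb{S}^2))^3$ is therefore
\[
\left((\mathcal{S}')^*\varphi\right)(\hat{\pmb y})=\varsigma\,\overline{J_{D'}(\hat{\pmb y})\psi_\alpha(\hat{\pmb y})}\int_{\mathbb{S}^2}\overline{\nabla_{\pmb y}G(\pmb p_{B_R}(\hat{\pmb x}),\pmb p_{D'}(\hat{\pmb y}))}\,\varphi(\hat{\pmb x})\,\mathrm{d}s(\hat{\pmb x}).
\]
The plan is to assume $(\mathcal{S}')^*\varphi=\pmb 0$ and deduce $\varphi=0$.

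First I would define the single-layer potential on the measurement sphere,
\[
W(\pmb y):=\int_{\Gamma_{B_R}}G(\pmb x,\pmb y)\overline{\varphi(\pmb x/R)}\,\mathrm{d}s(\pmb x),\qquad \pmb y\in\mathbb{R}^3\setminus\Gamma_{B_R}.
\]
The surface measure on $\Gamma_{B_R}$ contributes only the constant factor $R^2$. The adjoint then equals, up to conjugation and constants, $\overline{J_{D'}\psi_\alpha}\,\overline{\nabla W}$ evaluated on $\Gamma'$. As in the proof of Theorem \ref{Frechet_injective}, $J_{D'}\psi_\alpha$ cannot vanish on any open subset of $\mathbb{S}^2$ \cite{Ivanyshyn2010}. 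Since $W$ is smooth near $\Gamma'$, continuity gives $\nabla W=\pmb 0$ on all of $\Gamma'$.

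It is precisely here that we no longer need $\pmb q=q\pmb\nu$. The full vector $\pmb q$ is available, so the full gradient vanishes, and in particular $\partial_{\pmb\nu}W=0$ on $\Gamma'$. The function $W$ solves the Helmholtz equation in the ball $B_R\supset\overline{D'}$ and hence in $D'$. Since $\kappa^2$ is not an interior Neumann eigenvalue in $D'$, we get $W=0$ in $D'$. Unique continuation, $W$ being analytic in the connected open ball, then gives $W=0$ in all of $B_R$. Alternatively, $\nabla W=\pmb 0$ on $\Gamma'$ makes $W$ constant on $\Gamma'$, which one could combine with a Dirichlet argument, but the Neumann route matches the stated hypothesis.

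Next, continuity of the single-layer potential across $\Gamma_{B_R}$ gives $W=0$ on $\Gamma_{B_R}$ from the outside. $W$ is a radiating solution in $|\pmb y|>R$, so uniqueness of the exterior Dirichlet problem gives $W=0$ outside $B_R$. The jump relation $\partial_{\pmb\nu}W_--\partial_{\pmb\nu}W_+=\overline{\varphi}$ then yields $\varphi=0$. So $(\mathcal{S}')^*$ is injective and $\mathcal{S}'$ has dense range.

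The main obstacle I expect is the step passing from the vanishing of $\overline{J_{D'}\psi_\alpha}\,\nabla W$ to $\nabla W=\pmb 0$ on the whole of $\Gamma'$. This needs the nonvanishing property of $J_{D'}\psi_\alpha$ on open sets from \cite{Ivanyshyn2010} together with the continuity of $\nabla W$ on $\Gamma'$. A secondary technical point is writing the adjoint correctly: the conjugations and the constant factors $\varsigma$ and $R^2$ from the measure on $\Gamma_{B_R}$ must be tracked, but none of them affects the argument.
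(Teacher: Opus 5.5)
Your proof is correct and follows essentially the same route as the paper. You show the adjoint is injective via a single-layer potential over the measurement sphere, use the non-vanishing of $J_{D'}\psi_\alpha$ on open sets together with continuity to get $\nabla W=\pmb 0$ on $\Gamma'$, and apply Neumann uniqueness in $D'$ and unique continuation to get $W=0$ in $B_R$, from which $\varphi=0$ follows. Your closing step, using the radiating kernel with $\overline{\varphi}$ and then exterior Dirichlet uniqueness plus the normal-derivative jump relation, is a slightly more careful rendering of the paper's appeal to ``injectivity of the single-layer potential operator''; read as a boundary operator on $\Gamma_{B_R}$, that appeal would additionally require $\kappa^2$ not to be a Dirichlet eigenvalue of the ball.
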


\begin{proof}
To prove that the range of \(\mathcal{S}'\left[ \pmb{p}_D, \psi_{\alpha}\right]\) is dense, it suffices to show that the \(L^2\)-adjoint 
\(
\left( \mathcal{S}'\left[ \pmb{p}_D, \psi_{\alpha}\right]  \right)^* \colon L^2(\mathbb{S}^2) \to \left( L^2(\mathbb{S}^2) \right)^3
\)
denoted by
	\[
	\left( \left( \mathcal{S}'\left[ \pmb{p}_D, \psi_{\alpha}\right]  \right)^* \varPsi \right)(\hat{\pmb{y}}) 
	= \varsigma\overline{\psi_{\alpha}(\hat{\pmb{y}})}
	\nabla_{\pmb{p}_{D'}} \int_{\mathbb{S}^2}
	\overline{G(\pmb p_{B_R}(\hat{\pmb{x}}), \pmb{p}_{D'}(\hat{\pmb{y}}))}J_B(\hat{\pmb{x}})\varPsi(\hat{\pmb{x}})  \mathrm{d}s(\hat{\pmb{x}}),
	\]
is injective, where $J_B$ is the Jacobian and \(\varPsi \in L^2(\mathbb{S}^2)\).

Suppose that \(\left( \mathcal{S}'\left[ \pmb{p}_D, \psi_{\alpha}\right] \right)^* \varPsi =\pmb 0\) in \((L^2(\mathbb{S}^2))^3\), and define
\[
U(\pmb{y}) = \int_{\mathbb{S}^2}
\overline{G(\pmb p_{B_R}(\hat{\pmb{x}}), \pmb{y})}J_B(\hat{\pmb{x}}) \varPsi(\hat{\pmb{x}})  \mathrm{d}s(\hat{\pmb{x}}), 
\quad \pmb{y} \in \mathbb{R}^3 \setminus \Gamma_{B_R}.
\]
By assumption, we have $\varsigma  \overline{\psi_{\alpha}(\hat{\pmb{y}})}
\nabla U(\pmb{p}_{D'}(\hat{\pmb{y}}))= \pmb{0}$
in \((L^2(\mathbb{S}^2))^3\). Since \(0 < \varsigma < 1\) and $\psi_\alpha$ cannot vanish identically on any open subset of \(\mathbb{S}^2\), it follows that
\[
\nabla U(\pmb{p}_{D'}(\hat{\pmb{x}})) = \pmb{0}, \quad \hat{\pmb{x}} \in \mathbb{S}^2.
\]
Hence, the normal derivative of \(U\) vanishes on \(\Gamma'\). 
Since $U$ is a parametrized single-layer potential that satisfies the Helmholtz equation in $D'$, and $\kappa^2$ is not an interior Neumann eigenvalue of $-\Delta$ in $D'$, then $U$ vanishes  in $D'$. The unique continuation principle then implies that \(U = 0\) in the interior of $\Gamma_{B_R}$.

By continuity of the single-layer potential, we have $U=0$ on $\Gamma_{B_R}$. The injectivity of the single-layer potential operator then yields \(\varPsi(\hat{\pmb{x}}) = 0\) in \(L^2(\mathbb{S}^2)\), which implies that \(\mathcal{S}'\left[ \pmb{p}_D, \psi_{\alpha}\right]\) has dense range. 

\end{proof}

Regarding the injectivity and dense-range property of \(\mathcal{S}_{\infty}'[\pmb{p}_D,\psi_{\alpha}]\), we refer to \cite[Theorem 3.1]{Ivanyshyn2010} for a complete proof.  
Since the Fr$\acute{\rm e}$chet derivatives $\mathcal{S}'[\pmb{p}_D,\psi_{\alpha}]$ and $\mathcal{S}_{\infty}'[\pmb{p}_D,\psi_{\alpha}]$ are injective with dense range, the linearized equivalent data equation is required to compute the boundary update. 

Let $\pmb{p}_D^{(l)}$ denote the parametrization of the current approximation of the boundary $\Gamma$ at the $l$-th iteration. The linearization of \eqref{data equation sc}--\eqref{data equation far} leads to the equations
\begin{align}
\mathcal{S}'[\pmb{p}_D^{(l)},\psi_{\alpha}]\,\pmb{q} &= f_D^{\rm sc}, \label{parametrization of frechet scattering}\\
\mathcal{S}_\infty'[\pmb{p}_D^{(l)},\psi_{\alpha}]\,\pmb{q} &= f_D^{\infty}, \label{parametrization of frechet far field}
\end{align}
where 
\(
f_D^{\rm sc} = w_D^{\rm sc} - \mathcal{S}(\pmb{p}_D^{(l)}) \psi_{\alpha}
\) and \(
f_D^{\infty} = w_D^{\infty} - \mathcal{S}_{\infty}(\pmb{p}_D^{(l)})\psi_{\alpha}.
\)
To terminate the iterative procedure, we employ the relative error estimators 
\begin{align}
E_l^{\rm sc} &= 
\frac{\|w_D^{\rm sc} - \mathcal{S}(\pmb{p}_D^{(l)})\psi_{\alpha}\|_{L^2}}
     {\|w_D^{\rm sc}\|_{L^2}} \leq \epsilon, 
\label{error sc}\\
E_l^{\infty} &= 
\frac{\|w_D^{\infty} - \mathcal{S}_\infty(\pmb{p}_D^{(l)}) \psi_{\alpha}\|_{L^2}}
     {\|w_D^{\infty}\|_{L^2}} \leq \epsilon, 
\label{error far}
\end{align}
where \(\epsilon\) is a small positive constant depending on the noise level.

In the iterative reconstruction procedure, the obstacle boundary is updated through its parametrization $\pmb p_D$, and the corresponding update is described by $\pmb q$ defined in \eqref{updateq}. Thus, the parametrization update reduces to determining the vector $\Delta\pmb{c}$ and the scalar function $\Delta r(\hat{\pmb{x}})$. Here, $\Delta r(\hat{\pmb{x}})$ is approximated by a truncated spherical harmonic expansion.

From \cite{ganesh2004,ivanshyn10}, the spherical harmonics
\[
Y_{k,j}(\theta,\phi)=c_k^jP_k^{|j|}(\cos\theta)e^{\mathrm{i}j\phi},\quad k\in\mathbb{N},\quad |j|\leq k,
\]
form a complete orthonormal system in $L^2(\mathbb{S}^2)$, where the coefficients are given by
\[ c_k^j=(-1)^{(|j|-j)/2}\sqrt{\frac{2k+1}{4\pi}\frac{(k-|j|)!}{(k+|j|)!}},
\]
and $P_k^{|j|}$ denotes the associated Legendre function with degree $k$ and order $|j|$. Noting that the radial function $r$ is real-valued, its update can be approximated in the form
\begin{equation*}
	\Delta r(\theta,\phi)=\sum_{k=1}^{M}\sum_{j=1}^{k}\alpha_{kj}\Im (Y_{k,j}(\theta,\phi))+\sum_{k=0}^{M}\sum_{j=0}^{k}\beta_{kj}\Re (Y_{k,j}(\theta,\phi)),
\end{equation*}
where the integer $M \geq 0$ is the truncation number.
In particular, when $M=0$, the expression simplifies to $\Delta r(\theta,\phi) = \beta_{00} \Re(Y_{0,0}(\theta,\phi))$, corresponding to a constant radial perturbation.

\begin{remark}
	Due to the severe ill-posedness of the inverse problem, the reconstructions are sensitive to the initial guess. To address this issue, our numerical implementation employs an adaptive strategy: starting from the truncation number $M=0$, we progressively increase $M$ through $1,2,\cdots,M_{\rm max}$ until the stopping criterion is satisfied. This process not only provides the update of the density but also ensures efficient use of the available data. Consequently, a better approximation of the position and scale can be achieved.
\end{remark}

Now, we present the iterative algorithm in Table \ref{algorithm} for the inverse problem.
\begin{table}[!htbp]
	\caption{}\label{algorithm}
	\vspace{0.5ex}
	\centering
	\begin{tabular}{cp{.8\textwidth}}
		\toprule
		\multicolumn{2}{l}{{\bf Algorithm:}\quad Iterative approach for the phased inverse scattering problem} \\
		\midrule
		Step 1 & Given an incident plane wave $ u^{\rm inc}$, collect the scattered field  $ u_D^{\rm sc}$ or far-field data $ u_D^{\infty}$.\\
		Step 2 & Choose contraction factor $\varsigma$, initial surface $\boldsymbol p_{D}^{(0)}$, tolerance $\epsilon$, and truncation set $\{M_i=i\}_{i\ge0}$. For each $M_i$, set a maximum iteration count $\textit{loop}(M_i)\in\mathbb{N}$. Initialize $i=0$ and $l=0$.\\
		Step 3 & For current surface $\boldsymbol p_{D}^{(l)}$ at truncation $M_i$, compute the density ${\psi}_\alpha^{(l,M_i)}$ from the parametrized regularized field equation~\eqref{parametrized field equation diri} or \eqref{parametrized field equation imp}.\\
		Step 4 & Calculate error $E_{l,M_i}^{\chi}(\chi\in\left\lbrace \mathrm{sc},\infty\right\rbrace )$. If $E_{l,M_i}^{\chi}\le\epsilon$, stop the iteration and output $ \boldsymbol p_{D}^{(l)}$. Else if $l\le \textit{loop}(M_i)$, solve \eqref{parametrization of frechet scattering} or \eqref{parametrization of frechet far field} to obtain an update $\boldsymbol q^{(l,M_i)}$, and set $\boldsymbol p_{D}^{(l+1)}=\boldsymbol p_{D}^{(l)}+\boldsymbol q^{(l,M_i)}$, $l= l+1$. Go to Step 3.\\
		Step 5 & If $l=\textit{loop}(M_i)$ and $E_{l,M_i}^{\rm sc}>\epsilon$, increase truncation to the next value $M_{i+1}$, and choose the surface with the minimal error for $M_i$ as the initial guess $\boldsymbol p_{D}^{(0)}$. Set $i= i+1$ and $l= 0$, then {return to Step 3}. \\
		\bottomrule
	\end{tabular}
\end{table}

\begin{remark}
The iterative approach in Table~\ref{algorithm} can be naturally extended to the case of multiple incident waves. For incident waves \( u_{1}^{\mathrm{inc}}, \dots, u_{k}^{\mathrm{inc}} \), we collect the corresponding scattered fields \( u_{D,1}^{\mathrm{s}}, \dots, u_{D,k}^{\mathrm{s}} \) or far-field data \( u_{D,1}^{\infty}, \dots, u_{D,k}^{\infty} \). The iterative approach is modified as follows: 
\begin{itemize}
	\item[(1)] Calculate the densities $\psi_{\alpha,1}, \dots, \psi_{\alpha,k}$ by equivalent field equations;
	\item[(2)] Replace the calculation of $E_{l}^{\chi}$ in equations \eqref{error sc}-\eqref{error far} with the vector form
	\[
	E_{l}^{\chi} = \frac{ \left\| \left( w^{\chi}_{D,1}, \dots, w^{\chi}_{D,k} \right)^\top - \left( \mathcal{S}_\chi(\pmb{p}_D^{(l)}) \psi_{\alpha,1}, \dots, \mathcal{S}_\chi(\pmb{p}_D^{(l)}) \psi_{\alpha,k} \right)^\top \right\|_{L^2} }{ \left\| \left( w^{\chi}_{D,1}, \dots, w^{\chi}_{D,k} \right)^\top \right\|_{L^2} } \leq \epsilon, 
	\]
	for $\chi \in \{ \mathrm{sc}, \infty \}$ with $\mathcal{S}_{\mathrm{sc}}(\pmb{p}_D^{(l)}) := \mathcal{S}(\pmb{p}_D^{(l)})$;
	\item[(3)] Transform the linearized equivalent data equations \eqref{parametrization of frechet scattering}-\eqref{parametrization of frechet far field} into the following vectorized systems in Step 5:
	\[
	\left( \mathcal{S}_\chi'[\pmb{p}_D^{(l)},\psi_{\alpha,1}], \dots, \mathcal{S}_\chi'[\pmb{p}_D^{(l)},\psi_{\alpha,k}] \right)^\top  \pmb{q} = \left(f^{\chi}_{D,1}, \dots, f^{\chi}_{D,k}\right)^\top,
	\]
	with $\mathcal{S}_{\mathrm{sc}}'[\pmb{p}_D^{(l)},\psi_{\alpha,1}]:=\mathcal{S}'[\pmb{p}_D^{(l)},\psi_{\alpha,1}]$.
\end{itemize}
As shown in Section~\ref{num ex}, employing multiple incident waves can further improve the reconstruction accuracy and robustness in some cases.
\end{remark}

%
%
%
\begin{remark}
    It is worth mentioning that the proposed method generally exhibits high computational efficiency. All numerical tests were carried out in MATLAB 2025a on a personal laptop (48 GB RAM; 4.50 GHz Apple M4 Pro) using Apple's BLAS library. For the Dirichlet boundary condition cases (e.g., Examples 1, 2, 4--6) in Section~\ref{num ex}, 1000 iterations require only about 3.240732 seconds of computation time, and satisfactory reconstructions are typically achieved within 200 iterations. The computational cost may increase for more complex geometries, such as in Example~7, as the number of computational nodes grows significantly.
\end{remark}

\section{Numerical implementation}\label{numerical section}
The full discretization of the equivalent field equations \eqref{parametrized field equation diri}-\eqref{parametrized field equation imp} and the linearized equivalent data equations \eqref{parametrization of frechet scattering}-\eqref{parametrization of frechet far field} will be discussed in this section, and then a discrete formulation for the phaseless case is also developed.

\subsection{Discretization}\label{Discretization}
In the following, we denote $f(\hat{\pmb{x}}(\theta,\phi))$ simply as $f(\theta,\phi)$ for brevity.
The Gauss-trapezoidal product rule for numerical integration of continuous functions over $\mathbb{S}^2$ is given by
\[
\int_{\mathbb{S}^2} f(\hat{\pmb{x}}) \mathrm{d}s(\hat{\pmb{x}}) \approx \frac{\pi}{n+1} \sum_{j=0}^{2n+1} \sum_{i=1}^{n+1} \varpi_i f(\theta_i,\phi_j)
\]
with quadrature points
\begin{equation}\label{discretization node}
\phi_j = {j\pi}/{(n+1)},\quad \theta_i = \arccos z_i
\end{equation}
 and weights 
\[
\varpi_i = \frac{2(1-z_i^2)}{(n+1)^2 [P_n(z_i)]^2},
\] where $z_i$ denotes the zeros of the Legendre polynomial $P_{n+1}$.

Crucially, the equivalent field equations \eqref{parametrized field equation diri}-\eqref{parametrized field equation imp} do not contain singularities, which enables us to obtain approximate solutions through the Gaussian quadrature scheme with high efficiency.

Define
\begin{align*}
L_{1,l,q}(\zeta,\xi,\theta,\phi):=&\widetilde{K}(\zeta,\xi,\theta,\phi)\Im(Y_{l,q}(\theta,\phi)),\\[1ex]
L_{2,l,q}(\zeta,\xi,\theta,\phi):=&\widetilde{K}(\zeta,\xi,\theta,\phi)\Re(Y_{l,q}(\theta,\phi)),
\end{align*}
where
\begin{align}
\widetilde{K}(\zeta,\xi,\theta,\phi):=&\mathcal{A}_1(\zeta,\xi,\theta,\phi)\sin\theta\cos\phi+\mathcal{A}_2(\zeta,\xi,\theta,\phi)\sin\theta\sin\phi\notag\\
&+\mathcal{A}_3(\zeta,\xi,\theta,\phi)\cos\theta,\notag\\[1ex]
\mathcal{A}_1(\zeta,\xi,\theta,\phi):=&K(\zeta,\xi,\theta,\phi)(c_1+\varsigma r(\theta,\phi)\sin\theta\cos\phi-R\sin\zeta\cos\xi),\label{A1}\\[1ex]
\mathcal{A}_2(\zeta,\xi,\theta,\phi):=&K(\zeta,\xi,\theta,\phi)(c_2+\varsigma r(\theta,\phi)\sin\theta\sin\phi-R\sin\zeta\sin\xi),\label{A2}\\[1ex]
\mathcal{A}_3(\zeta,\xi,\theta,\phi):=&K(\zeta,\xi,\theta,\phi)(c_3+\varsigma r(\theta,\phi)\cos\theta-R\cos\zeta),\label{A3}
\end{align}
and
{\small
\[
K(\zeta,\xi,\theta,\phi)=\left(\mathrm{i}\kappa- \frac{1}{|\pmb p_{B_R}(\zeta,\xi)-\pmb p_{D'}(\theta,\phi)|}\right)\frac{e^{\mathrm{i}\kappa|\pmb p_{B_R}(\zeta,\xi)-\pmb p_{D'}(\theta,\phi)|}}{4\pi|\pmb p_{B_R}(\zeta,\xi)-\pmb p_{D'}(\theta,\phi)|^2}J_{D'}(\theta,\phi).
\]}
Substituting these definitions into \eqref{parametrization of frechet scattering} and applying the Gauss--trapezoidal product rule yields the fully discrete linear system
\begin{equation}\label{overdetermined system}
\small
\begin{aligned}
	\sum_{{p}=1}^{3}B_{{p}}(\zeta_r,\xi_s)\Delta c_{{p}}+\sum_{l=1}^{M}\sum_{q=1}^{l}\alpha_{lq}B_{1,l,q}(\zeta_r,\xi_s)+\sum_{l=0}^{M}\sum_{q=0}^{l}\beta_{lq}B_{2,l,q}(\zeta_r,\xi_s)=f_D^{\rm sc}(\zeta_r,\xi_s),
\end{aligned}
\end{equation}
for $r=0,\cdots,\tilde{n}-1$ and $s=0,\cdots,2\tilde{n}-1$. This overdetermined system is solved for the real coefficients $\Delta c_1$, $\Delta c_2$, $\Delta c_3$, $\alpha_{lq}$ and $\beta_{lq}$, where the system matrices are defined by
\begin{align*}
B_{p}(\zeta_r,\xi_s) = &\frac{\pi}{n+1}\sum_{{j}=0}^{2n+1}\sum_{{i}=1}^{n+1}\varpi_{{i}}\mathcal{A}_{p}(\zeta_r,\xi_s,\theta_{{i}},\phi_{{j}})\psi_{\alpha}(\theta_{{i}},\phi_{{j}}),\quad {p}=1,2,3,\\
B_{{p},l,q}(\zeta_r,\xi_s) = &\varsigma\frac{\pi}{n+1}\sum_{{j}=0}^{2n+1}\sum_{{i}=1}^{n+1}\varpi_{{i}}L_{{p},l,q}(\zeta_r,\xi_s,\theta_{i},\phi_{j})\psi_{\alpha}(\theta_{{i}},\phi_{{j}}),\quad {p}=1,2.
\end{align*}

In the above discrete system and matrix definitions, $M$ denotes the truncation number. The points $\pmb p_{B_R}(\zeta_r,\xi_s)$ are the measurement nodes on the observation surface $\Gamma_{B_R}$, while $\pmb p_{D'}(\theta_i,\phi_j)$ are the quadrature nodes on the homothetic auxiliary surface $\Gamma'$. More precisely,
\[
\zeta_r=\frac{\pi r}{\tilde n},\quad \xi_s=\frac{\pi s}{\tilde n},
\quad r=0,\cdots,\tilde n-1,\quad s=0,\cdots,2\tilde n-1,
\]
so that the total number of measurement points is $2\tilde n^2$. For the Gauss--trapezoidal product rule, $\theta_i$ ($i=1,\cdots,n+1$) and $\phi_j$ ($j=1,\cdots,2n+1$) are defined in \eqref{discretization node}. Hence the total number of quadrature points is $2(n+1)^2$.

Typically, the number of unknown coefficients $(M+1)^2+3\ll 2\tilde{n}^2$ . Due to the severely ill-posed nature of the problem, we solve the overdetermined system \eqref{overdetermined system} via Tikhonov regularization. Accordingly, the linear system \eqref{overdetermined system} is expressed as an optimization problem aimed at minimizing the Tikhonov functional \cite{Tikhonov}
\begin{equation}\label{minimization}
\begin{aligned}
	\sum_{s=0}^{2\tilde{n}-1}&\sum_{r=0}^{\tilde{n}-1}\Big|\sum_{{p}=1}^{3}B_{{p}}(\zeta_r,\xi_s)\Delta c_{{p}}+\sum_{l=1}^{M}\sum_{q=1}^{l}\alpha_{lq}B_{1,l,q}(\zeta_r,\xi_s)\\
	&+\sum_{l=0}^{M}\sum_{q=0}^{l}\beta_{lq}B_{2,l,q}(\zeta_r,\xi_s)-f_D^{\rm sc}(\zeta_r,\xi_s)\Big|^2+\lambda \Bigg(|\Delta c_1|^2+|\Delta c_2|^2+|\Delta c_3|^2\\
	&+\frac{1}{2}\sum_{l=1}^{M}\sum_{q=1}^{l}(1+l(l+1))^3\alpha_{lq}^2+\beta_{00}^2+\frac{1}{2}\sum_{l=1}^{M}\sum_{q=0}^{l}(1+l(l+1))^3\beta_{lq}^2\Bigg),
\end{aligned}
\end{equation}
where $\lambda > 0$ is the regularization parameter and the last term denotes the $H^3$-penalty. Applying the least squares principle, the minimizer of \eqref{minimization} satisfies the normal equation
\begin{equation}\label{the final}
	\left(\lambda\tilde{\pmb I}+\Re(\pmb B^*\pmb B)\right)\pmb \varUpsilon=\Re(\pmb B^*\pmb f_D^{\rm sc})
\end{equation}
with the system matrix $\pmb{B}$ defined by
\[
\pmb{B} = \left( \pmb {B}_1, \pmb{B}_2, \pmb{B}_3, \pmb{B}_{1,1,1}, \cdots, \pmb{B}_{1,M,M}, \pmb{B}_{2,0,0}, \cdots, \pmb{B}_{2,M,M} \right).
\]
The column blocks of $\pmb{B}$ are given by
\begin{align*}
\pmb B_{{p}}&=\left(B_{{p}}(\pmb\zeta,\xi_0),\cdots,B_{{p}}(\pmb\zeta,\xi_{2\tilde{n}-1})\right)^\top, \quad {p}=1,2,3,\\
\pmb B_{1,l,q}&=\left(B_{1,l,q}(\pmb\zeta,\xi_0),\cdots,B_{1,l,q}(\pmb\zeta,\xi_{2\tilde{n}-1})\right)^\top,\quad l=1,\cdots,M,~ q=1,\cdots,l,\\
\pmb B_{2,l,q}&=\left(B_{2,l,q}(\pmb\zeta,\xi_0),\cdots,B_{2,l,q}(\pmb\zeta,\xi_{2\tilde{n}-1})\right)^\top,\quad l=0,\cdots,M,~ q=0,\cdots,l,
\end{align*}
where 
\[
B_\chi(\pmb{\zeta},\xi_s) = \left( B_\chi(\zeta_0,\xi_s), \cdots, B_\chi(\zeta_{\tilde{n}-1},\xi_s) \right) \quad \text{for} \quad \chi \in \{p, (1,l,q), (2,l,q)\}.
\]
The solution vector and regularization matrix are shown by
\begin{align*}
	&\pmb \varUpsilon=(\Delta c_1,\Delta c_2,\Delta c_3,\alpha_{11},\cdots,\alpha_{M1},\cdots,\alpha_{MM},\beta_{00},\cdots,\beta_{M0},\cdots,\beta_{MM})^\top,\\
	&\tilde {\pmb I}
	=\mathrm{diag}\big(
	\underbrace{1,1,1,}_{\Delta c_j,\;j=1,2,3}
	\underbrace{\varrho_1,}_{\alpha_{11}}\;
	\underbrace{\varrho_2,\varrho_2}_{\alpha_{2j},j=1,2},\;\ldots,\;
	\underbrace{\varrho_M,\cdots,\varrho_M}_{\alpha_{Mj},j=1,\ldots,M},\\
	&\qquad\qquad\underbrace{1}_{\beta_{00}},\;
	\underbrace{\varrho_1,\varrho_1}_{\beta_{1j},j=0,1},\;\ldots,\;
	\underbrace{\varrho_M,\cdots,\varrho_M}_{\beta_{Mj},j=0,\ldots,M}
	\big),\\
	&\pmb{f}_D^{\rm sc}=\left(f_D^{\rm sc}(\pmb\zeta,\xi_0),\cdots,f_D^{\rm sc}(\pmb\zeta,\xi_{2\tilde{n}-1})\right)^\top,
\end{align*}
where
\[
\varrho_k=\bigl(1+k(k+1)\bigr)^3/2,\quad k=1,\cdots,M,
\]
\[
f_D^{\rm sc}(\pmb\zeta,\xi_s)=\left( f_D^{\rm sc}(\zeta_0,\xi_s),\cdots,f_D^{\rm sc}(\zeta_{\tilde{n}-1},\xi_s)\right).
\]
Thus, the updated approximation is given by
\[
\pmb p_D^{new}=(\pmb c+\Delta \pmb c)+(r(\hat{\pmb x})+\Delta r(\hat{\pmb x}))\hat{\pmb x},
\]
where $\pmb c=(c_1,c_2,c_3)^\top$ and $\Delta\pmb c=(\Delta c_1,\Delta c_2, \Delta c_3)^\top$.

For the specific discretization of \eqref{parametrization of frechet far field}, we only need to reformulate $\mathcal{A}_{p}$ for ${p}=1,2,3$ (given in \eqref{A1}-\eqref{A3}) as follows
\begin{align*}
	\mathcal{A}_1(\zeta,\xi,\theta,\phi)&=K(\zeta,\xi,\theta,\phi)\sin\zeta\cos\xi,\\
	\mathcal{A}_2(\zeta,\xi,\theta,\phi)&=K(\zeta,\xi,\theta,\phi)\sin\zeta\sin\xi,\\
	\mathcal{A}_3(\zeta,\xi,\theta,\phi)&=K(\zeta,\xi,\theta,\phi)\cos\zeta,
\end{align*}
where 
\[
K(\zeta,\xi,\theta,\phi)=-\mathrm{i}\kappa e^{-\mathrm{i}\kappa\hat{\pmb x}(\zeta,\xi)\cdot\pmb p_{D'}(\theta,\phi)}J_{D'}(\theta,\phi)/4\pi.
\]
Correspondingly, the right hand $f_D^{\rm sc}$ and $\pmb f_D^{\rm sc}$ are replaced by $f_D^{\infty}$ and 
\(
\pmb f_D^{\infty}=\left( f_D^\infty(\pmb\zeta,\xi_0),\cdots,f_D^\infty(\pmb\zeta,\xi_{2\tilde{n}-1})\right) ^\top.
\)

\subsection{The inverse scattering problem with phaseless data}

We now consider the inverse scattering problem using phaseless data. 
Within the iterative framework developed in Section~\ref{novel approach}, 
the equivalent field equations remain unchanged as given in \eqref{parametrized field equation diri}-\eqref{parametrized field equation imp}, 
while the equivalent data equation is rewritten as
\begin{equation}\label{phaseless}
\bigl|G^{\infty}(\zeta,\xi) + (\mathcal{S}_{\infty}(\pmb{p}_D)\psi_{\alpha})(\zeta,\xi)\bigr|
= \bigl|u_D^{\rm tot,\infty}(\zeta,\xi)\bigr|,
\end{equation}
with $\zeta\in[0,\pi]$ and $\xi\in[0,2\pi)$.

The linearization of \eqref{phaseless} with respect to $\pmb{p}_D$ yields
\begin{align*}
\mathcal{S}'_P[\pmb{p}_D,\psi_{\alpha}]\pmb{q} = f_D^{\rm tot,\infty},
\end{align*}
where
\begin{align*}
\mathcal{S}_P'[\pmb{p}_D, \psi_{\alpha}]\pmb{q}&= 2\Re\left( \overline{\left(G^{\infty} + \mathcal{S}_\infty(\pmb{p}_D) \psi_{\alpha}\right)} \mathcal{S}_\infty'[\pmb{p}_D, \psi_{\alpha}]\pmb{q} \right),\\[1ex]
f_D^{\mathrm{tot},\infty}&= |u_D^{\mathrm{tot},\infty}|^2 - |G^{\infty} + \mathcal{S}_\infty(\pmb{p}_D)\psi_{\alpha}|^2,
\end{align*}
and $\mathcal{S}'_\infty[\pmb{p}_D,\psi_{\alpha}]$ is defined in \eqref{frechet far}.


Introducing
{\small
\begin{equation*}
{B}_{p}(\zeta_r, \xi_s) = \frac{2\pi}{n+1} \Re\left\{ \overline{\widetilde{M}(\zeta_r, \xi_s)} \sum_{{j}=0}^{2n+1} \sum_{{i}=1}^{n+1} \varpi_{{i}} \mathcal{A}_{p}(\zeta_r, \xi_s, \theta_{{i}}, \phi_{{j}}) \psi_{\alpha}(\theta_{{i}}, \phi_{{j}}) \right\}, \quad p=1,2,3,
\end{equation*}
\begin{equation*}
{B}_{p,l,q}(\zeta_r, \xi_s) = \varsigma \frac{2\pi}{n+1} \Re\left\{ \overline{\widetilde{M}(\zeta_r, \xi_s)} \sum_{{j}=0}^{2n+1} \sum_{{i}=1}^{n+1} \varpi_{{i}} L_{p,l,q}(\zeta_r, \xi_s, \theta_{{i}}, \phi_{{j}}) \psi_{\alpha}(\theta_{{i}}, \phi_{{j}}) \right\}, \quad p=1,2,
\end{equation*}}where 
\begin{align*}
	\widetilde{M}(\zeta_r, \xi_s) = &~G^{\infty}(\zeta_r, \xi_s) \\
	&+ \frac{1}{4(n+1)} \sum_{j=0}^{2n+1} \sum_{i=1}^{n+1} \varpi_i e^{-\mathrm{i}\kappa \hat{\pmb{x}}(\zeta_r, \xi_s) \cdot \pmb{p}_{D'}(\theta_i,\phi_j)} \psi_{\alpha}(\theta_i,\phi_j)J_{D'}(\theta_{i},\phi_{j}).
\end{align*}
We obtain the discretized linear system
\begin{equation} \label{phaseless overdetermined system}
\begin{aligned}
	\sum_{p=1}^{3} {B}_{p}(\zeta_r, \xi_s) \Delta c_{p} 
	&+ \sum_{l=1}^{M} \sum_{q=1}^{l} \alpha_{lq} {B}_{1,l,q}(\zeta_r, \xi_s) \\
	&+ \sum_{l=0}^{M} \sum_{q=0}^{l} \beta_{lq} {B}_{2,l,q}(\zeta_r, \xi_s) 
	= f_D^{\mathrm{tot},\infty}(\zeta_r, \xi_s).
\end{aligned}
\end{equation}

\noindent The discretization and regularization of \eqref{phaseless overdetermined system} follow the same procedure as in the previous subsection, replacing \(f_D^{\rm sc}\) and \(\pmb{f}_D^{\rm sc}\) in \eqref{the final} by \(f_D^{\mathrm{tot},\infty}\) and
\[
\pmb{f}_D^{\mathrm{tot},\infty} = 
\left(
f_D^{\mathrm{tot},\infty}(\pmb\zeta, \xi_0),
\cdots,
f_D^{\mathrm{tot},\infty}(\pmb\zeta, \xi_{2\tilde{n}-1})
\right)^\top.
\]

The iterative procedure for the phaseless inverse scattering problem will terminate when relative error
\[
E_l^{\rm tot}=\frac{\big\| |w_D^{\mathrm{tot}, \infty}|^2-|G^{ \infty}+\mathcal{S}_\infty(\pmb p_D^{(l)})\psi_{\alpha}|^2\big\|_{L^2}}{\big\||w_D^{\mathrm{tot}, \infty}|^2\big\|_{L^2}}\leq\epsilon
\]
holds, where $w_D^{\rm tot, \infty}(\hat{\pmb{x}})=u_D^{\mathrm{tot},\infty}(\hat{\pmb{x}})$. 

\section{Numerical experiments}\label{num ex}

This section is organized into two parts. We first numerically verify the approximation property of homothetic surfaces, and then demonstrate the performance of the proposed inversion algorithm through several representative examples.

\subsection{Numerical verification of the approximation property for homothetic surfaces}

We begin with a simple forward scattering experiment to illustrate the approximation property of homothetic surfaces. For simplicity, we restrict ourselves to the Dirichlet case. The purpose of this test is to show that the scattered field generated by a suitably chosen homothetic surface can approximate the exact scattered field to the desired accuracy.
We consider a smooth cushion-shaped obstacle \(D\) with a star-shaped boundary parameterized by
\[
\pmb{p}_D(\theta,\phi)
= \sqrt{0.3+0.1(\cos 2\phi -1)(\cos 4\theta -1)}\,\hat{\pmb{x}}(\theta,\phi),
\quad
(\theta,\phi)\in[0,\pi]\times[0,2\pi).
\]

Following the validation strategy in Section~6 of \cite{jcp22}, we construct the exact solution in the form
\[
u_D^{\rm sc}(\pmb{x}) = G(\pmb{x},\pmb{z}_0), 
\quad \pmb{z}_0=(0,\,0.05,\,0.05)^{\top},
\]
where $G$ is the fundamental solution of the three-dimensional Helmholtz equation.

The numerical density $g$ on $\Gamma'$ is obtained by solving the boundary integral equation~\eqref{dirichlet novel} with the boundary data
\[
f_1(\pmb{x}) = G(\pmb{x},\pmb{z}_0),\quad \pmb{x}\in\Gamma,
\]
and then the numerical solution $u^{\rm sc}_{D'}$ generated by the homothetic surface $\Gamma'$ can be computed directly by \eqref{scattered}. 

Since the boundary integral equation~\eqref{dirichlet novel} is ill-posed, we employ Tikhonov regularization as shown in \eqref{field equation diri} by solving the normal equation
\begin{equation}\label{regular}
	\left( \alpha I + S_D^{*} S_D \right) g_{\alpha} = S_D^{*} f_1, 
\end{equation}
where $\alpha>0$.
As another regularized formulation, we may also consider the augmented system
\begin{align}\label{zengguang}
\begin{bmatrix}
S_D \\[2mm]
\breve{\alpha} I
\end{bmatrix}
g_{\alpha}
=
\begin{bmatrix}
f_1 \\[2mm]
0
\end{bmatrix},
\end{align}
which can be efficiently solved by a standard QR decomposition algorithm. 
When the regularization parameter $\breve{\alpha}=\sqrt{\alpha}$, the augmented system \eqref{zengguang} yields the same normal equation as \eqref{regular}; otherwise, the two formulations can be viewed as different regularization strategies for \eqref{dirichlet novel}.

The detailed discretization of the boundary integral equations is presented in Section~\ref{numerical section}. The relative $L^2$ errors are evaluated over 800 observation points on $\Gamma_{B_R}$ with $R=1$ (equally distributed in both the polar angle $\zeta$ and the azimuthal angle $\xi$) according to
\[
\epsilon_{\rm rel} 
= 
\frac{\|u^{\rm sc}_{D} - u^{\rm sc}_{D'}\|_{L^2(\Gamma_{B_R})}}{\|u^{\rm sc}_{D}\|_{L^2(\Gamma_{B_R})}} 
\approx
\frac{
\left(
\sum_{j=1}^{800} |u^{\rm sc}_{D}(\pmb{x}_j) - u^{\rm sc}_{D'}(\pmb{x}_j)|^2
\right)^{1/2}
}{
\left(
\sum_{j=1}^{800} |u^{\rm sc}_{D}(\pmb{x}_j)|^2
\right)^{1/2}
}.
\]

We test the method at wavenumber \(\kappa = 1\) with geometric contraction factor \(\varsigma = 0.5\) and fixed regularization parameters \(\alpha = 10^{-10}\), \(\breve{\alpha}=10^{-12}\).
The surface \(\Gamma'\) is discretized using \(n+1\) points in \(\theta\) and \(2(n+1)\) points in \(\phi\), as defined by equation~\eqref{discretization node} in Section~\ref{numerical section}, and the same discretized nodes are used on \(\Gamma\).

Table~\ref{tab:cushion} compares the results for the augmented system~\eqref{zengguang} and the normal equation~\eqref{regular}.
For the augmented system, the relative error $\epsilon_{\rm rel}$ decays rapidly as \(n\) increases and reaches machine precision around \(n=20\), maintaining a level of \(10^{-15}\) thereafter.
In contrast, the normal equation stagnates at a relative error of about \(10^{-8}\) for all \(n\), due to its squared condition number and increased sensitivity to round-off errors. 
Although its accuracy is lower than that of the augmented system, it offers faster computation.
Therefore, we adopt the normal equation formulation to solve the equivalent field equations as described in Section~\ref{numerical section}. Its accuracy is sufficient for the inverse problem, and its lower computational cost makes it more efficient when the equivalent field equations are solved repeatedly throughout the reconstruction process.

\begin{table}[!htbp]
\centering
\caption{Scattering by a cushion-shaped obstacle (Dirichlet case, $\kappa=1$).
Relative errors and computational time for the augmented system and normal equation formulations.}
\label{tab:cushion}
\vspace{0.5ex}
\begin{tabular}{c c c c c}
\hline
\multirow{2}{*}{$n$}
 & \multicolumn{2}{c}{Augmented system}
 & \multicolumn{2}{c}{Normal equation} \\
\cline{2-5}
 & $\epsilon_{\rm rel}$ & Time (s)
 & $\epsilon_{\rm rel}$ & Time (s) \\
\hline
\vspace{-2.2ex} \\
5 & $1.3261\times10^{-5}$ & 0.00056 & $1.3087\times10^{-5}$ & 0.00021 \\
10 & $1.2804\times10^{-7}$ & 0.00619 & $1.8153\times10^{-7}$ & 0.00750 \\
15 & $2.9817\times10^{-9}$ & 0.06171 & $1.0396\times10^{-7}$ & 0.01699 \\
20 & $4.1626\times10^{-11}$ & 0.32119 & $4.2147\times10^{-8}$ & 0.01984 \\
25 & $5.9130\times10^{-13}$ & 1.22071 & $4.2802\times10^{-8}$ & 0.05332 \\
30 & $1.5393\times10^{-14}$ & 3.71026 & $4.6967\times10^{-8}$ & 0.12984 \\
35 & $2.4155\times10^{-15}$ & 5.71834 & $4.6384\times10^{-8}$ & 0.24029 \\
40 & $1.6422\times10^{-15}$ & 11.40335 & $4.6245\times10^{-8}$ & 0.37619 \\
\hline
\end{tabular}
\end{table}

To further illustrate the influence of the regularization parameters in the forward approximation test, we compare the normal equation~\eqref{regular} and the augmented system~\eqref{zengguang} by varying $\alpha$ and $\breve{\alpha}$ independently under 
the setting $\kappa=1$, $\varsigma=0.5$, and $n=30$.
The same numerical parameter values are tested in the two formulations, but they should not be interpreted as equivalent regularization levels. 
Indeed, solving the augmented system~\eqref{zengguang} is equivalent to solving the normal equation with parameter $\breve{\alpha}^{\,2}$; hence the two formulations coincide when $\alpha=\breve{\alpha}^{\,2}$.

\begin{table}[!htbp]\centering

\caption{Sensitivity of the relative $L^2$-error $\epsilon_{\rm rel}$ to $\alpha$ in~\eqref{regular} and $\breve{\alpha}$ in~\eqref{zengguang}.}
\label{alpha-breve-alpha-relation}
\vspace{1.2ex}
\begin{tabular}{c|cc}
\toprule
$\alpha$ or $\breve{\alpha}$ 
& $\epsilon_{\rm rel}(\alpha)$
& $\epsilon_{\rm rel}(\breve{\alpha})$ \\
\midrule
$10^{-2}$  & $5.24\!\times\!10^{-3}$  & $9.33\!\times\!10^{-4}$ \\
$10^{-4}$  & $9.33\!\times\!10^{-4}$  & $8.81\!\times\!10^{-7}$ \\
$10^{-6}$  & $1.90\!\times\!10^{-5}$  & $2.65\!\times\!10^{-9}$ \\
$10^{-8}$  & $8.81\!\times\!10^{-7}$  & $2.27\!\times\!10^{-11}$ \\
$10^{-10}$ & $4.70\!\times\!10^{-8}$  & $1.75\!\times\!10^{-13}$ \\
$10^{-12}$ & $2.65\!\times\!10^{-9}$  & $1.54\!\times\!10^{-14}$ \\
$10^{-14}$ & $1.73\!\times\!10^{-10}$ & $2.46\!\times\!10^{-14}$ \\
\bottomrule
\end{tabular}
\end{table}

Table~\ref{alpha-breve-alpha-relation} shows that the augmented system~\eqref{zengguang} is more stable for small values of $\breve{\alpha}$, while the normal equation~\eqref{regular} is more sensitive to $\alpha$ because the formation of $S_D^*S_D$ squares the condition number. 
The relation $\alpha=\breve{\alpha}^{\,2}$ is also reflected in the table; for instance, the error from~\eqref{zengguang} with $\breve{\alpha}=10^{-6}$ is consistent with the error from~\eqref{regular} with $\alpha=10^{-12}$.

\subsection{Numerical experiments for the inverse approach}

We now present several numerical examples to examine the accuracy, stability, and robustness of the proposed inversion algorithm.
The scattered field or far-field data are numerically simulated at \(400\) measurement points, i.e., \(\tilde{n} = 20\). 
To avoid the ``inverse crime'', the forward scattering problem is solved via a Galerkin method based on the single-layer potential formulation~\cite{ganesh2004}, while the inverse problem is addressed by recovering the density via a regularization scheme based on homothetic surfaces.
To test stability, the synthetic noisy data are considered as follows:
\begin{align*}
u_\delta^{\rm sc}&=(1+\delta\Theta)u_D^{\rm sc},\\
u_\delta^\infty&=(1+\delta\Theta)u_D^\infty,\\
|u_\delta^{\mathrm{tot},\infty}|^2&=(1+\delta\Theta_1)|u_D^{\mathrm{tot},\infty}|^2,
\end{align*}
where $\Theta=\Theta_1+\mathrm{i}\Theta_2$, $\Theta_1$ and $\Theta_2$ are independent and normally distributed random variables supported in $[-1,1]$, and $\delta>0$ denotes the relative noise level.

Similar to the iteration process in \cite{zhao22}, for fixed scaling factor $\rho > 0$, the update $\varUpsilon$ is computed by
\vspace{-1ex}
\[
\pmb \varUpsilon
= \rho\left(\lambda\tilde{\pmb I}+\Re(\pmb{B}^*\pmb{B})\right)^{-1}
\Re(\pmb{B}^*\pmb{f}_D^{\rm sc}),
\]
\vspace{-0.5ex}
for the scattered field data case, while the same strategy can be employed for other types of data.

Throughout all the numerical examples, unless otherwise stated, we adopt the sphere of radius \(R\) centered at \((0,0,0)^\top\) as the observation surface.
The initial guess is also a sphere with center \(\pmb{c}^{(0)}\) and radius \(r^{(0)}\). 
The regularization parameters $\alpha$ and $\lambda$ are chosen as $10^{-8}$.
The scaling factor and geometric contraction factor are fixed at \(\rho=0.2\) and \(\varsigma=0.9\), respectively.
The maximum truncation number is \(M_{\rm max}=5\), and the maximum number of iterations for each truncation number is \(\textit{loop}=20\).
The number of quadrature points on \(\Gamma'\) is \(162\) (i.e., \(n=8\)), while the number of discrete nodes on the current approximate boundary \(\Gamma\) is \(338\).
In all numerical examples, $\epsilon$ denotes the stopping tolerance and it is chosen empirically. For each combination of data type and noise level, except for the non-star-shaped obstacle and limited-aperture cases, the same value is used consistently throughout all examples.

\begin{table}[!htbp]
	\caption{}\label{obstacle}
	\centering
	\vspace{0.5ex}
	\begin{tabular}{lll}
		\toprule  
		Type           &Parametrization of the exact boundary curves\\
		\midrule  
		pinched ball-shaped   & $\pmb p_D(\theta, \phi)=\sqrt{0.3+0.12 \cos 2 \phi(\cos 2 \theta-1)} \hat{\pmb{x}}(\theta,\phi),$ \\
		~\\
		cushion-shaped  & 
		$\pmb p_D(\theta, \phi)=\sqrt{0.3+0.1(\cos 2 \phi-1)(\cos 4 \theta-1)} \hat{\pmb{x}}(\theta,\phi)$.\\   
		\bottomrule 
	\end{tabular}
\end{table}


\begin{figure}[!htbp]
	\centering 
	\subfigure[True shape with multiple views of pinched ball-shaped obstacle.]{
		\begin{tabular}{cccc}
			\includegraphics[width=0.21\textwidth]{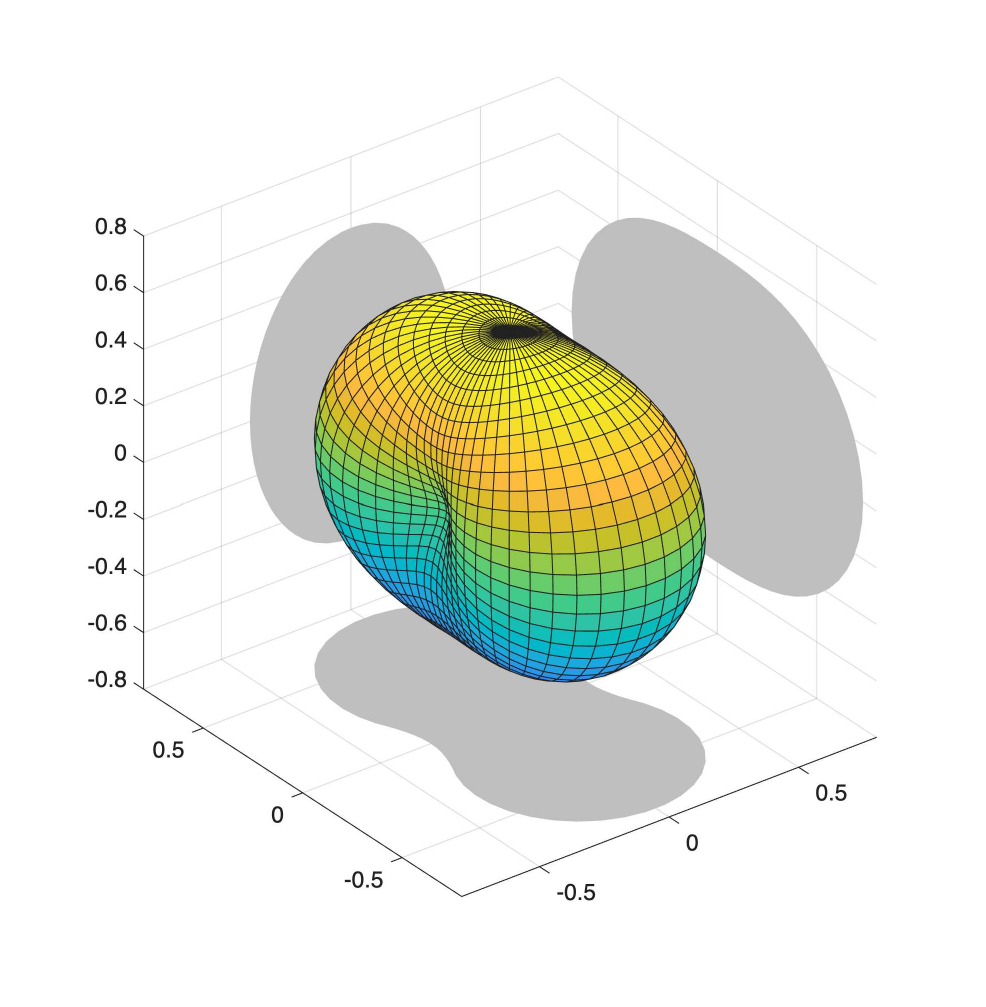} &
			\includegraphics[width=0.21\textwidth]{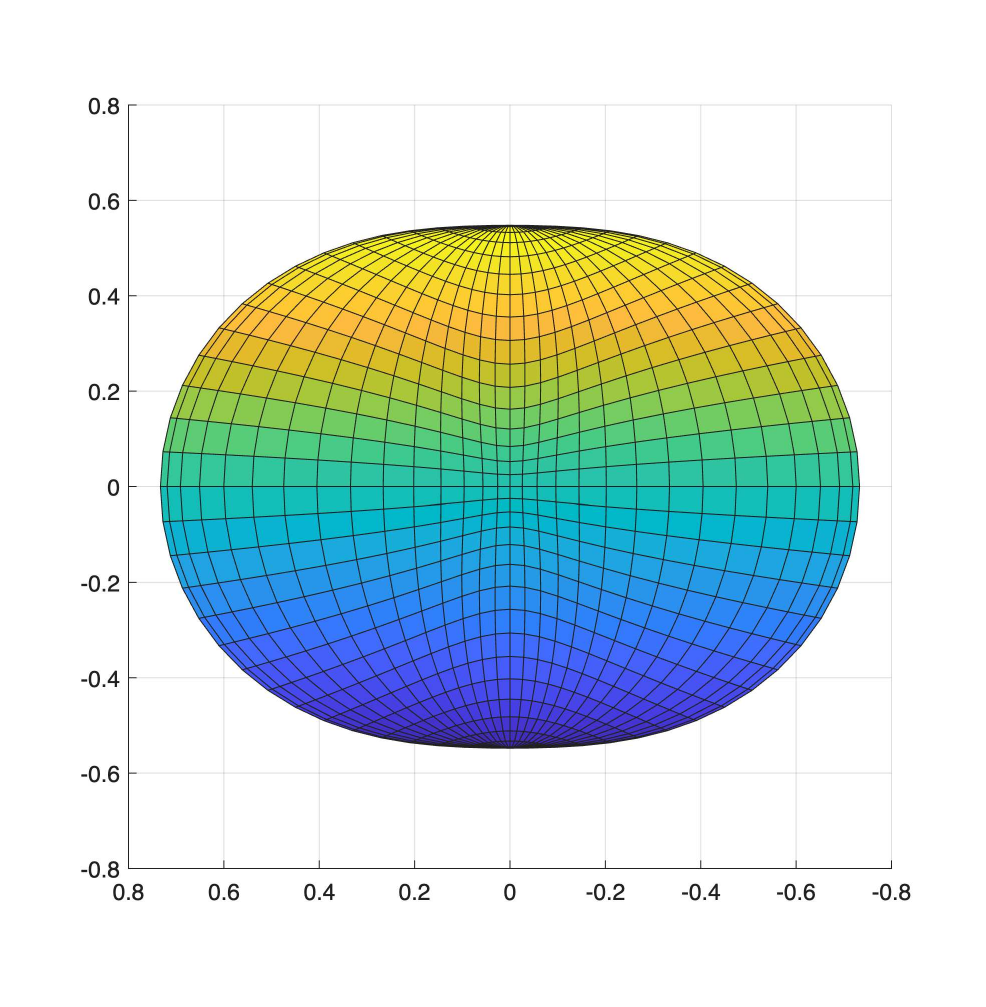} &
			\includegraphics[width=0.21\textwidth]{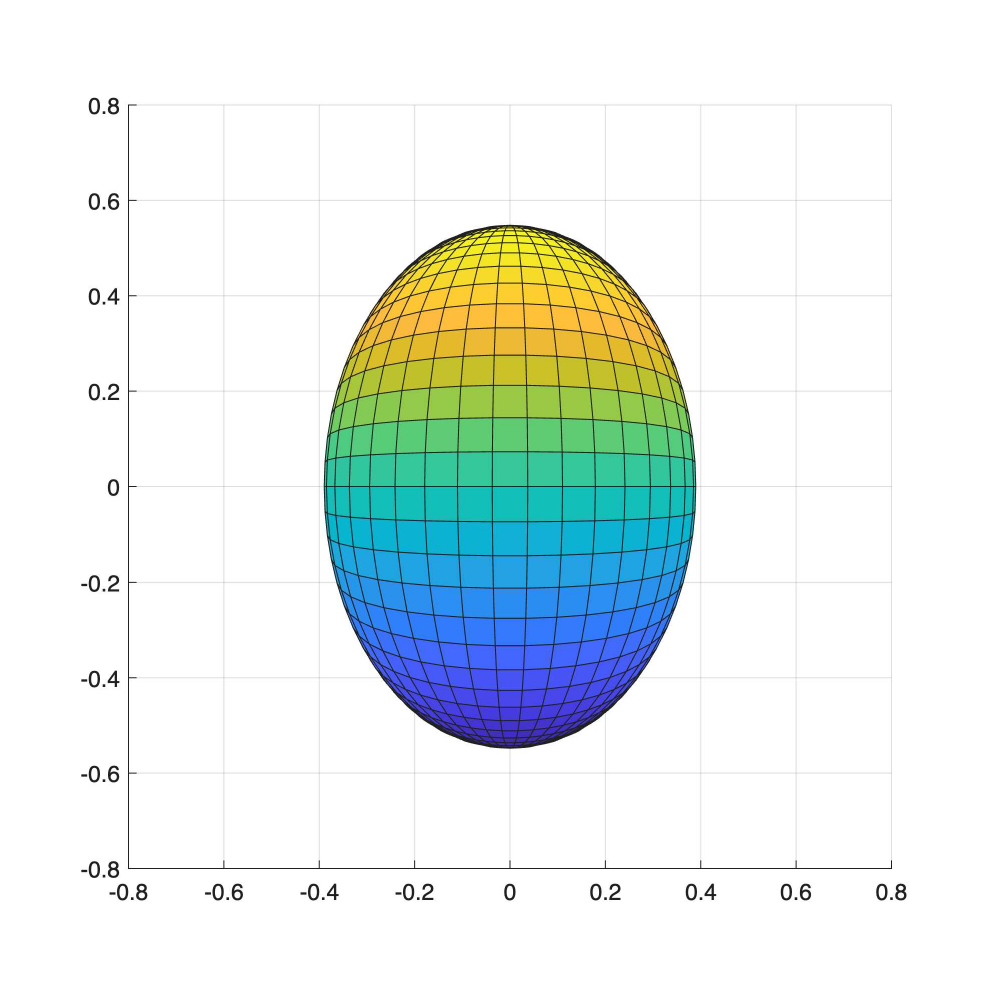} &
			\includegraphics[width=0.21\textwidth]{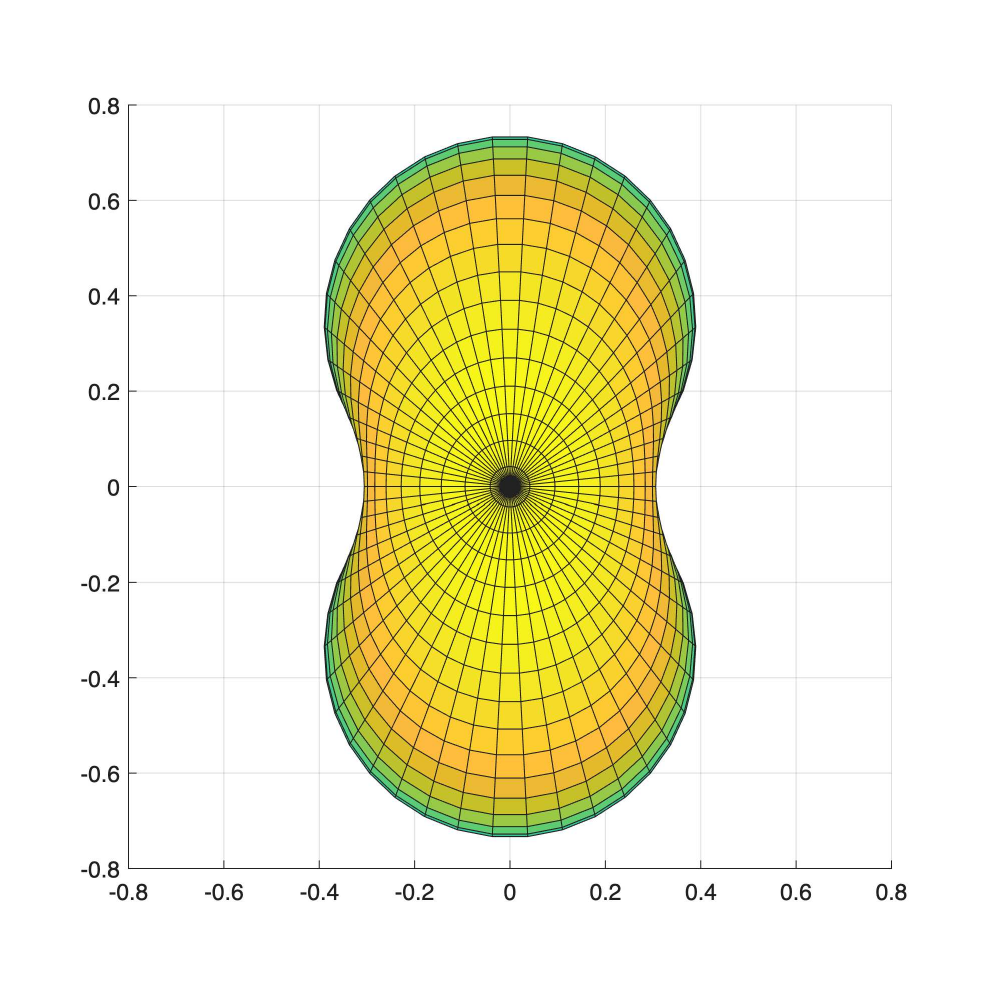}
		\end{tabular}
	} 
	
	\subfigure[True shape with multiple views of cushion-shaped obstacle.]{
		\begin{tabular}{cccc}
			\includegraphics[width=0.21\textwidth]{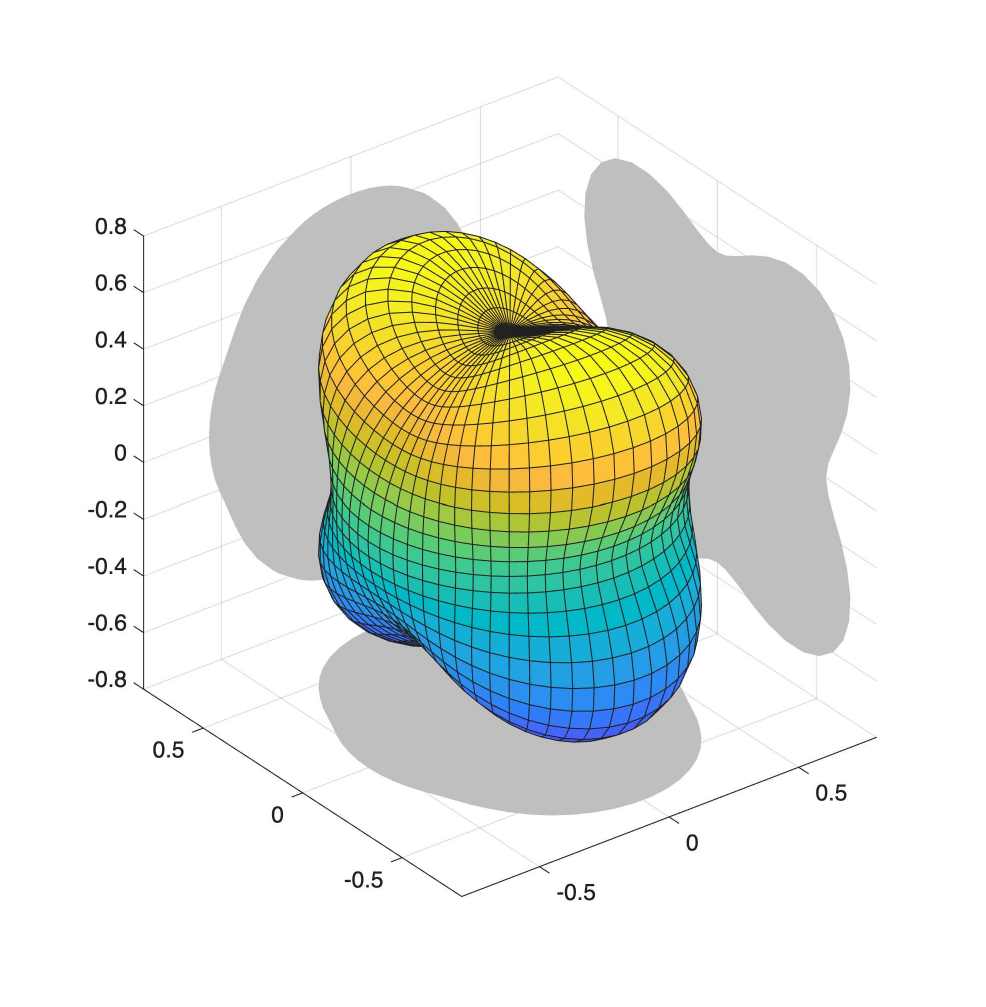} &
			\includegraphics[width=0.21\textwidth]{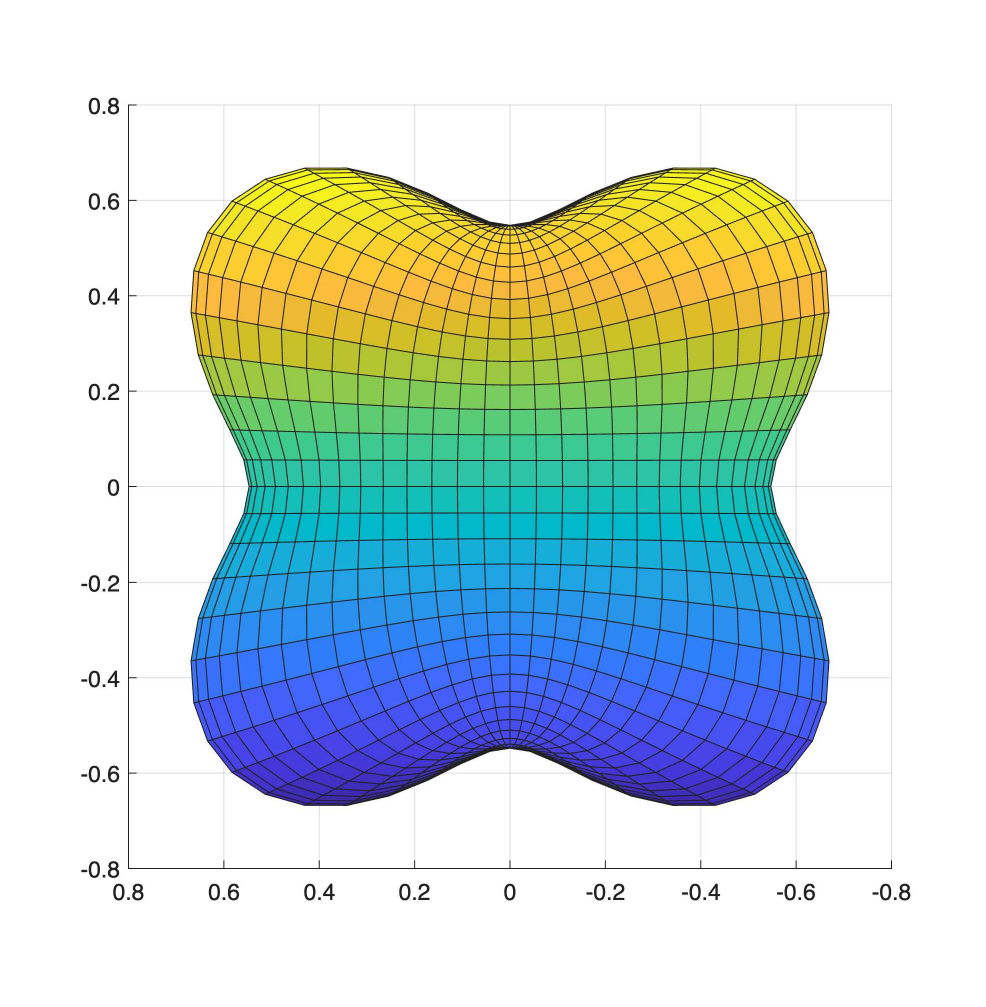} &
			\includegraphics[width=0.21\textwidth]{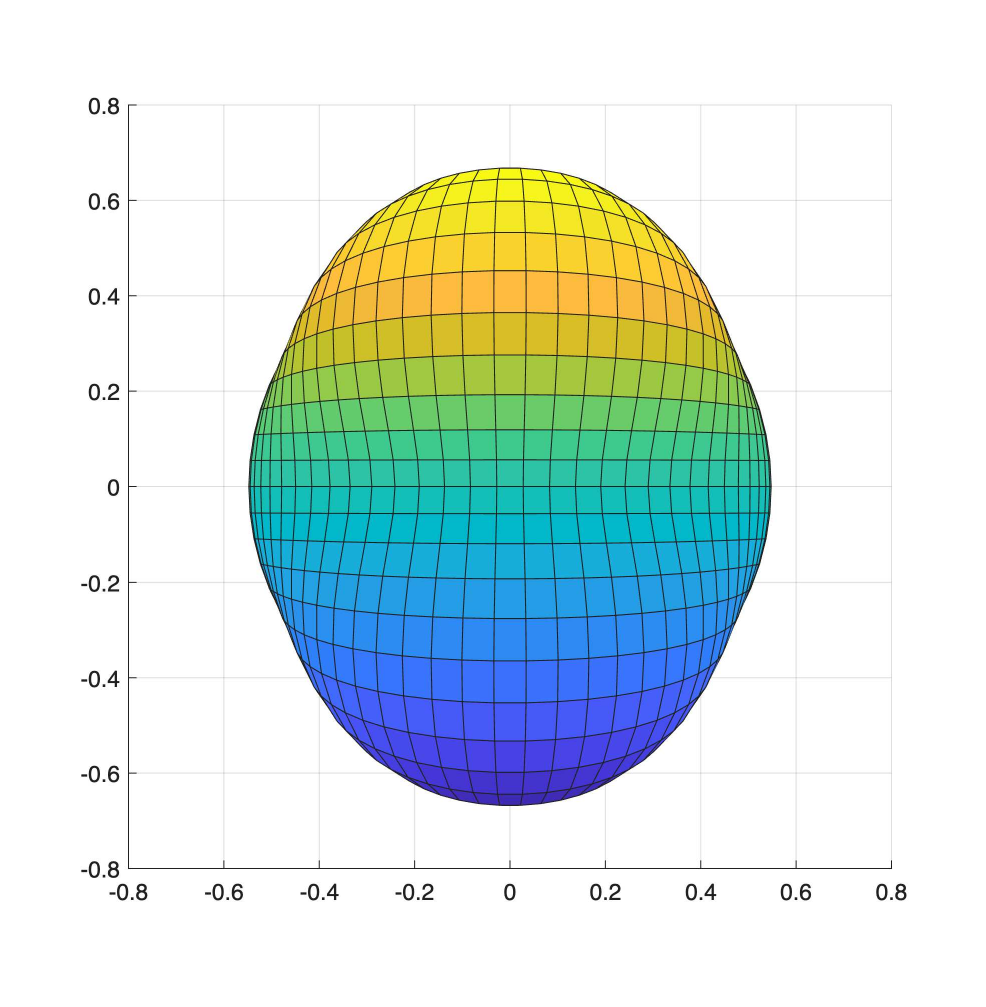} &
			\includegraphics[width=0.21\textwidth]{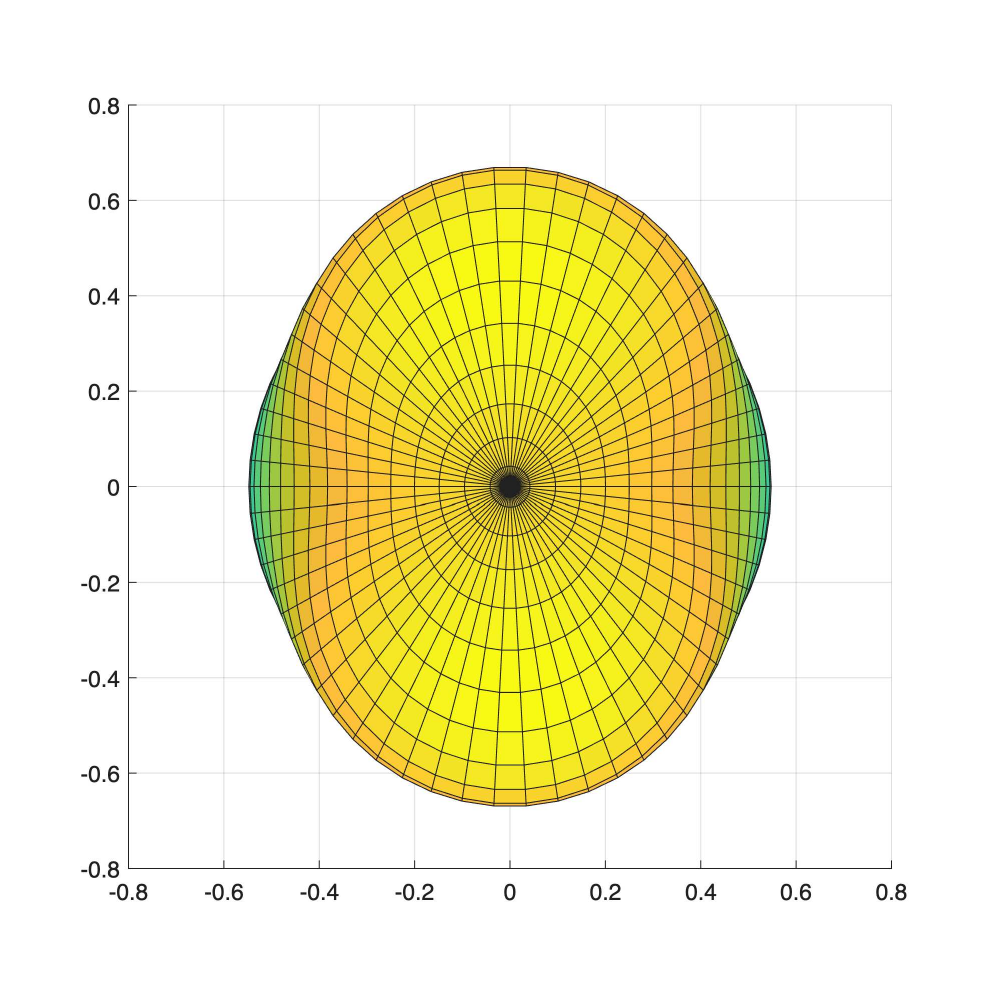}
		\end{tabular}
		
	} 
	\caption{True shape of 3D plots with front, side and top views.}\label{true shape}
\end{figure}

\begin{figure}[!htbp]
\centering 
\subfigure[$\pmb c^{(0)} = (-0.1, 0.4, -0.3)^\top$, $r^{(0)} = 0.3$, $\kappa=3.5$, $R=5$, $\pmb d=(0, 0, 1)^\top$.]{
    \begin{tabular}{ccc}
        {\includegraphics[width=0.285\textwidth]{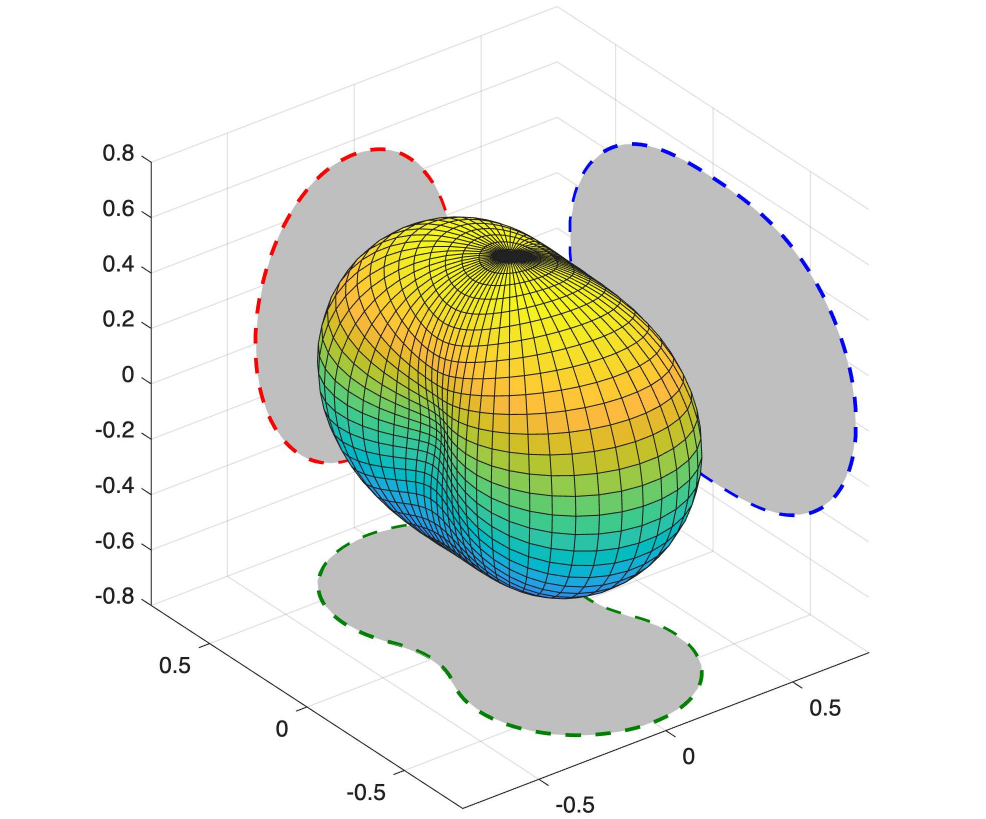}} &
        {\includegraphics[width=0.285\textwidth]{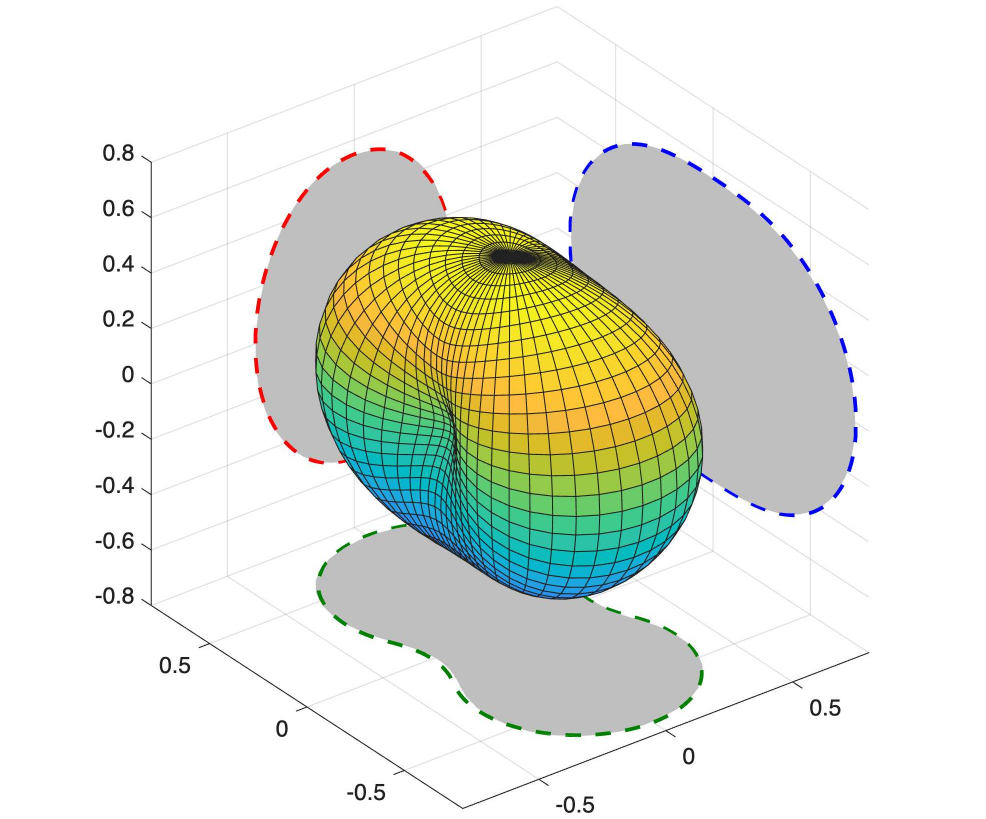}} &
        {\includegraphics[width=0.285\textwidth]{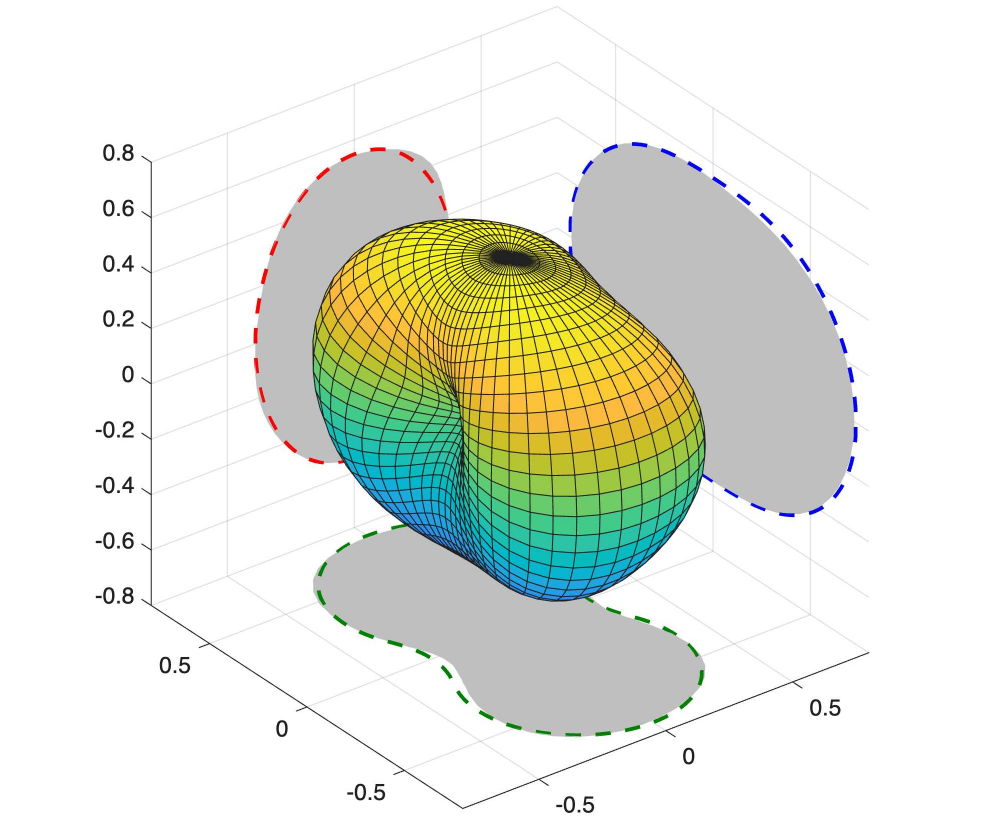}}
    \end{tabular}
}

\subfigure[$\pmb c^{(0)} = (0.3, 0.4, 0.2)^\top$, $r^{(0)} = 0.6$, $\kappa=4.5$, $\pmb d=(0, 0, 1)^\top$.]{
    \begin{tabular}{ccc}
        {\includegraphics[width=0.285\textwidth]{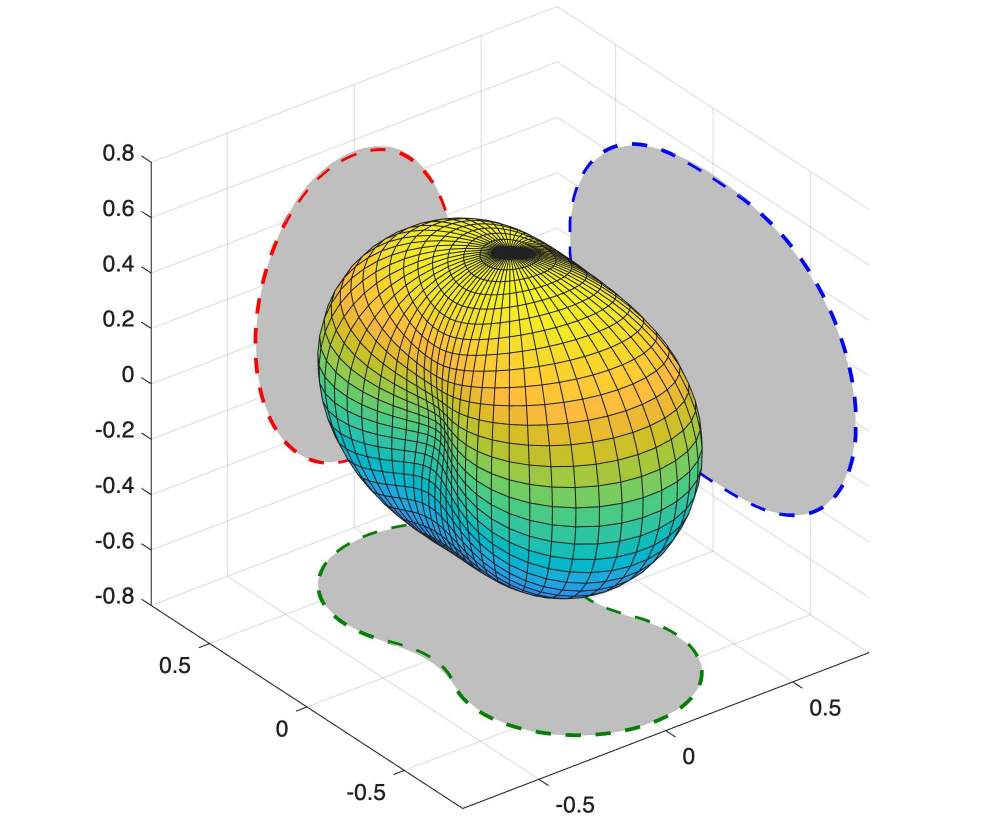}} &
        {\includegraphics[width=0.285\textwidth]{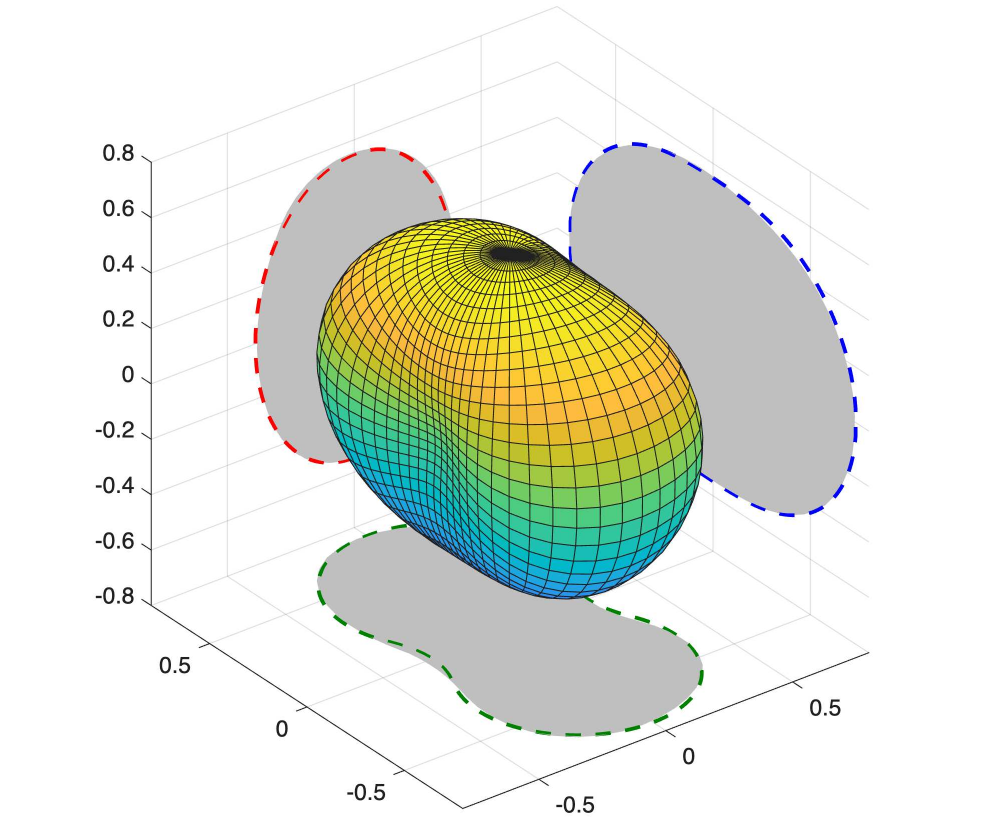}} &
        {\includegraphics[width=0.285\textwidth]{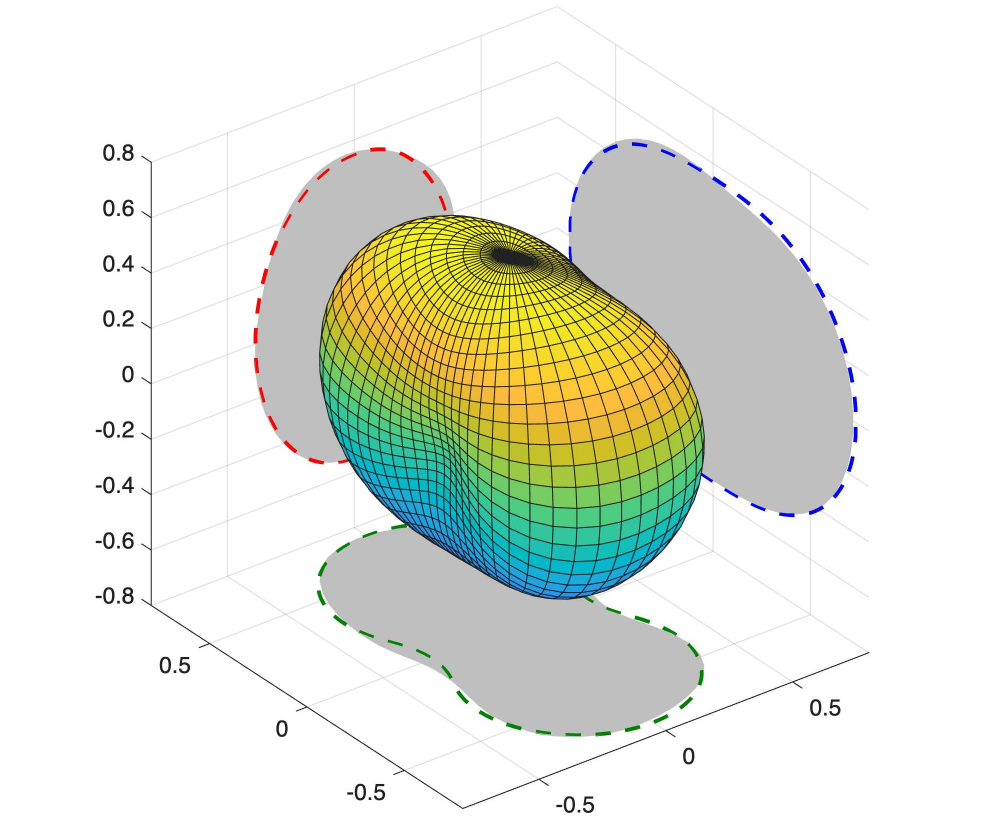}}
    \end{tabular}
}

\subfigure[$\pmb c^{(0)} = (0.5, -0.5, 0.1)^\top$, $r^{(0)} = 0.3$, $\kappa=4.5$, $\pmb z=(0, 0, 4)^\top$.]{
    \begin{tabular}{ccc}
        {\includegraphics[width=0.285\textwidth]{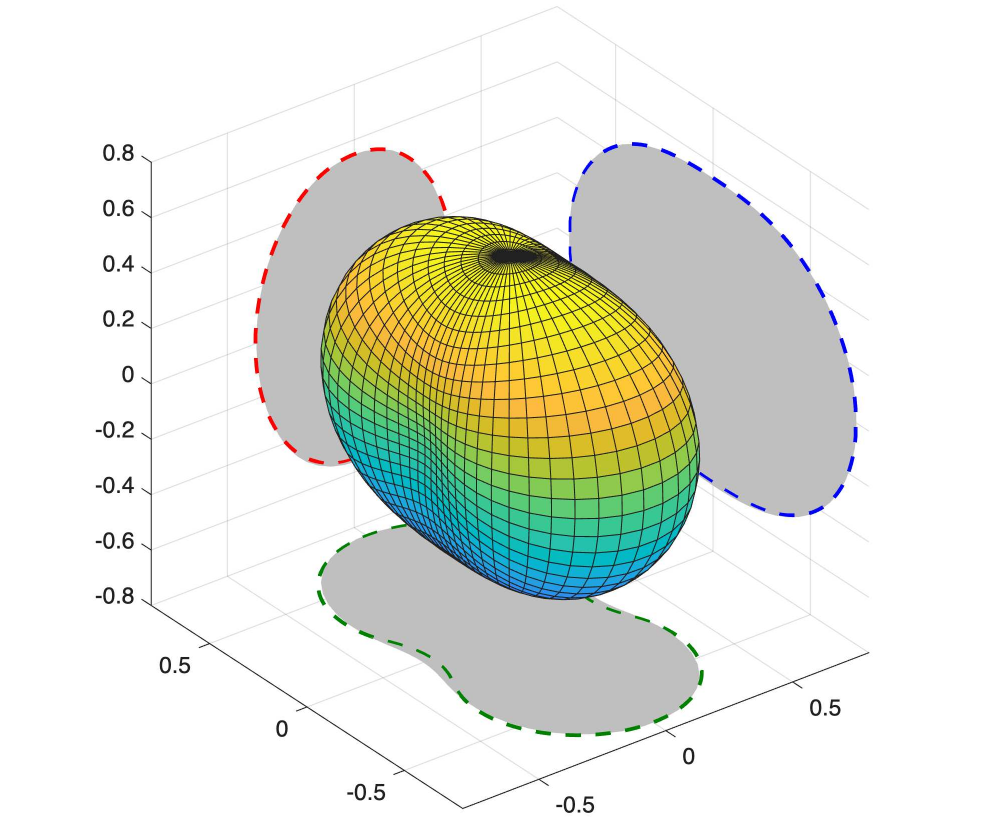}} &
        {\includegraphics[width=0.285\textwidth]{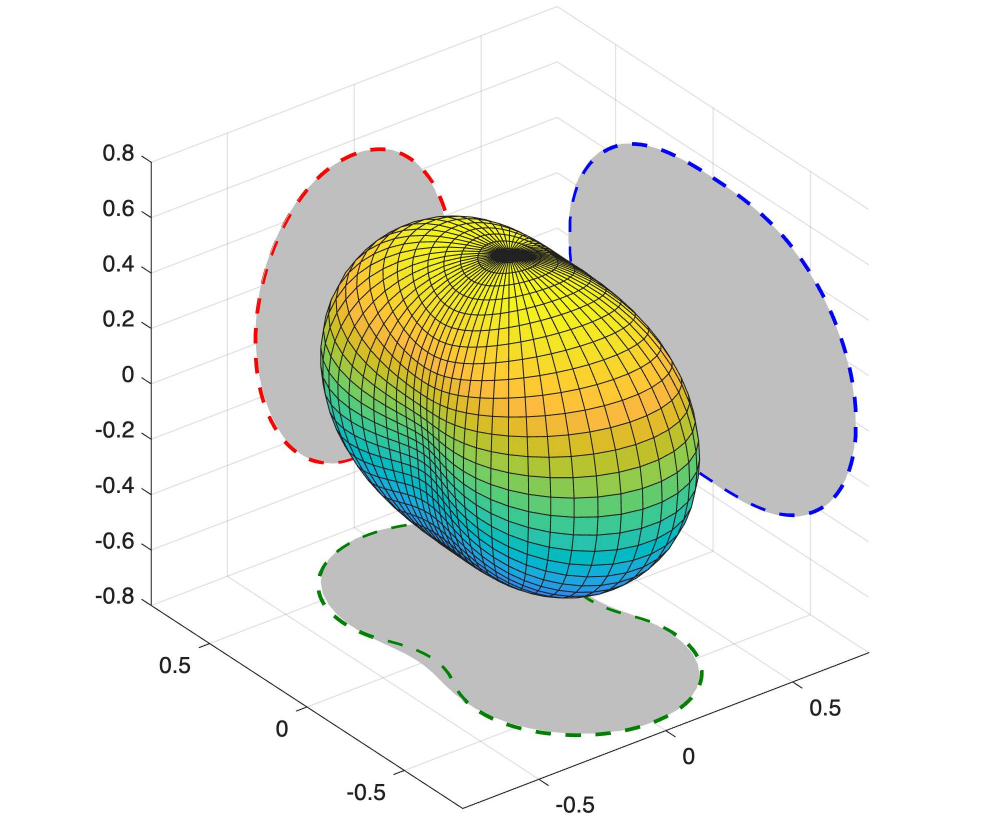}} &
        {\includegraphics[width=0.285\textwidth]{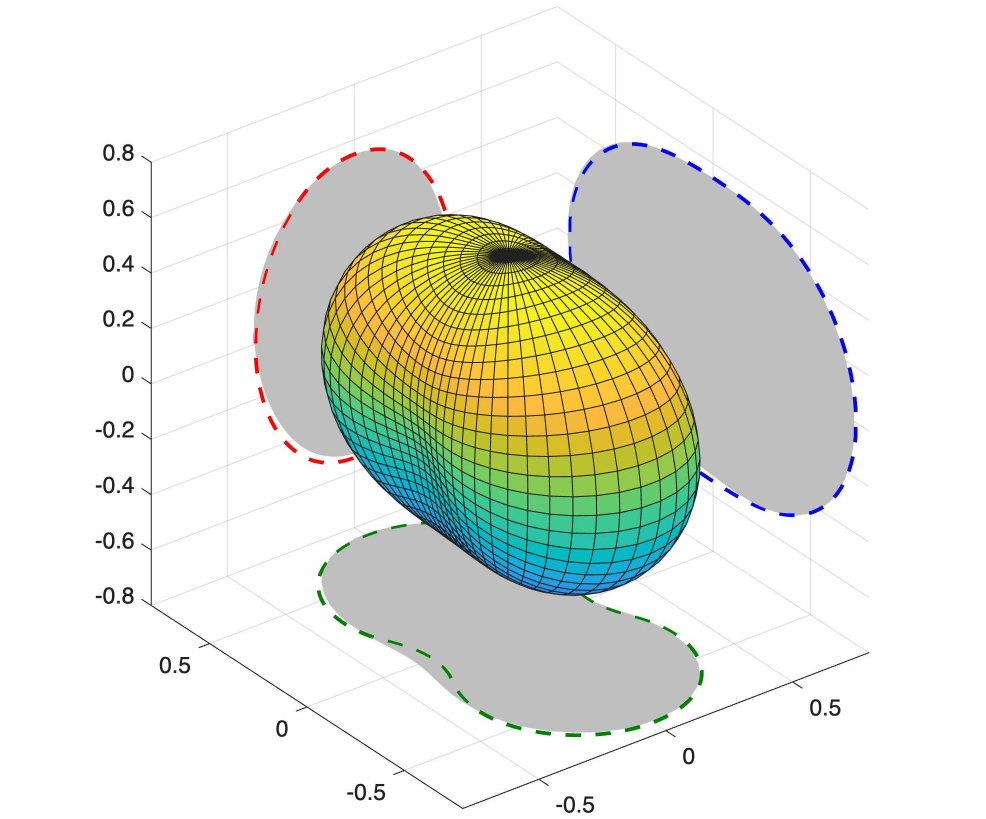}}
    \end{tabular}
}
\caption{Reconstructions of a pinched ball-shaped obstacle with single incident wave. Subfigures~(a)-(c) show results from phased scattered field, phased far-field, and phaseless far-field data, respectively, each with $1\%$, $5\%$ and $10\%$ noise. The stopping parameters $\epsilon=(0.009,0.023,0.045)$ for (a), $\epsilon=(0.018,0.032,0.050)$ for (b), and $\epsilon=(0.004,0.015,0.030)$ for (c).}\label{fig_ex1.1.1}
\end{figure}

\begin{figure}[!htbp]
\centering 
\subfigure[$\pmb c^{(0)} = (0.1, -0.7, 0.1)^\top$, $r^{(0)} = 0.3$, $\kappa=5$, $R=5$, $\pmb d=(0, 0, 1)^\top$.]{
    \begin{tabular}{ccc}
        {\includegraphics[width=0.285\textwidth]{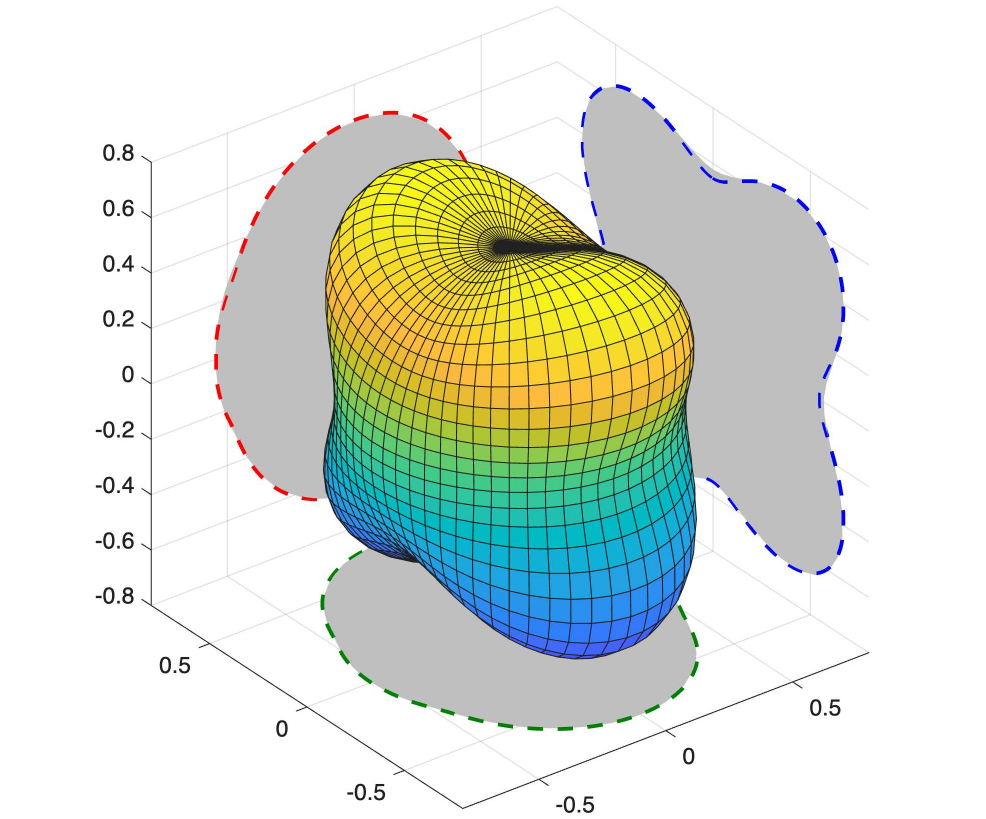}} &
        {\includegraphics[width=0.285\textwidth]{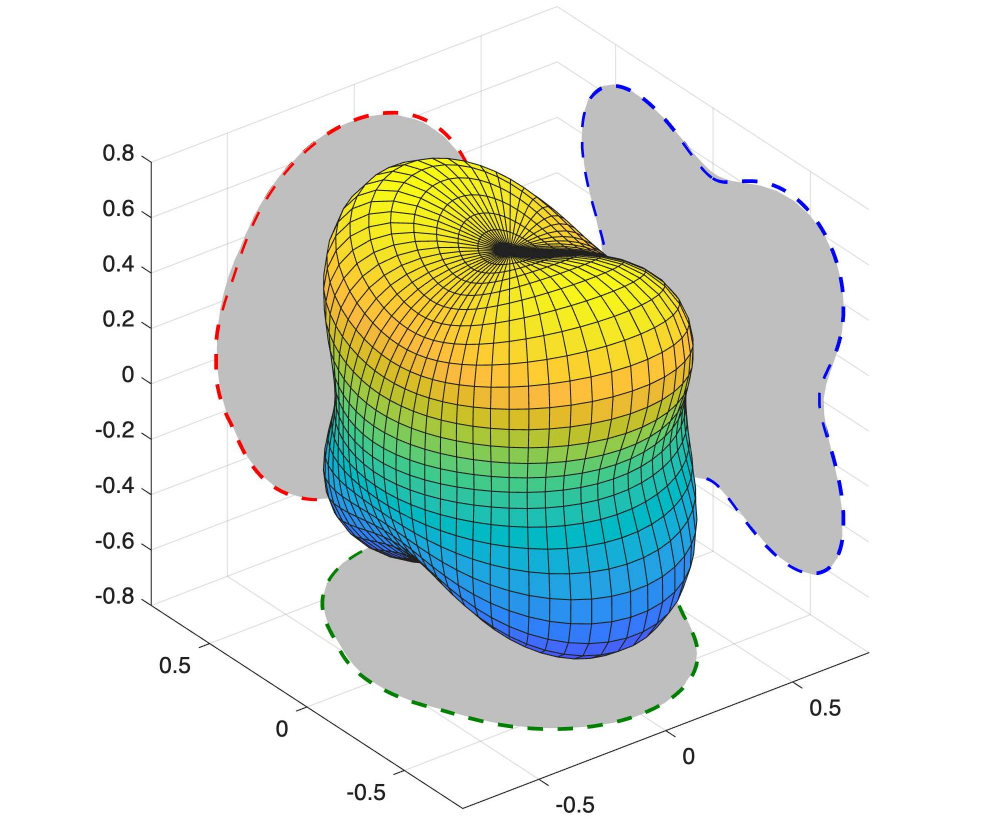}} &
        {\includegraphics[width=0.285\textwidth]{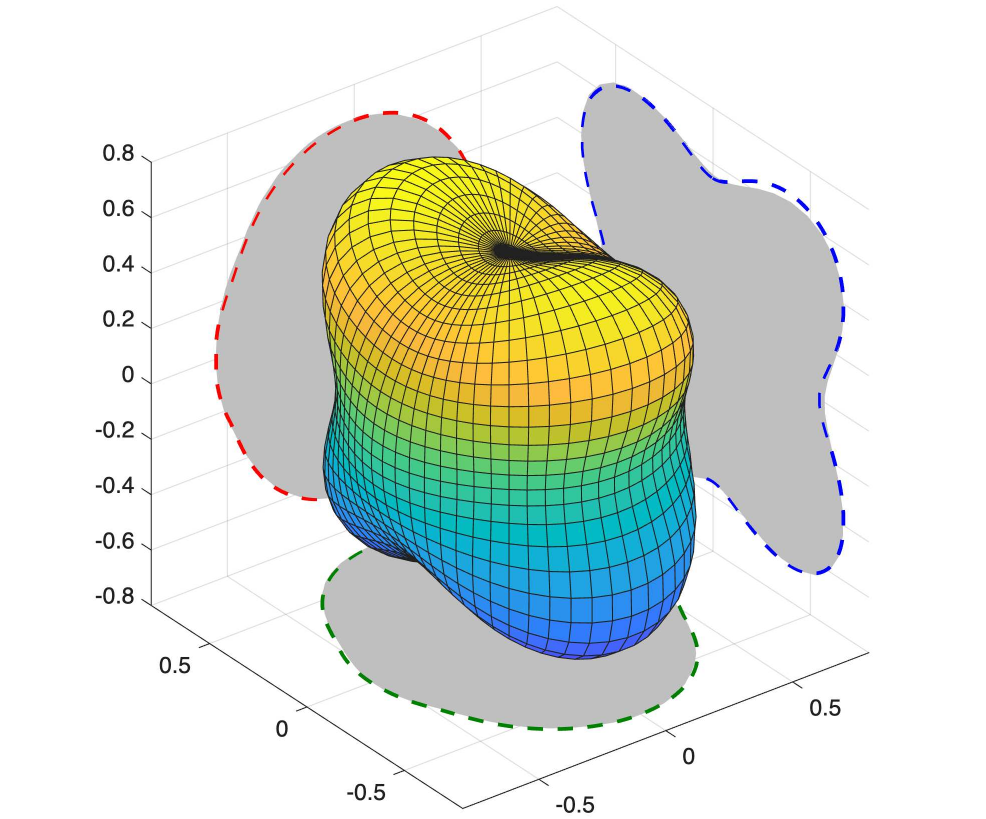}}
    \end{tabular}
}

\subfigure[$\pmb c^{(0)} = (-0.6, -0.3, 0.1)^\top$, $r^{(0)} = 0.3$, $\kappa=4.5$, $\pmb d=(0, 0, 1)^\top$.]{
    \begin{tabular}{ccc}
        {\includegraphics[width=0.285\textwidth]{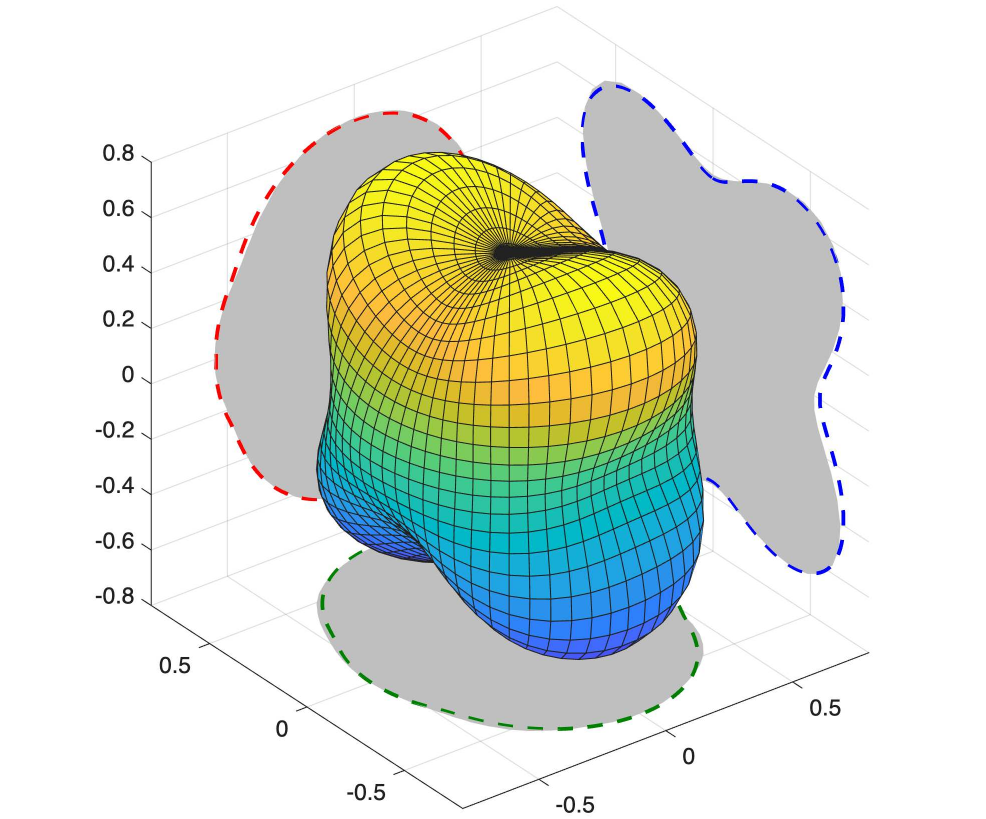}} &
        {\includegraphics[width=0.285\textwidth]{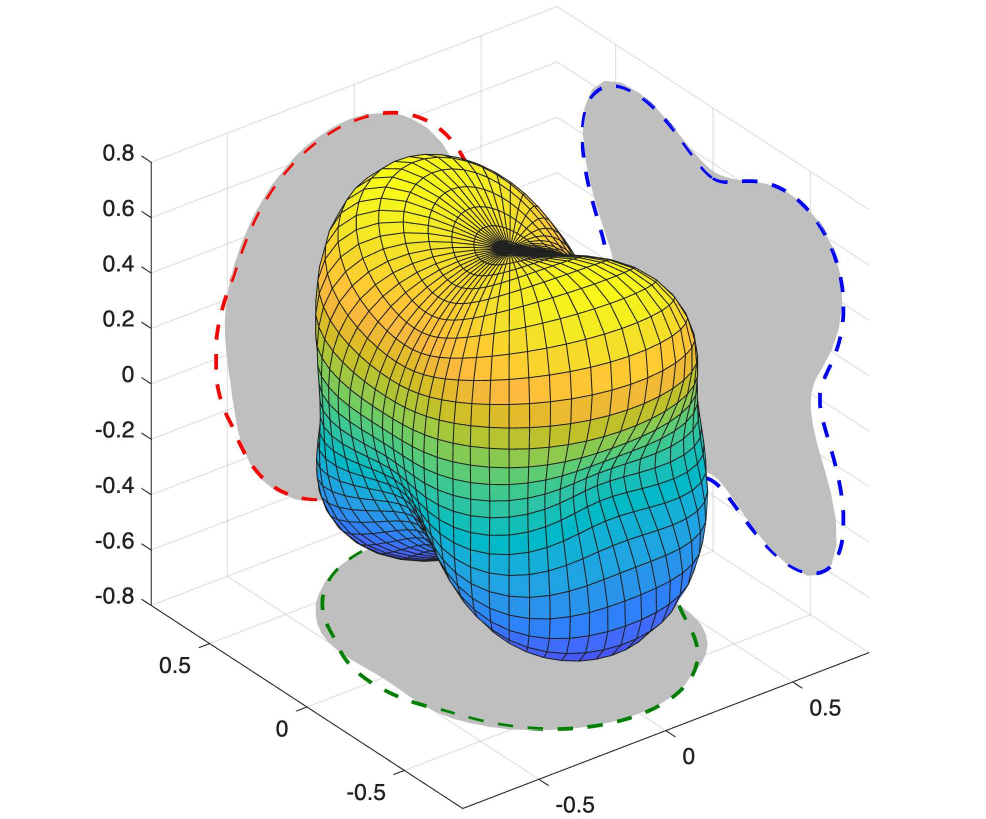}} &
        {\includegraphics[width=0.285\textwidth]{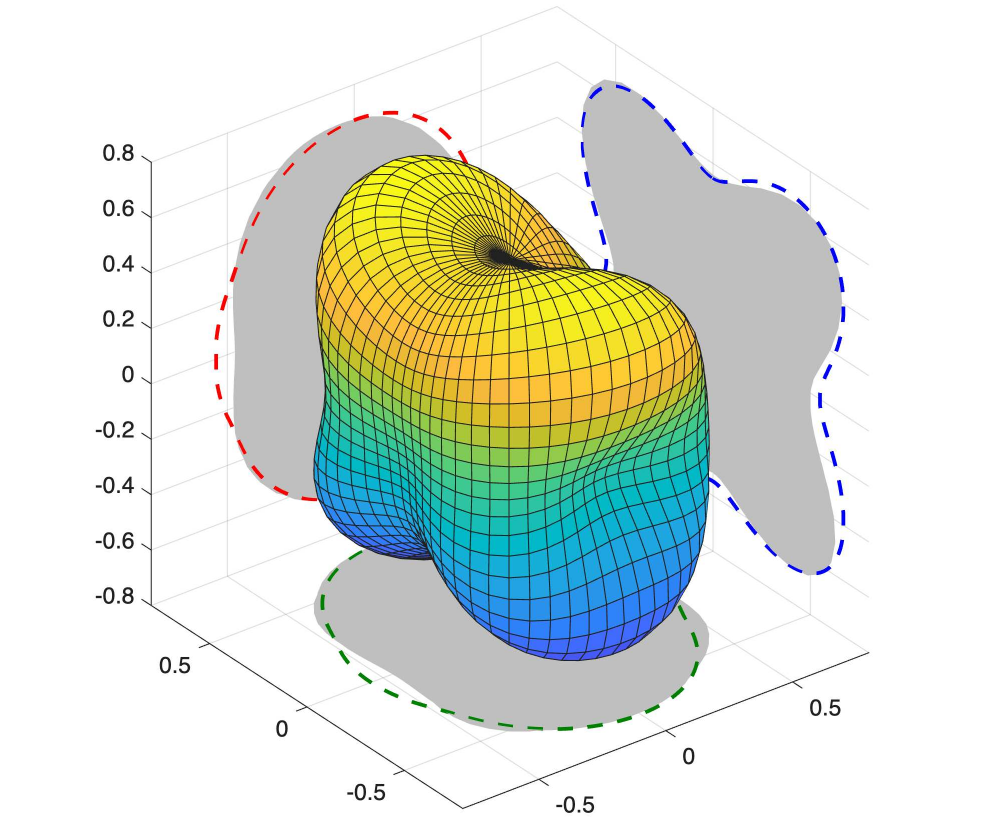}}
    \end{tabular}
}

\subfigure[$\pmb c^{(0)} = (0.1, -0.5, -0.1)^\top$, $r^{(0)} = 0.6$, $\kappa=4.5$, $\pmb z=(0, 0, 4)^\top$.]{
    \begin{tabular}{ccc}
        {\includegraphics[width=0.285\textwidth]{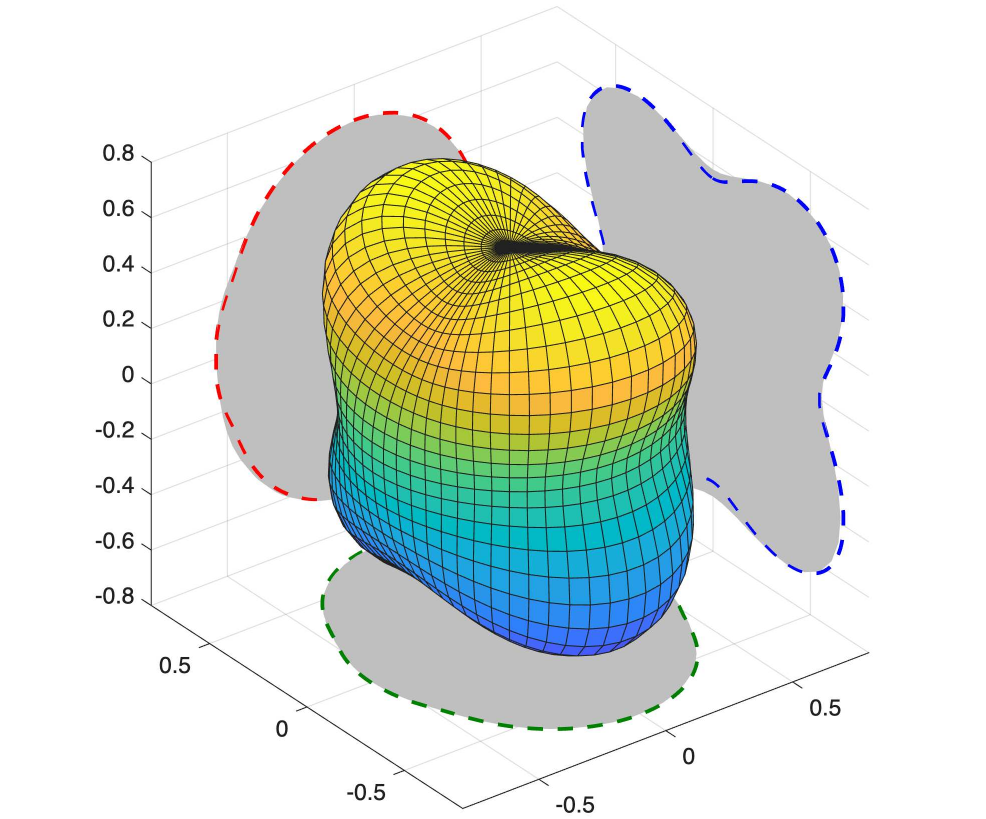}} &
        {\includegraphics[width=0.285\textwidth]{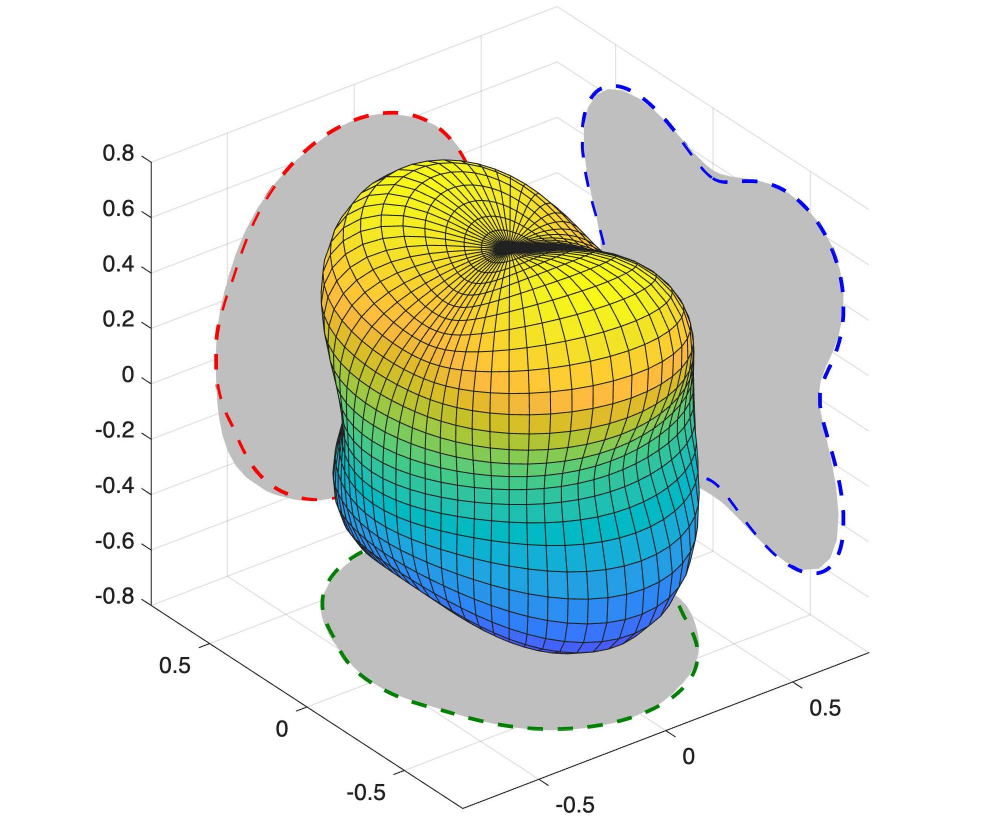}} &
        {\includegraphics[width=0.285\textwidth]{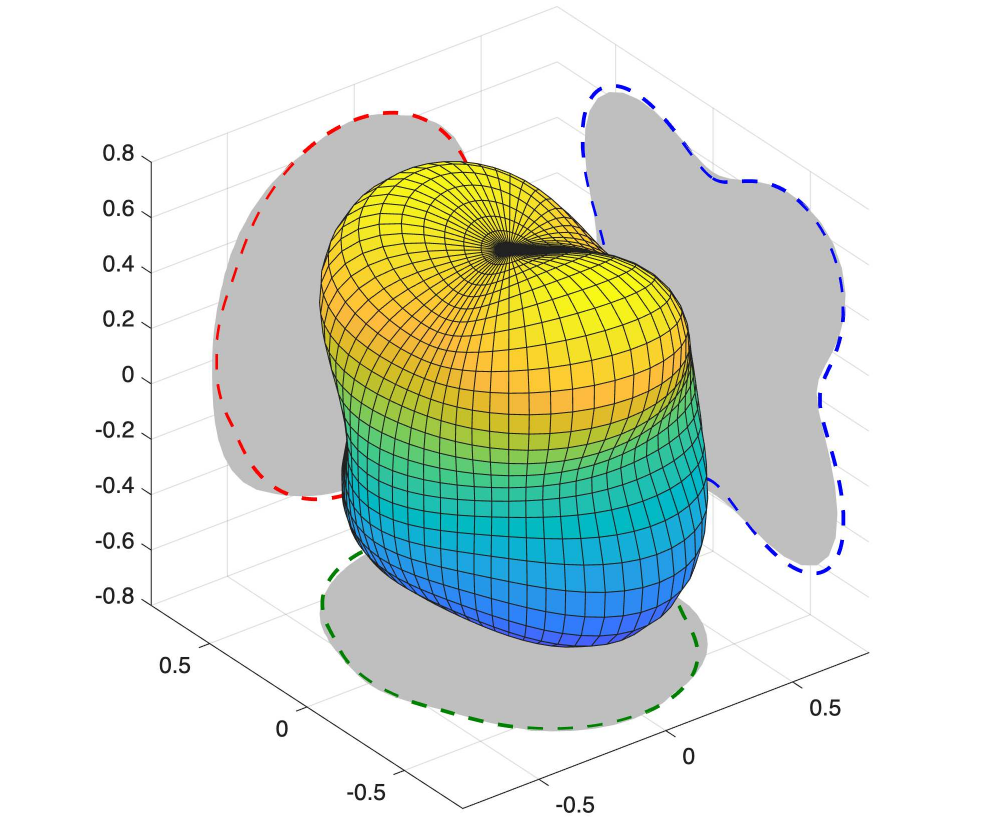}}
    \end{tabular}
}
\caption{Reconstructions of a cushion-shaped obstacle with single incident wave. Subfigures~(a)-(c) show results from phased scattered field, phased far-field, and phaseless far-field data, respectively, each with $1\%$, $5\%$ and $10\%$ noise. The stopping parameters $\epsilon=(0.009,0.023,0.045)$ for (a), $\epsilon=(0.018,0.032,0.050)$ for (b), and $\epsilon=(0.004,0.015,0.030)$ for (c).}\label{fig_ex1.2.1}
\end{figure}

\begin{figure}[htbp!]
\centering
\begin{tabular}{ccc}
    \subfigure[$\pmb d=(\pm 1,0,0)^\top$, $\epsilon=0.023$.]
    {\includegraphics[width=0.295\textwidth]{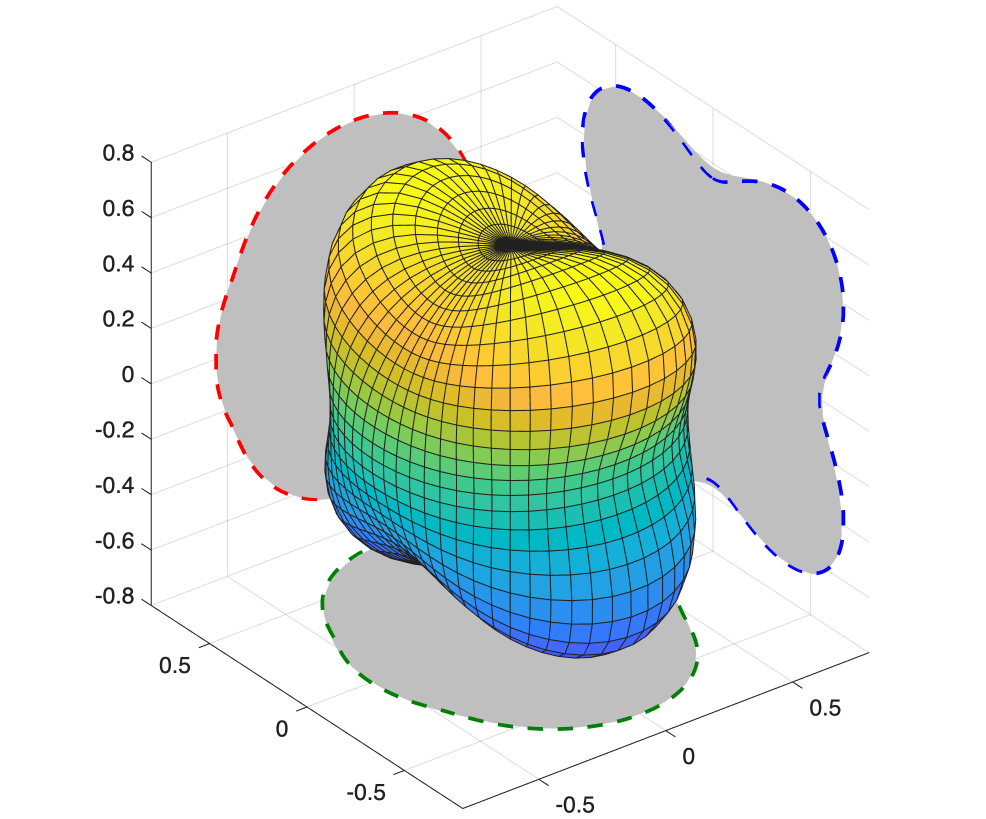}} &
    \subfigure[$\pmb d=(0,0,\pm 1)^\top$, $\epsilon=0.032$.]
    {\includegraphics[width=0.295\textwidth]{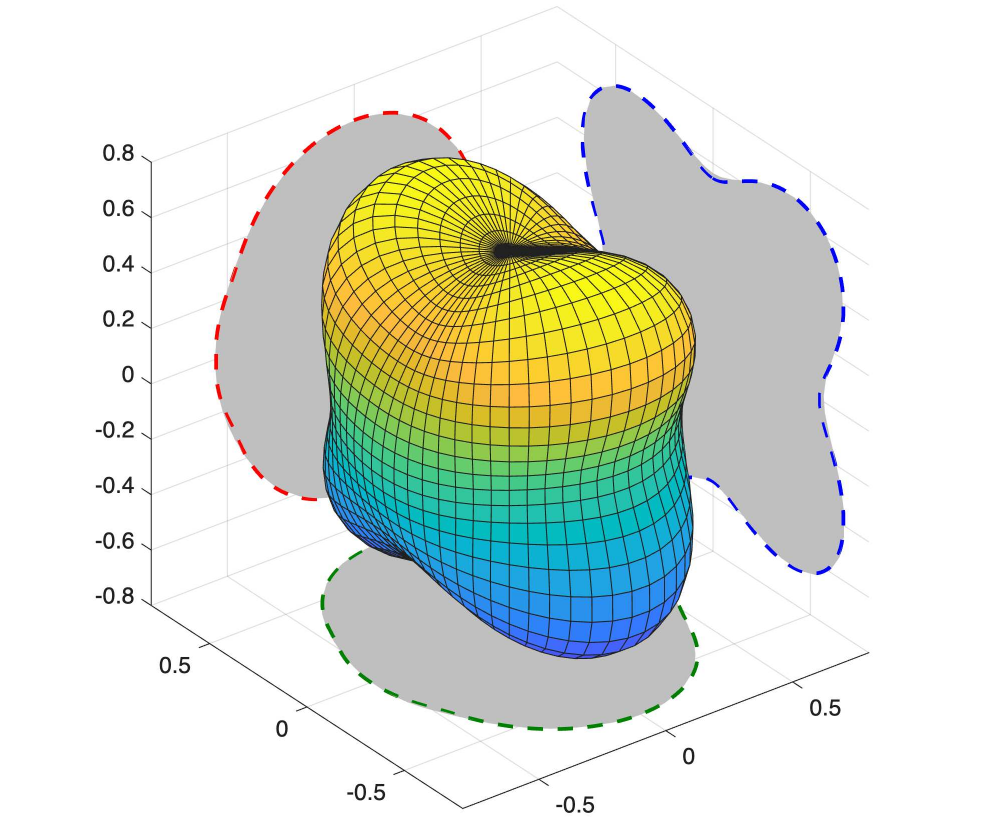}} &
    \subfigure[$\pmb z=(\pm 4,0,0)^\top$, $\epsilon=0.015$.]
    {\includegraphics[width=0.295\textwidth]{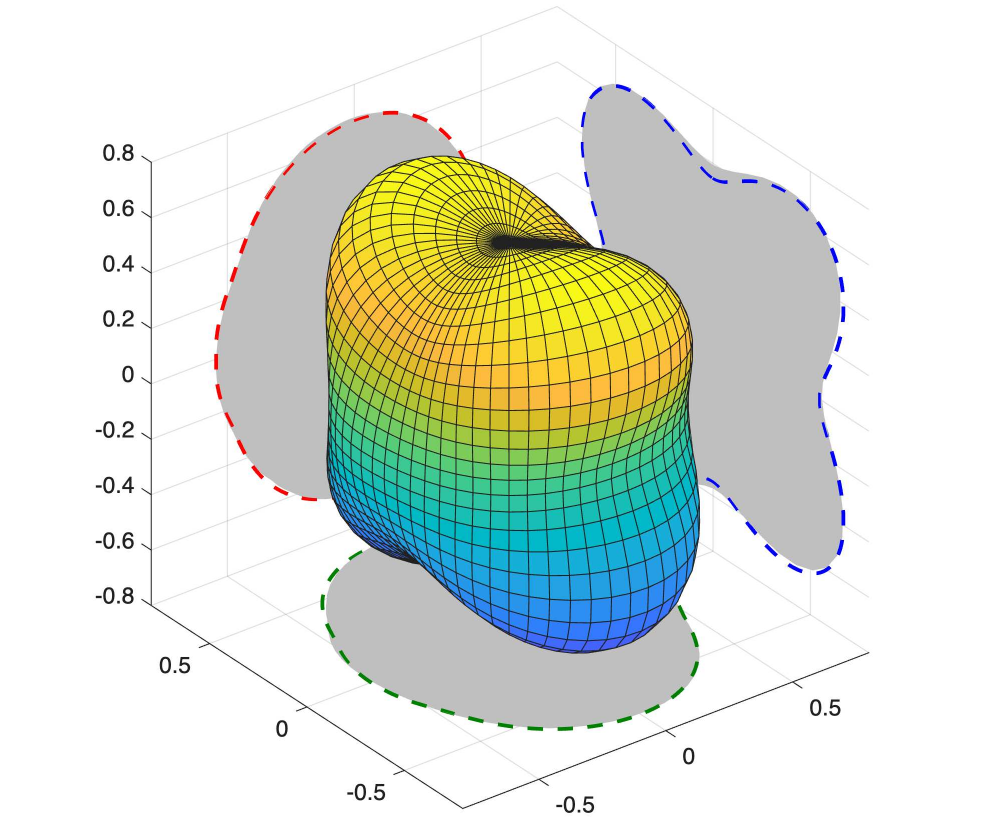}}
\end{tabular}
\begin{tabular}{ccc}
    \subfigure[$\pmb d$=$(\pm 1,0,0)^\top$,$(0,0,\pm 1)^\top$, $\epsilon=0.023$.]
    {\includegraphics[width=0.295\textwidth]{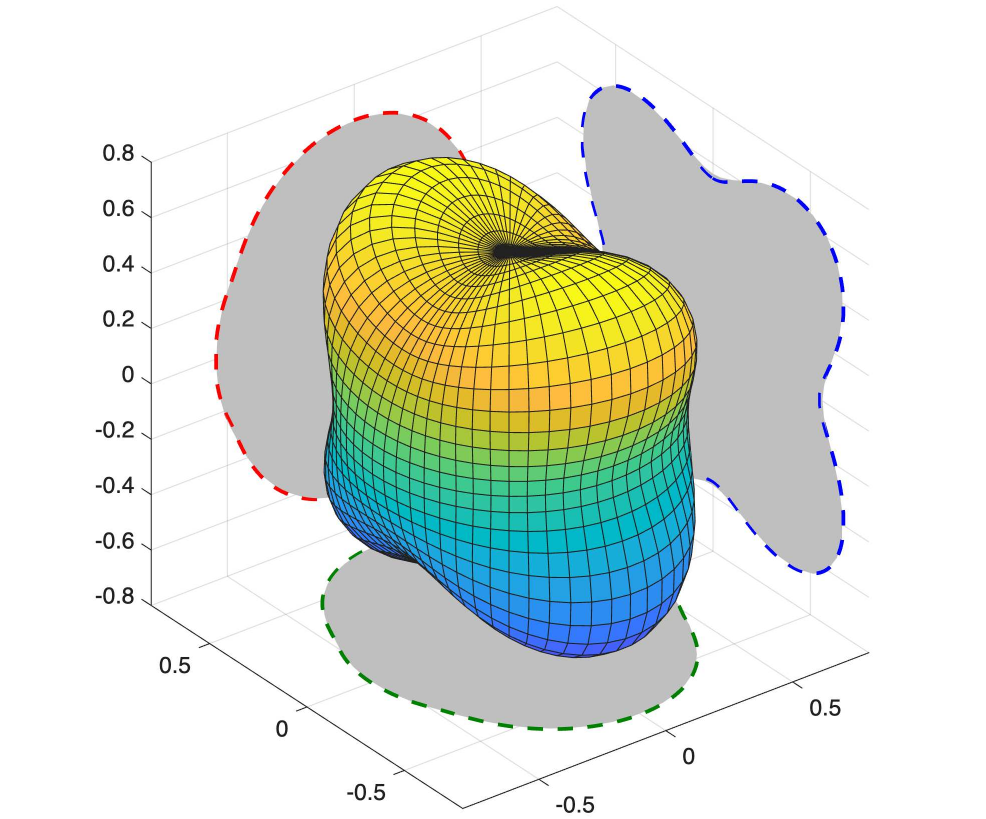}} &
    \subfigure[$\pmb d$=$(\pm 1,0,0)^\top$,$(0,0,\pm 1)^\top$, $\epsilon=0.032$.]
    {\includegraphics[width=0.295\textwidth]{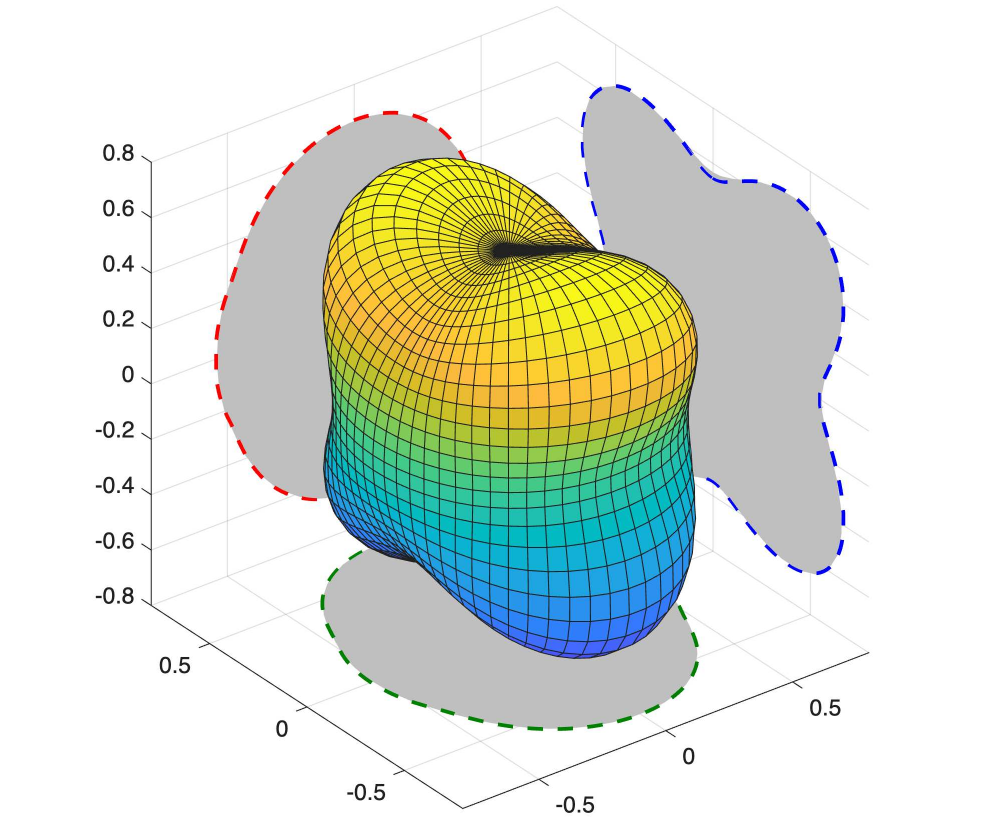}} &
    \subfigure[$\pmb z$=$(\pm 4,0,0)^\top$,$(0,0,\pm 4)^\top$, $\epsilon=0.015$.]
    {\includegraphics[width=0.295\textwidth]{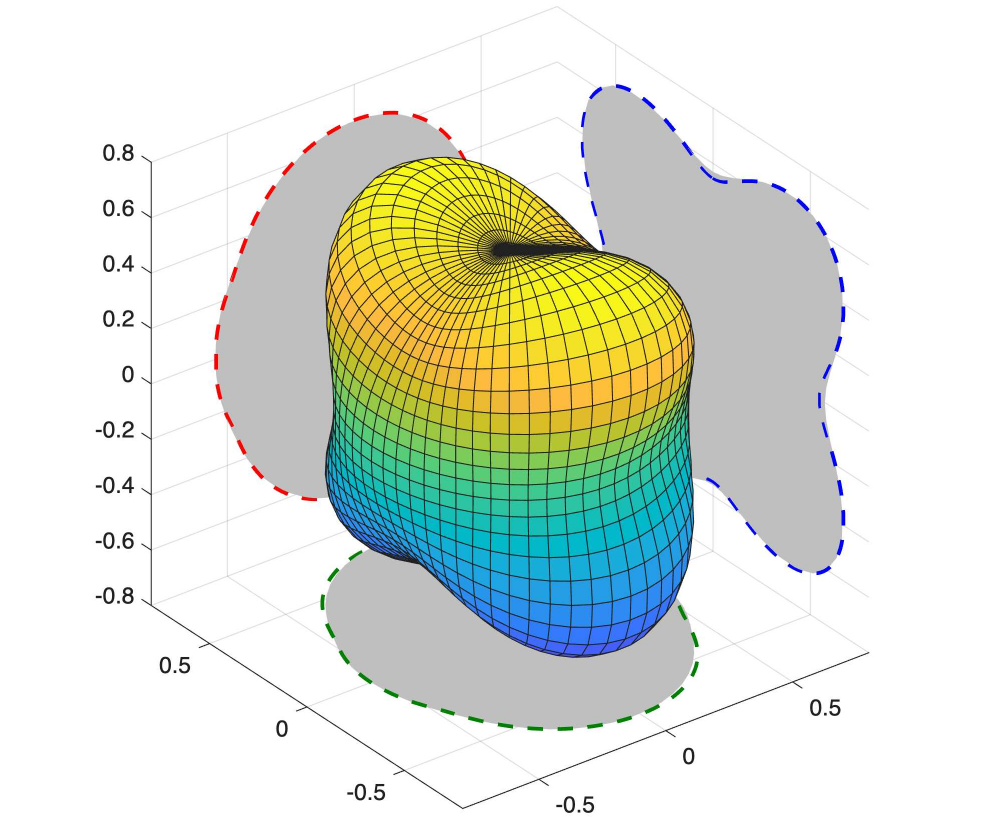}}
\end{tabular}
\caption{Reconstructions of a cushion shaped obstacle with multiple incident waves under 5\% noise. Subfigures~(a), (b), and (c) correspond to reconstructions with two incident waves using phased scattered field, phased far field, and phaseless far field data, respectively. Subfigures~(d), (e), and (f) correspond to reconstructions with four incident waves under the same three data types. The initial guesses, wavenumbers, and the other parameters for each data type are the same as shown in Figure \ref{fig_ex1.2.1} under $5\%$ noise.}\label{fig_ex1.2.2}
\end{figure}

\begin{figure}[!htbp]
\centering
\subfigure[1 point source $\pmb z=(0,0,4)^{\top}$, $\epsilon=0.015$.]{
\begin{tabular}{cccc}
\includegraphics[width=0.21\textwidth]{ex1_17} &
\includegraphics[width=0.21\textwidth]{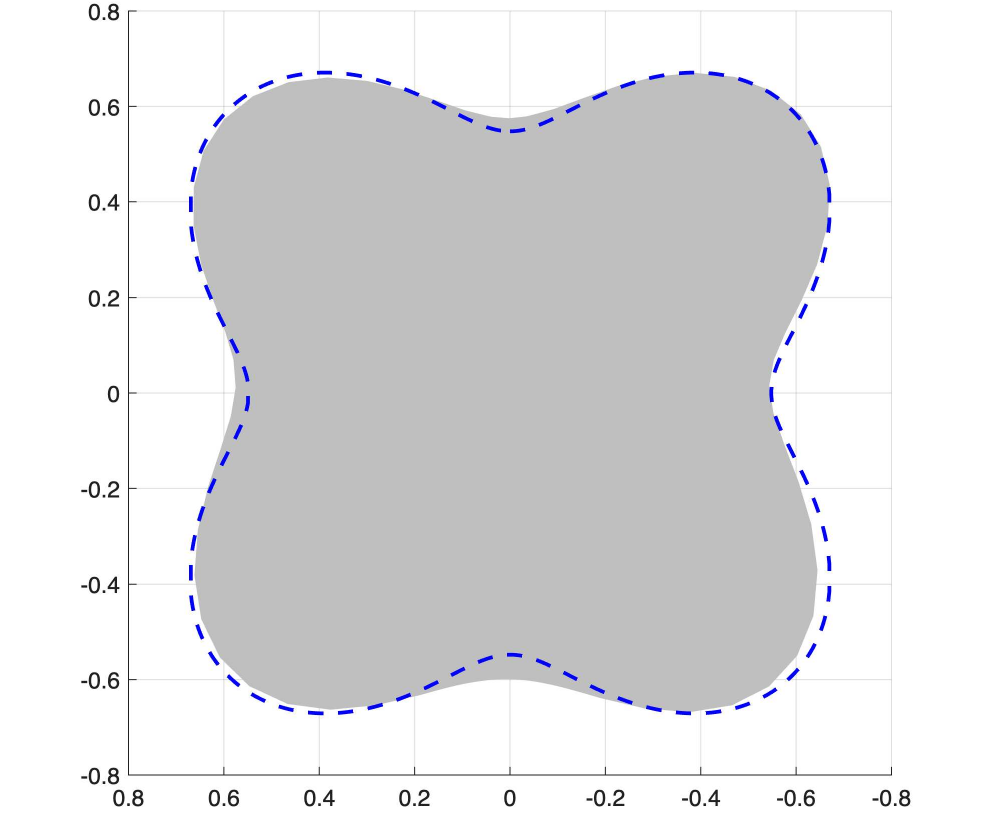} &
\includegraphics[width=0.21\textwidth]{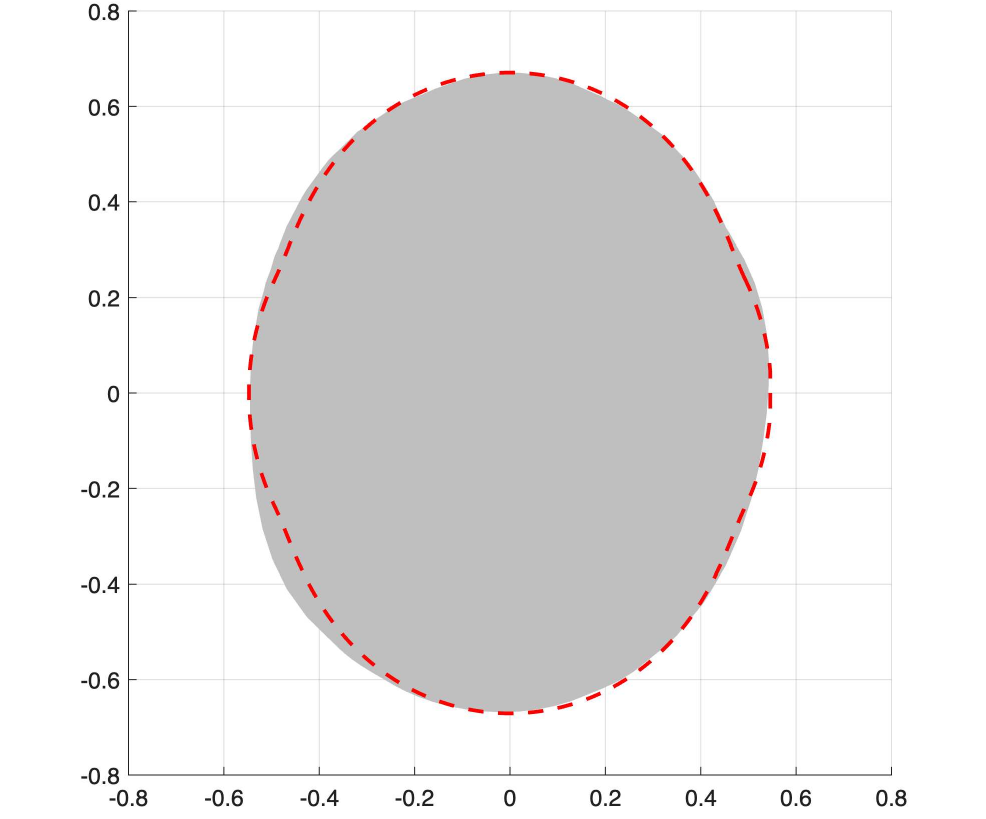} &
\includegraphics[width=0.21\textwidth]{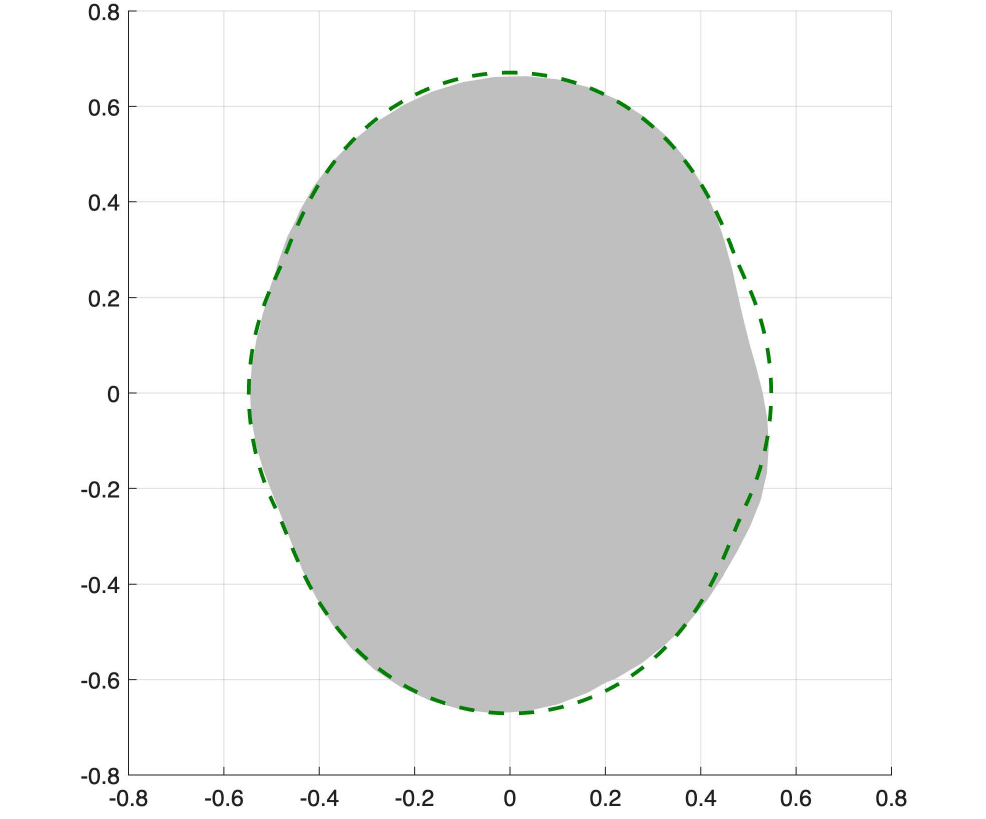}
\end{tabular}
}\\[-1.2ex]
\subfigure[2 point sources $\pmb z=(\pm 4,0,0)^{\top}$, $\epsilon=0.015$.]{
\begin{tabular}{cccc}
\includegraphics[width=0.21\textwidth]{ex1_21} &
\includegraphics[width=0.21\textwidth]{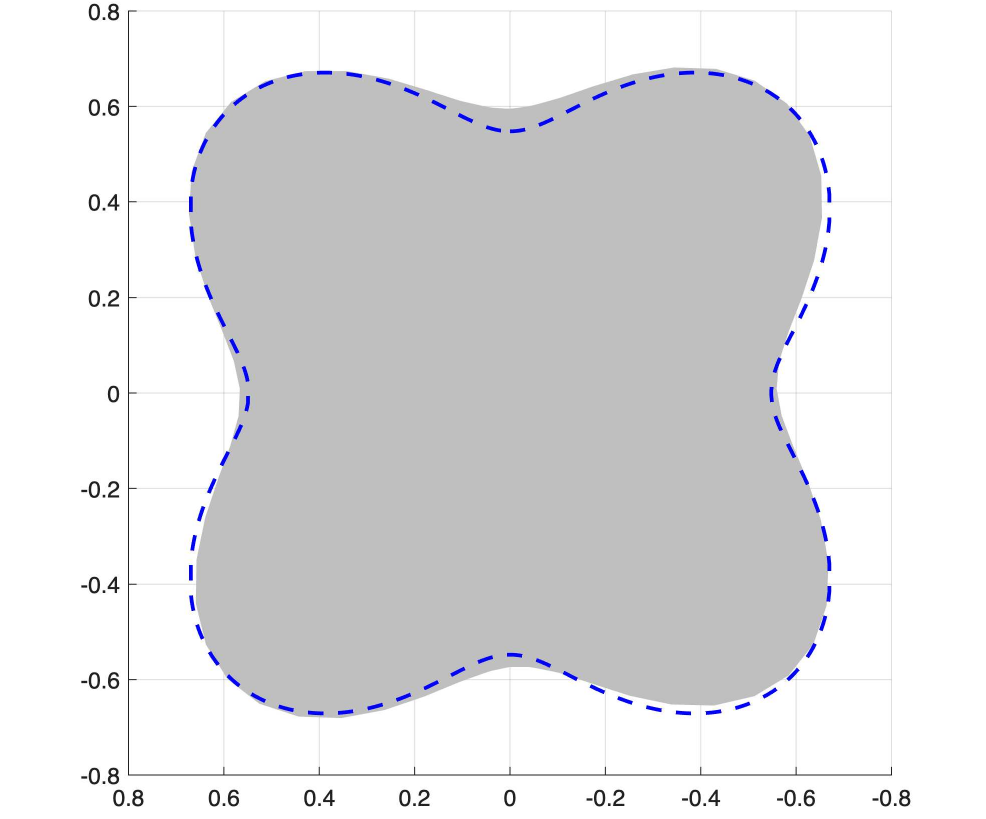} &
\includegraphics[width=0.21\textwidth]{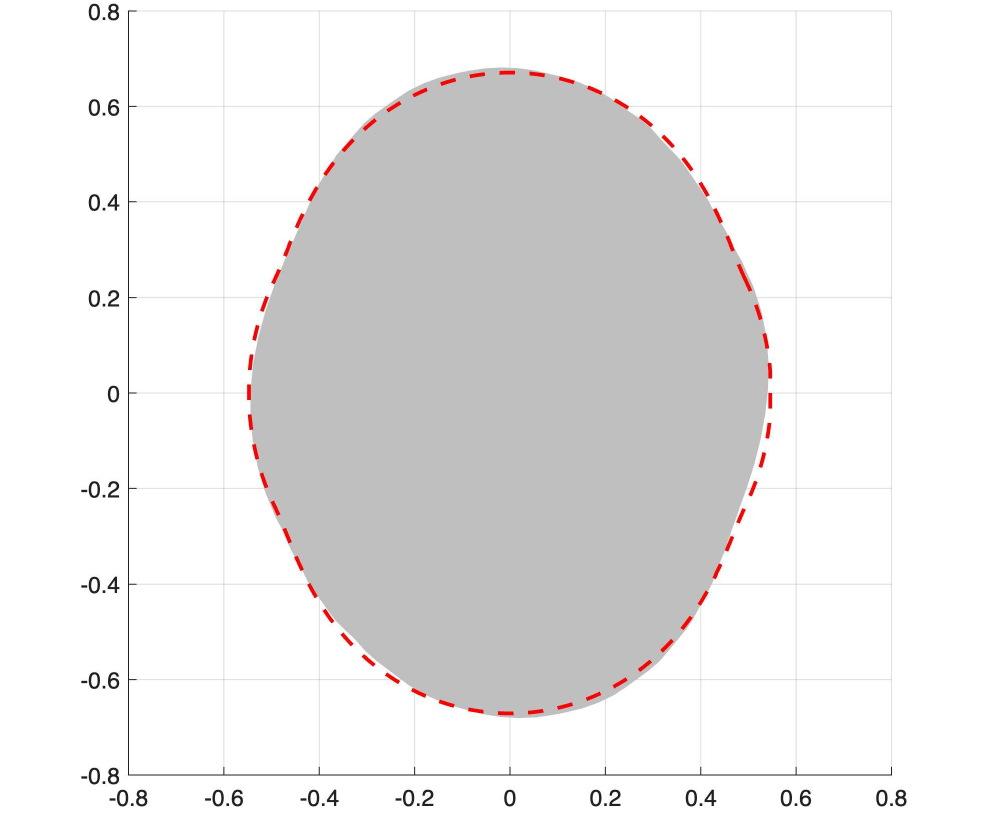} &
\includegraphics[width=0.21\textwidth]{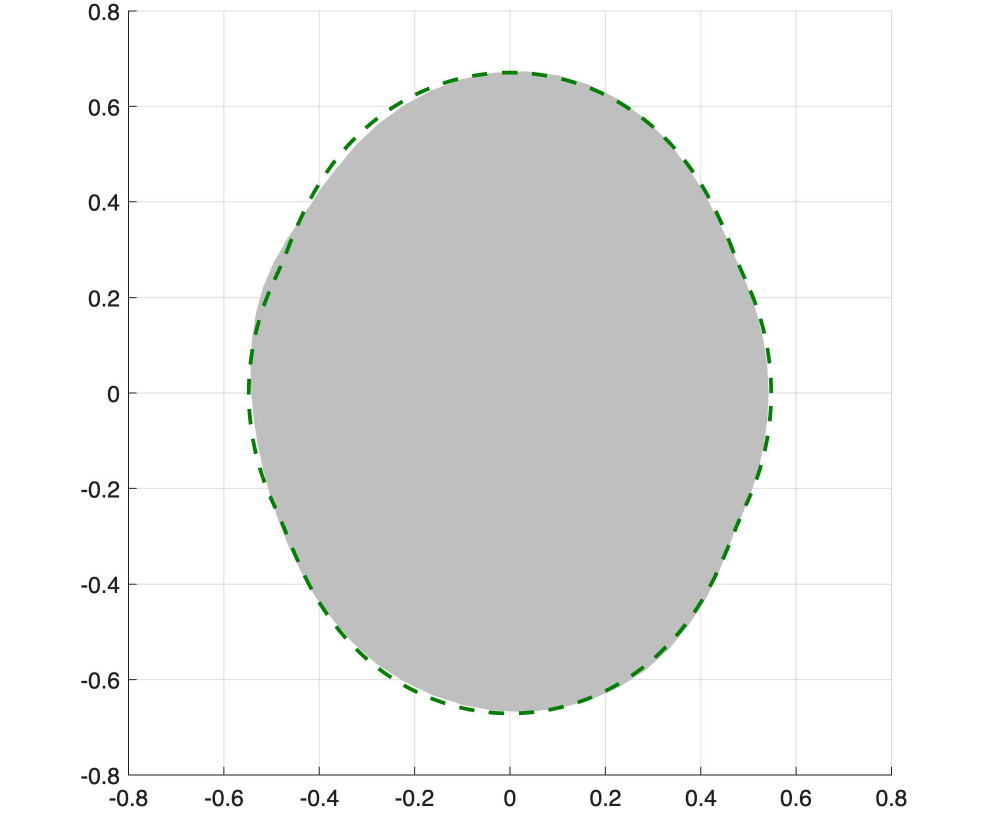}
\end{tabular}
}\\[-1.2ex]
\subfigure[4 point sources $\pmb z=(\pm 4,0,0)^{\top},(0,0,\pm 4)^{\top}$, $\epsilon=0.015$.]{
\begin{tabular}{cccc}
\includegraphics[width=0.21\textwidth]{ex1_24} &
\includegraphics[width=0.21\textwidth]{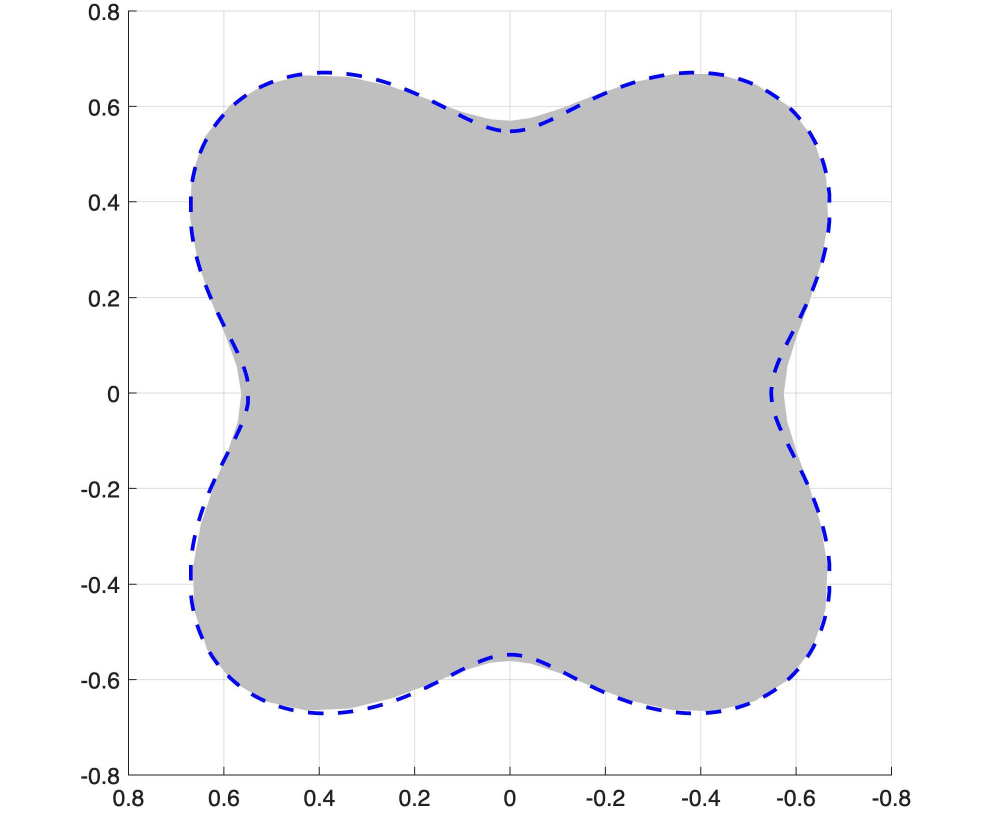} &
\includegraphics[width=0.21\textwidth]{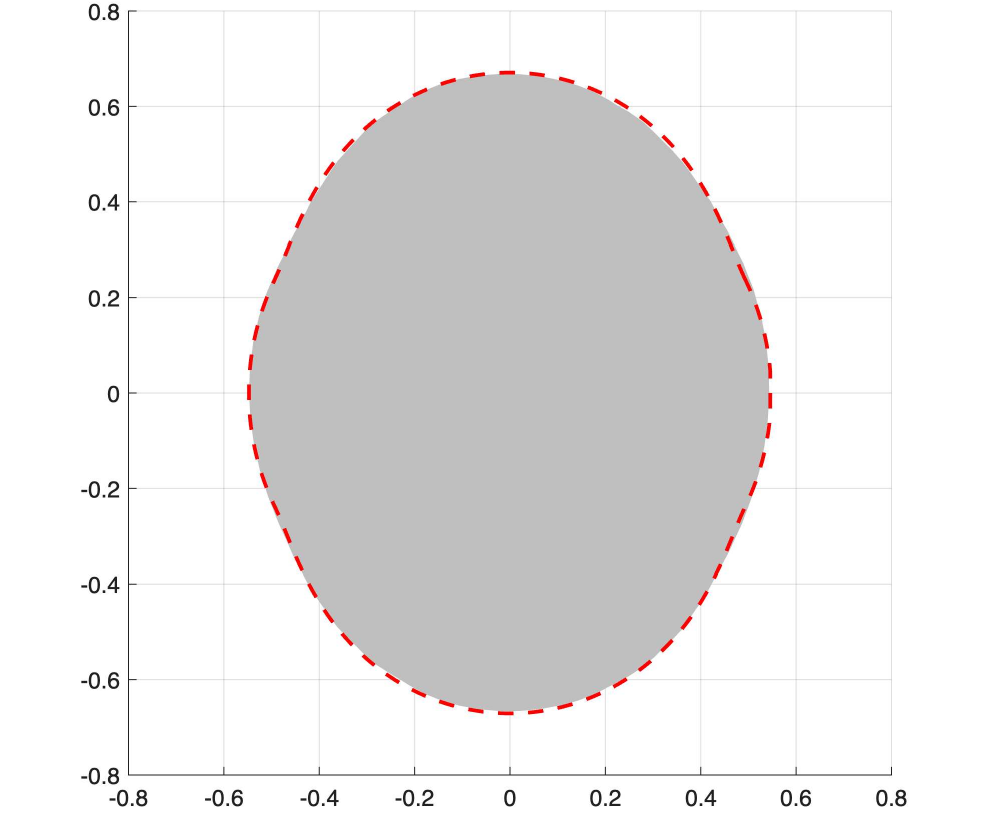} &
\includegraphics[width=0.21\textwidth]{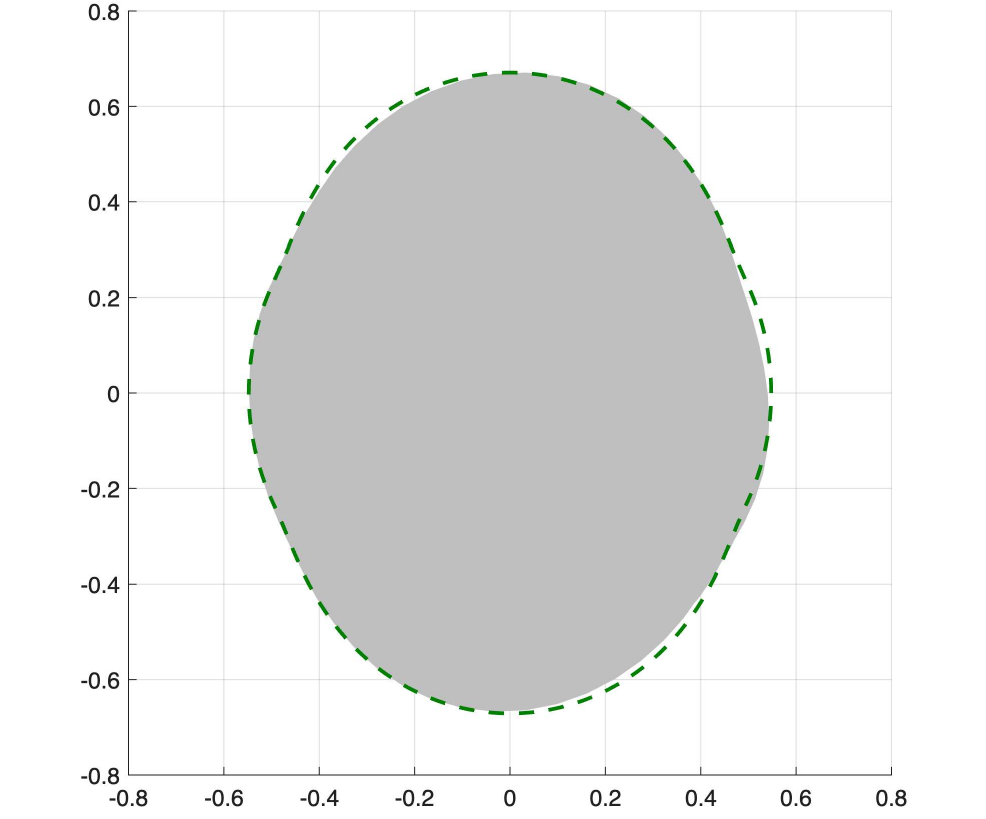}
\end{tabular}
}
\caption{Reconstructions of a cushion-shaped obstacle from phaseless far-field data with multiple point sources under $5\%$ noise. Subfigures~(a)--(c) show the results obtained with one, two, and four point sources, respectively. In each subfigure, the four panels show the three-dimensional reconstruction and its projection views along the directions $(1,0,0)^\top$, $(0,1,0)^\top$, and $(0,0,1)^\top$. In all cases, $\pmb c^{(0)}=(0.1,-0.5,-0.1)^\top$, $r^{(0)}=0.6$, $\kappa=4.5$, and $\epsilon=0.015$.}
\label{fig_ex1.2.3}
\end{figure}

In all figures, the shaded regions represent the projections of the reconstructions, whereas the colored dashed curves denote the true obstacle boundaries in the corresponding projection directions. This visualization allows a direct comparison between the exact geometry and the recovered shape.

We begin with two standard test obstacles: a pinched ball-shaped obstacle and a cushion-shaped obstacle.
The parameterized boundary curves are shown in Table~\ref{obstacle} and illustrated in figure~\ref{true shape}.

\vspace{2ex}
{\noindent\bf Example 1: Reconstructions with one or more incident waves.}
\vspace{1ex}

We consider the inverse problems~\ref{problem1}--\ref{problem3} under the Dirichlet boundary condition. Figures~\ref{fig_ex1.1.1} and~\ref{fig_ex1.2.1} show the reconstructions of the pinched-ball and cushion-shaped obstacles from a single incident wave under three noise levels. For each obstacle and data type, all reconstruction parameters are fixed except the noise level and the corresponding stopping tolerance. In all cases, both the shape and location of the obstacles are satisfactorily recovered.

Figure~\ref{fig_ex1.2.2} further examines the effect of multiple incident waves for the cushion-shaped obstacle under $5\%$ noise. Compared with the corresponding result for a single incident wave in Figure~\ref{fig_ex1.2.1}, all settings are kept unchanged except the number of incident waves, or equivalently, the number of source locations for the phaseless far-field data. The results show that one incident wave already yields satisfactory reconstructions, while two or four incident waves lead to slight improvements. This effect is seen more clearly in the projection views in Figure~\ref{fig_ex1.2.3}, which make the small improvements in the phaseless far-field reconstructions more visible.
These results indicate that the proposed method is effective with a single illumination and can further benefit from richer data.

\vspace{2ex}
{\noindent\bf Example 2: Reconstructions with different parameters.}
\vspace{1ex}

For two incident waves, Figures~\ref{fig_ex3.1}--\ref{fig_ex3.3} present the reconstruction results under different parameters, corresponding to scattered field, phased far-field, and phaseless far-field data, respectively. 
In each figure, the first and second rows show reconstructions with different initial guesses, while the second and third rows illustrate results for different wavenumbers. 
These results demonstrate that the proposed approach remains effective across a broad range of parameter settings, yielding consistently meaningful reconstructions despite variations in the initial guess and wavenumber.

\begin{figure}[!htbp]
\centering
\subfigure[$\pmb c^{(0)} = (-0.7, -0.5, 0.2)^\top$, $r^{(0)} = 0.4$, $R=5$, $\kappa=4$.]{
    \begin{tabular}{cccc}
        {\includegraphics[width=0.21\textwidth]{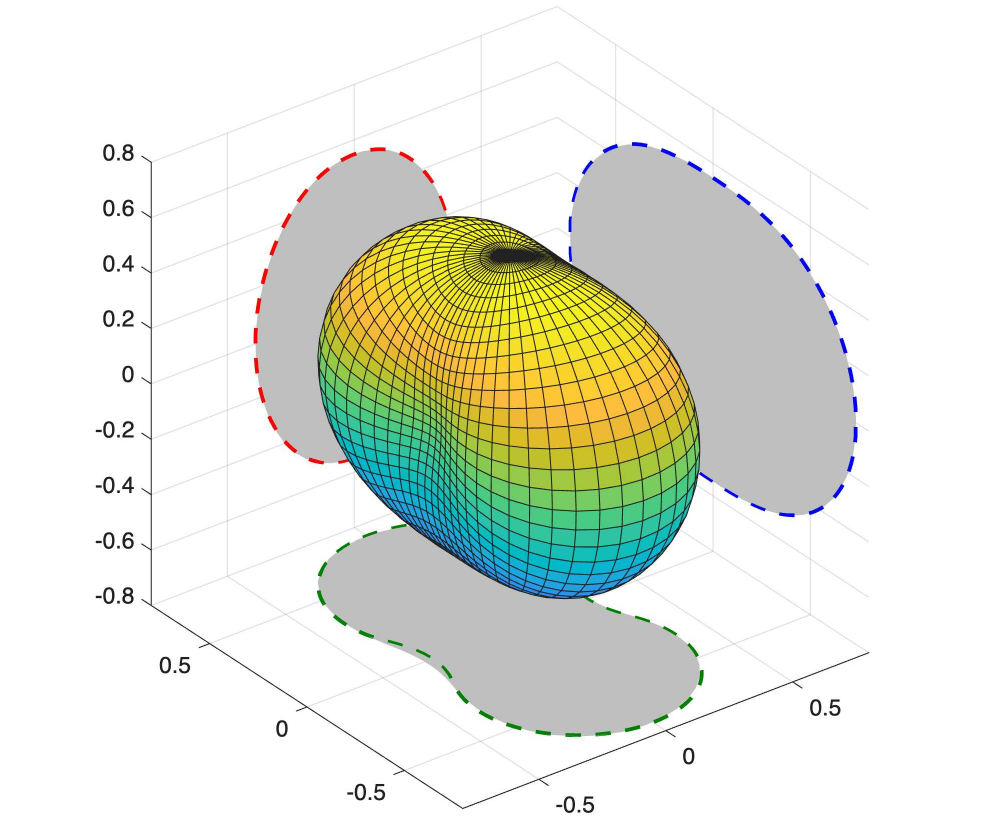}} &
        {\includegraphics[width=0.21\textwidth]{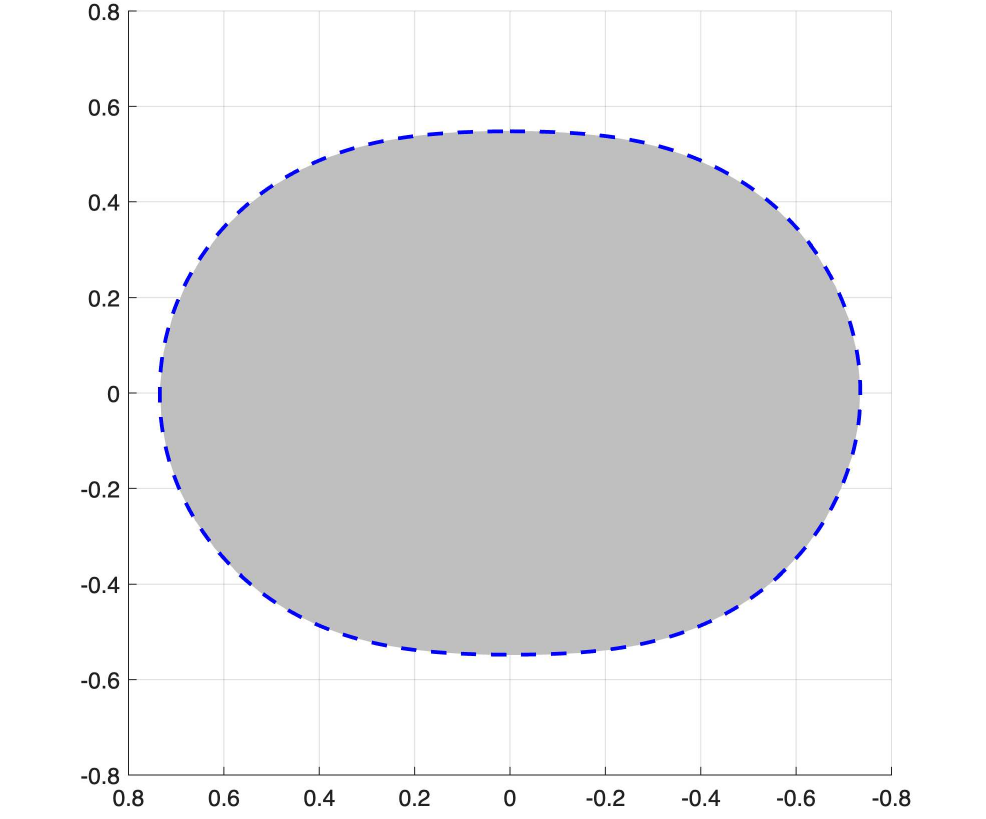}} &
        {\includegraphics[width=0.21\textwidth]{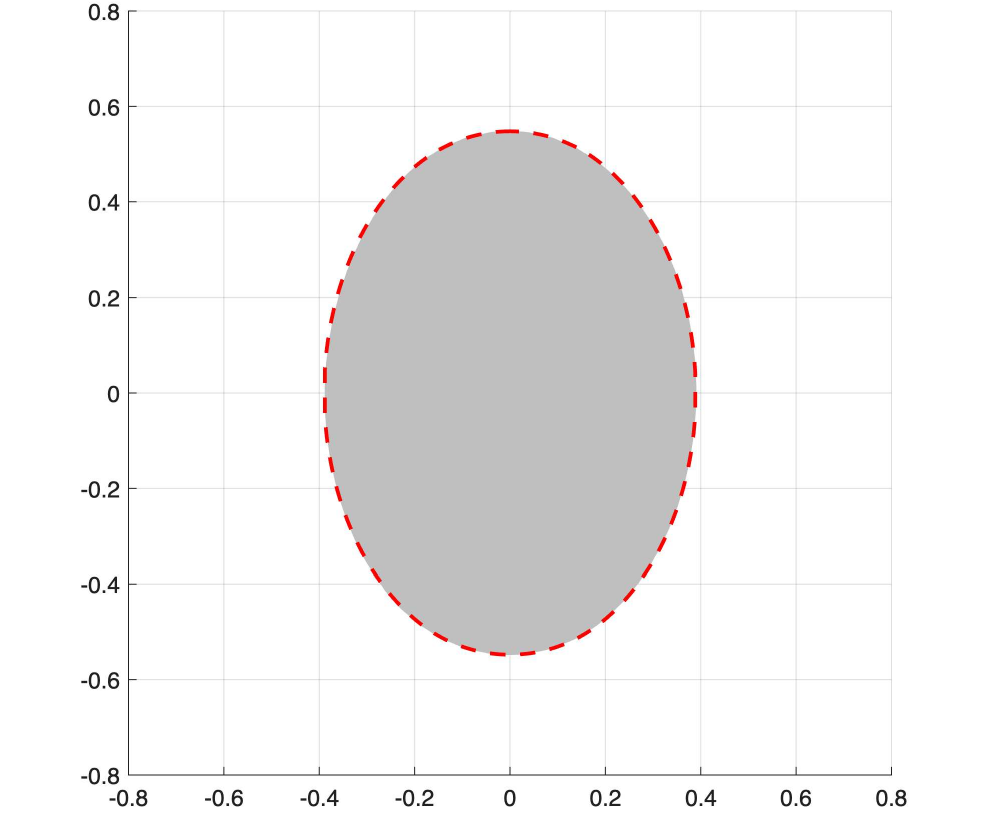}} &
        {\includegraphics[width=0.21\textwidth]{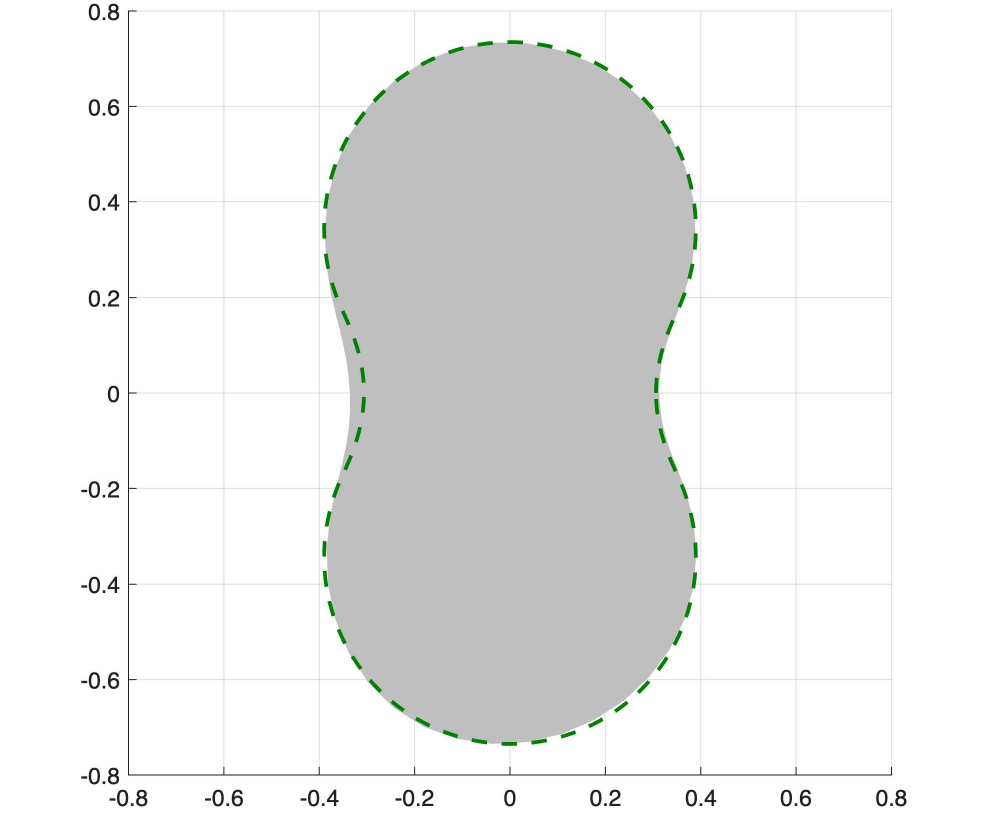}}
    \end{tabular}
}

\subfigure[$\pmb c^{(0)} = (0.3, 0.4, -0.1)^\top$, $r^{(0)} = 0.6$, $R=5$, $\kappa=4$.]{
    \begin{tabular}{cccc}
        {\includegraphics[width=0.21\textwidth]{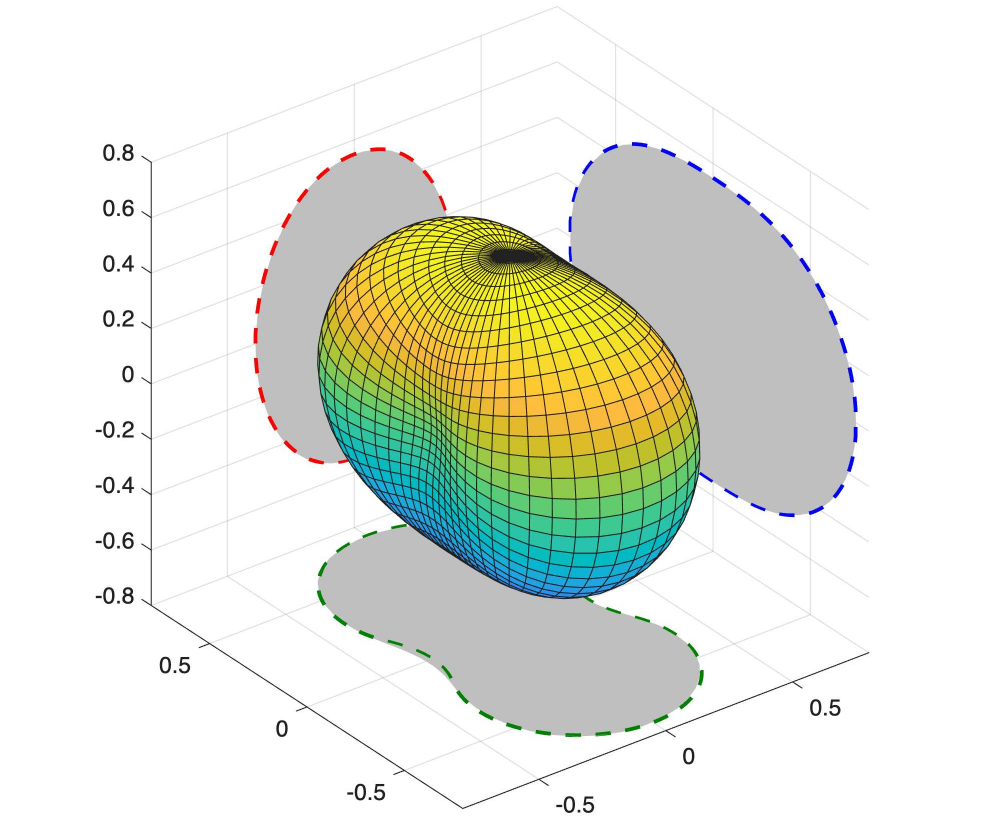}} &
        {\includegraphics[width=0.21\textwidth]{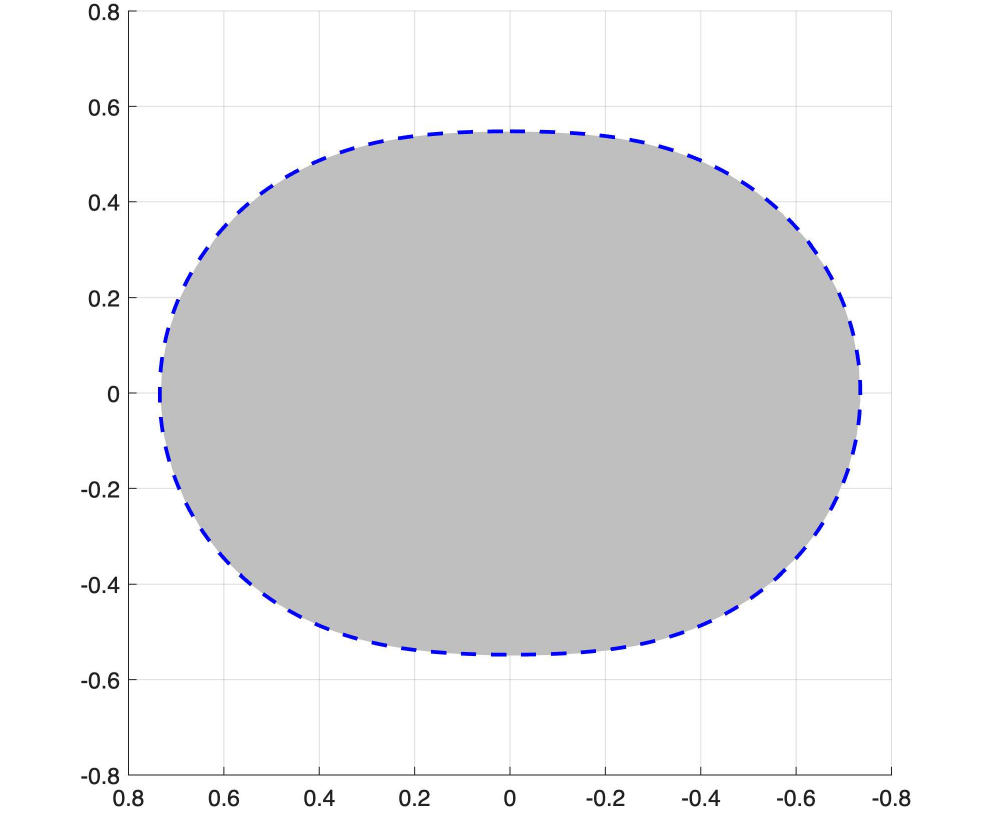}} &
        {\includegraphics[width=0.21\textwidth]{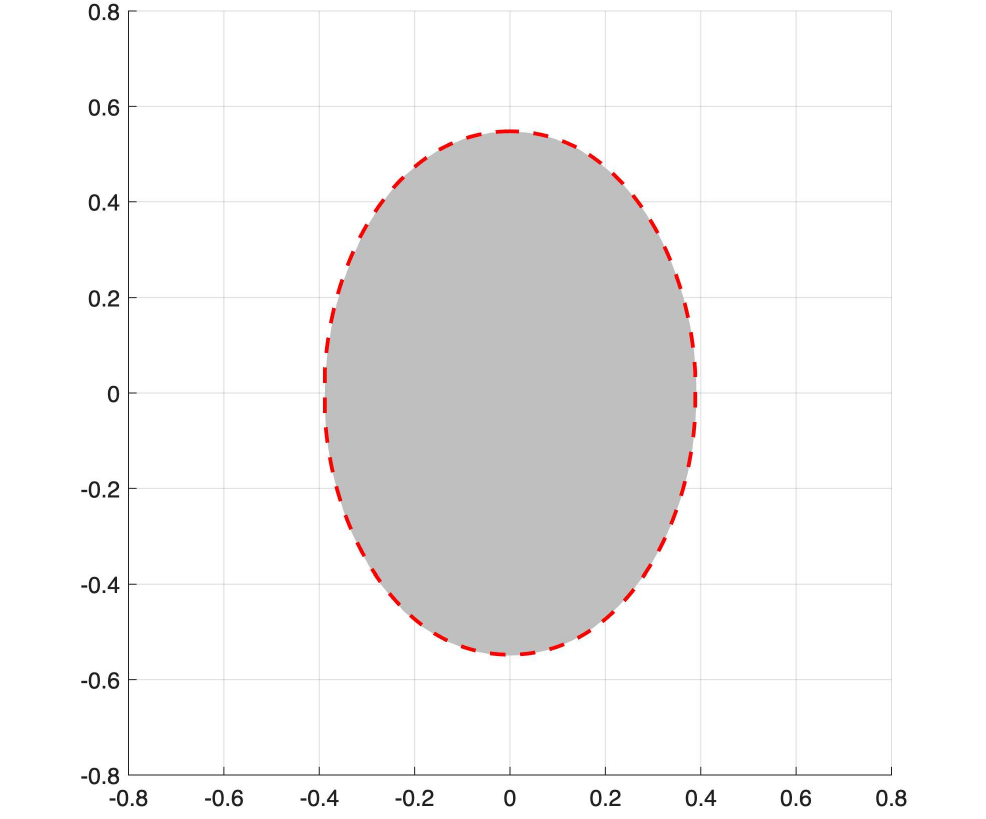}} &
        {\includegraphics[width=0.21\textwidth]{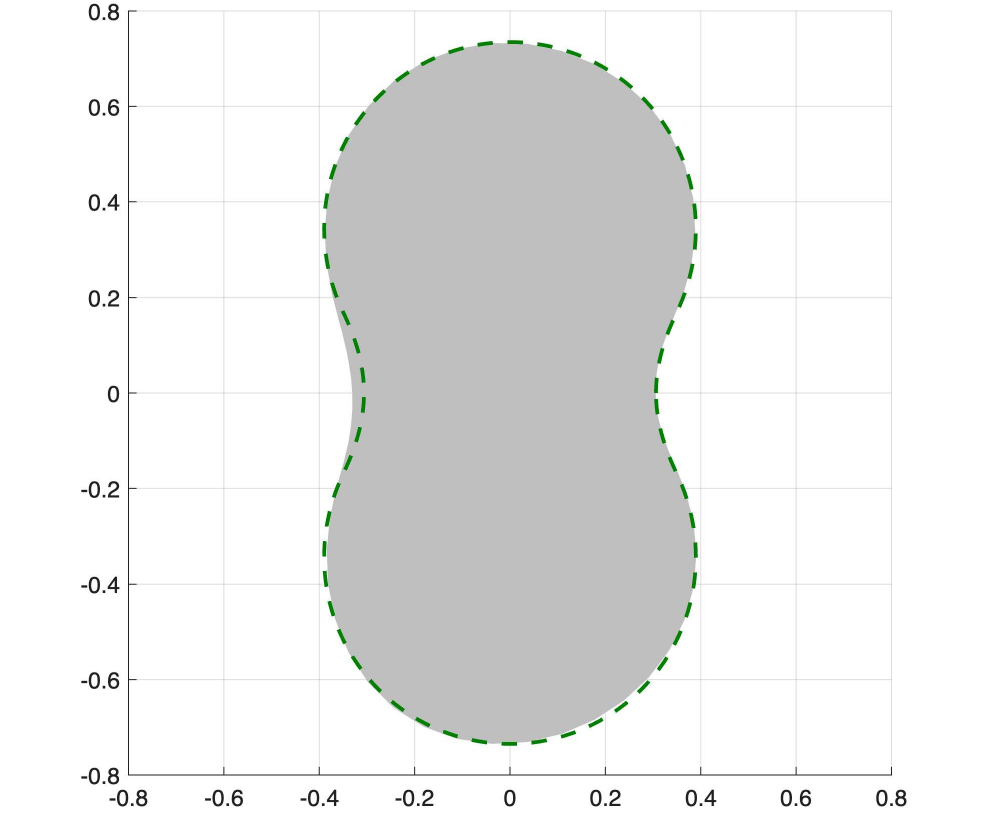}}
    \end{tabular}
}

\subfigure[$\pmb c^{(0)} = (0.3, 0.4, -0.1)^\top$, $r^{(0)} = 0.6$, $R=5$, $\kappa=7$.]{
    \begin{tabular}{cccc}
        {\includegraphics[width=0.21\textwidth]{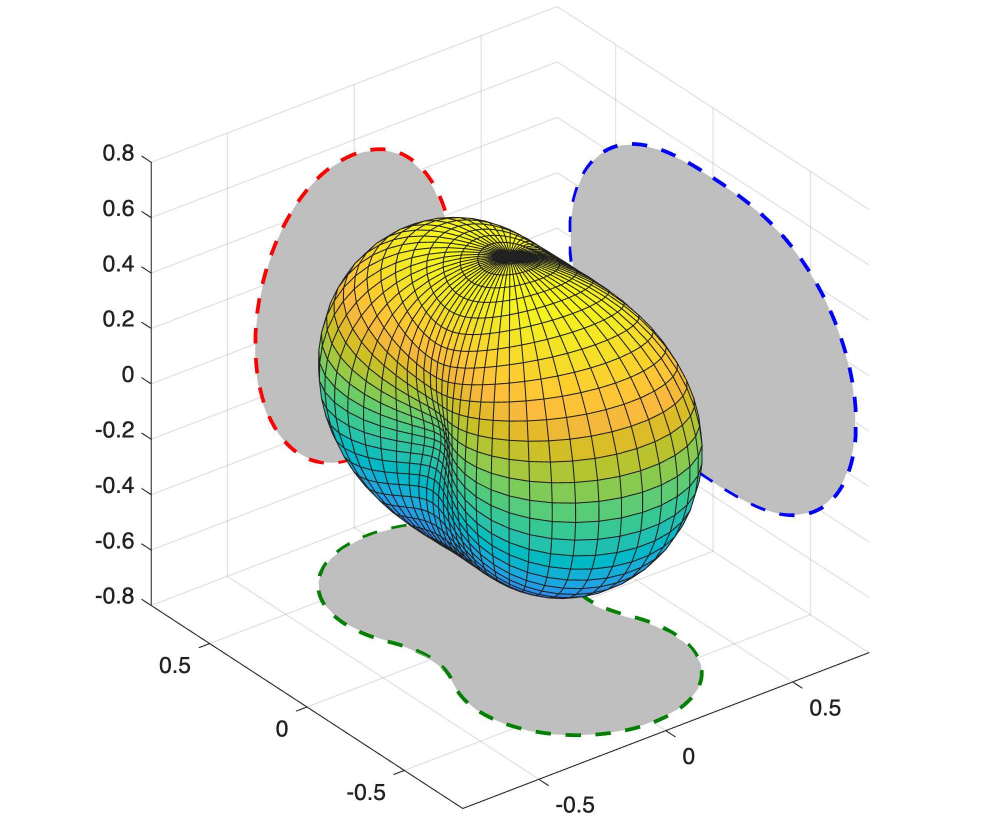}} &
        {\includegraphics[width=0.21\textwidth]{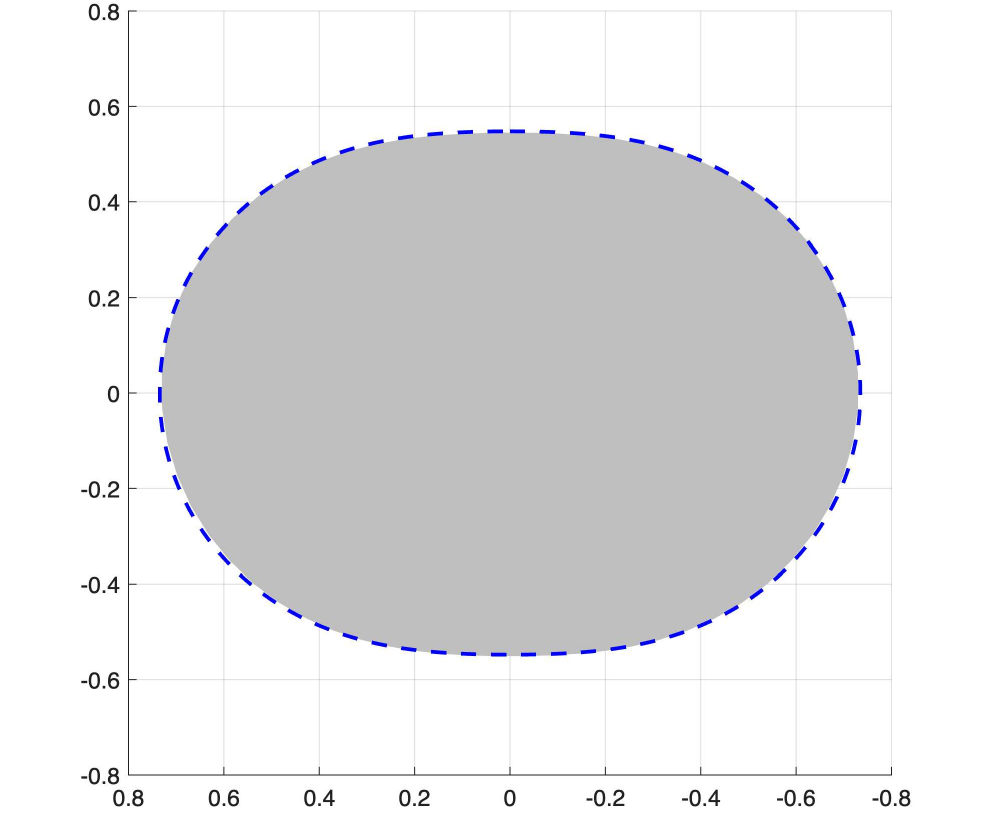}} &
        {\includegraphics[width=0.21\textwidth]{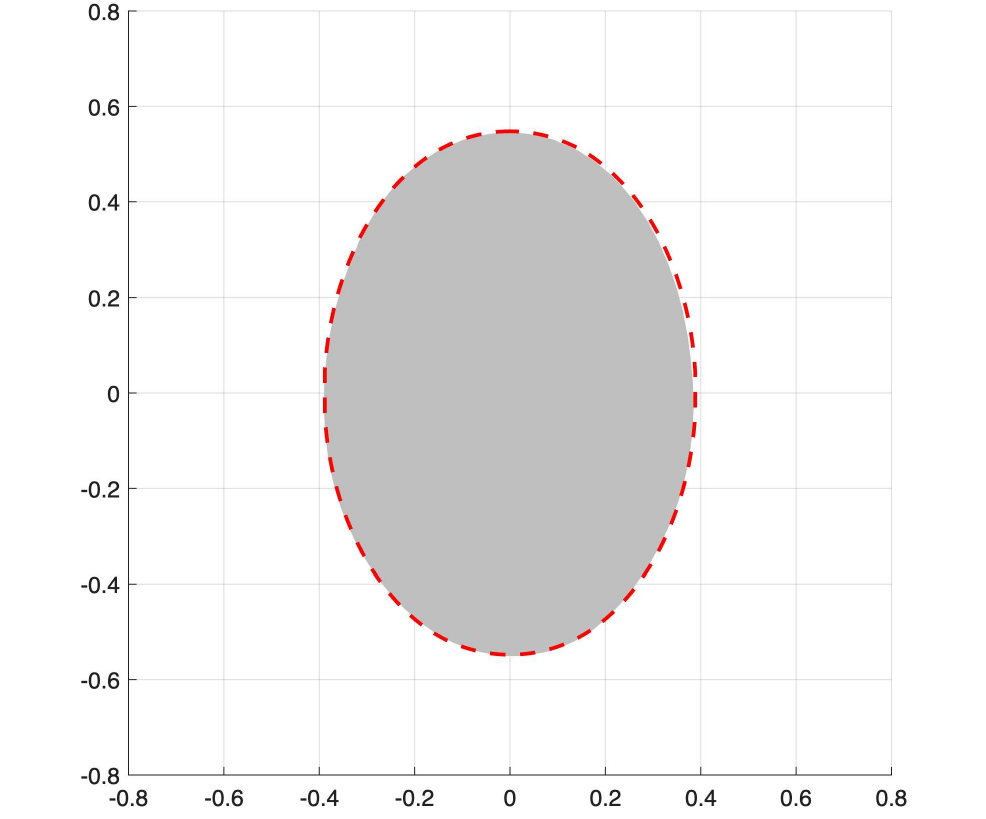}} &
        {\includegraphics[width=0.21\textwidth]{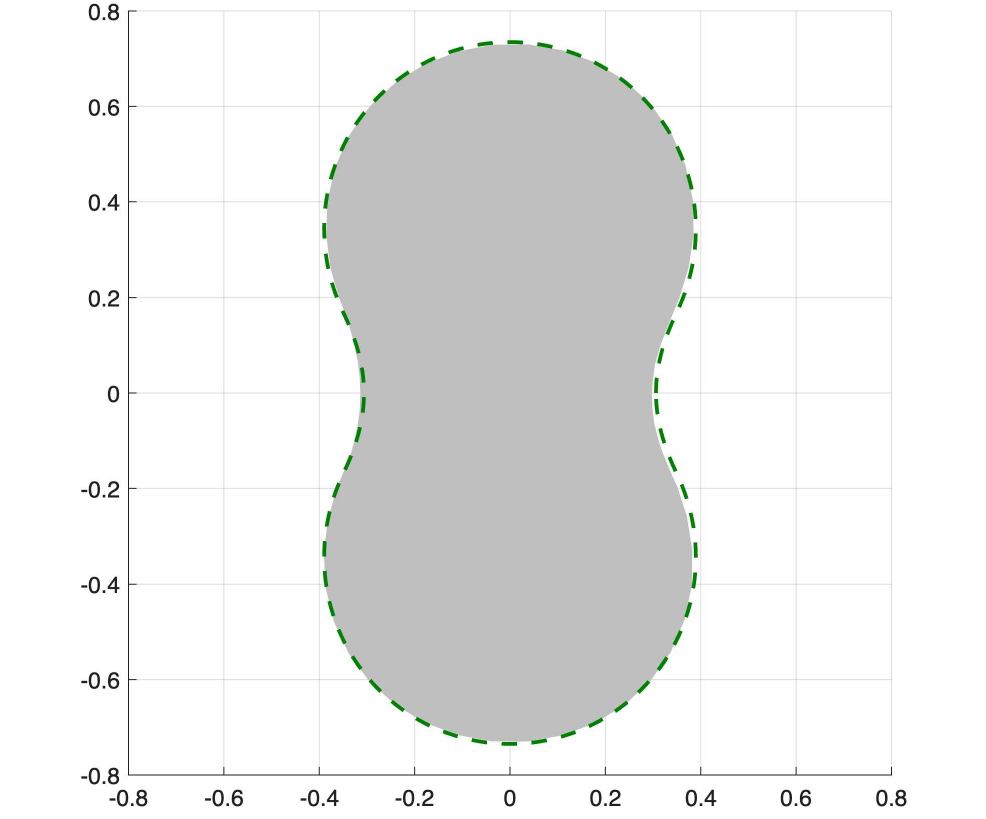}}
    \end{tabular}
}
\caption{Reconstruction of a pinched ball-shaped obstacle from phased scattered field data generated by two incident waves in the directions \((0,0,1)^\top\) and \((0,0,-1)^\top\) with \(5\%\) noise, $\epsilon=0.023$.}\label{fig_ex3.1}
\end{figure}

\begin{figure}[!htbp]
\centering
\subfigure[$\pmb c^{(0)}=(0.1,-0.6,0.2)^\top$, $r^{(0)}=0.4$, $\kappa=5$.]{
    \begin{tabular}{cccc}
        \includegraphics[width=0.21\textwidth]{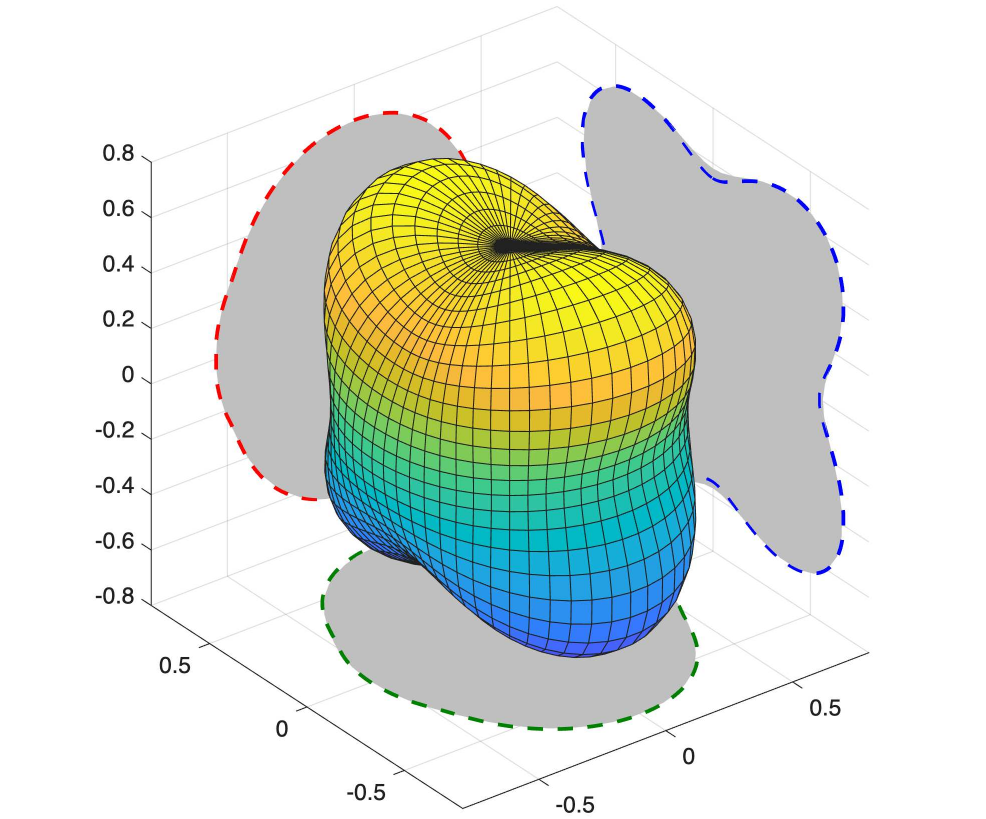} &
        \includegraphics[width=0.21\textwidth]{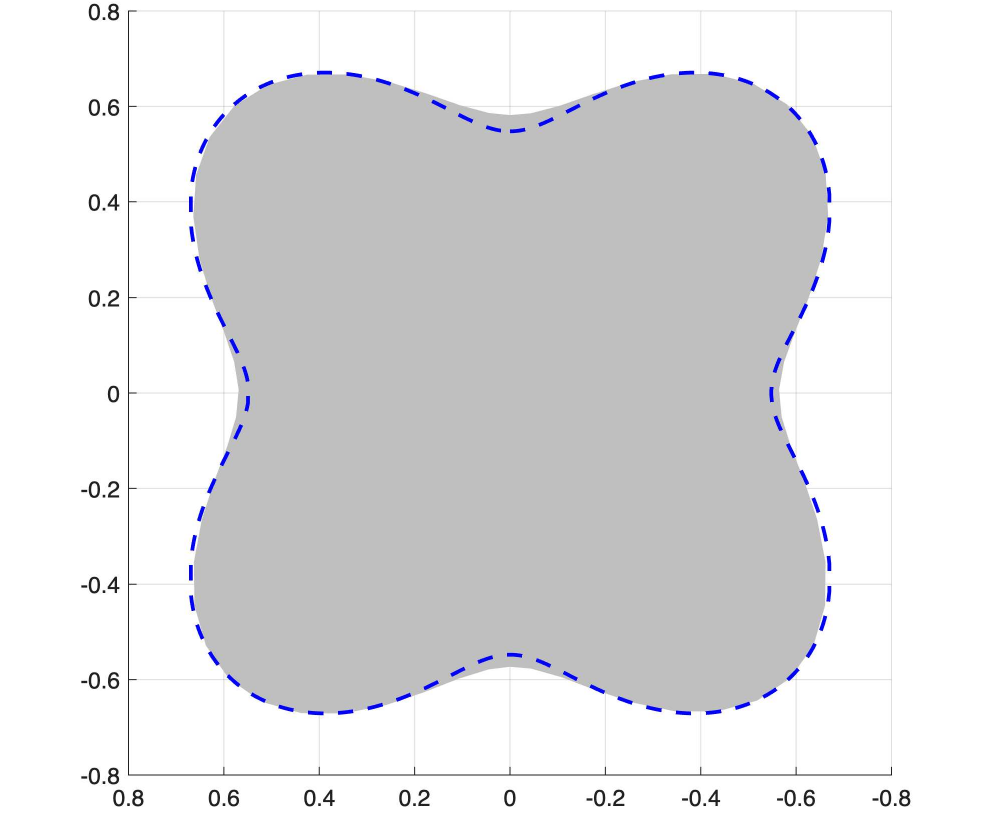} &
        \includegraphics[width=0.21\textwidth]{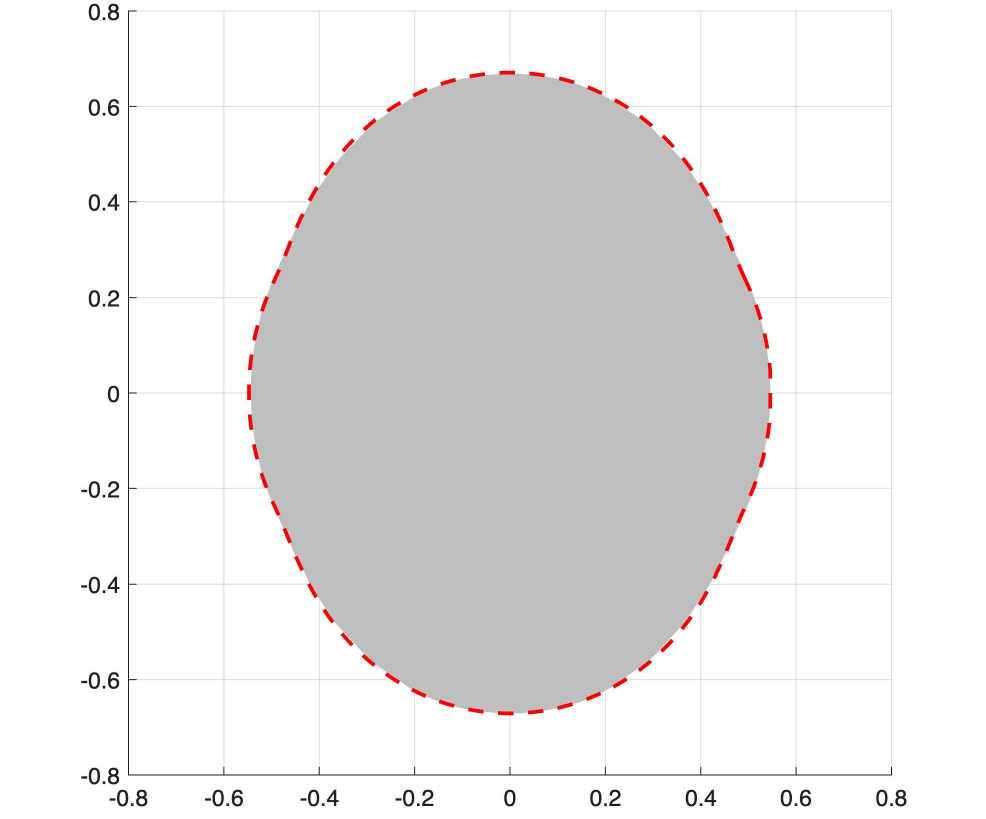} &
        \includegraphics[width=0.21\textwidth]{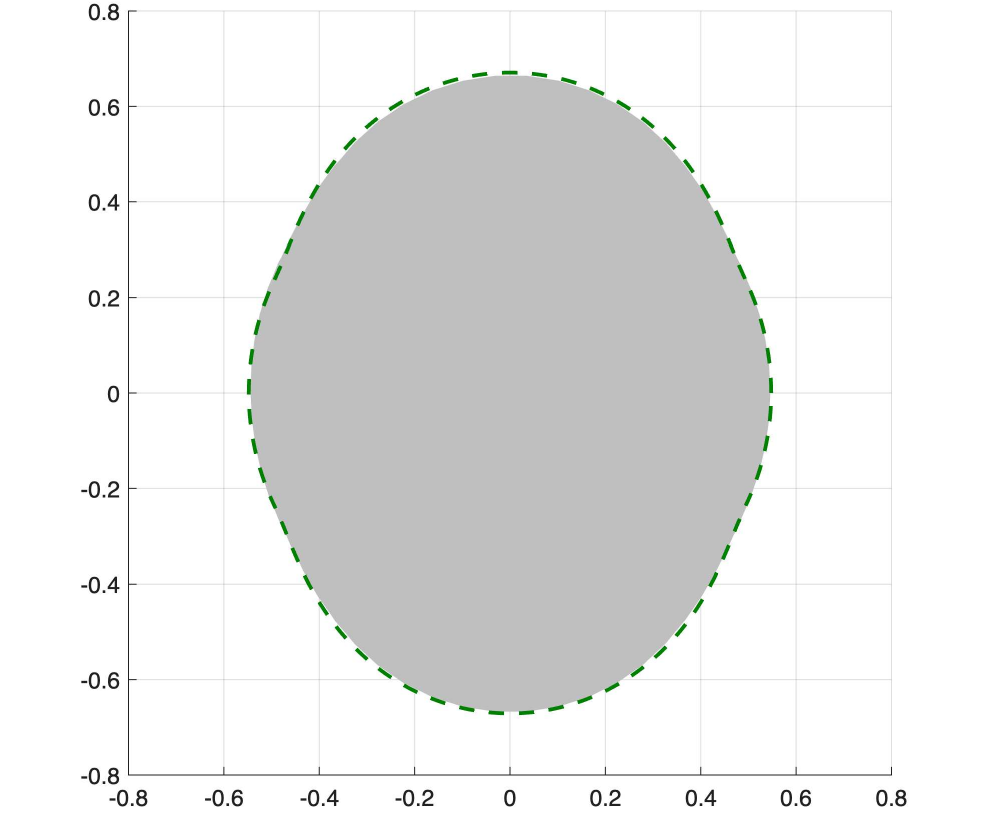}
    \end{tabular}
}

\subfigure[$\pmb c^{(0)}=(0.1,-0.5,-0.3)^\top$, $r^{(0)}=0.3$, $\kappa=5$.]{
    \begin{tabular}{cccc}
        \includegraphics[width=0.21\textwidth]{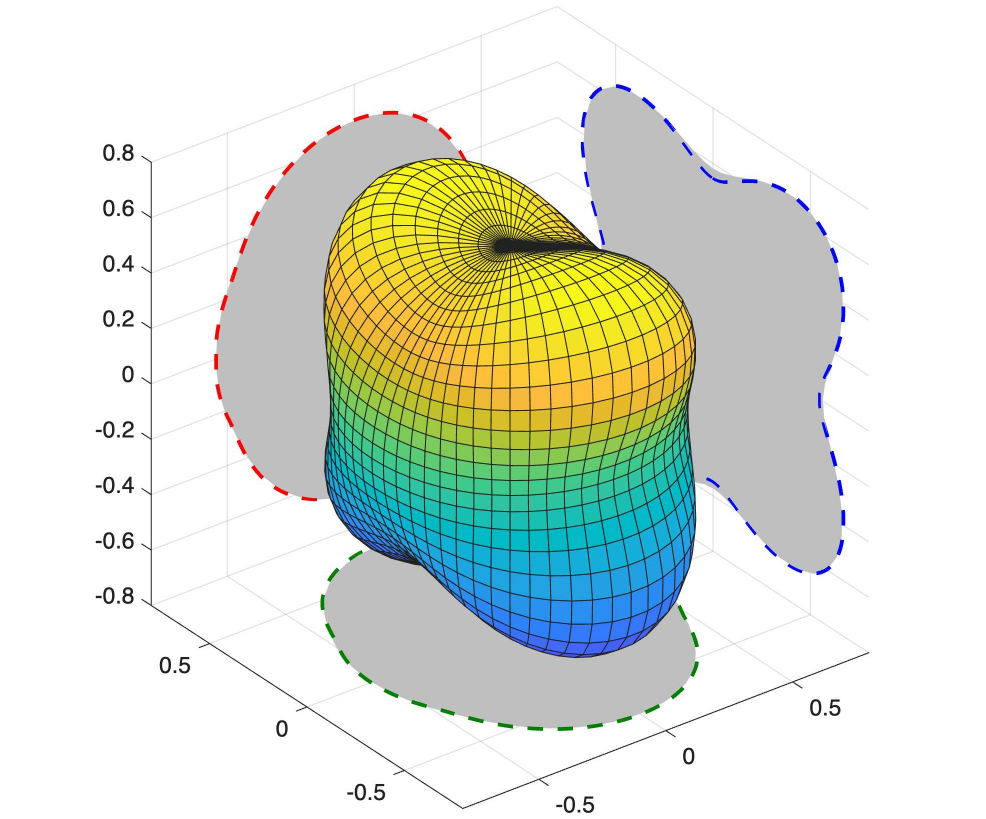} &
        \includegraphics[width=0.21\textwidth]{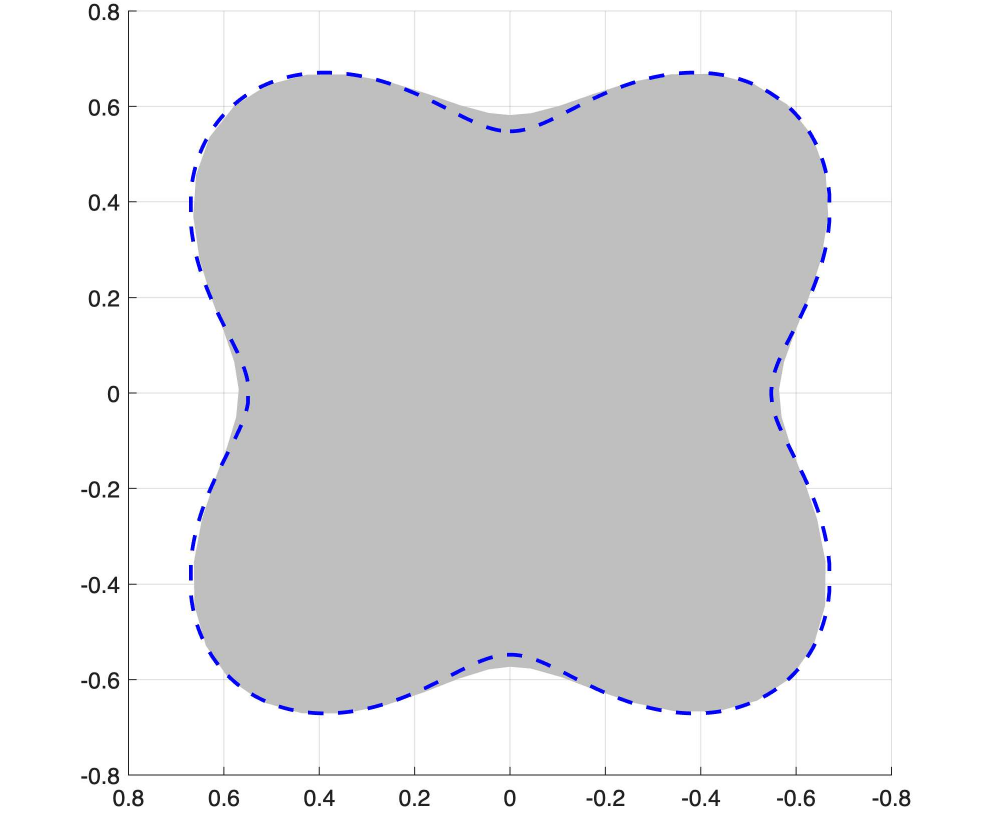} &
        \includegraphics[width=0.21\textwidth]{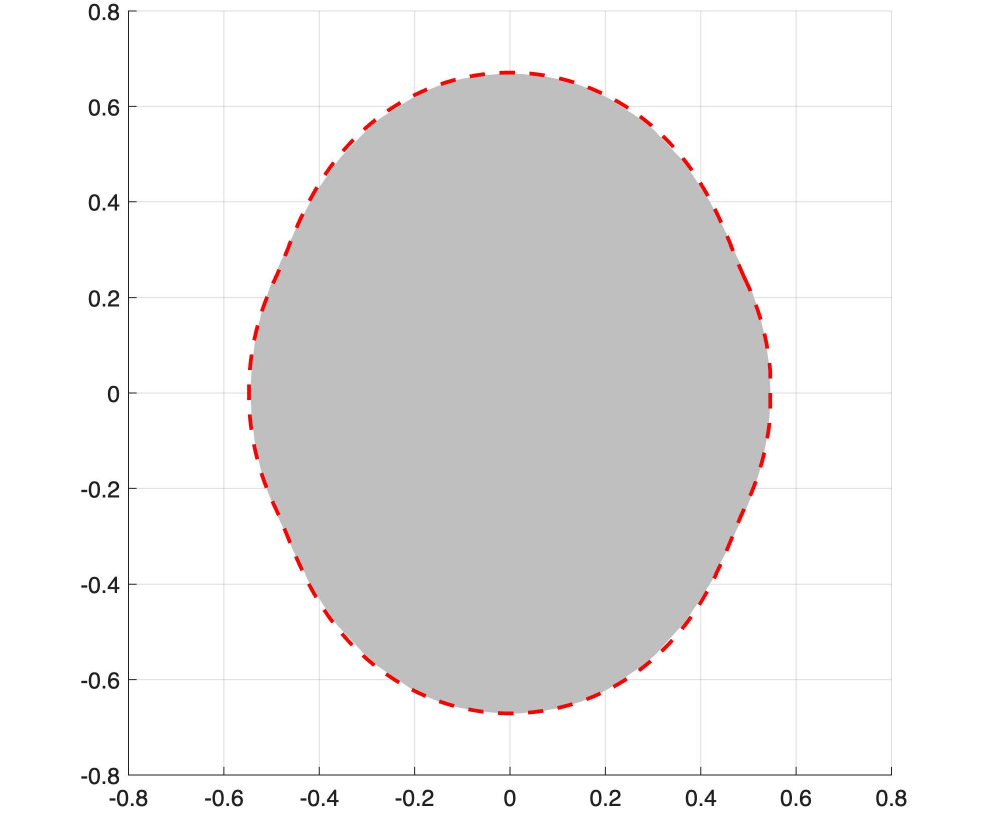} &
        \includegraphics[width=0.21\textwidth]{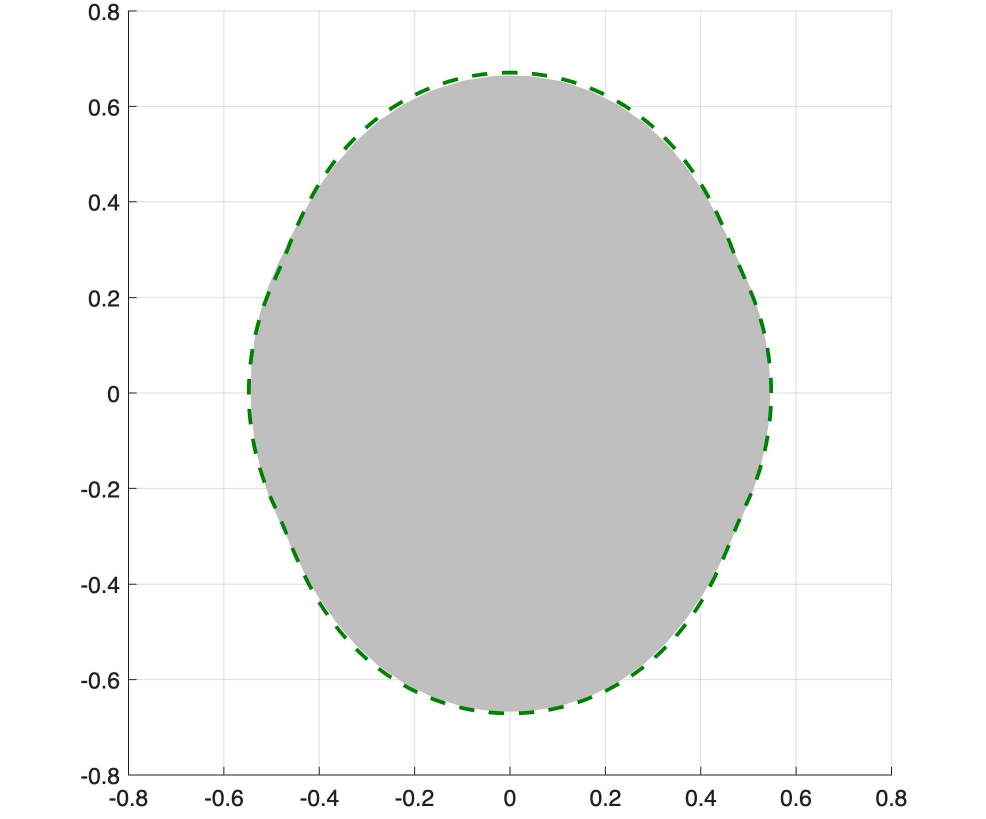}
    \end{tabular}
}

\subfigure[$\pmb c^{(0)}=(0.1,-0.5,-0.3)^\top$, $r^{(0)}=0.3$, $\kappa=3.6$.]{
    \begin{tabular}{cccc}
        \includegraphics[width=0.21\textwidth]{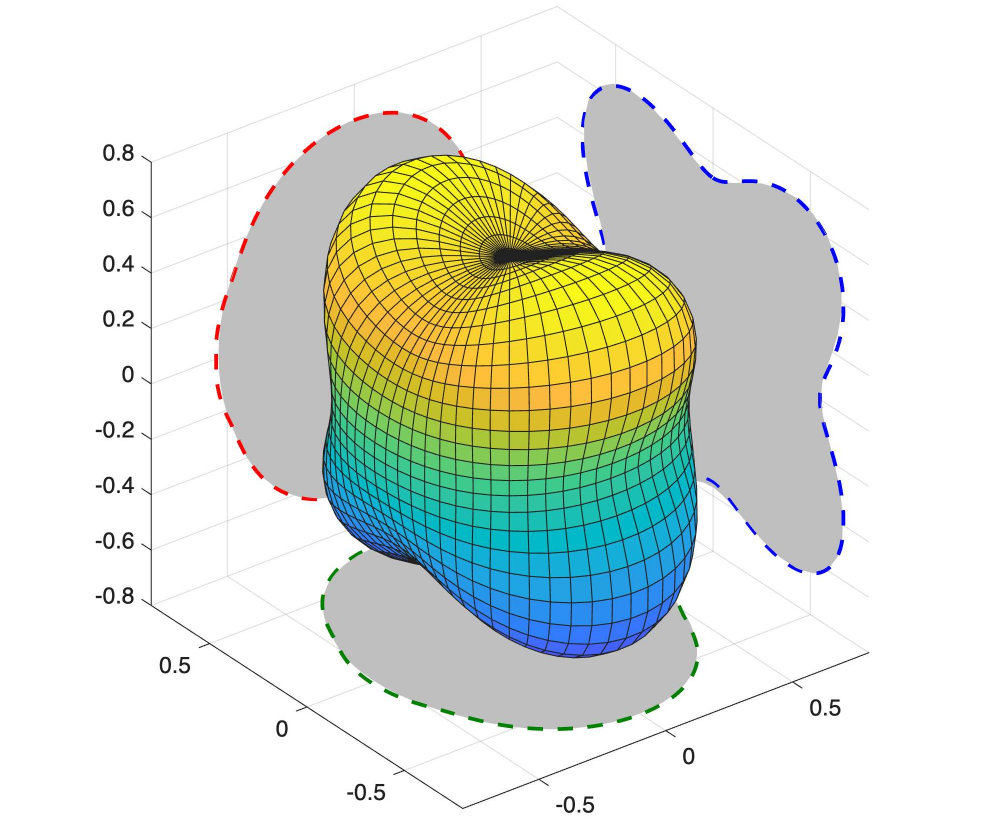} &
        \includegraphics[width=0.21\textwidth]{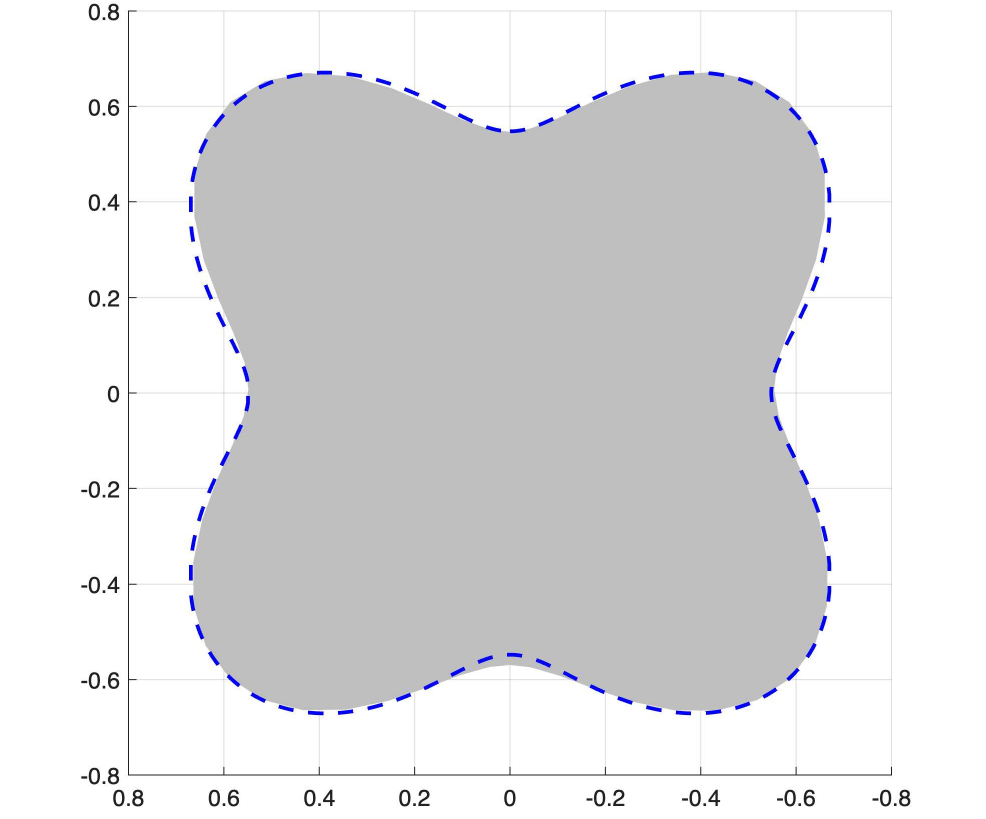} &
        \includegraphics[width=0.21\textwidth]{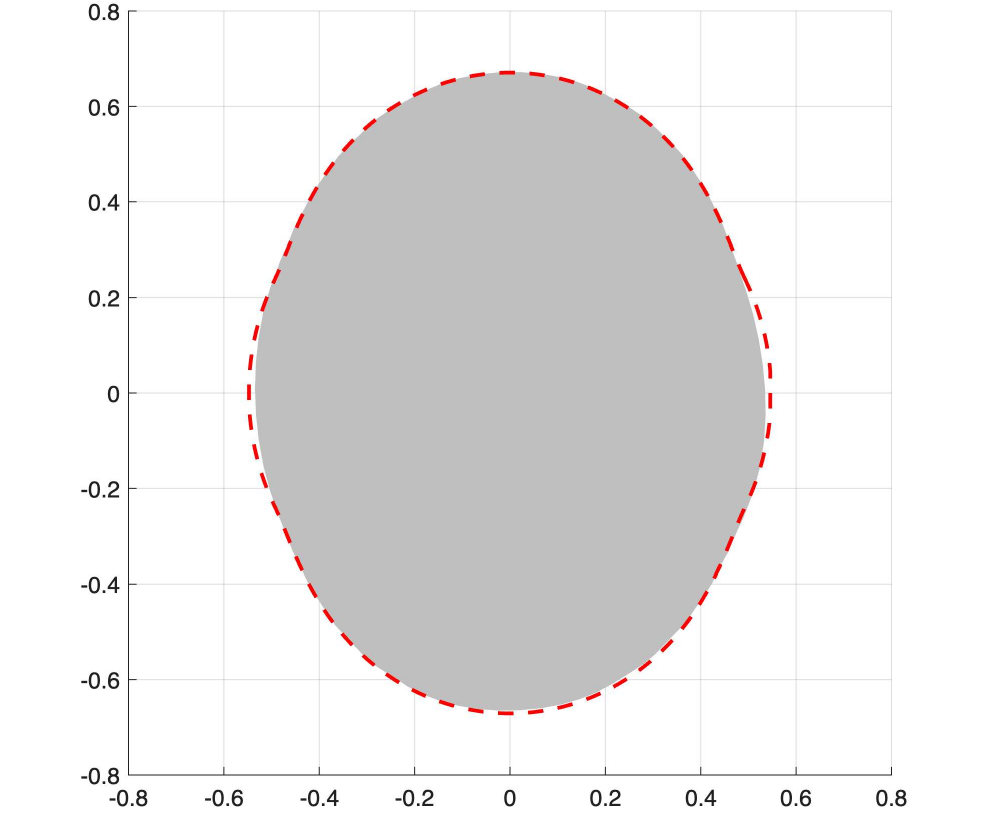} &
        \includegraphics[width=0.21\textwidth]{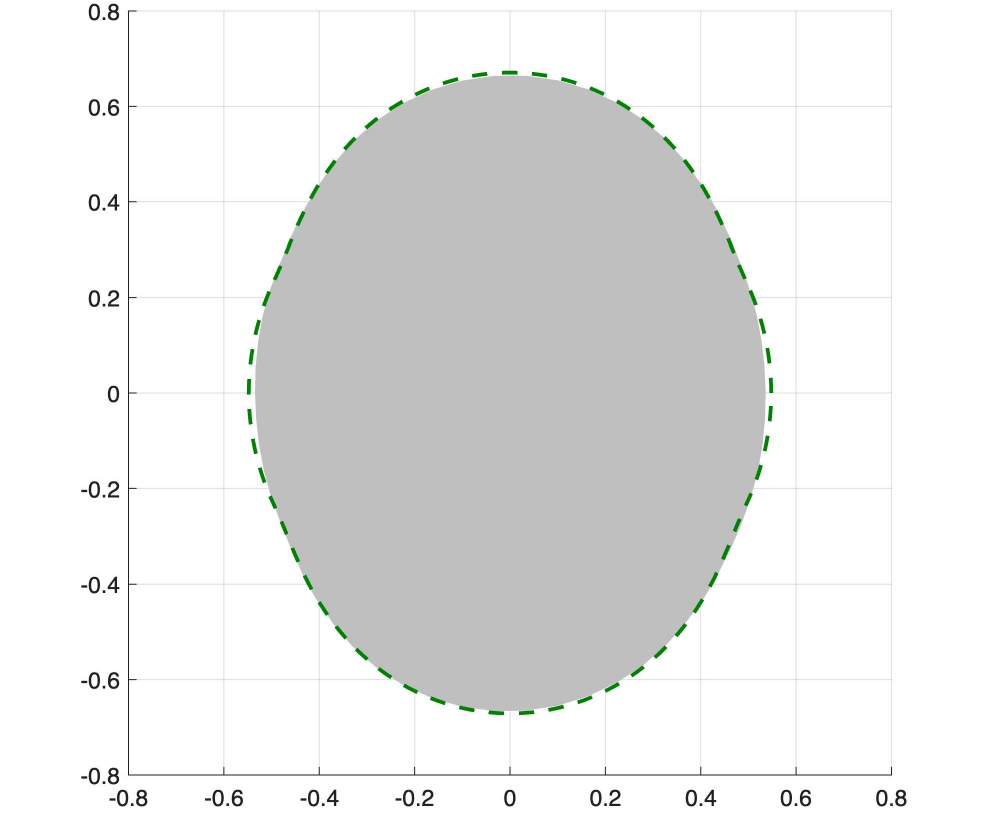}
    \end{tabular}
}
\caption{Reconstruction of a cushion-shaped obstacle from phased far-field data generated by two incident waves $(1,0,0)^\top$, $(-1,0,0)^\top$ with $5\%$ noise, $\epsilon=0.032$.}\label{fig_ex3.2}
\end{figure}

\begin{figure}[!htbp]
\centering
\subfigure[$\pmb c^{(0)} = (-0.1, 0.6, 0.2)^\top$, $r^{(0)} = 0.3$, $\kappa=3$.]{
    \begin{tabular}{cccc}
        {\includegraphics[width=0.21\textwidth]{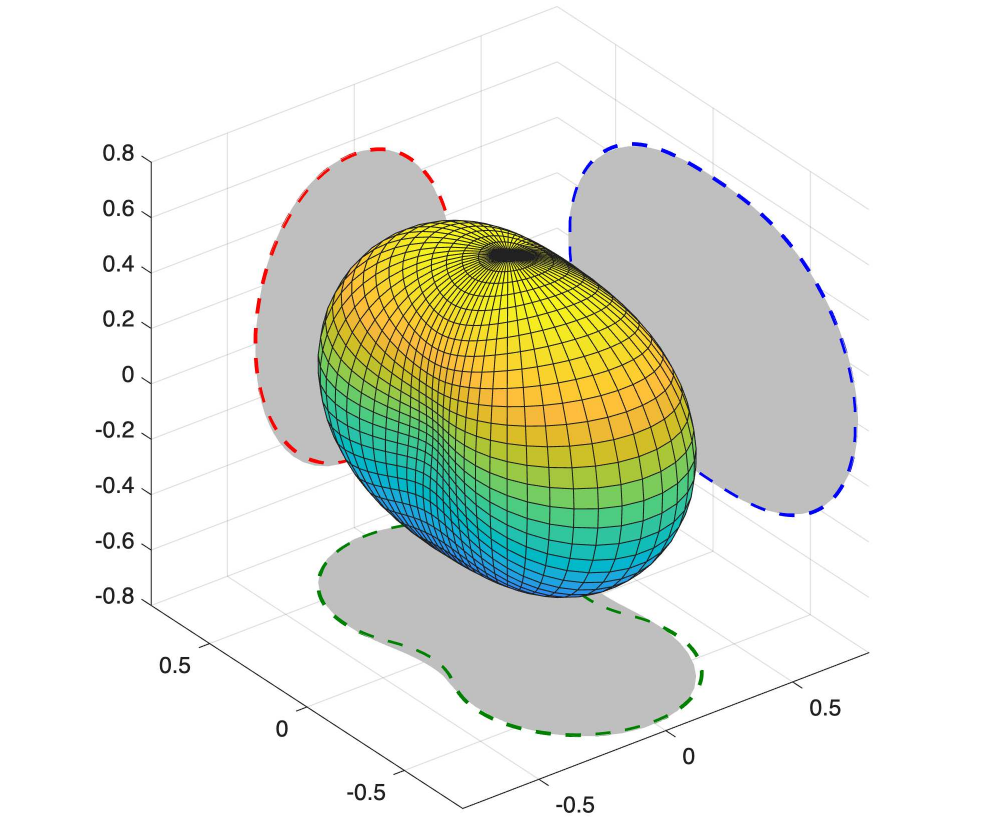}} &
        {\includegraphics[width=0.21\textwidth]{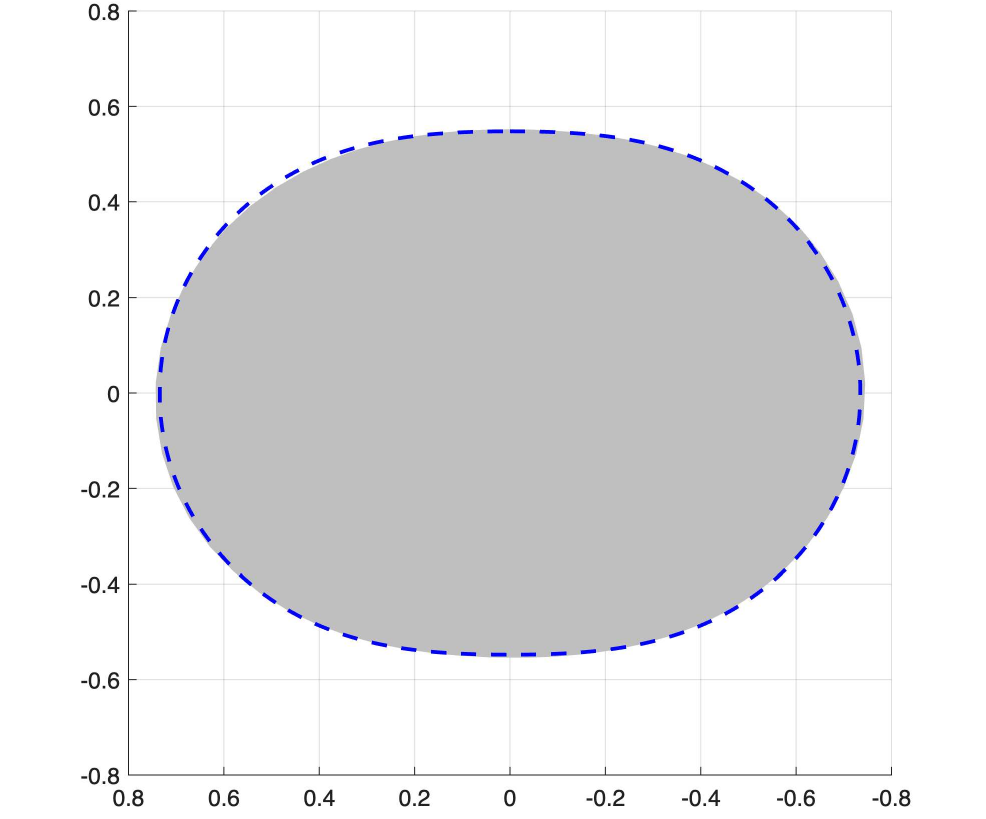}} &
        {\includegraphics[width=0.21\textwidth]{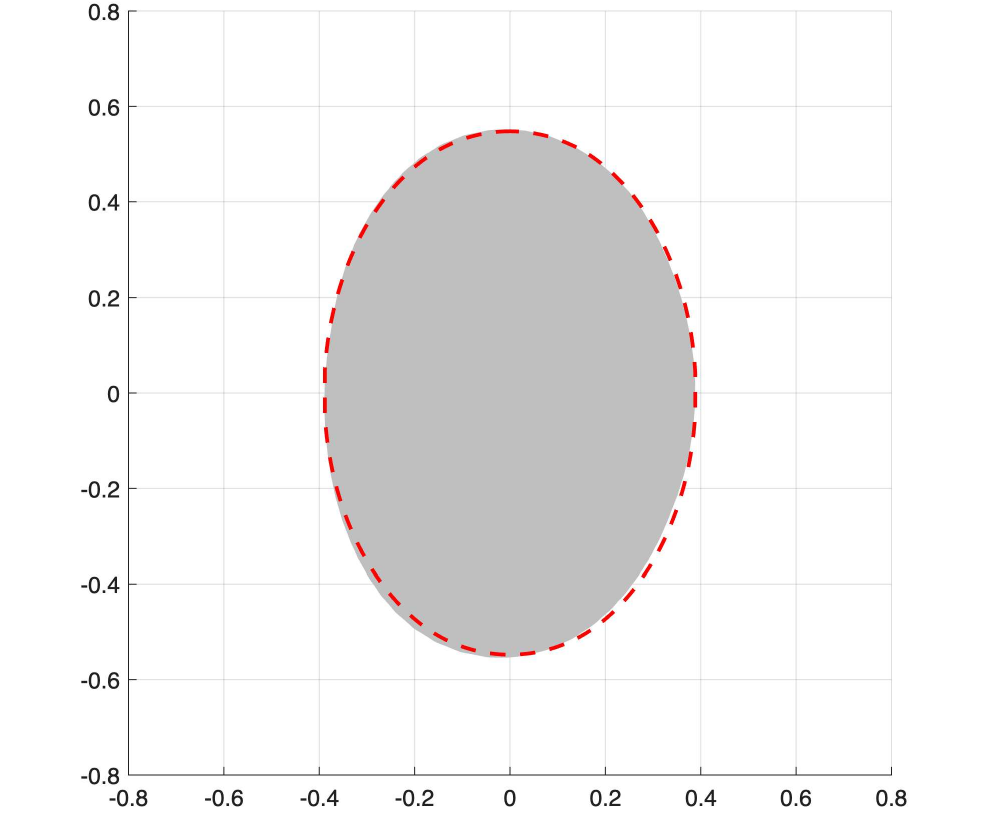}} &
        {\includegraphics[width=0.21\textwidth]{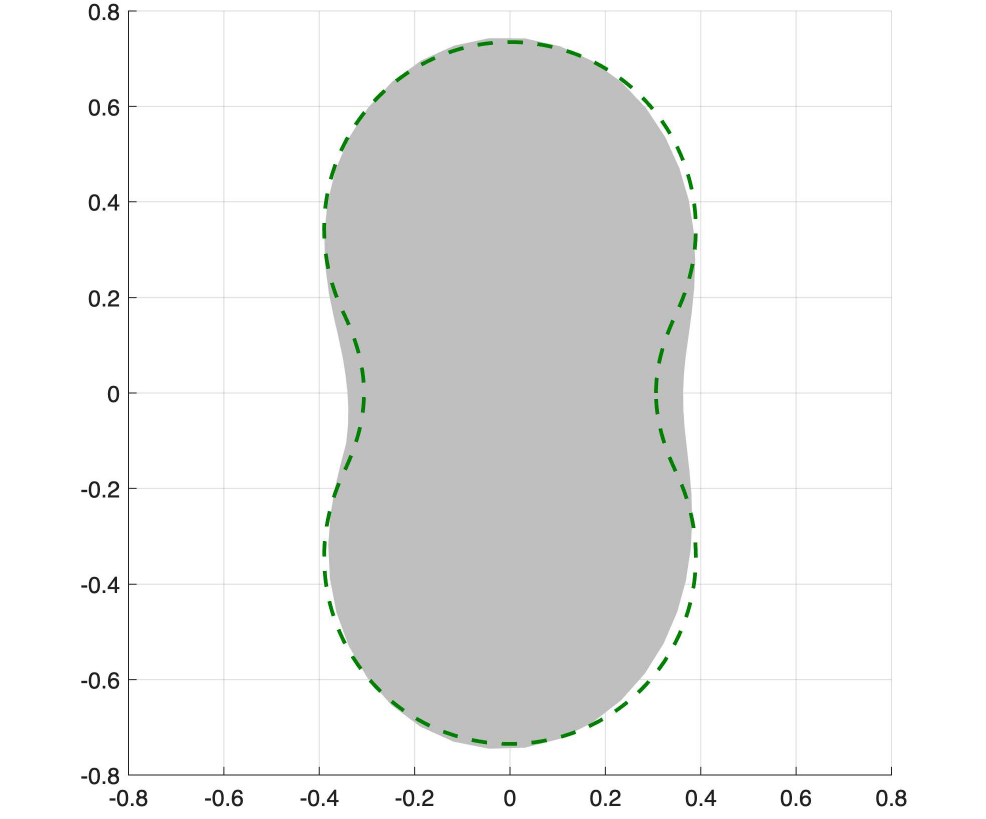}}
    \end{tabular}
}

\subfigure[$\pmb c^{(0)} = (-0.2, -0.4, 0.3)^\top$, $r^{(0)} = 0.8$, $\kappa=3$.]{
    \begin{tabular}{cccc}
        {\includegraphics[width=0.21\textwidth]{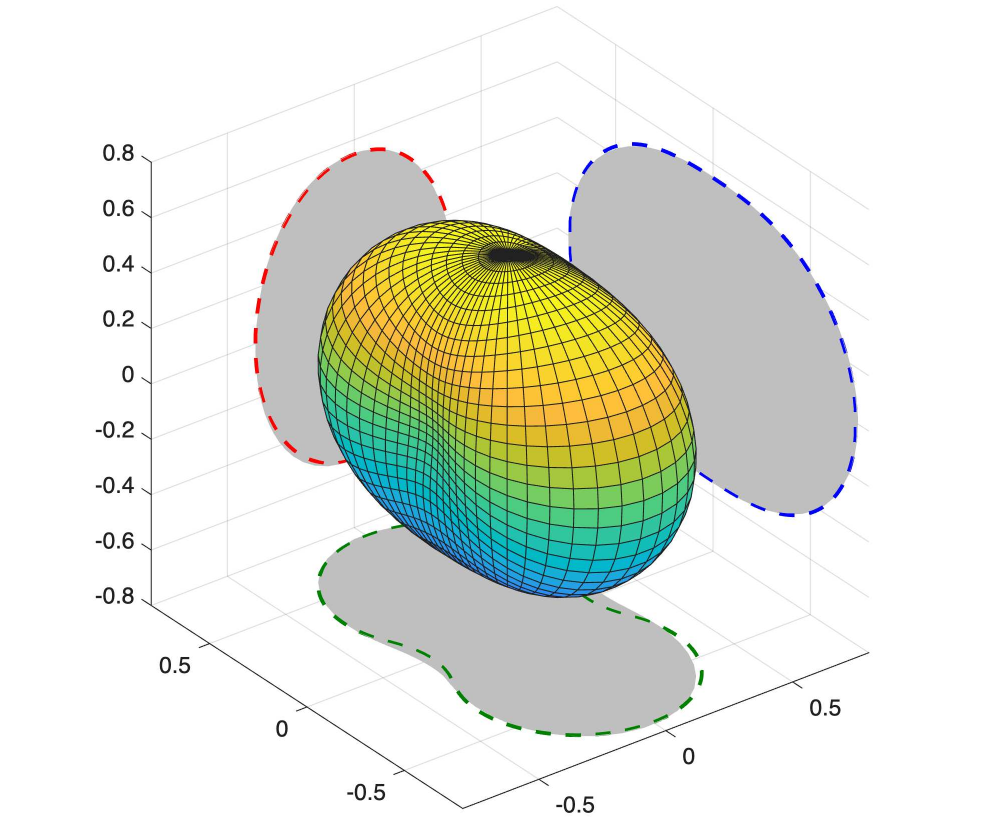}} &
        {\includegraphics[width=0.21\textwidth]{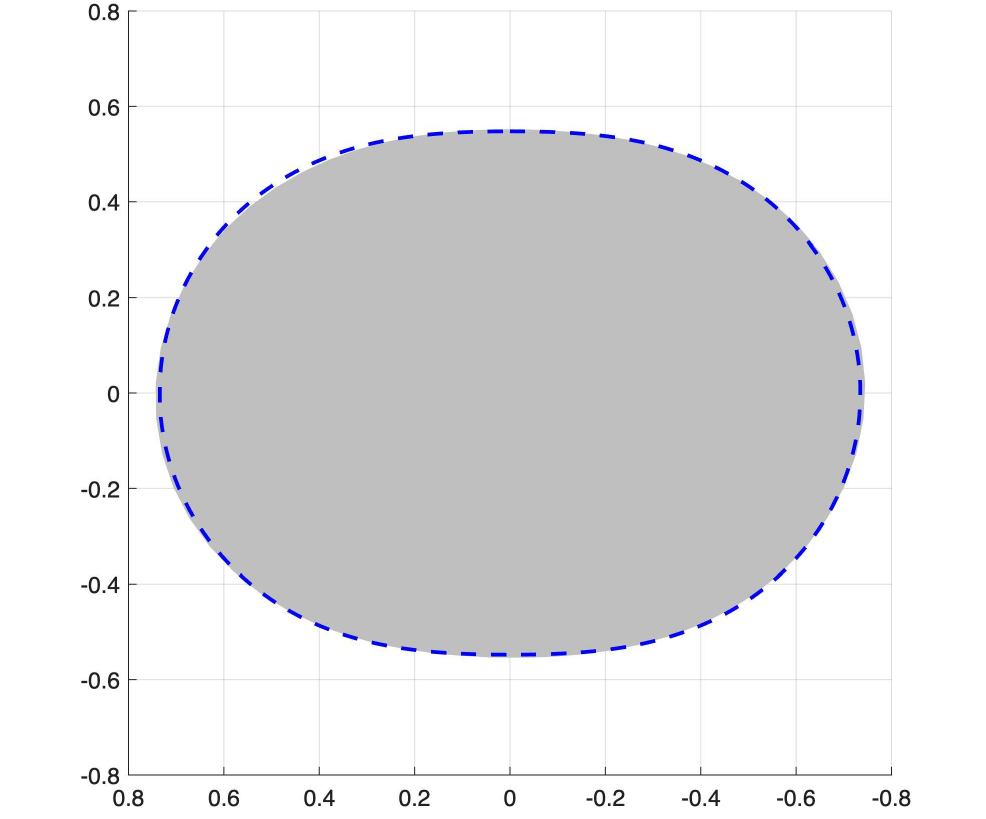}} &
        {\includegraphics[width=0.21\textwidth]{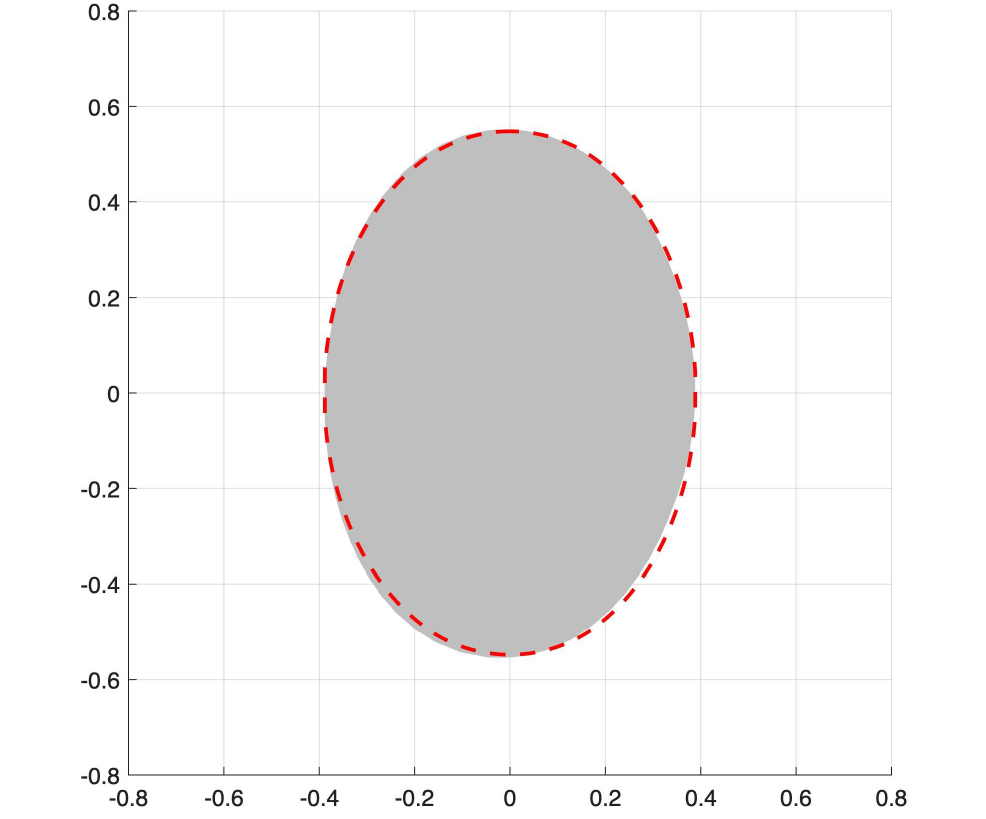}} &
        {\includegraphics[width=0.21\textwidth]{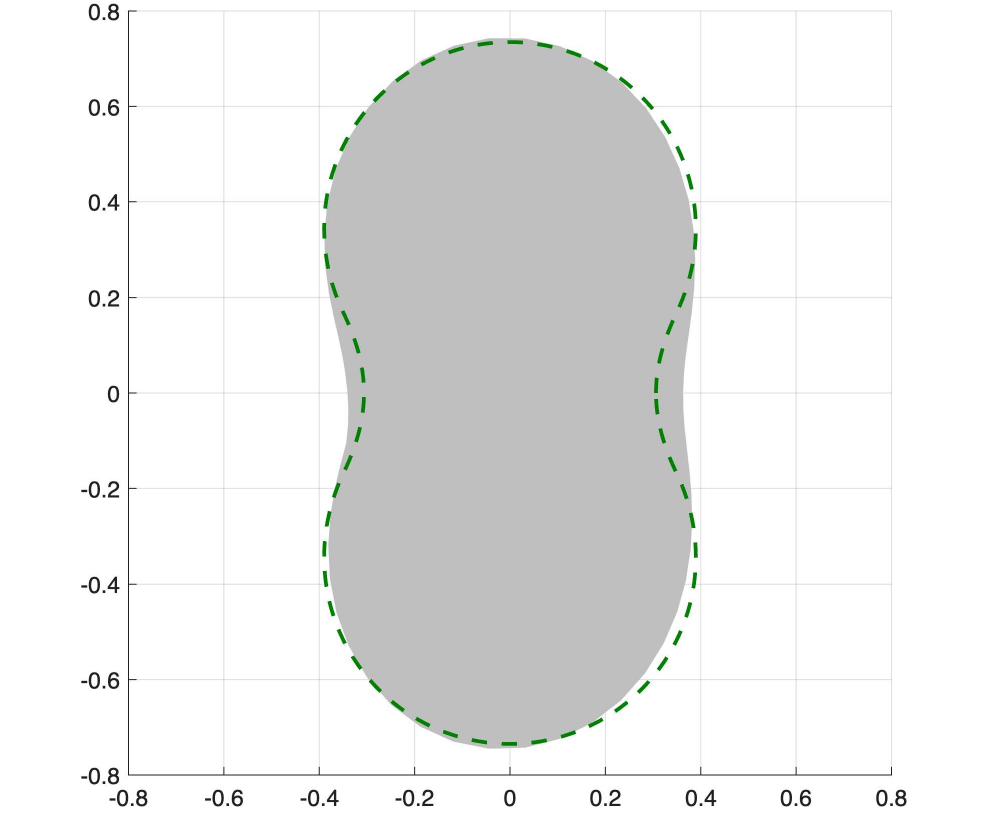}}
    \end{tabular}
}

\subfigure[$\pmb c^{(0)} = (-0.1, 0.6, 0.2)^\top$, $r^{(0)} = 0.3$, $\kappa=6$.]{
    \begin{tabular}{cccc}
        {\includegraphics[width=0.21\textwidth]{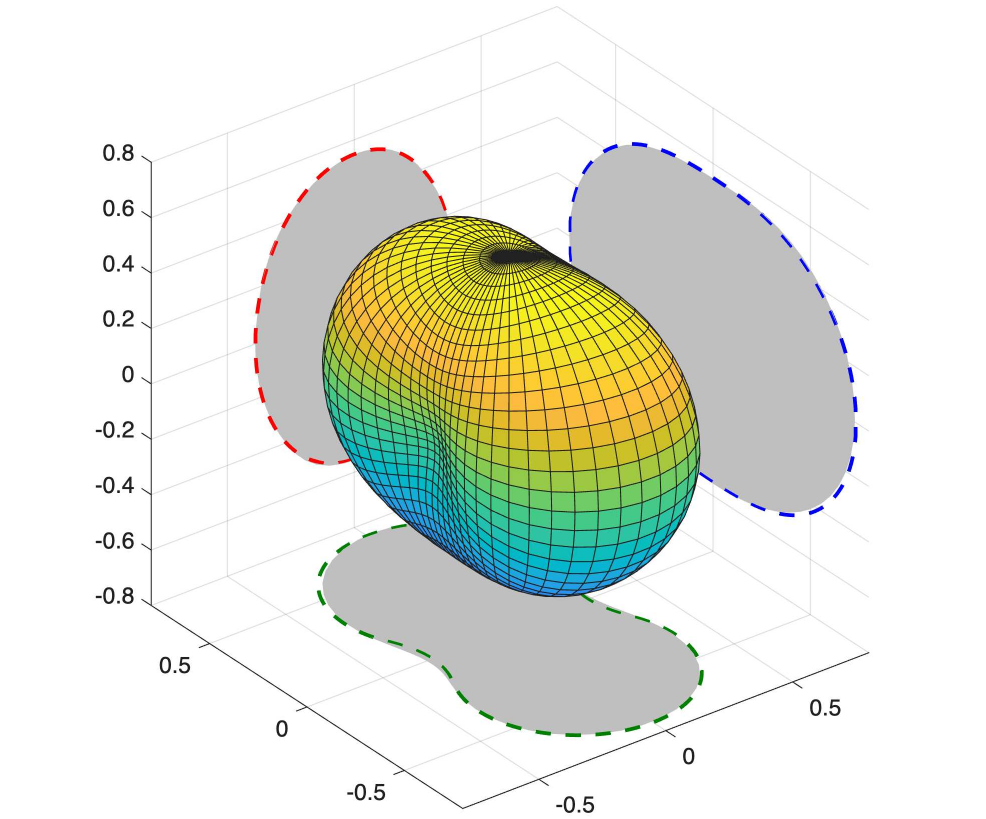}} &
        {\includegraphics[width=0.21\textwidth]{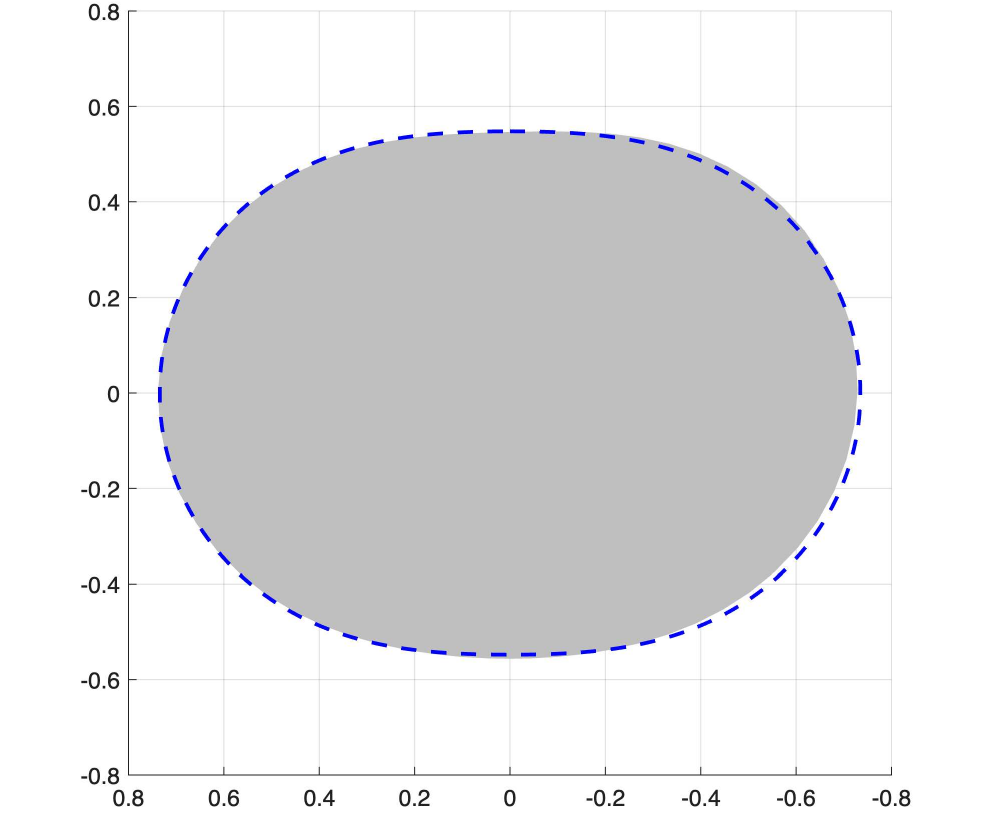}} &
        {\includegraphics[width=0.21\textwidth]{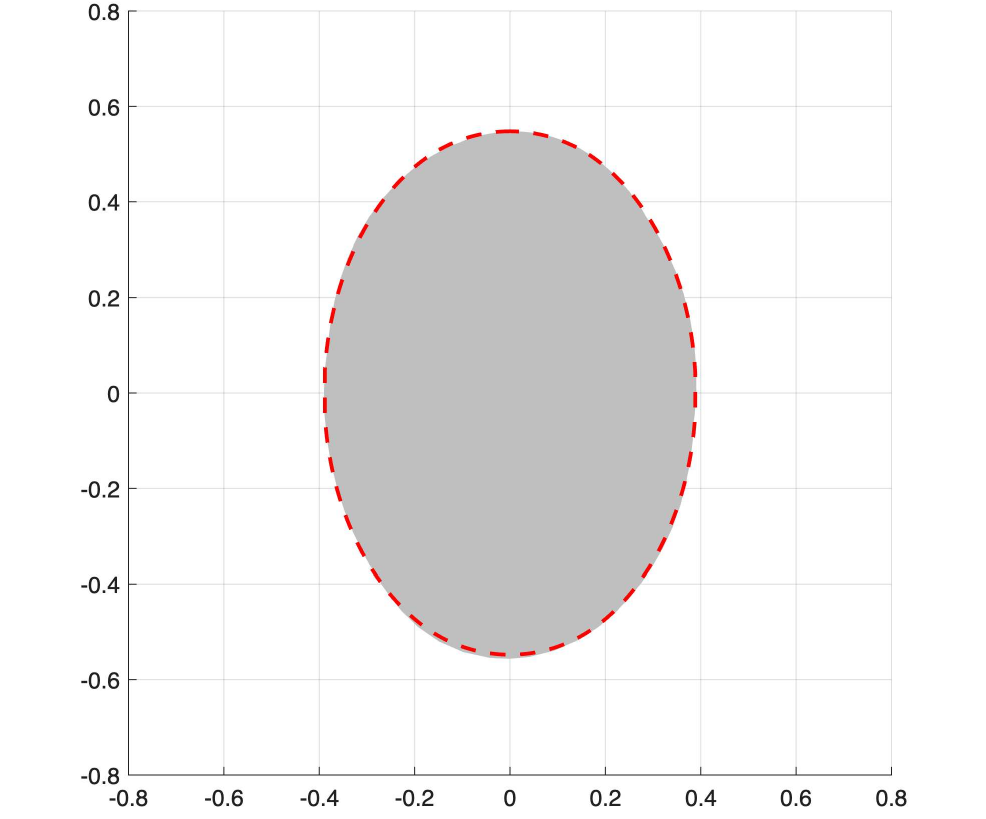}} &
        {\includegraphics[width=0.21\textwidth]{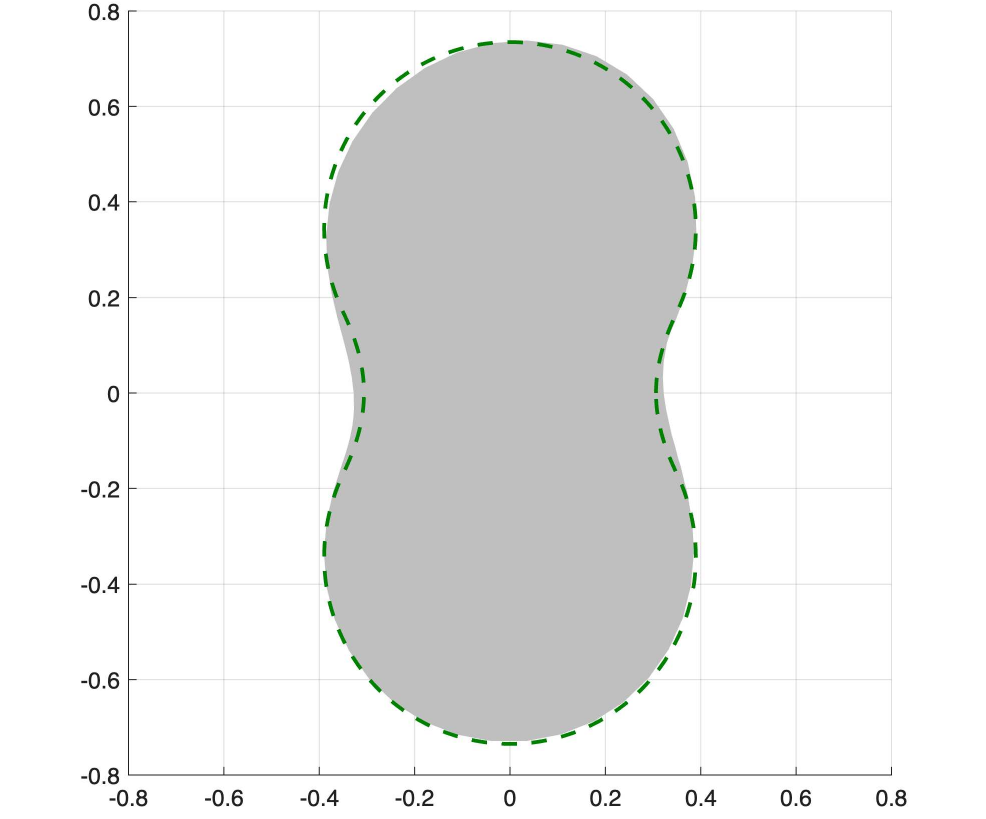}}
    \end{tabular}
}
\caption{Reconstruction of a pinched ball-shaped obstacle from phaseless far-field data generated by two point sources located at \((4,0,0)^\top\) and \((-4,0,0)^\top\) with $5\%$ noise, $\epsilon=0.015$.}\label{fig_ex3.3}
\end{figure}

\vspace{2ex}
{\noindent\bf Example 3: Reconstructions under different boundary conditions.}
\vspace{1ex}

We next examine whether the proposed method can be applied to boundary conditions other than the Dirichlet condition. To enable a clean side-by-side comparison among the three boundary conditions, all three cases use the same initial guess $\pmb c^{(0)}=(0,-0.1,0.1)^{\top}$, $r^{(0)}=0.6$ and the same wavenumber $\kappa=6.6$. Figure~\ref{fig_ex3} presents the reconstructions of the cushion-shaped obstacle from phaseless far-field data under Dirichlet, Neumann, and impedance boundary conditions, respectively.

\begin{figure}[!htbp]
\centering
\subfigure[Dirichlet boundary condition, $\kappa=6.6$, $\epsilon=0.004$.]{
    \begin{tabular}{cccc}
        {\includegraphics[width=0.21\textwidth]{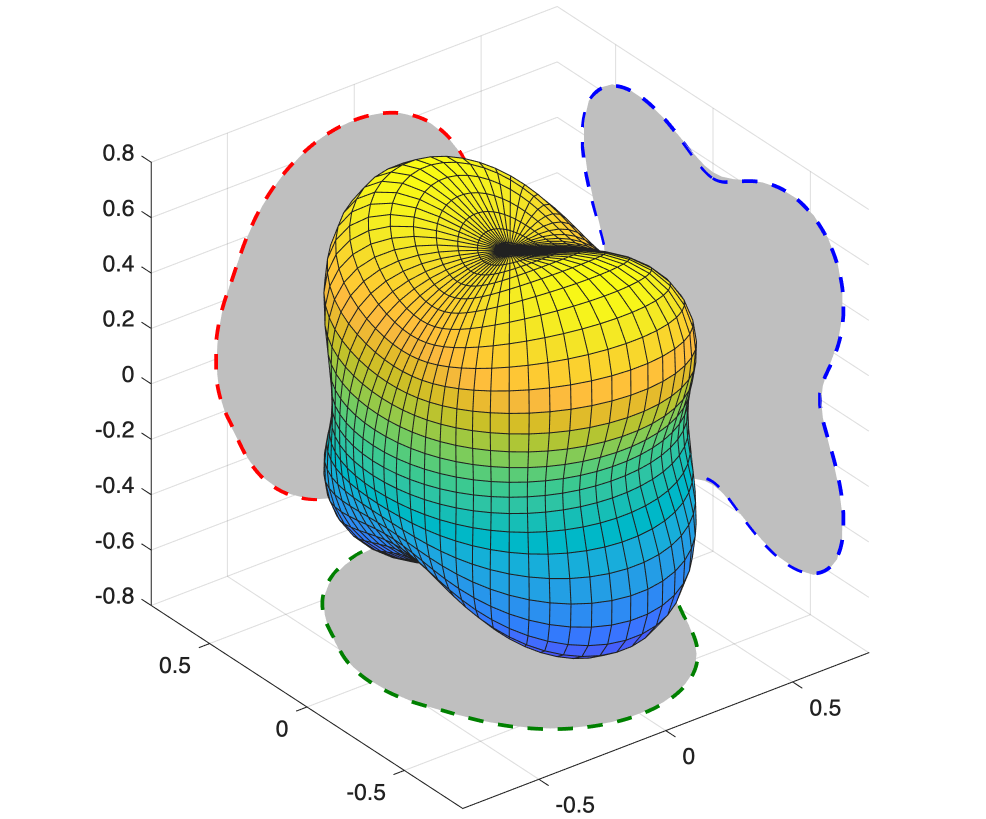}} &
        {\includegraphics[width=0.21\textwidth]{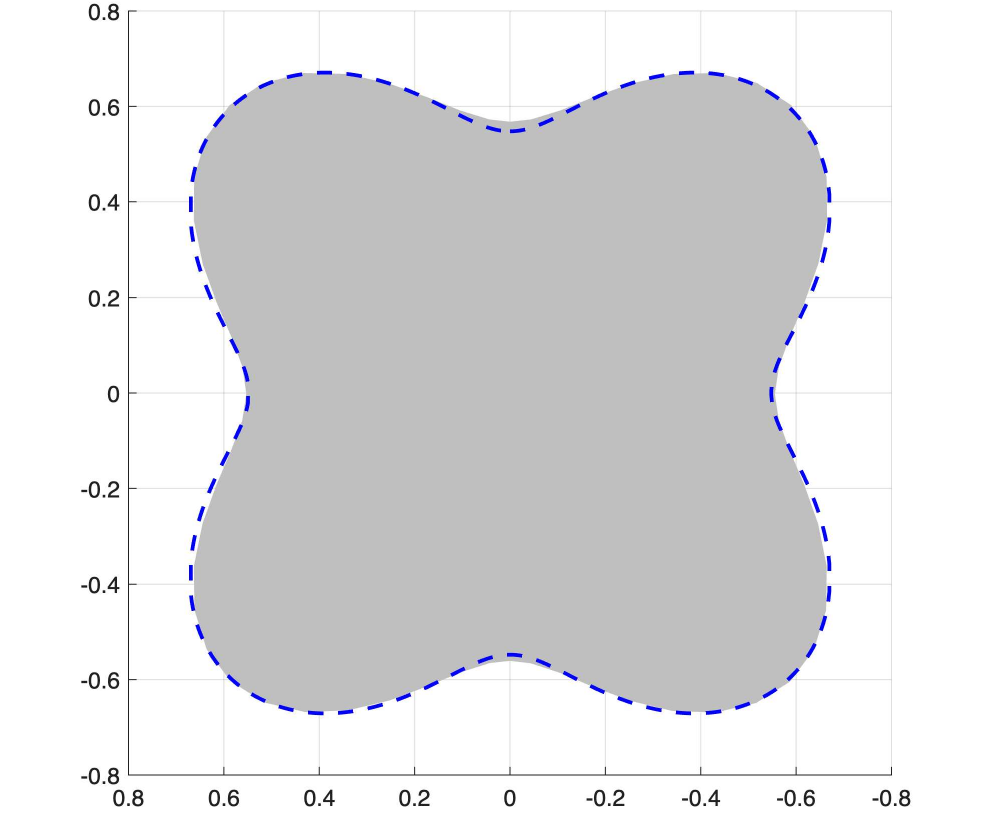}} &
        {\includegraphics[width=0.21\textwidth]{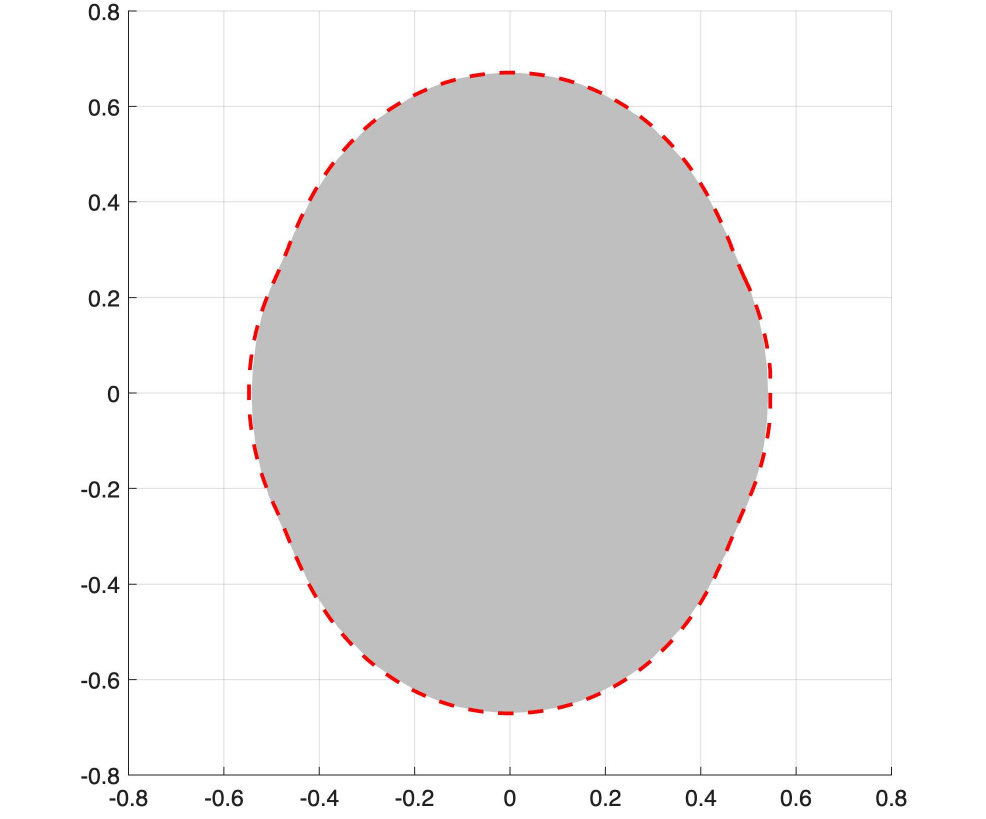}} &
        {\includegraphics[width=0.21\textwidth]{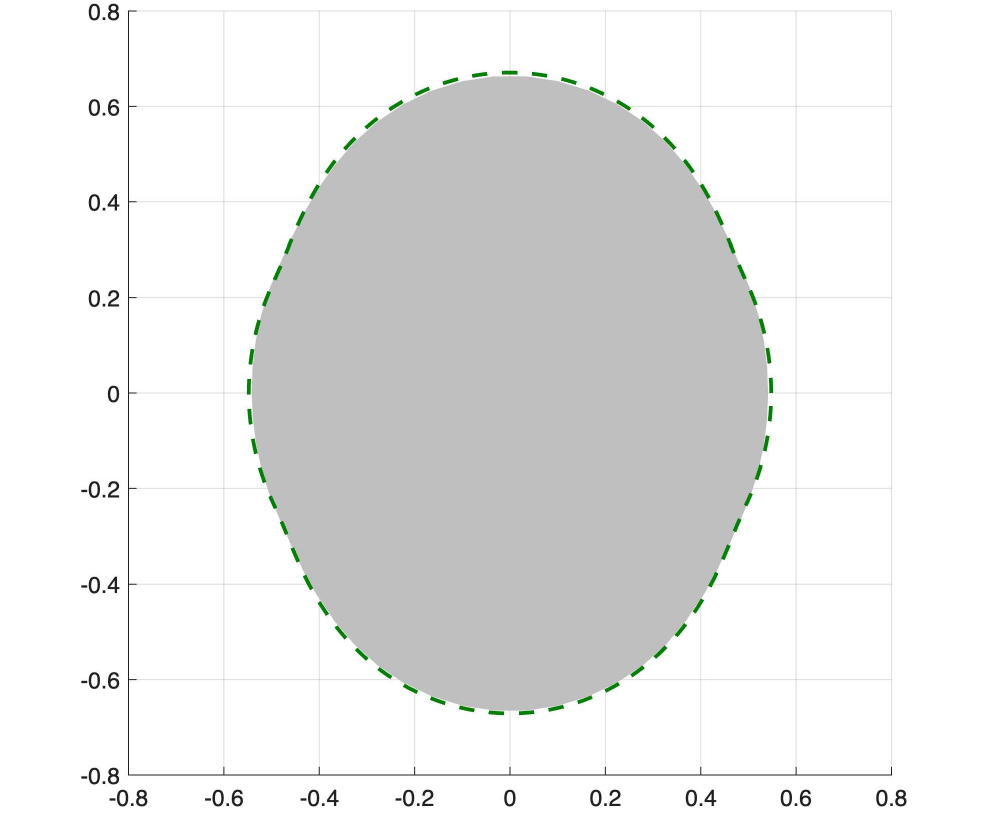}}
    \end{tabular}
}

\subfigure[Neumann boundary condition, $\kappa=6.6$, $\epsilon=0.018$.]{
    \begin{tabular}{cccc}
        {\includegraphics[width=0.21\textwidth]{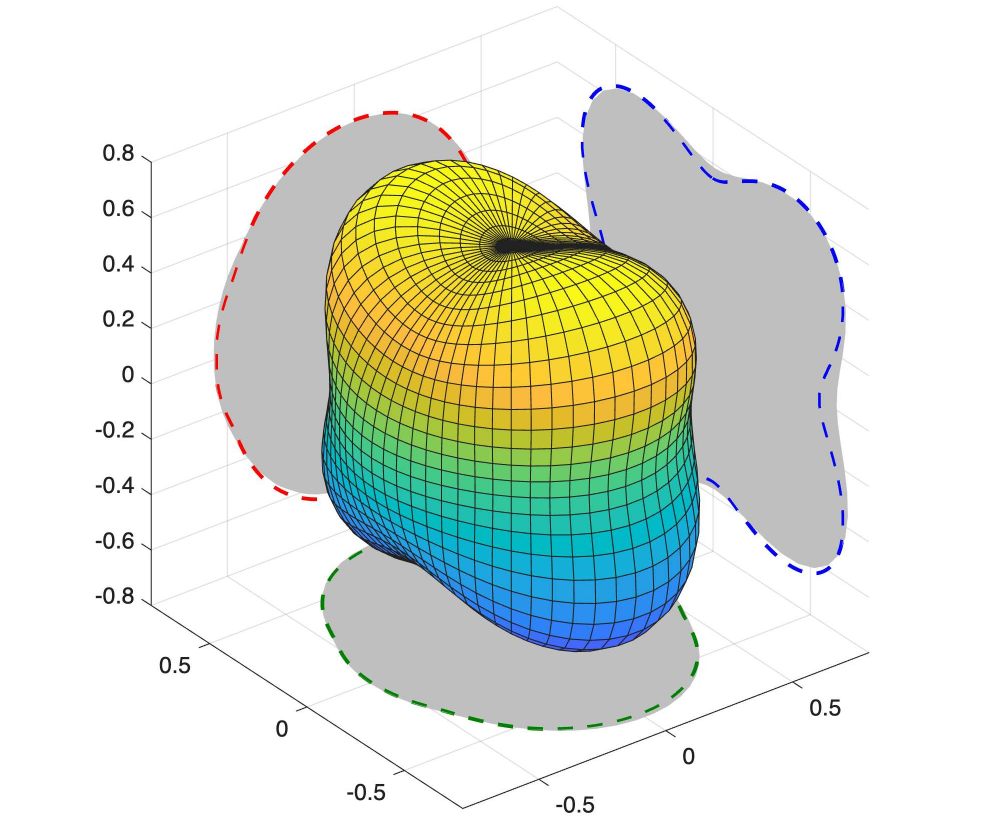}} &
        {\includegraphics[width=0.21\textwidth]{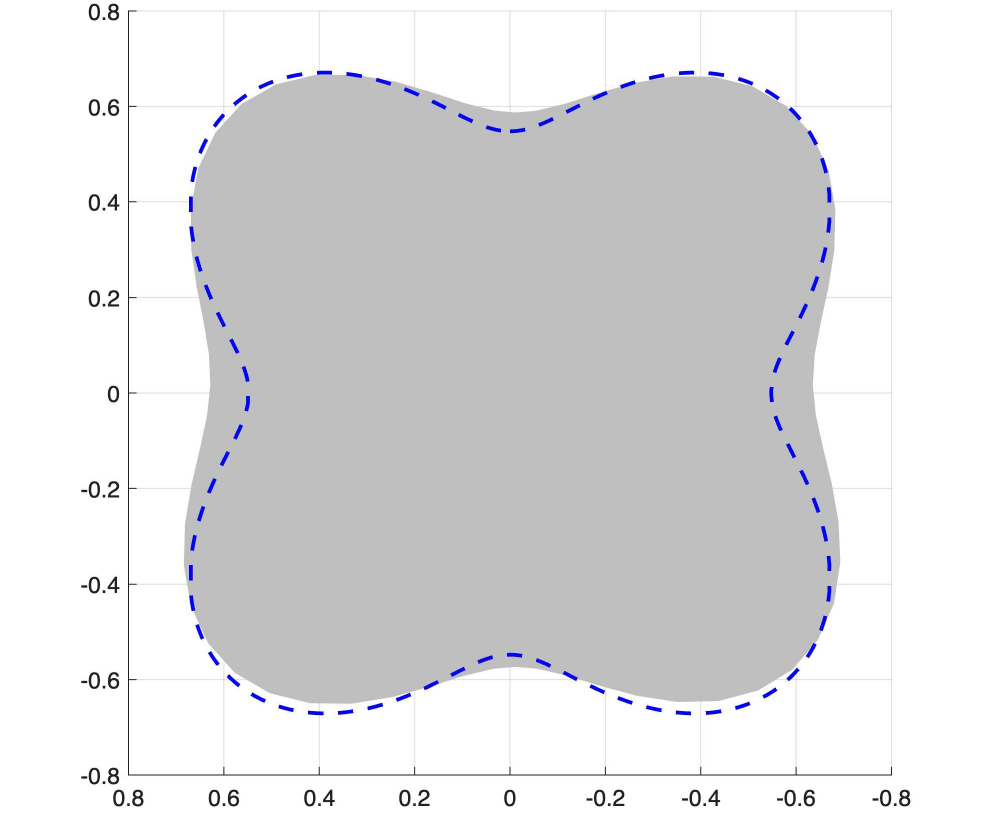}} &
        {\includegraphics[width=0.21\textwidth]{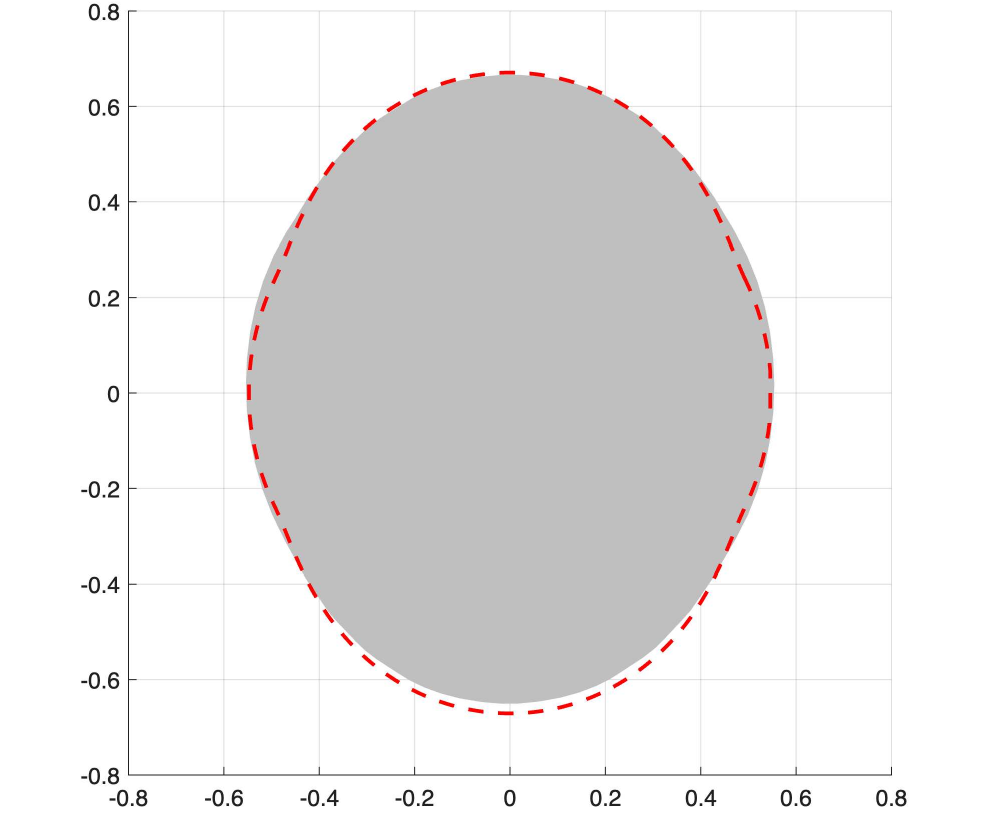}} &
        {\includegraphics[width=0.21\textwidth]{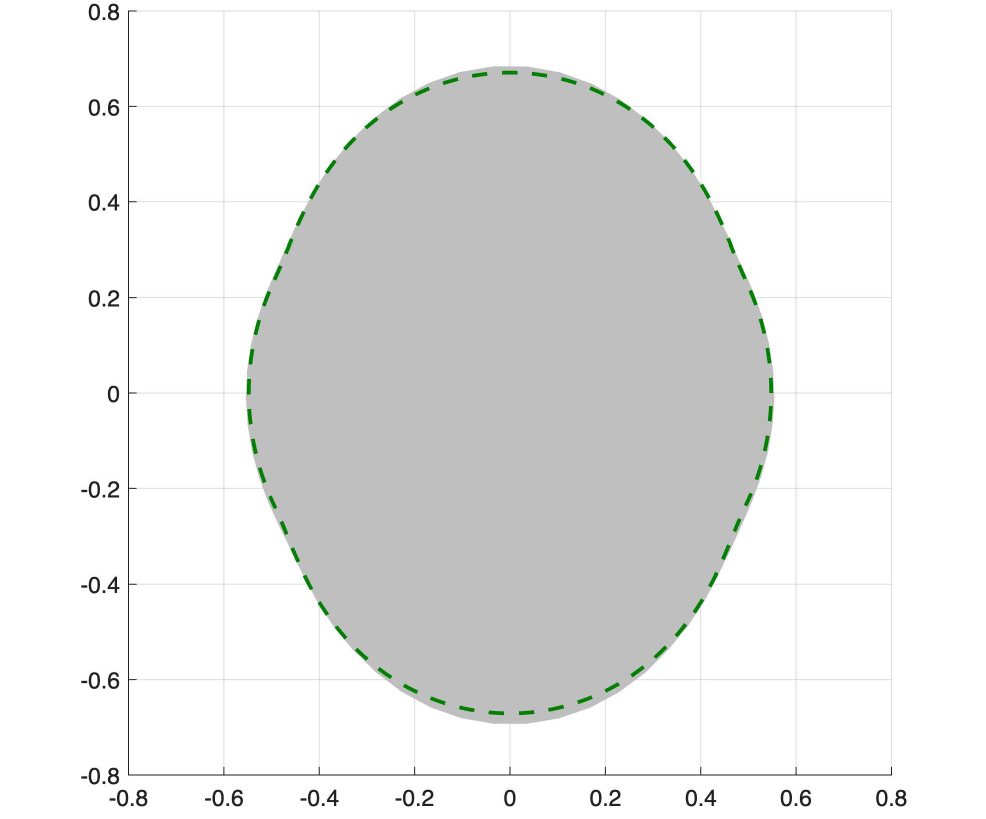}}
    \end{tabular}
}

\subfigure[Impedance boundary condition with $\eta=1+1{\rm i}$, $\kappa=6.6$, $\epsilon=0.008$.]{
    \begin{tabular}{cccc}
        {\includegraphics[width=0.21\textwidth]{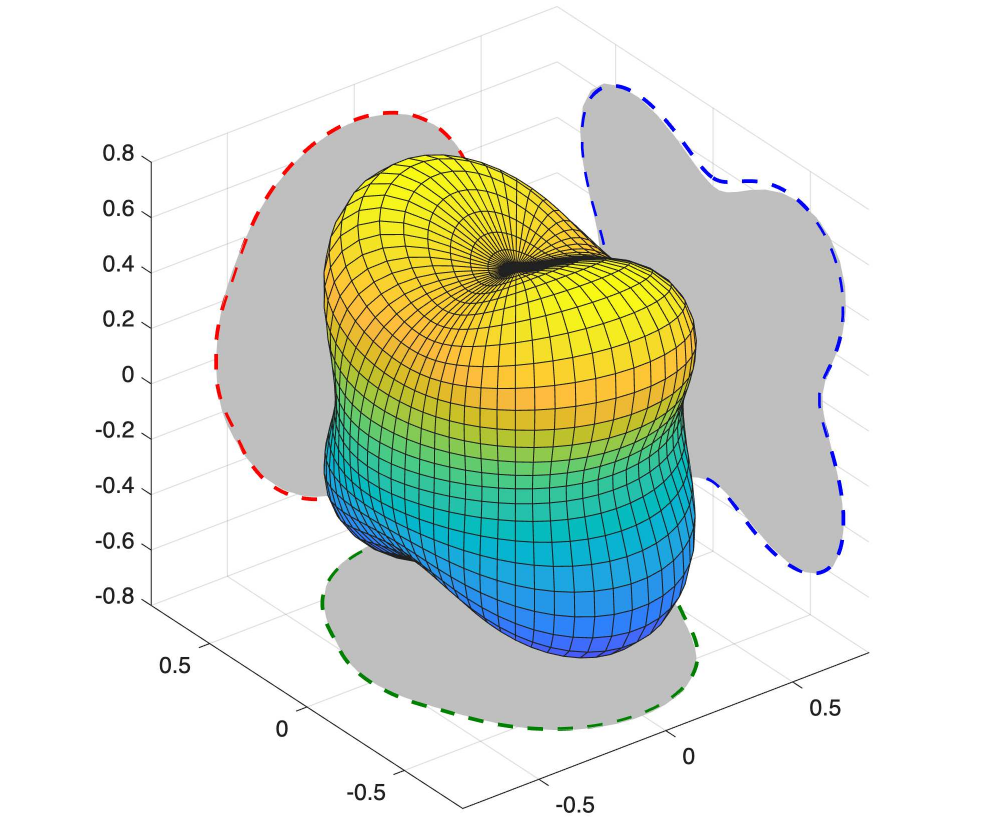}} &
        {\includegraphics[width=0.21\textwidth]{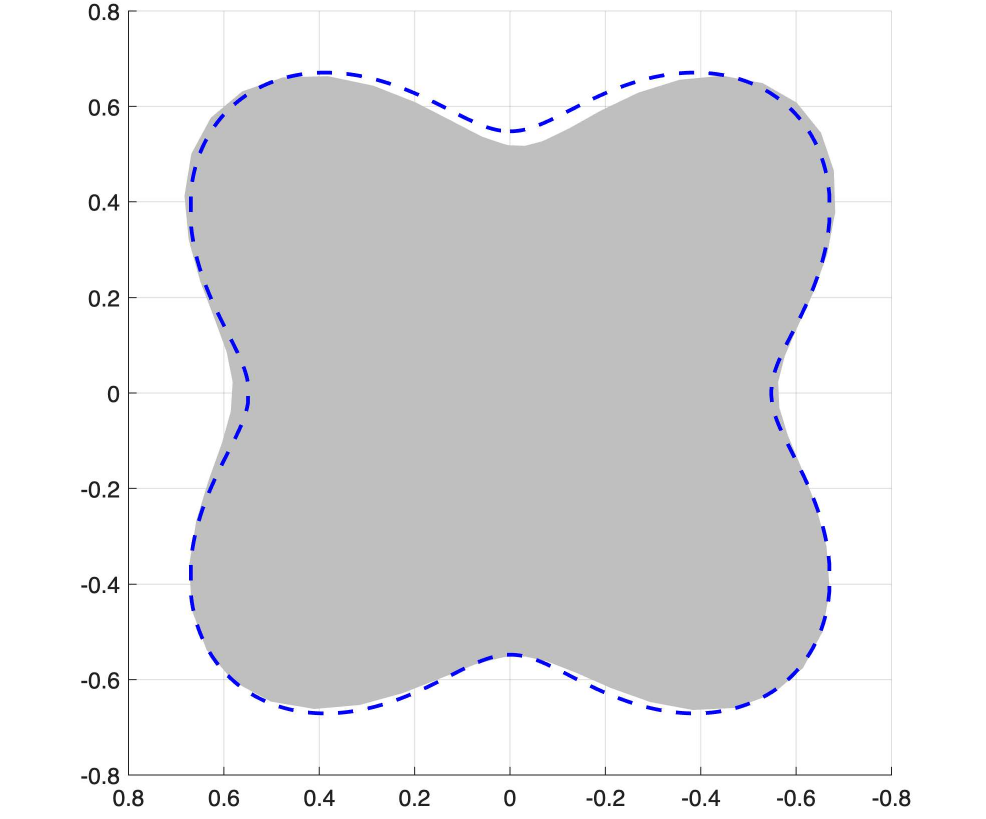}} &
        {\includegraphics[width=0.21\textwidth]{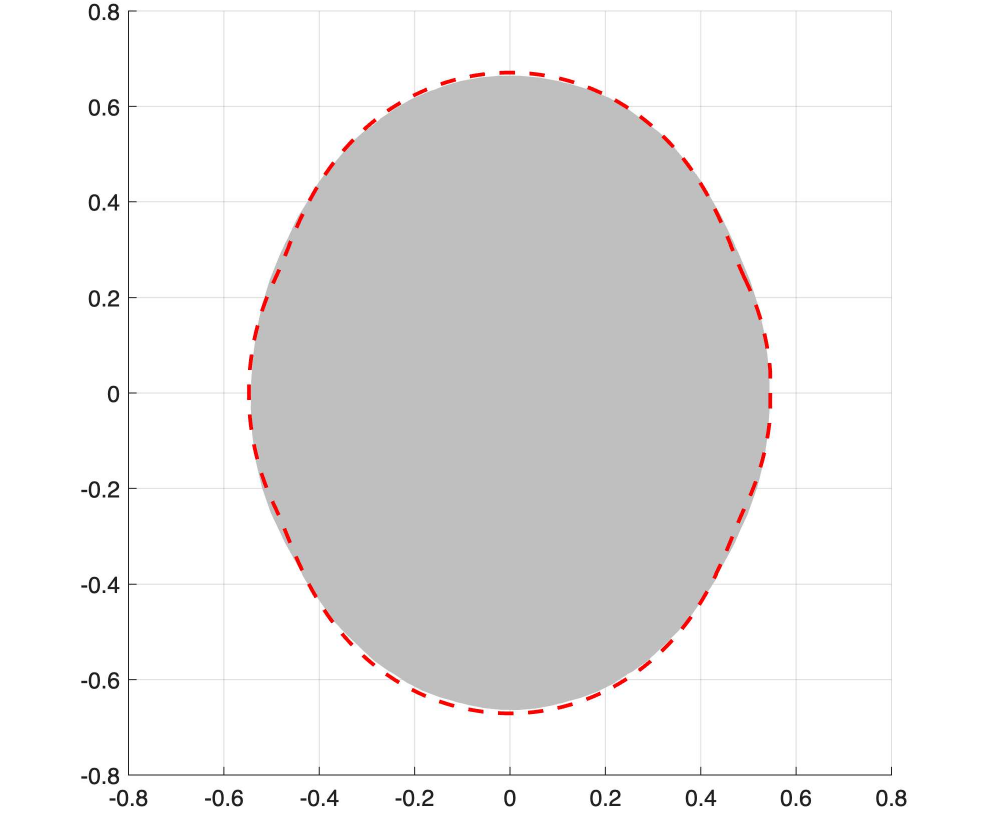}} &
        {\includegraphics[width=0.21\textwidth]{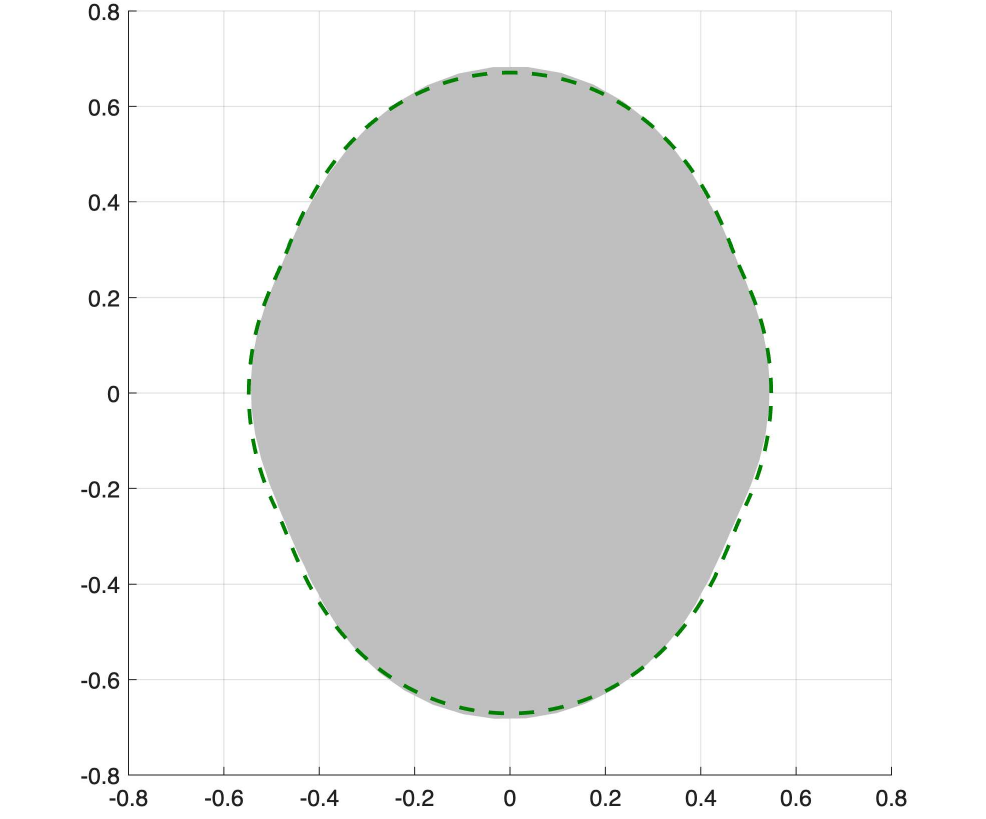}}
    \end{tabular}
}
\caption{Reconstructions of a cushion-shaped obstacle from phaseless far-field data under different boundary conditions with $1\%$ noise. The incident field is generated by point sources located at $(4,0,0)^\top$ and $(-4,0,0)^\top$, and the initial guess is a sphere with $\pmb{c}^{(0)}=(0,-0.1,0.1)^{\top}$ and $r^{(0)}=0.6$.}\label{fig_ex3}
\end{figure}

\begin{figure}[!htbp]
    \centering
    \begin{tabular}{cccc}
        \subfigure[Initial surface]{\includegraphics[width=0.217\textwidth]{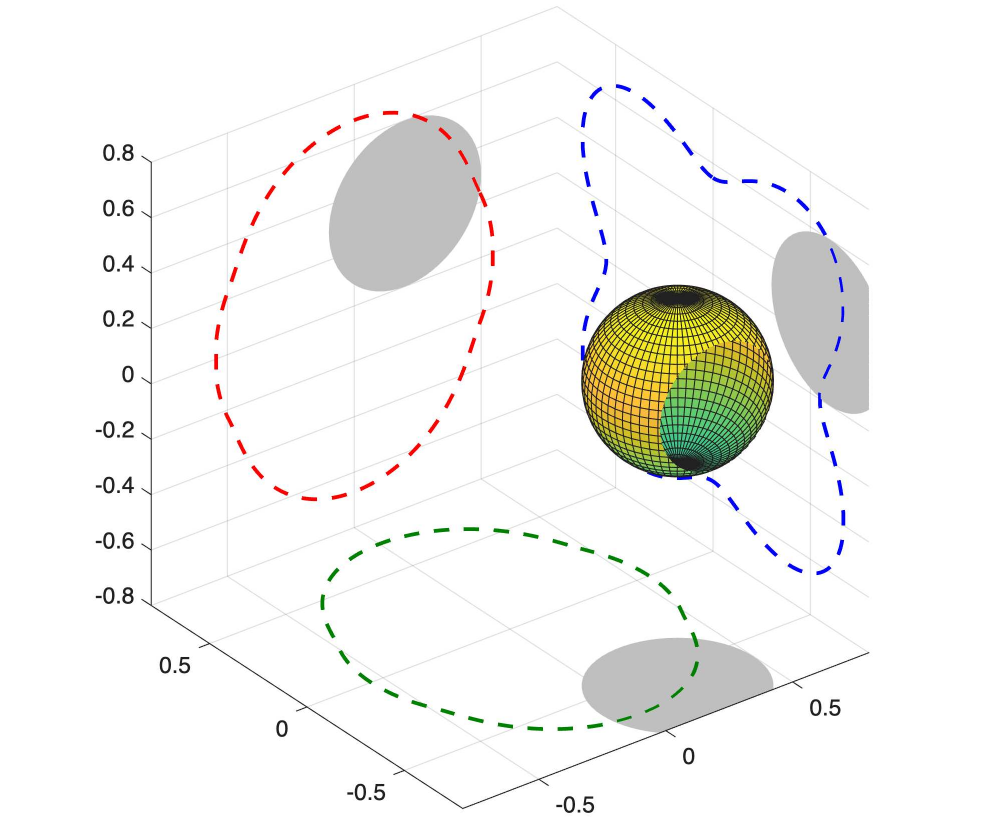}} &
        \subfigure[$M=1$]{\includegraphics[width=0.217\textwidth]{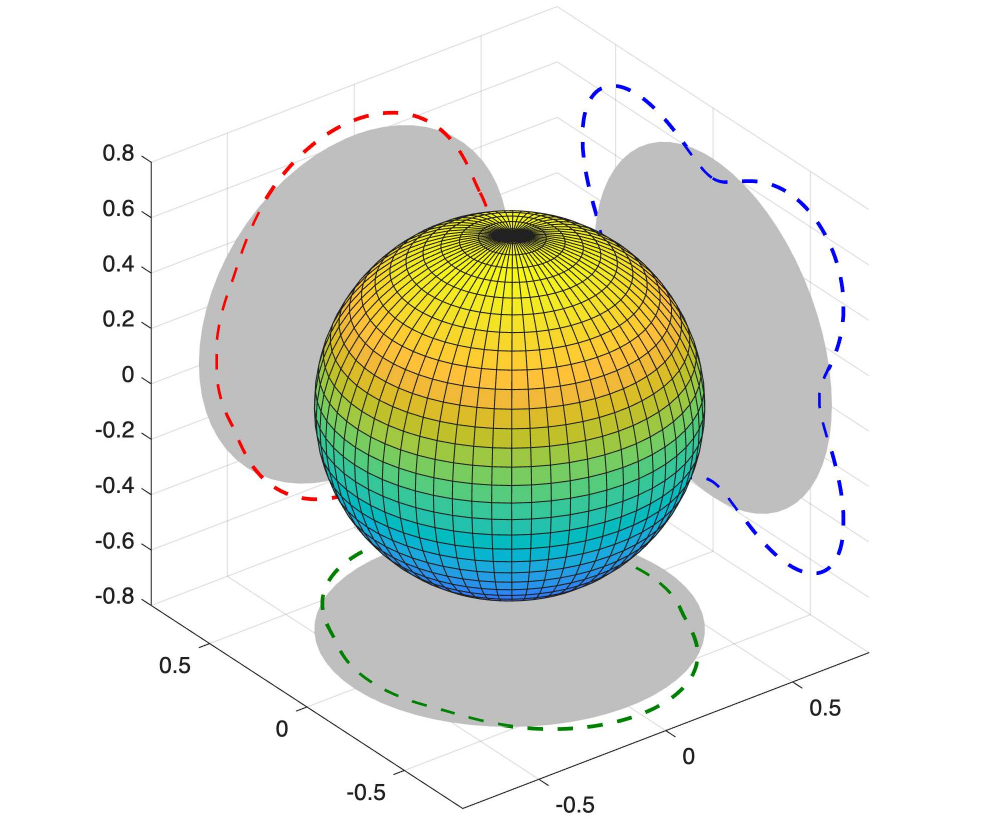}} &
        \subfigure[$M=3$]{\includegraphics[width=0.217\textwidth]{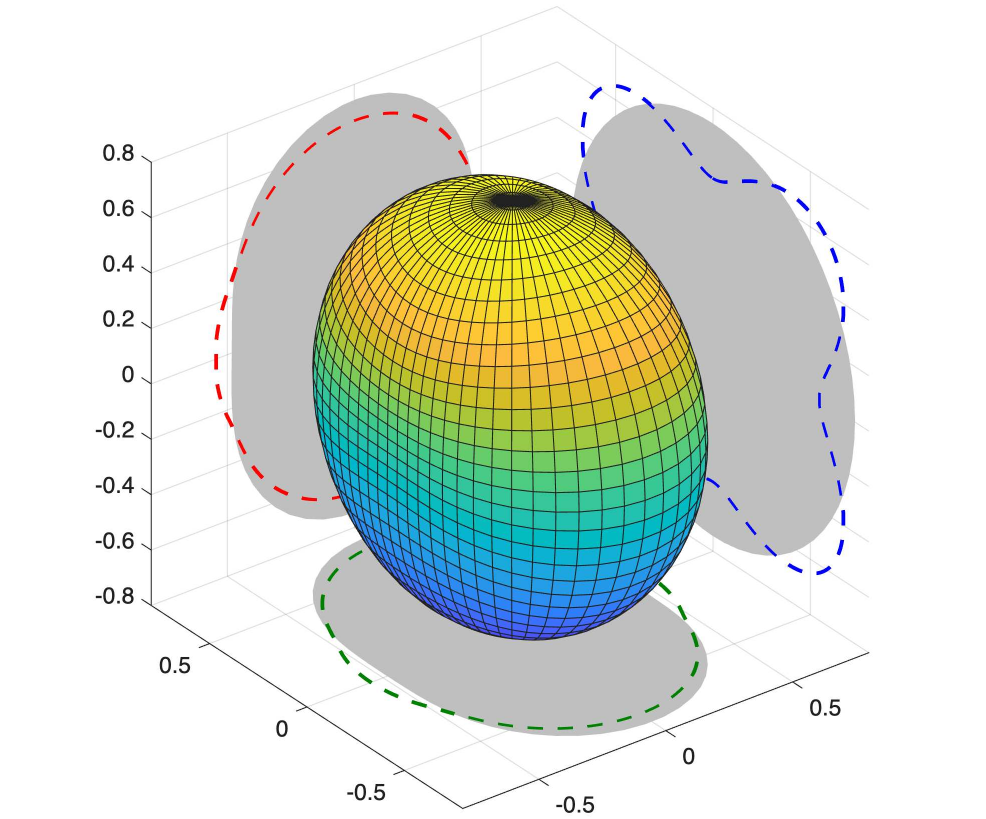}} &
        \subfigure[$M=5$]{\includegraphics[width=0.217\textwidth]{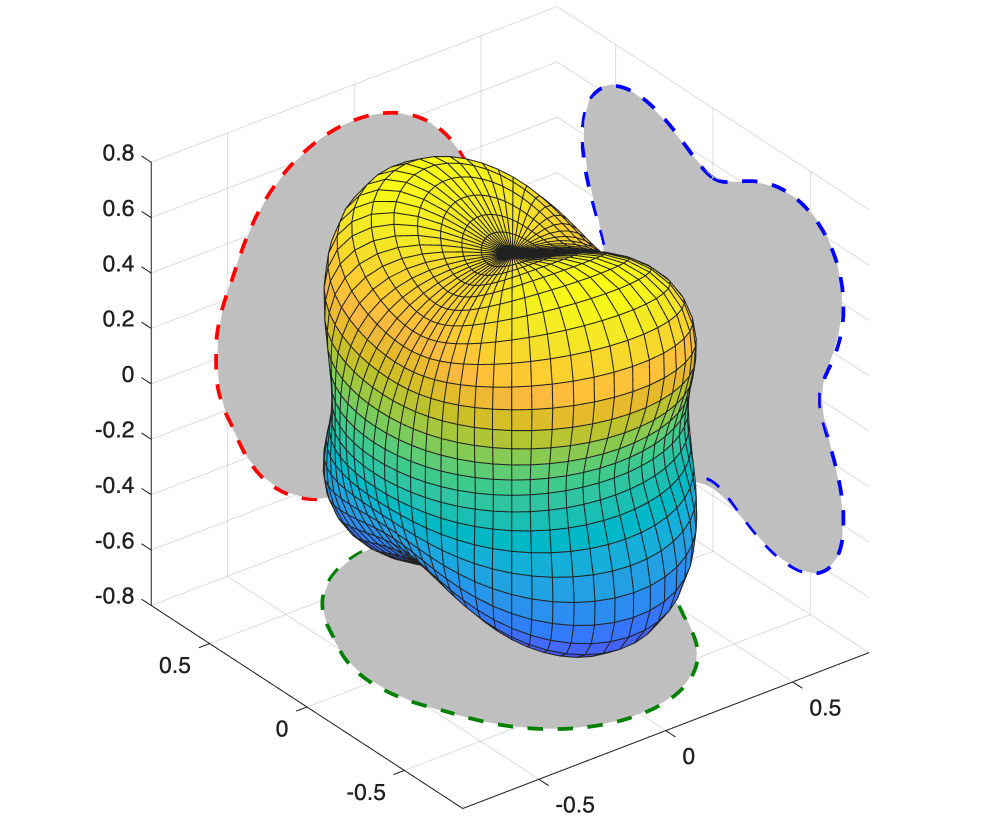}}
    \end{tabular}
    \caption{Reconstruction of a cushion-shaped obstacle from phased scattered field data with $1\%$ noise. $\kappa=4$, $\pmb d=(\pm 1,0,0)^{\top}$, $R=5$, $\pmb c^{(0)}=(0.2,-0.6,0.3)^{\top}$, $r^{(0)}=0.3$, $\epsilon=0.009$. Subfigures~(a)--(d) show the initial surface and the progressive reconstructions for truncation numbers $M=1,3,5$, respectively.}\label{fig_ex4}
\end{figure}

\vspace{2ex}
{\noindent\bf Example 4: Reconstruction process with increasing truncation numbers.}
\vspace{1ex}

Figure~\ref{fig_ex4} illustrates a process of reconstruction for a cushion-shaped obstacle from phased scattered field data with $1\%$ noise. 
Starting from the initial spherical surface in subfigure~(a), the process is shown for truncation numbers $M=1,3,5$ in subfigures~(b)--(d), respectively. 
The reconstructed surface gradually captures more geometric details of the true obstacle with increasing $M$, which demonstrates the effectiveness of iteratively increasing truncation numbers for improving reconstruction quality.

\vspace{2ex}
{\noindent\bf Example 5: Reconstruction of a non-star-shaped obstacle under the Dirichlet boundary condition.}
\vspace{1ex}

To further test the performance of the proposed method on a non-star-shaped geometry, 
we consider a bean-like surface obstacle under the Dirichlet boundary condition. 
The parametrization of the bean-like surface is given by
\[
z(\theta,\phi) =
\begin{pmatrix}
\begin{array}{l}
0.7\sqrt{1-0.1\cos(\pi\cos\theta)}\,\sin\theta\cos\phi \\[1ex]
0.7\left(\sqrt{1-0.4\cos(\pi\cos\theta)}\,\sin\theta\sin\phi + 0.3\cos(\pi\cos\theta)\right) \\[1ex]
0.7\cos\theta
\end{array}
\end{pmatrix}.
\]
The reconstruction results are shown in Figure~\ref{fig_ex5}. 
Subfigure~(a) displays the true shape from multiple viewpoints, 
in which the pronounced concave region along the $-y$ direction clearly reveals the nonconvex nature of the obstacle. 

Subfigure~(b) presents the reconstruction obtained from phaseless far-field data generated by two point sources located at $(4,0,0)^\top$ and $(-4,0,0)^\top$ with $1\%$ noise. The proposed approach successfully recovers the overall shape and location of the obstacle; however, since both sources illuminate the obstacle along the $\pm x$ direction, the concave region facing the $-y$ direction receives only weak indirect illumination, and the reconstructed surface in this region appears noticeably smoother than the true shape, failing to fully capture the depth of the indentation. 

To overcome this limitation, Subfigure~(c) shows the reconstruction obtained by adding two more point sources at $(0,4,0)^\top$ and $(0,-4,0)^\top$, so that the incident waves now directly probe the concave region from the $\pm y$ direction. With this enriched illumination, the reconstructed surface fits the true shape much more accurately, and in particular the concave feature is clearly resolved. This comparison demonstrates that the proposed method is capable of handling non-star-shaped obstacles, and that supplementing the incident sources along directions that directly illuminate the concave region significantly improves the reconstruction quality.

\begin{figure}[!htbp]
\centering 
\subfigure[True shape of bean-like surface obstacle.]{
    \begin{tabular}{cccc}
        {\includegraphics[width=0.21\textwidth]{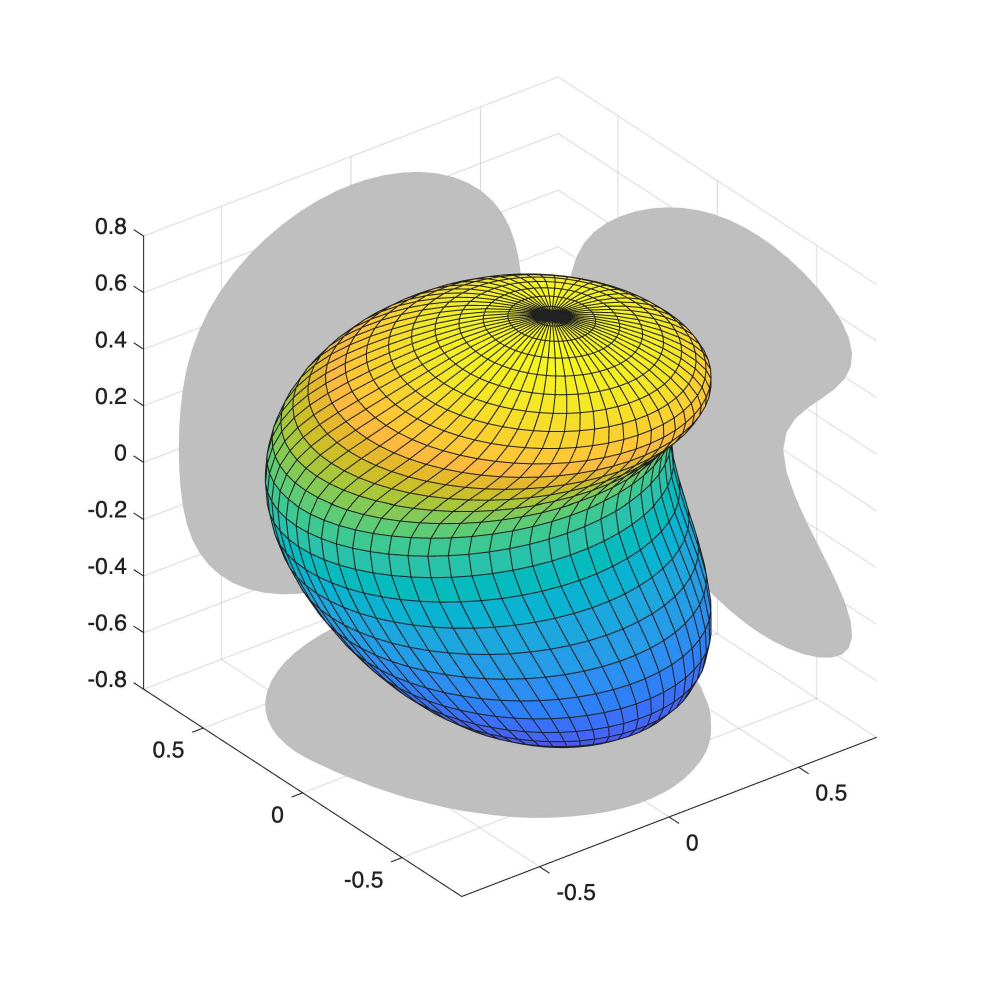}} &
        {\includegraphics[width=0.21\textwidth]{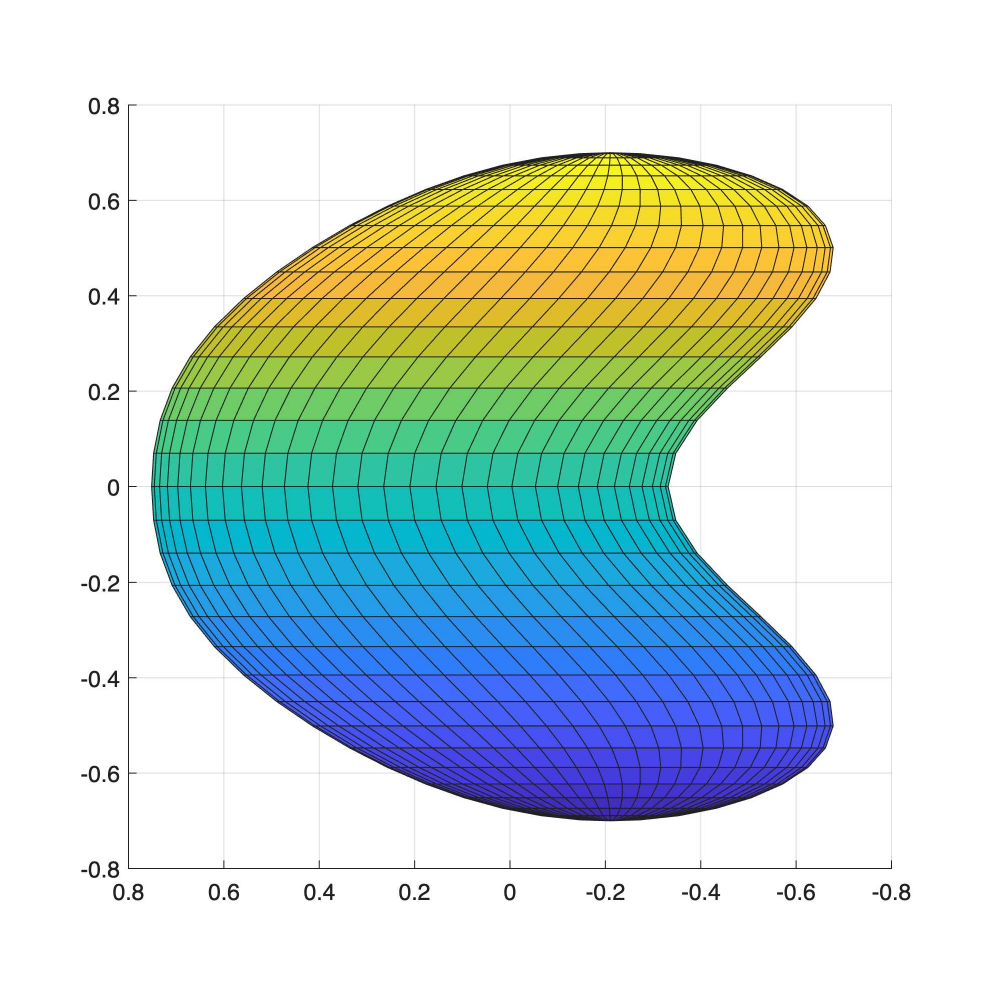}} &
        {\includegraphics[width=0.21\textwidth]{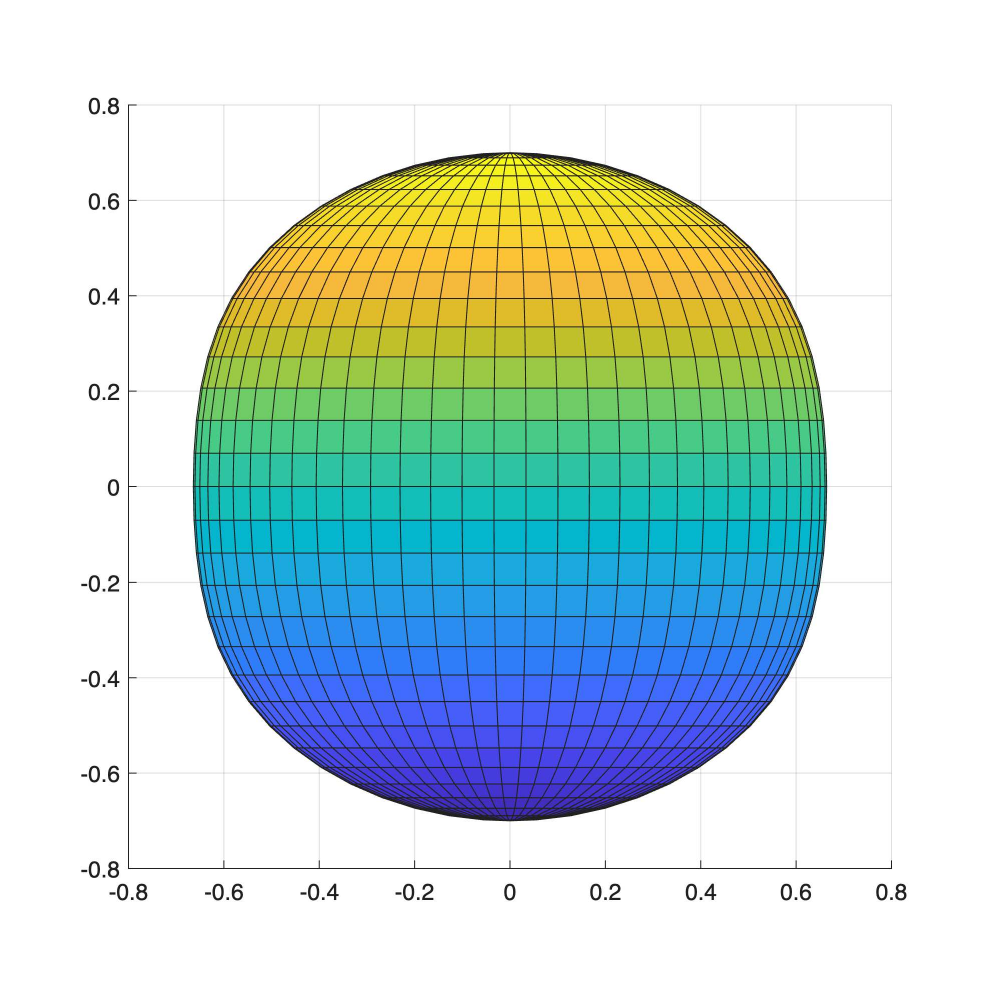}} &
        {\includegraphics[width=0.21\textwidth]{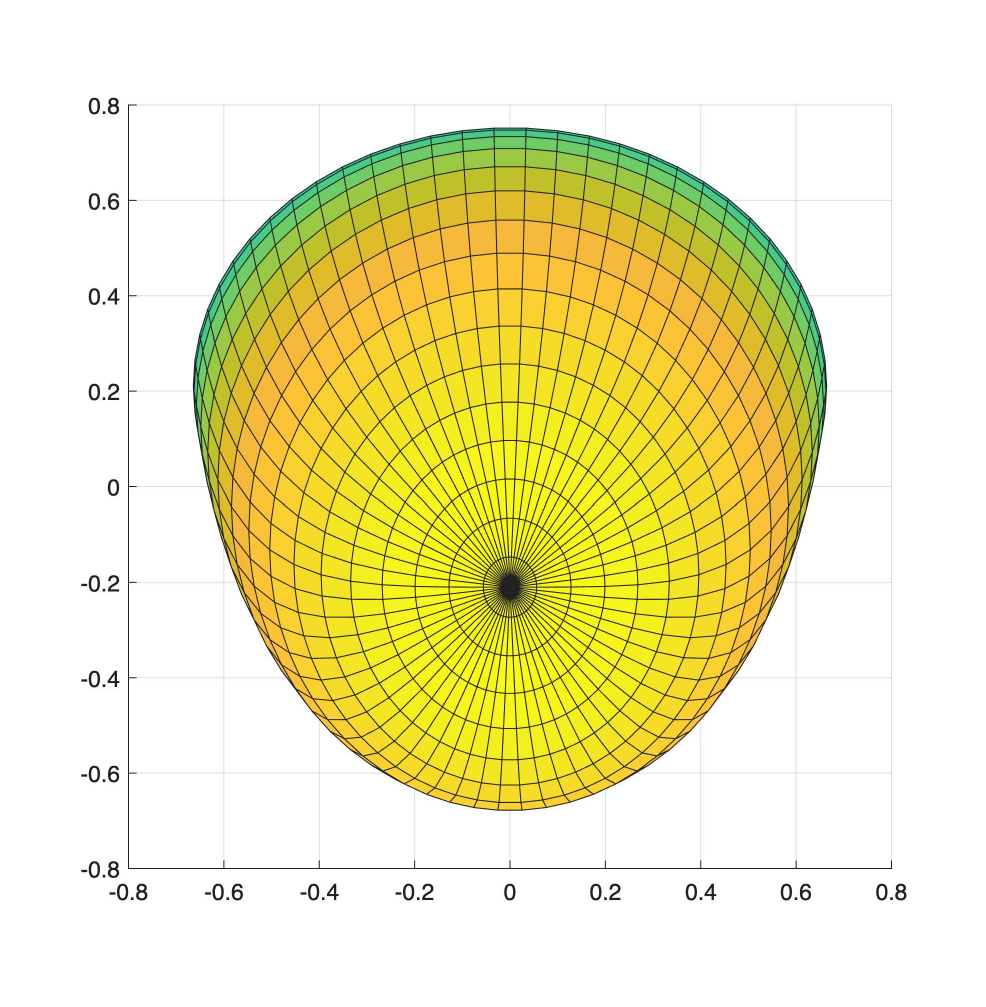}}
    \end{tabular}
}
\subfigure[Reconstructions from two point sources with $\epsilon=0.008$.]{
    \begin{tabular}{cccc}
        {\includegraphics[width=0.21\textwidth]{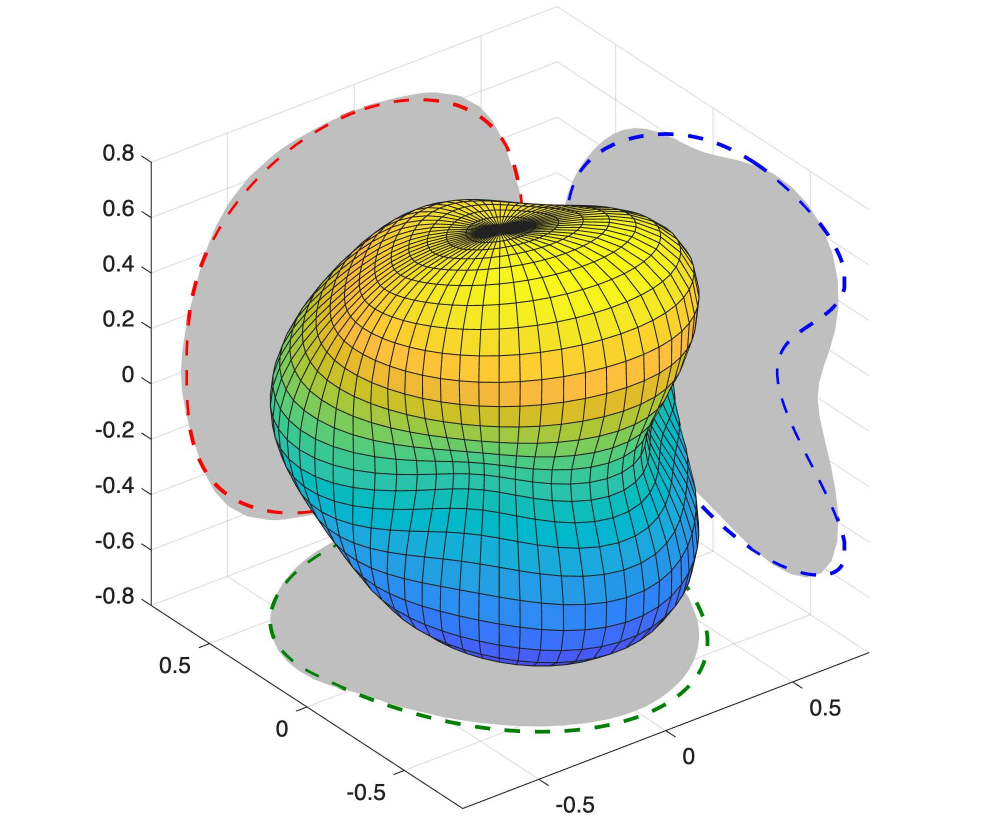}} &
        {\includegraphics[width=0.21\textwidth]{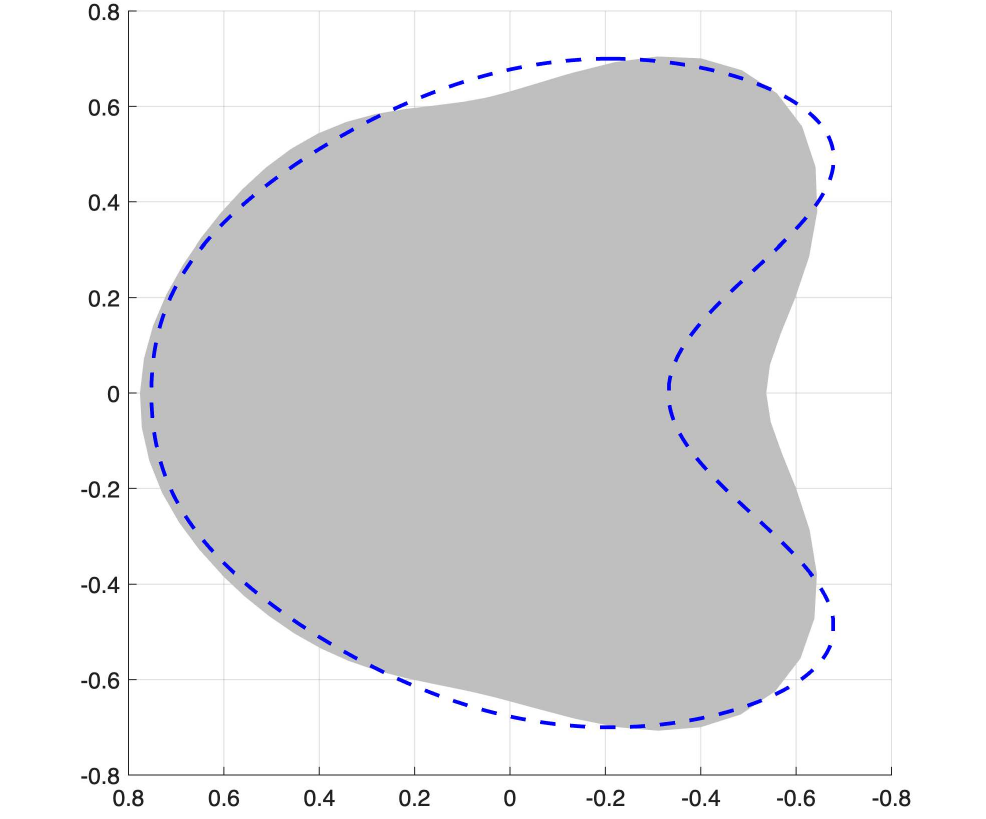}} &
        {\includegraphics[width=0.21\textwidth]{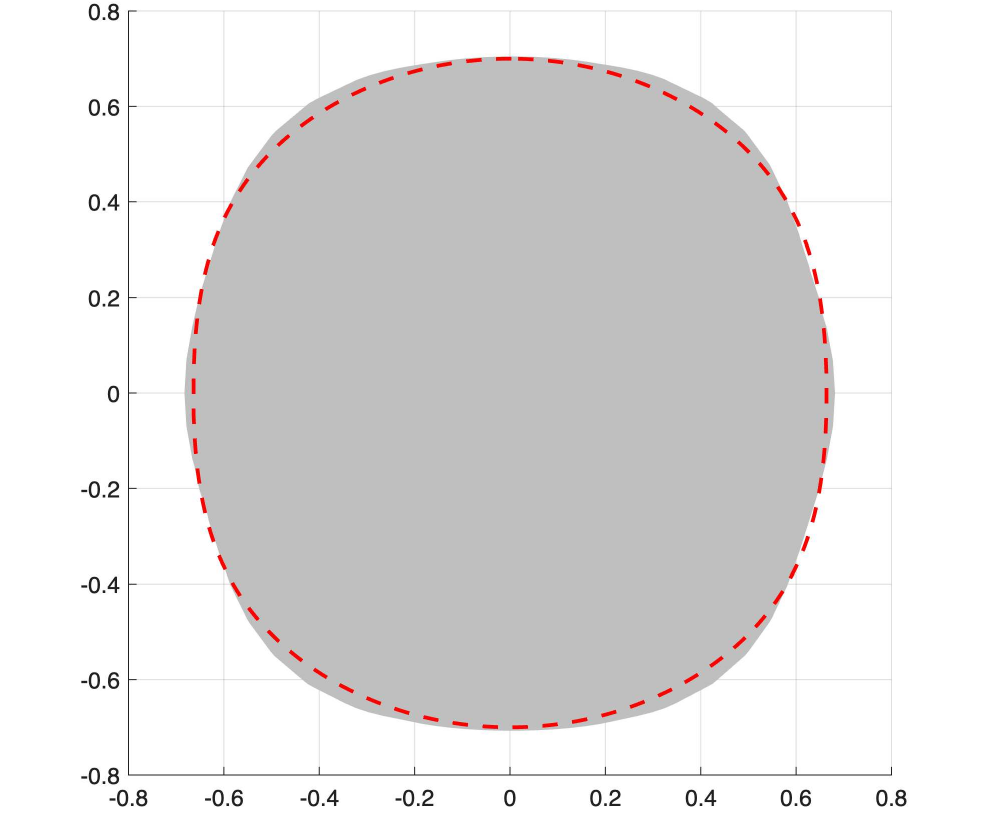}} &
        {\includegraphics[width=0.21\textwidth]{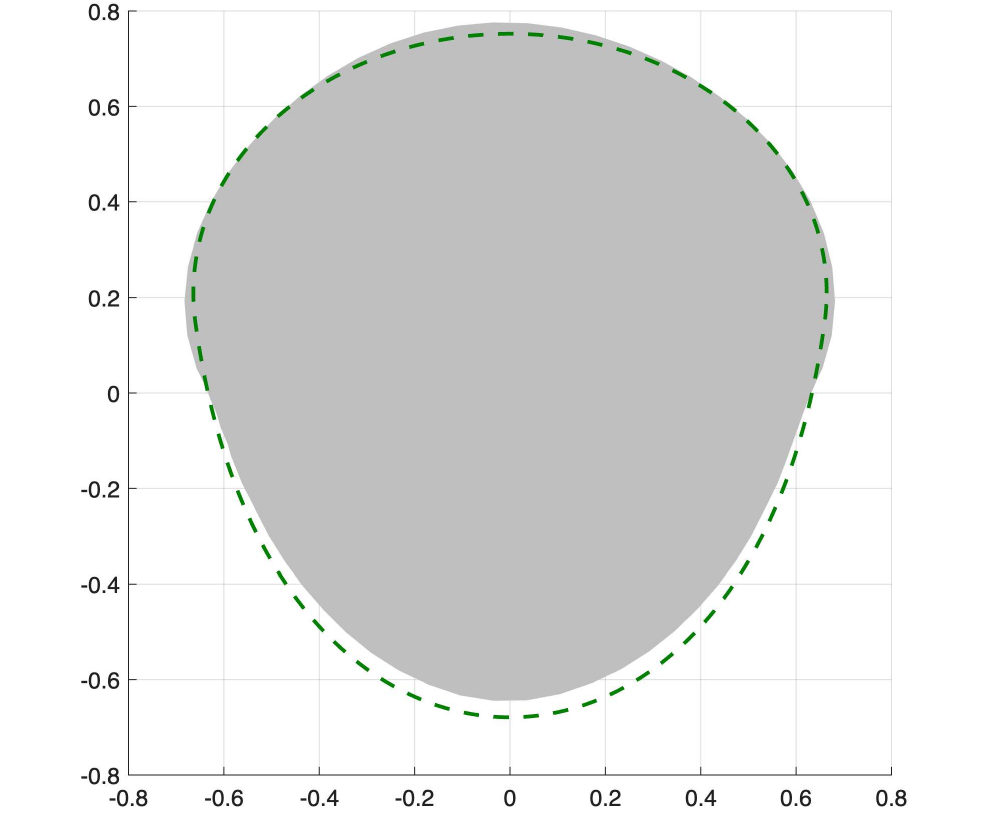}}
    \end{tabular}
}
\subfigure[Reconstructions from four point sources with $\epsilon=0.008$.]{
    \begin{tabular}{cccc}
        {\includegraphics[width=0.21\textwidth]{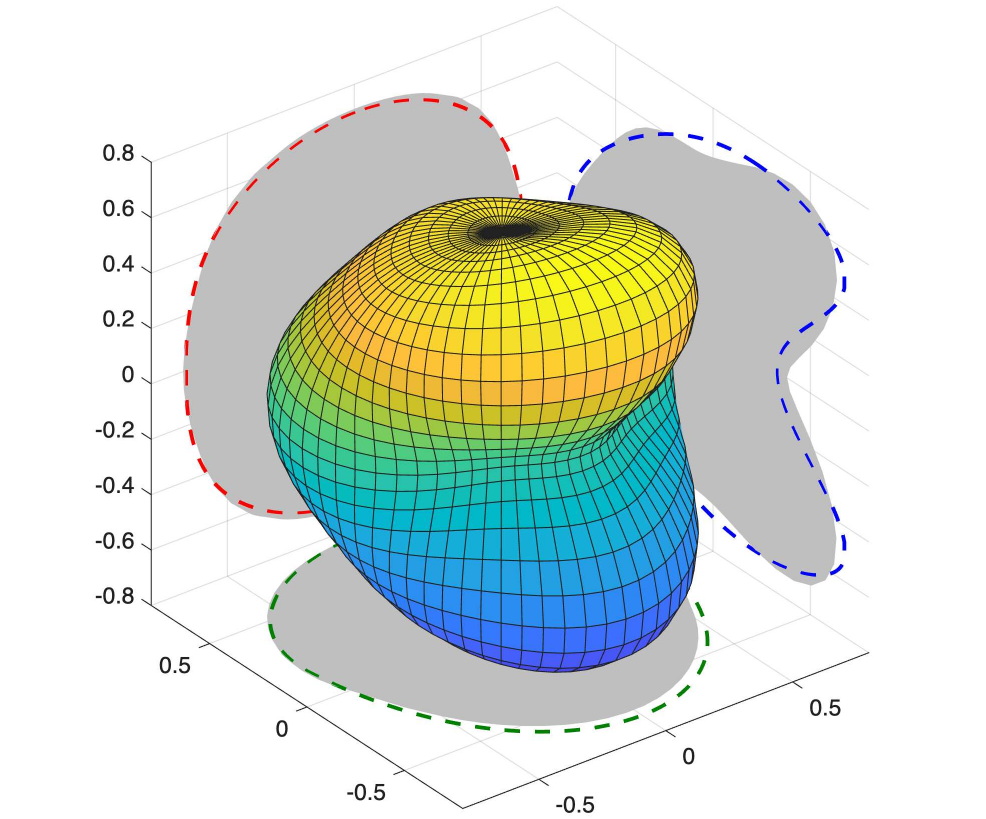}} &
        {\includegraphics[width=0.21\textwidth]{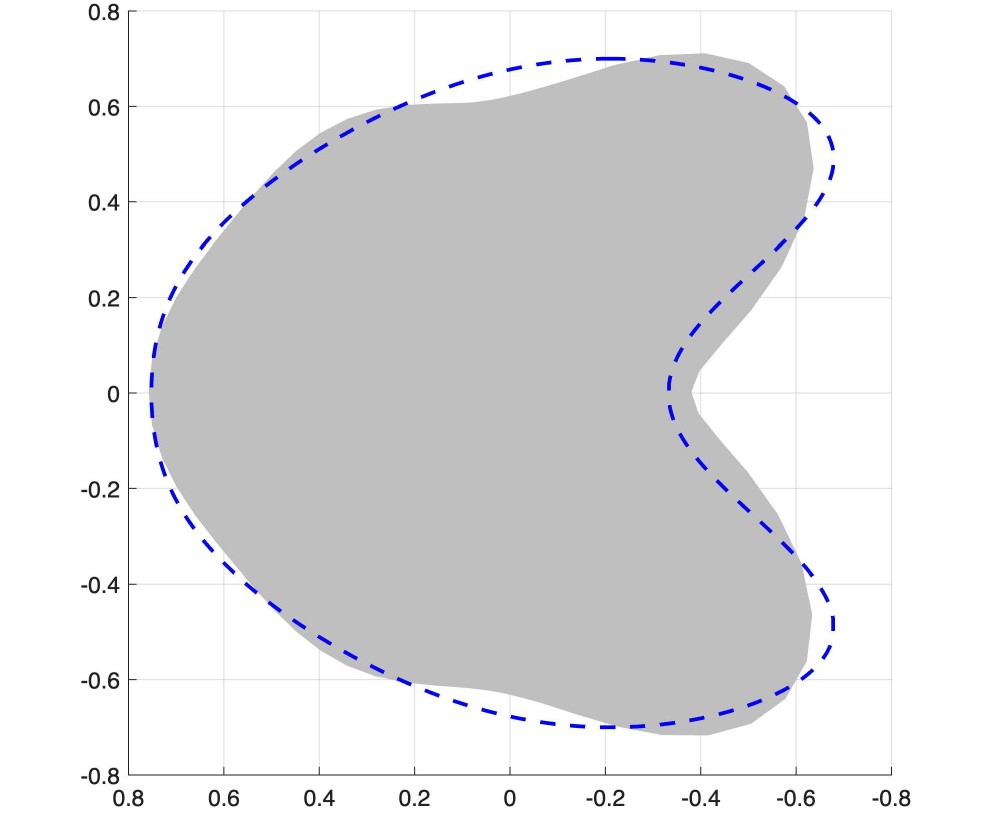}} &
        {\includegraphics[width=0.21\textwidth]{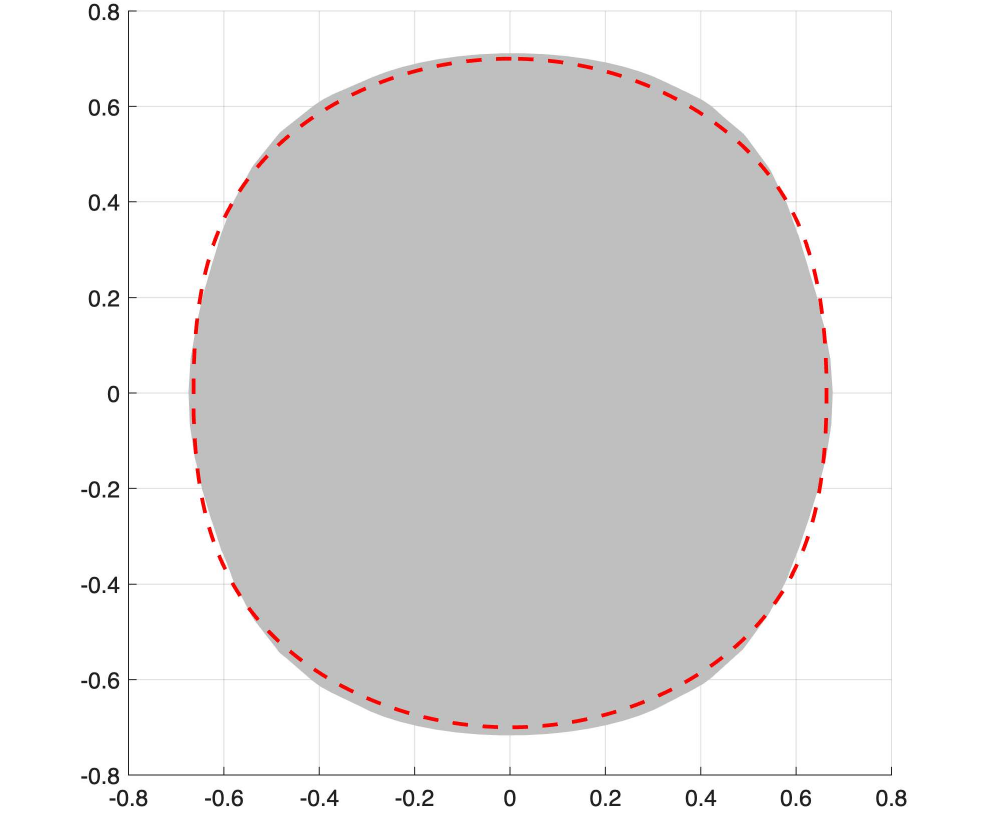}} &
        {\includegraphics[width=0.21\textwidth]{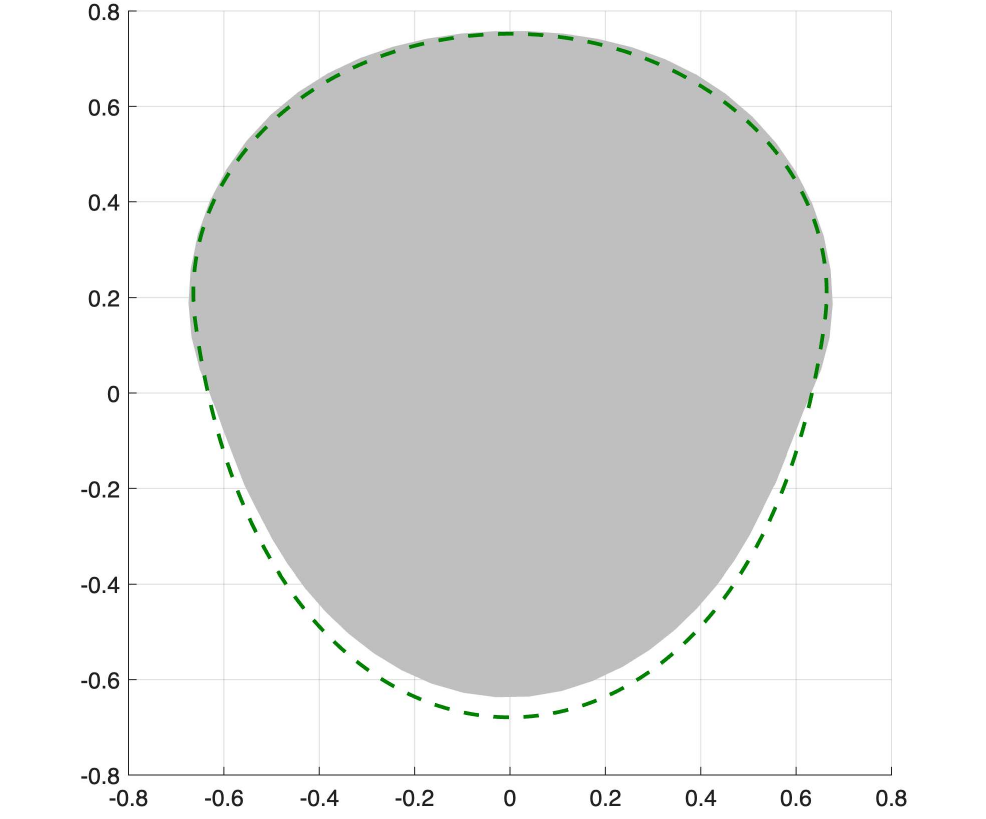}}
    \end{tabular}
}
\caption{Reconstruction of a bean-like surface obstacle under the Dirichlet boundary condition from phaseless far-field pattern with $1\%$ noise. (a) shows the true shape; (b) shows the reconstructions from two point sources located at $(4,0,0)^\top$ and $(-4,0,0)^\top$; (c) shows the reconstructions from four point sources located at $(\pm 4,0,0)^\top$ and $(0,\pm 4,0)^\top$. The initial guess is a sphere with $\pmb c^{(0)}=(0.1,-0.5,0.1)^{\top}$ and $r^{(0)}=0.3$, $\kappa=5$.}\label{fig_ex5}
\end{figure}

\vspace{2ex}
{\noindent\bf Example 6: Reconstructions from limited-aperture phaseless data under the Dirichlet boundary condition.}
\vspace{1ex}

To examine the performance of the proposed approach under incomplete angular information, we consider the reconstruction of the cushion-shaped obstacle under the Dirichlet boundary condition using phaseless far-field data measured only on a limited observation aperture.  
We consider limited-aperture measurements of the form
\[
\Gamma_{\mathrm{obs}}
=\big\{ (\theta,\phi):\ \theta\in I_\theta,\ \phi\in I_\phi \big\},
\]
where the intervals \(I_\theta\) and \(I_\phi\) specify the accessible angular ranges.  

Figure~\ref{fig_ex6} displays the reconstruction obtained from the limited-aperture phaseless data.  
Although the missing angular information leads to partial shadowing effects on the side opposite to the measurement aperture, 
the reconstruction still captures the global geometry and major features of the cushion-shaped obstacle.  
These results show that the proposed method remains stable and effective even when only limited-aperture phaseless measurements are available.

\begin{figure}[!htbp]
\centering 

\subfigure[ Reconstruction using limited-aperture data 1: \texorpdfstring{$I_\theta=[0,\pi/2]$}{I_theta=[0,pi/2]}, \texorpdfstring{$I_\phi=[0,2\pi]$}{I_theta=[0,2pi]} ]
{
    \begin{tabular}{cccc}
        {\includegraphics[width=0.21\textwidth]{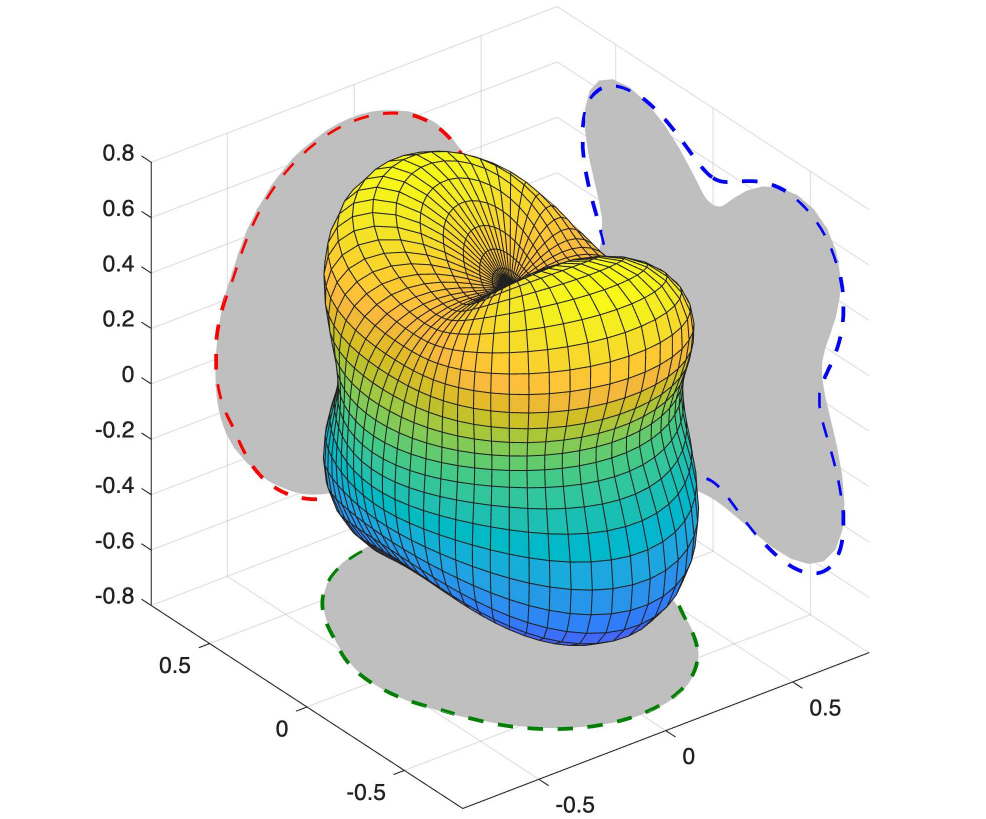}} &
        {\includegraphics[width=0.21\textwidth]{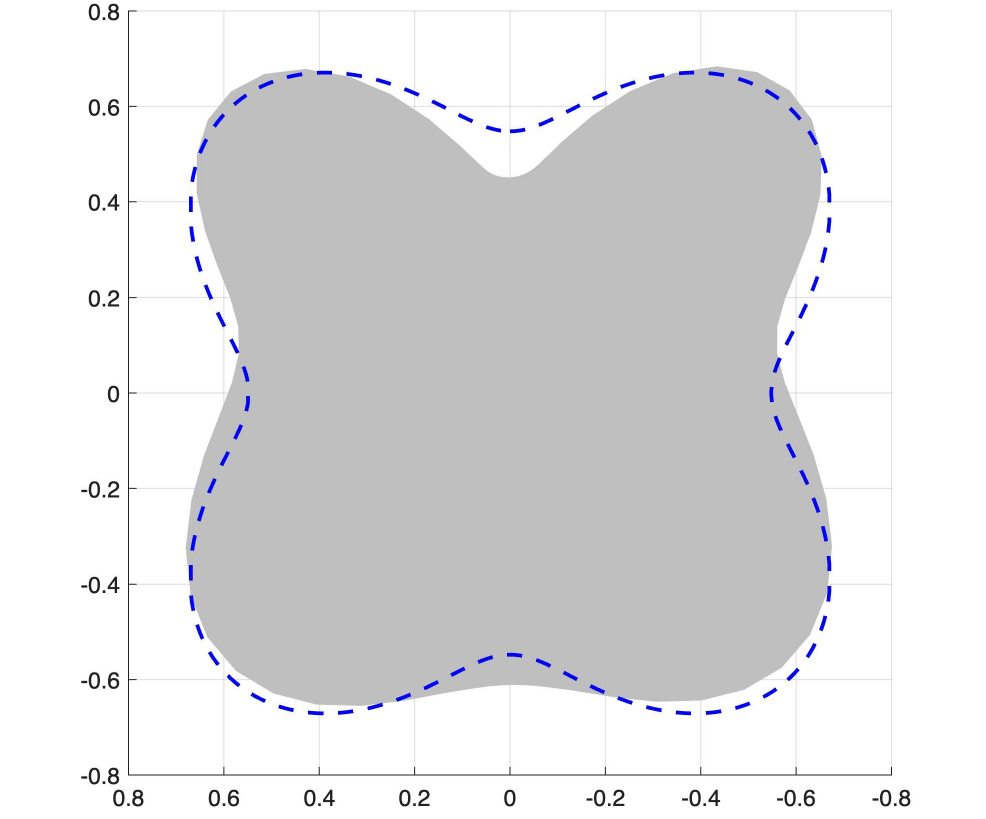}} &
        {\includegraphics[width=0.21\textwidth]{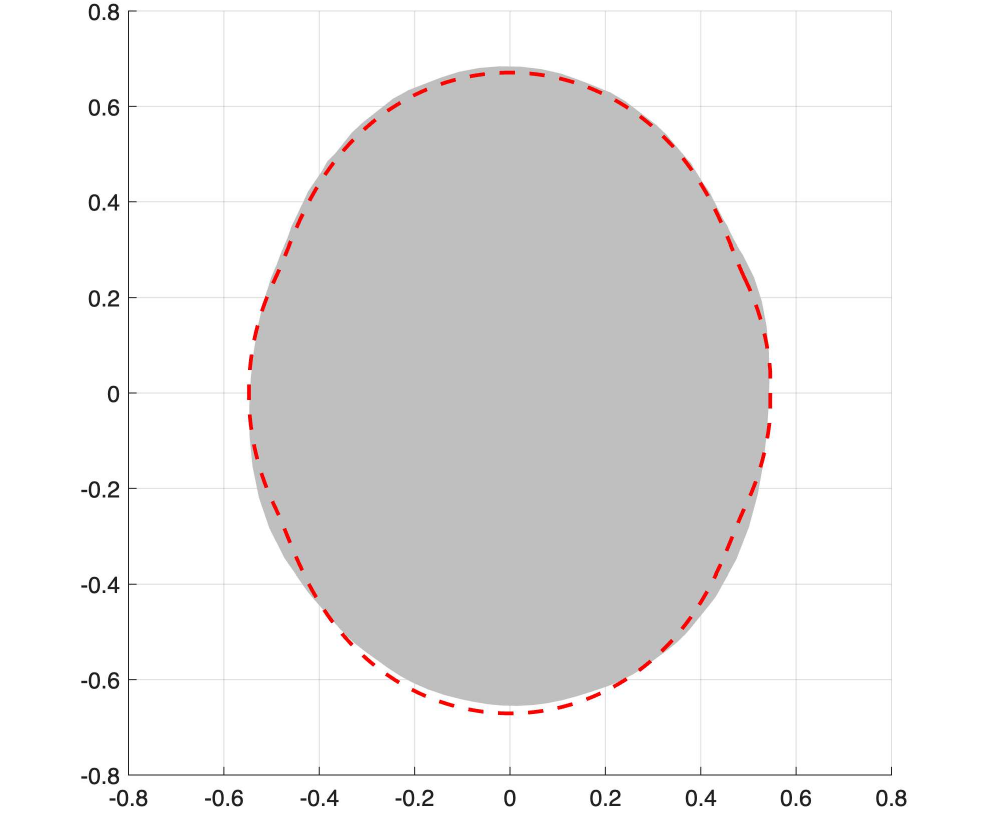}} &
        {\includegraphics[width=0.21\textwidth]{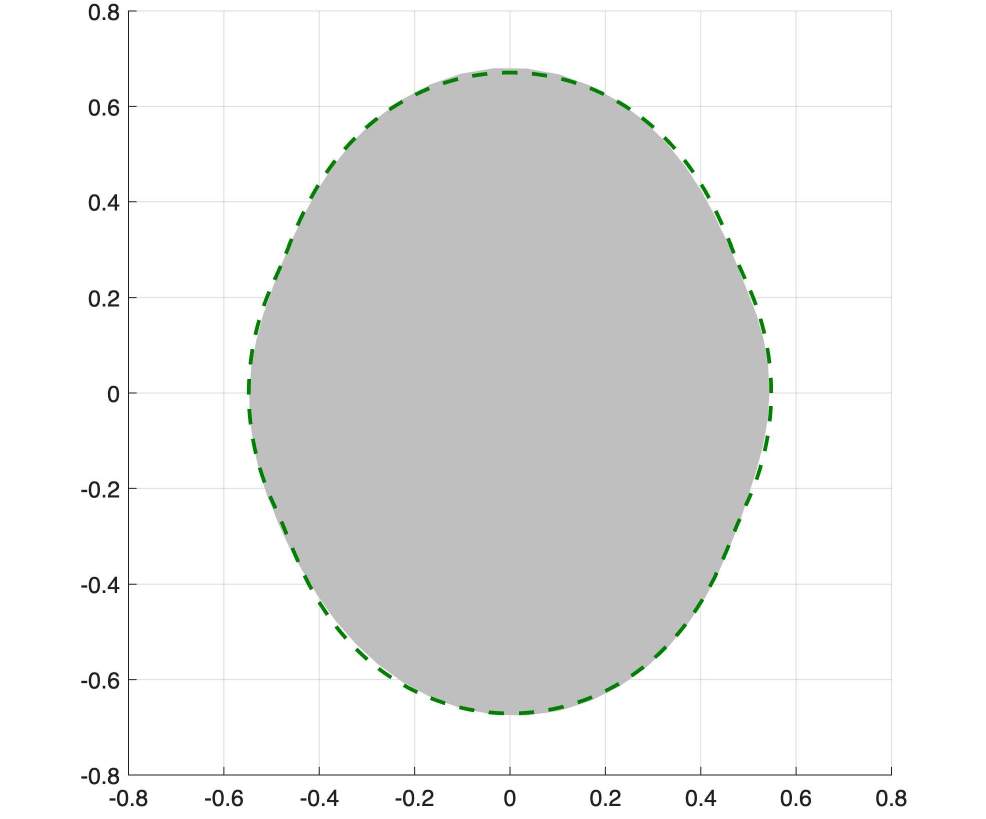}}
    \end{tabular} 
}\\[-1.2ex]

\subfigure[ Reconstruction using limited-aperture data 2: \texorpdfstring{$I_\theta=[0,\pi]$}{I_theta=[0,pi]}, \texorpdfstring{$I_\phi=[\pi/2,3\pi/2]$}{I_theta=[\pi/2,3pi/2]} ]
{
    \begin{tabular}{cccc}
        {\includegraphics[width=0.21\textwidth]{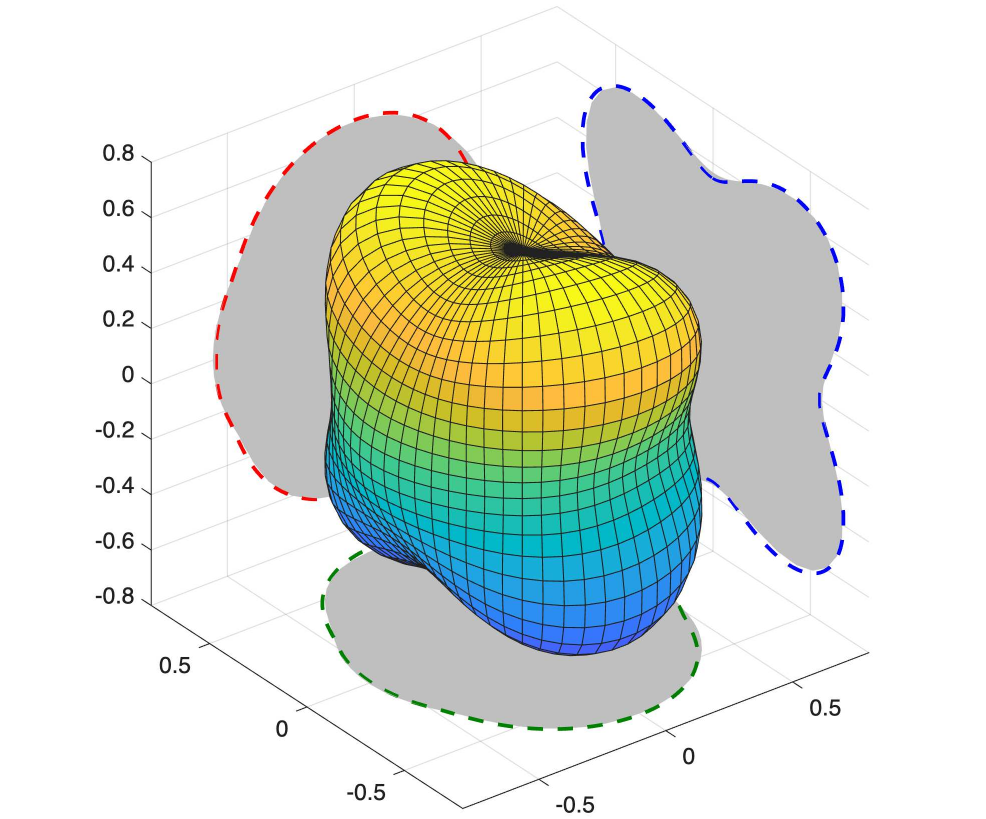}} &
        {\includegraphics[width=0.21\textwidth]{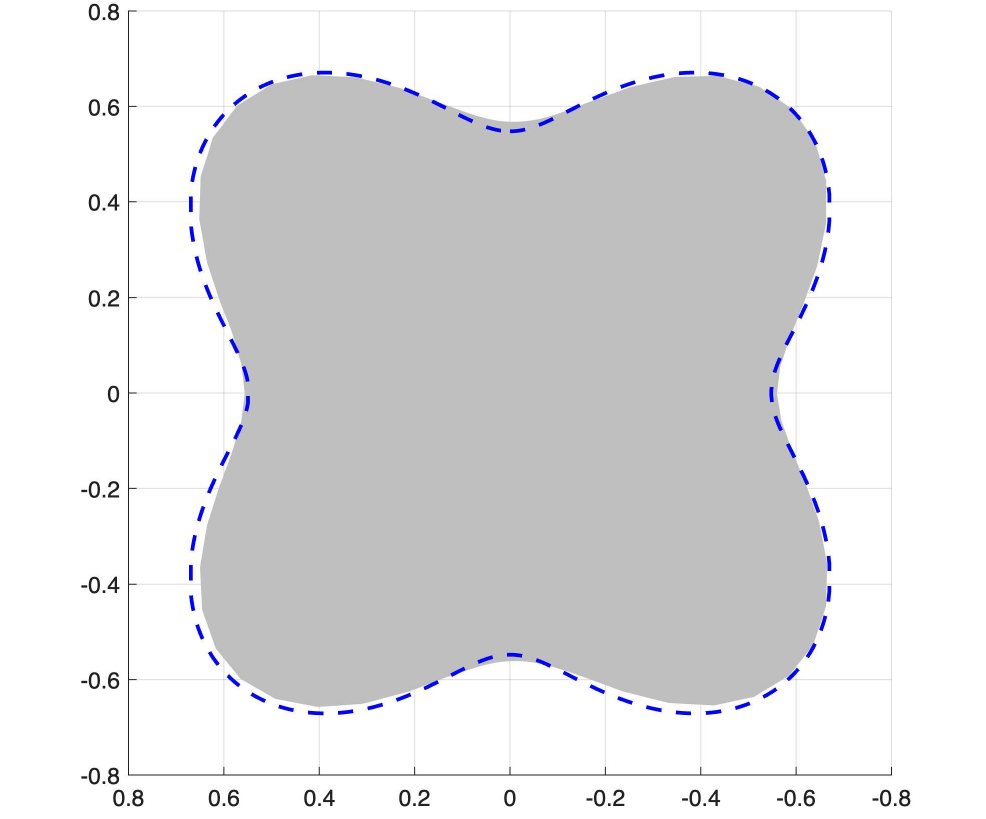}} &
        {\includegraphics[width=0.21\textwidth]{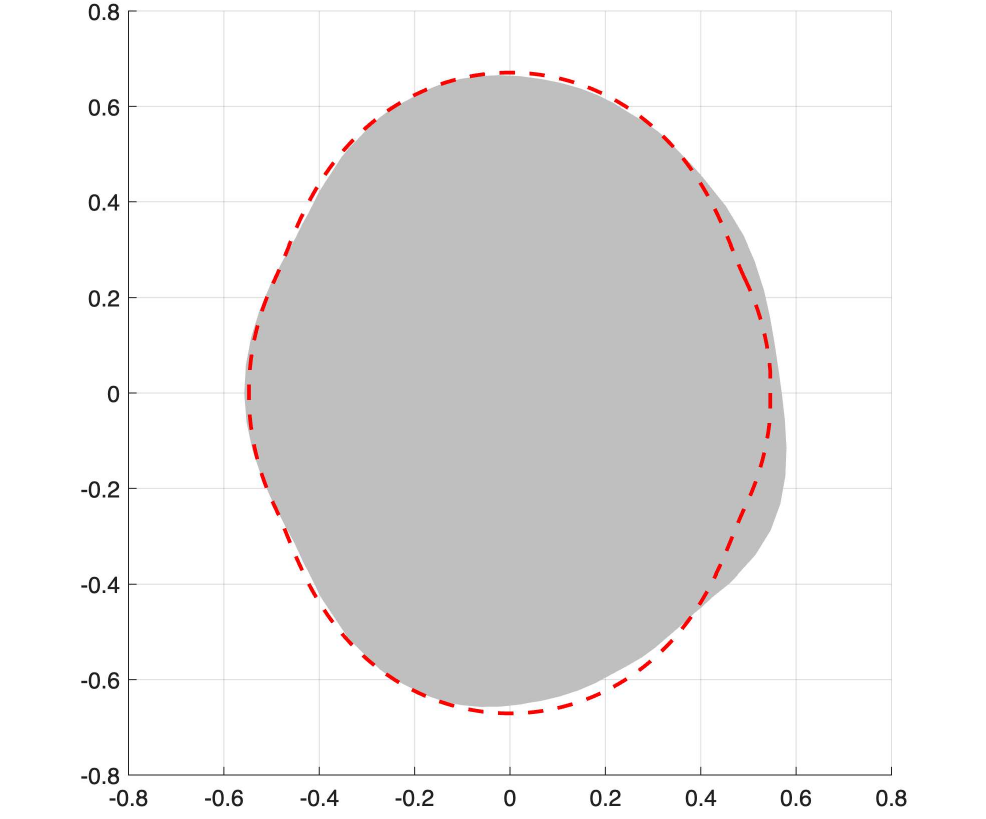}} &
        {\includegraphics[width=0.21\textwidth]{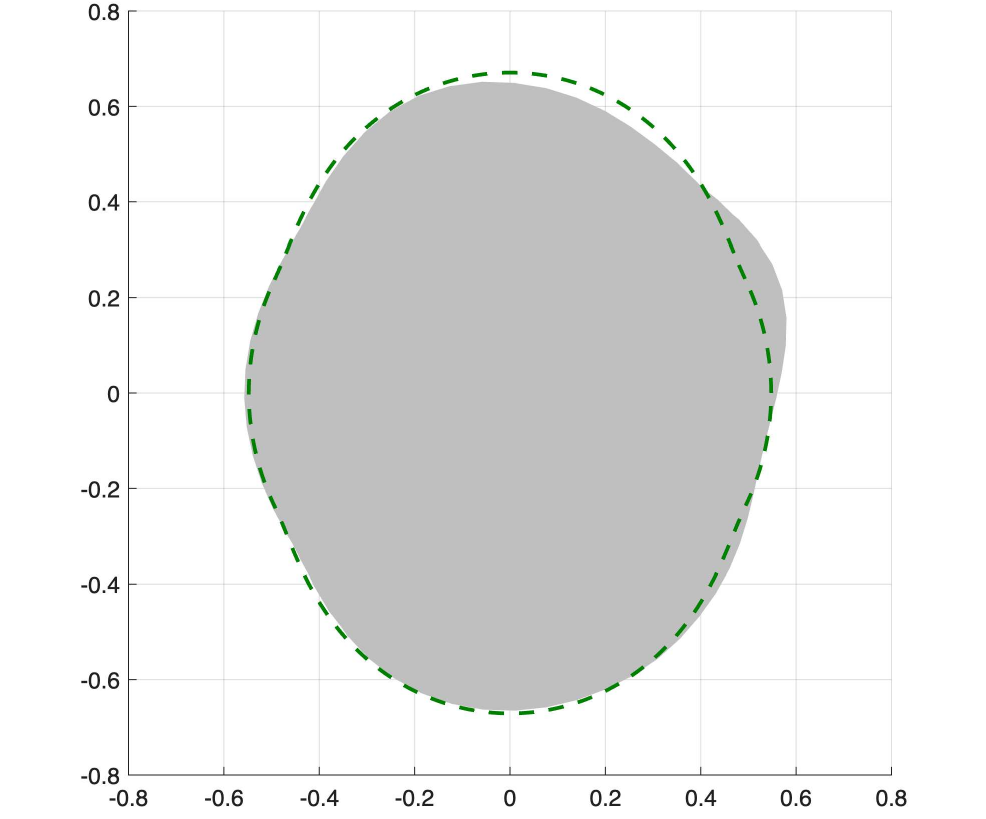}}
    \end{tabular} 
}\\[-1.2ex]

\subfigure[ Reconstruction using limited-aperture data 3: \texorpdfstring{$I_\theta=[\pi/4,3\pi/4]$}{I_theta=[\pi/4,3\pi/4]}, \texorpdfstring{$I_\phi=[\pi/2,3\pi/2]$}{I_theta=[\pi/2,3pi/2]} ]
{
    \begin{tabular}{cccc}
        {\includegraphics[width=0.21\textwidth]{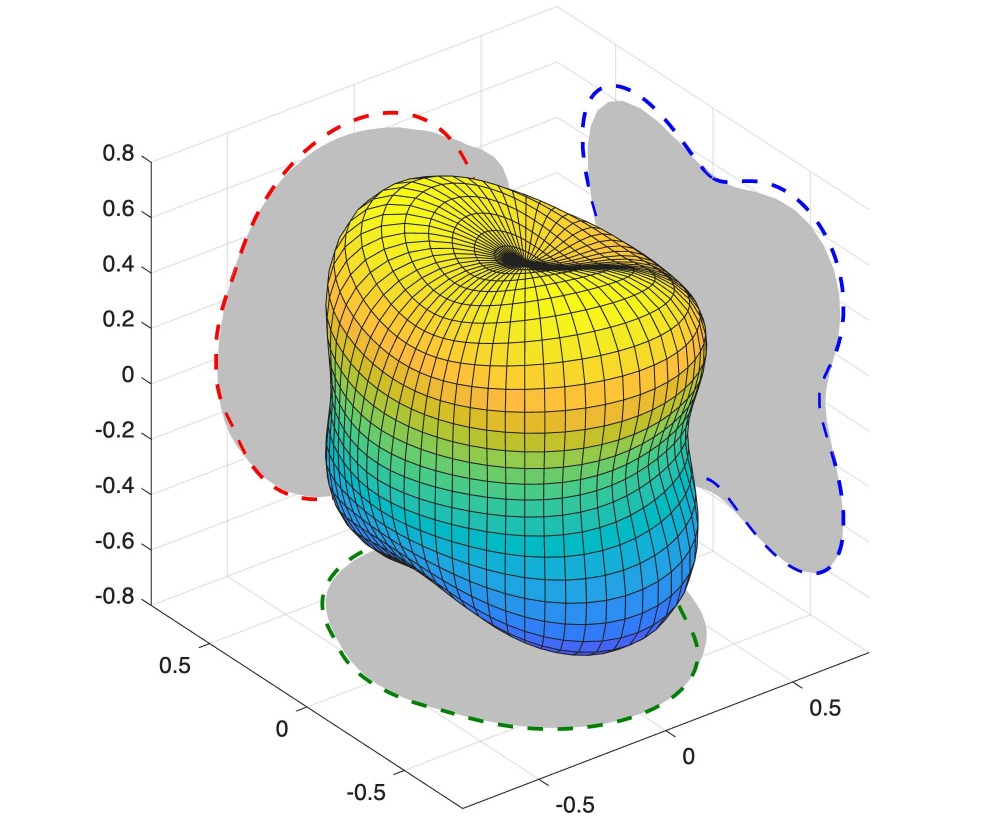}} &
        {\includegraphics[width=0.21\textwidth]{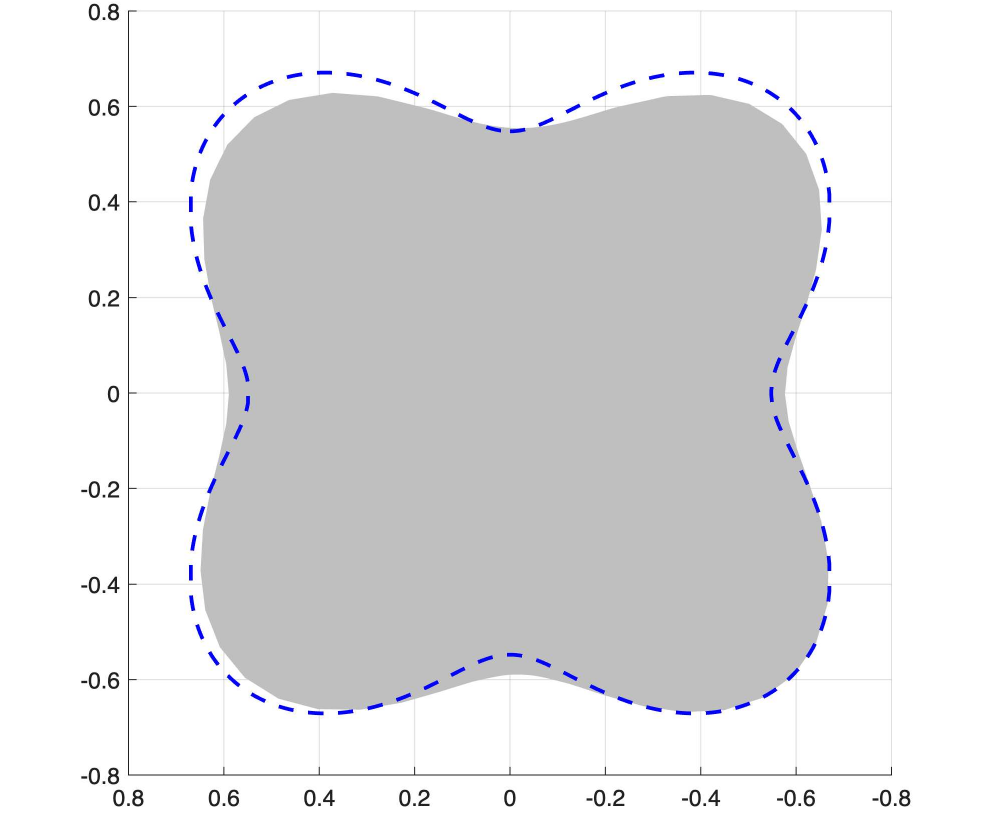}} &
        {\includegraphics[width=0.21\textwidth]{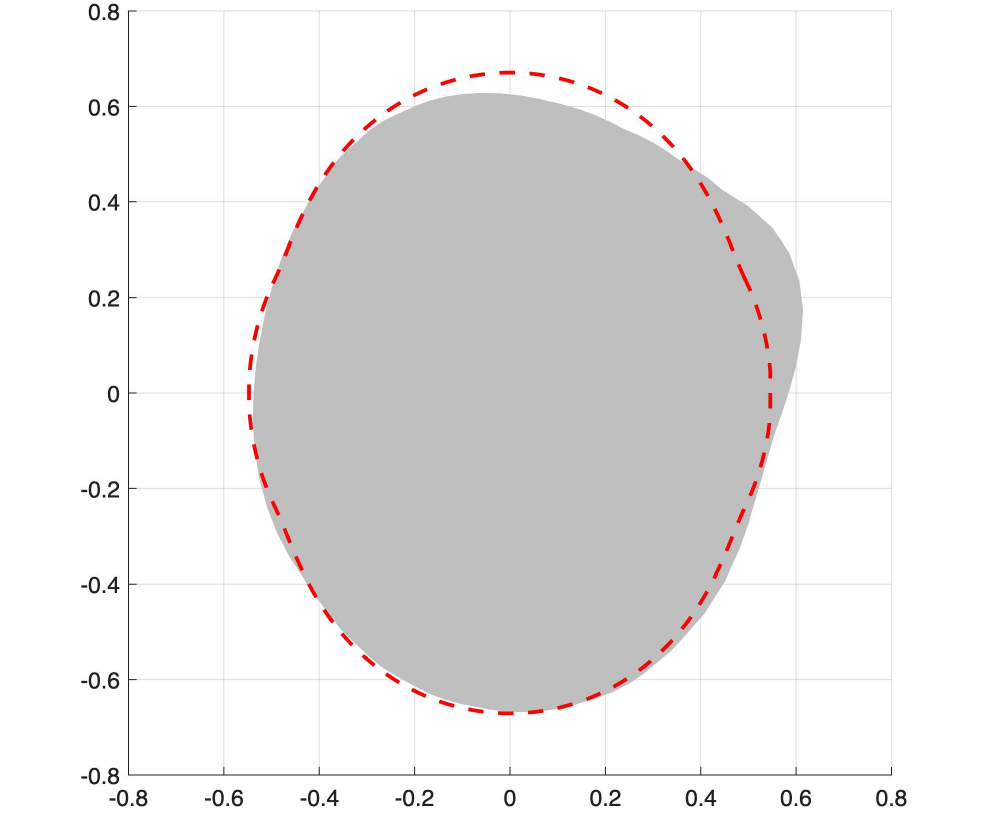}} &
        {\includegraphics[width=0.21\textwidth]{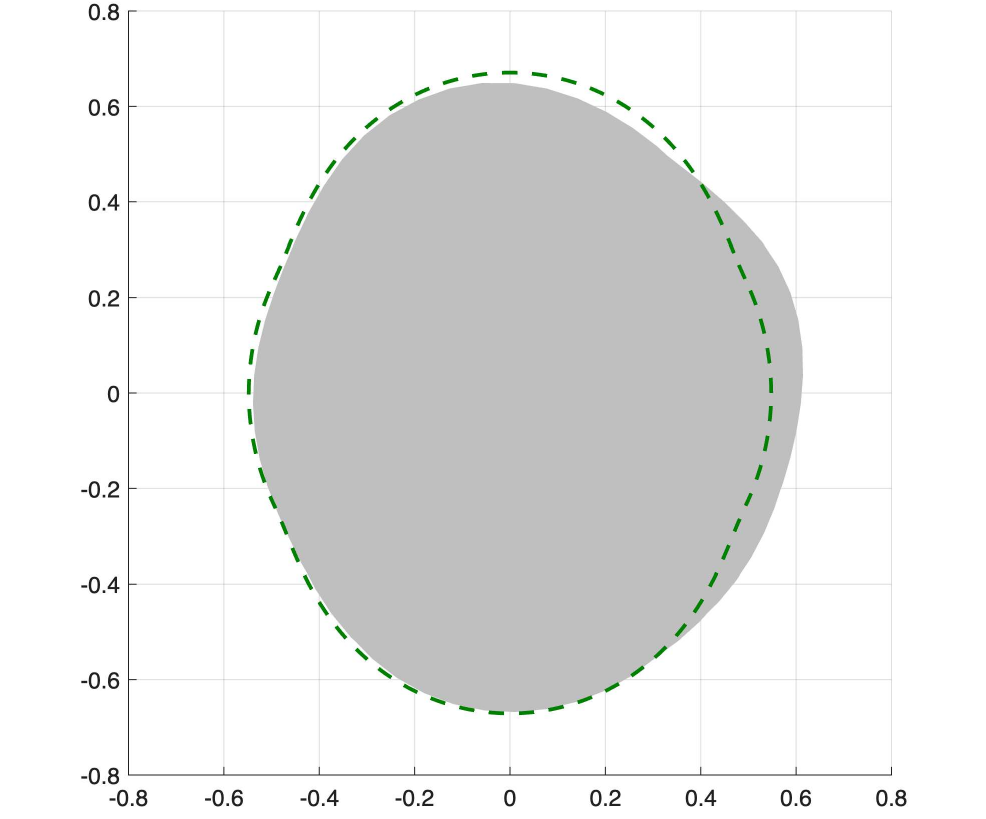}}
    \end{tabular} 
}\\[-1.2ex]
\caption{Reconstruction of a cushion-shaped obstacle under the Dirichlet boundary condition from limited-aperture phaseless far-field data generated by two point sources located at $(4,0,0)^\top$ and $(-4,0,0)^\top$ with $1\%$ noise. $\pmb c^{(0)}=(0.2,-0.3,-0.1)^{\top}$, $r^{(0)}=0.4$, $\kappa=4.5$, $\epsilon=(0.005,0.006,0.008)$.}\label{fig_ex6}
\end{figure}

\begin{figure}[!htbp]
\centering 

\subfigure[True shape of complex obstacle.]{
    \begin{tabular}{cccc}
        {\includegraphics[width=0.21\textwidth]{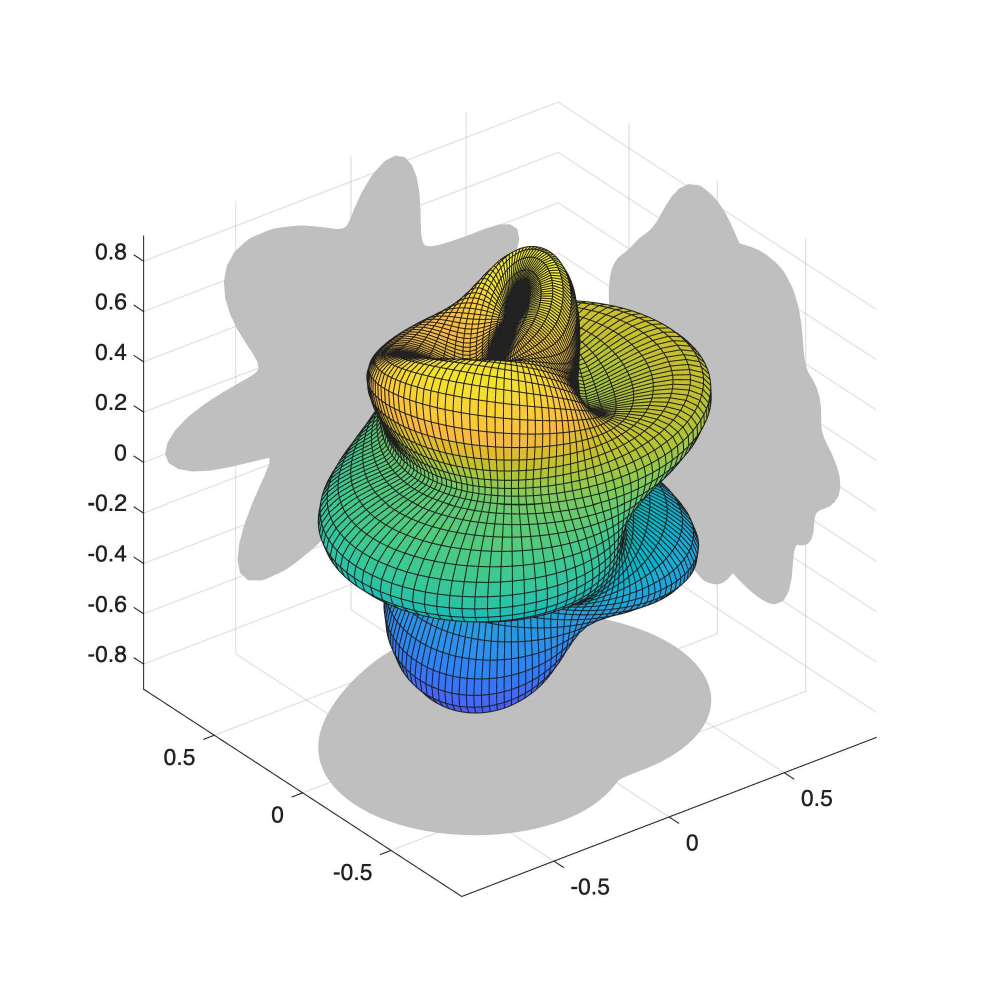}} &
        {\includegraphics[width=0.21\textwidth]{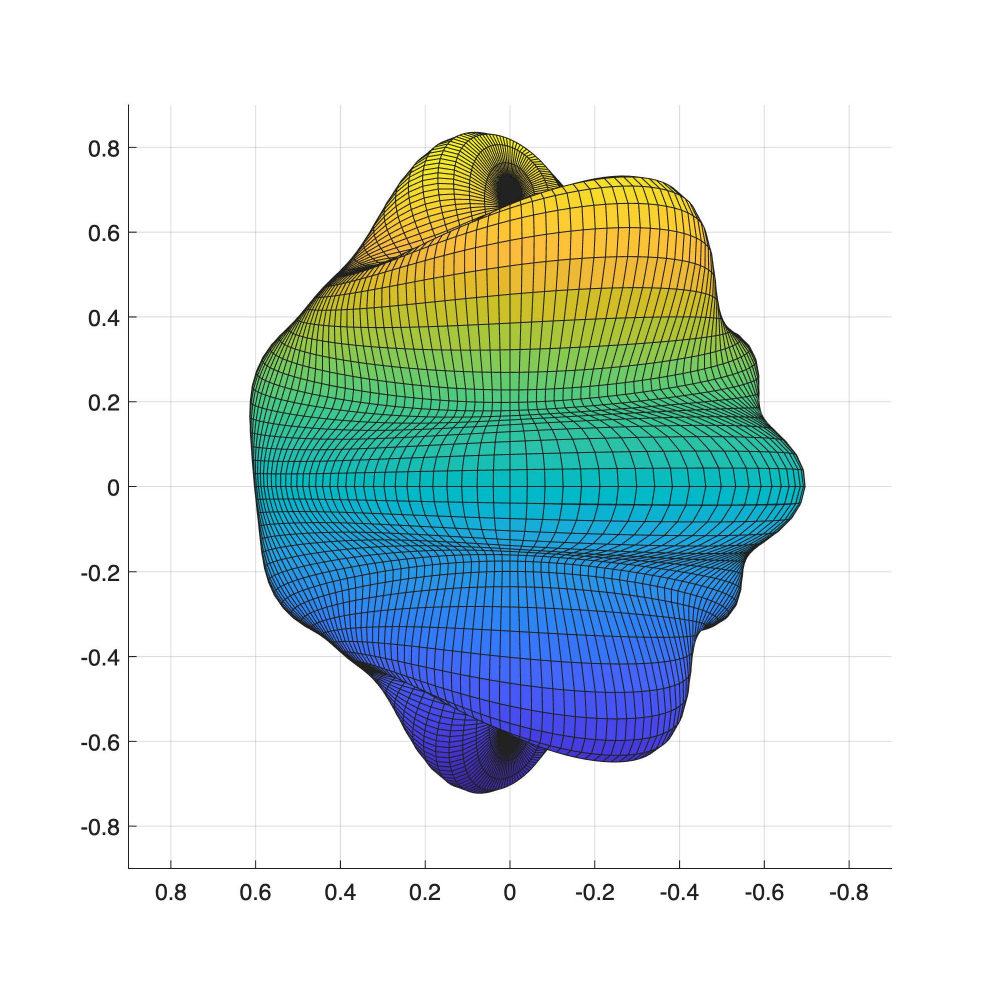}} &
        {\includegraphics[width=0.21\textwidth]{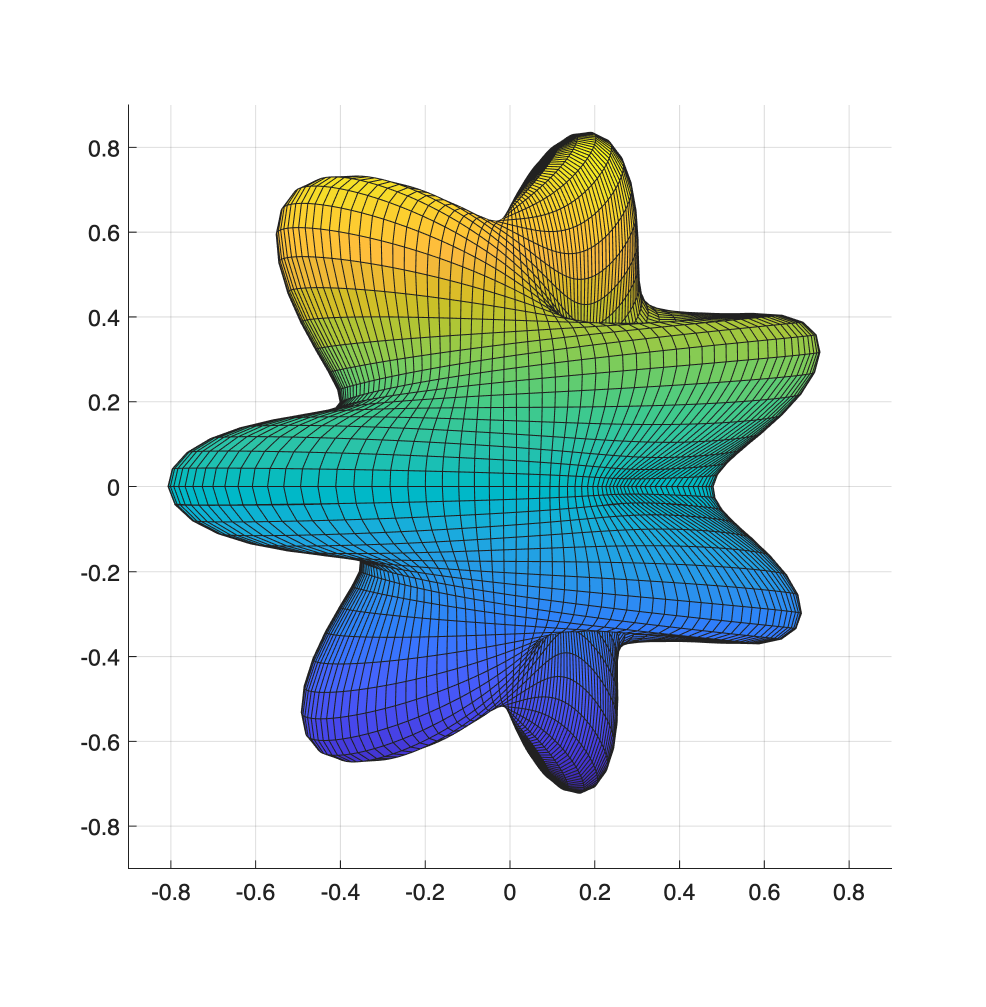}} &
        {\includegraphics[width=0.21\textwidth]{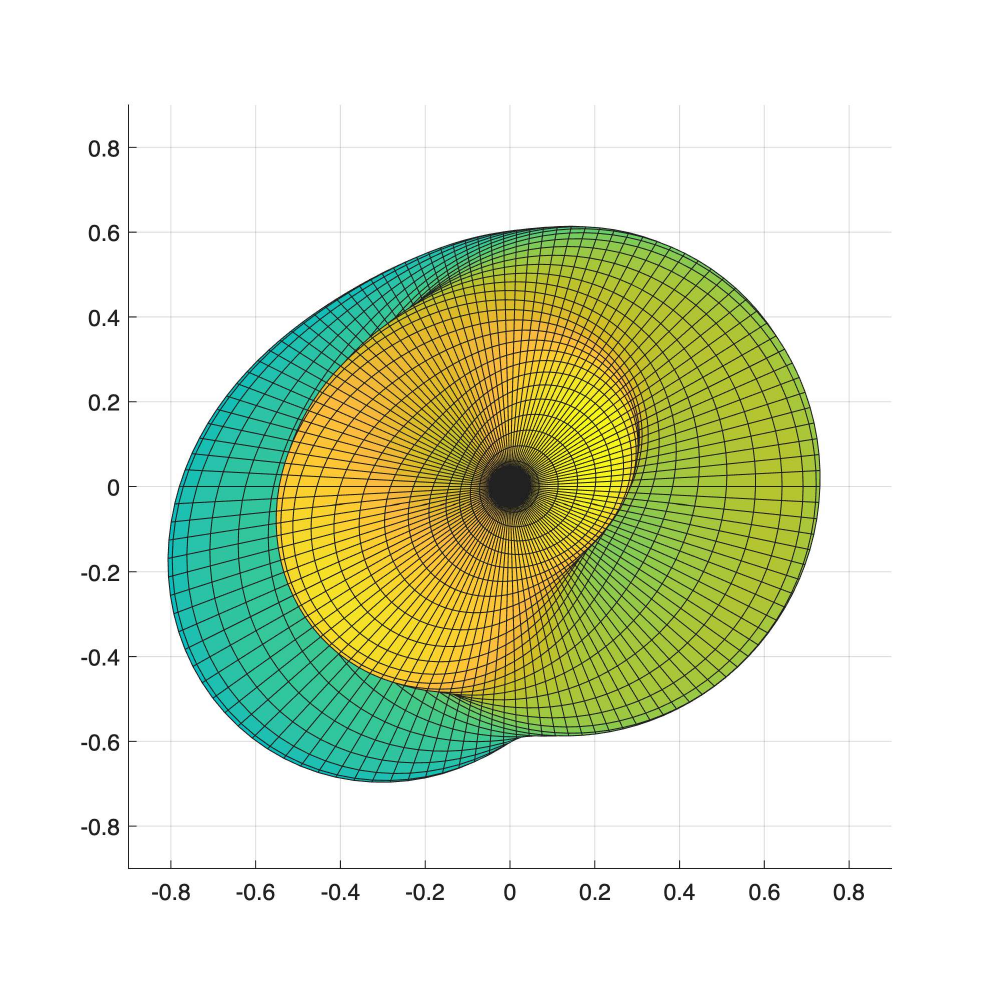}}
    \end{tabular}
}\\[-1.2ex]

\subfigure[Reconstruction with $1\%$ noise. $\epsilon=0.004$.]{
    \begin{tabular}{cccc}
        {\includegraphics[width=0.21\textwidth]{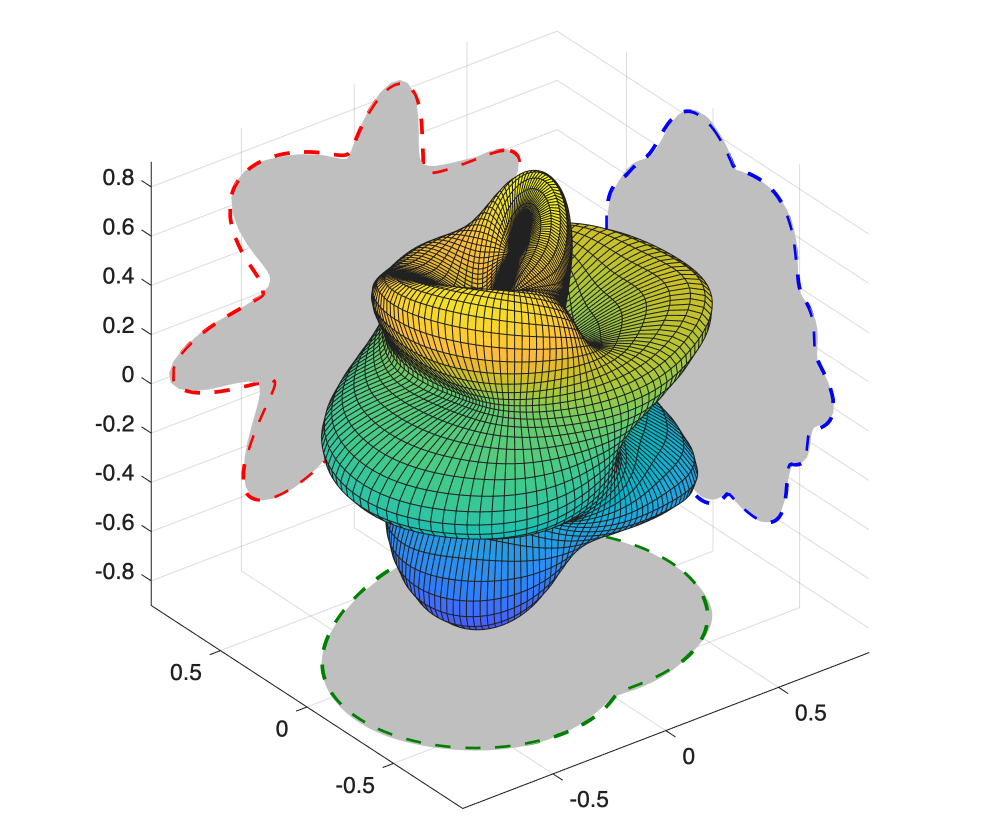}} &
        {\includegraphics[width=0.21\textwidth]{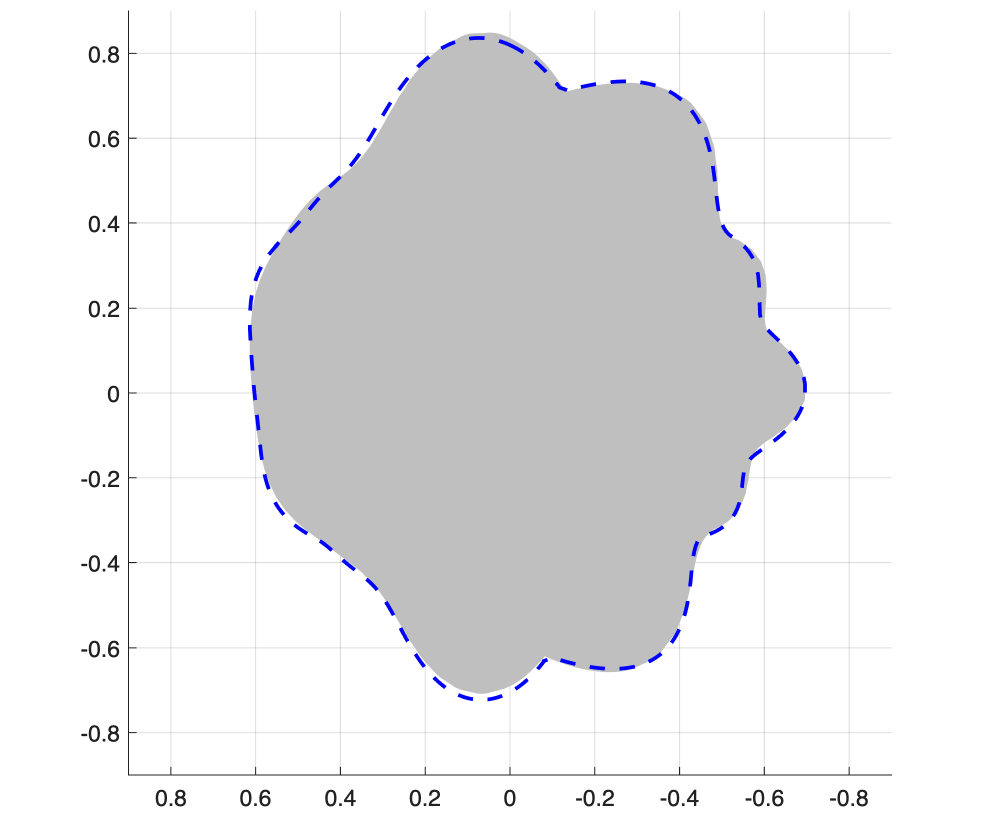}} &
        {\includegraphics[width=0.21\textwidth]{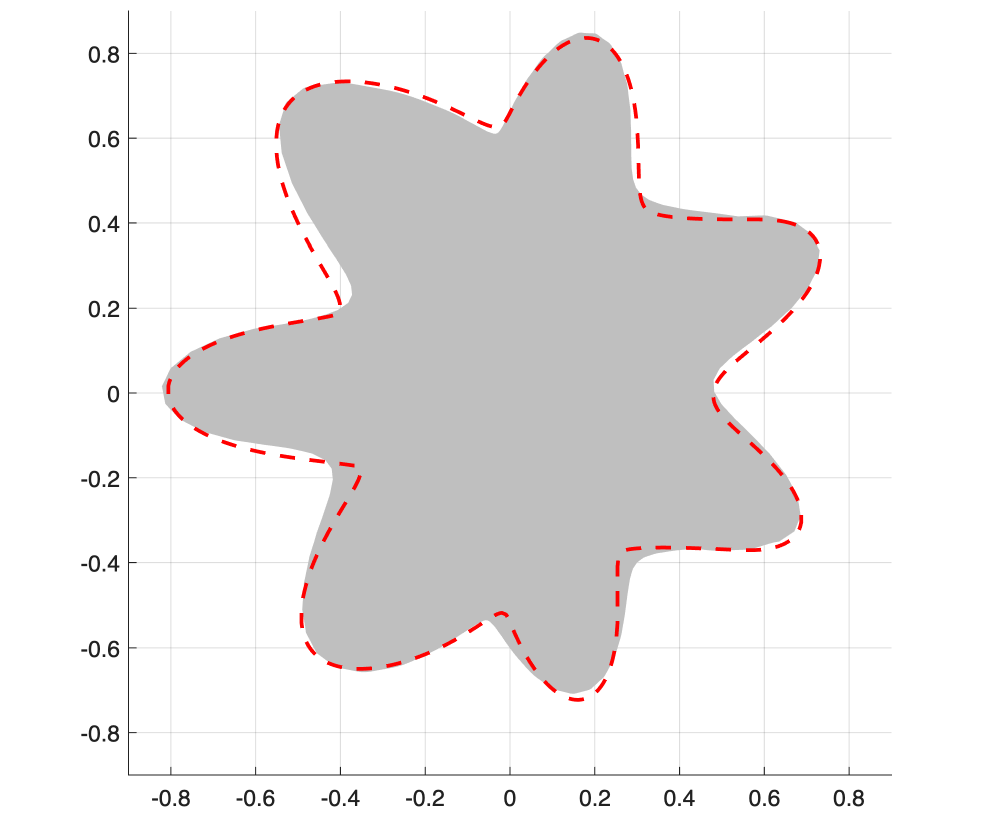}} &
        {\includegraphics[width=0.21\textwidth]{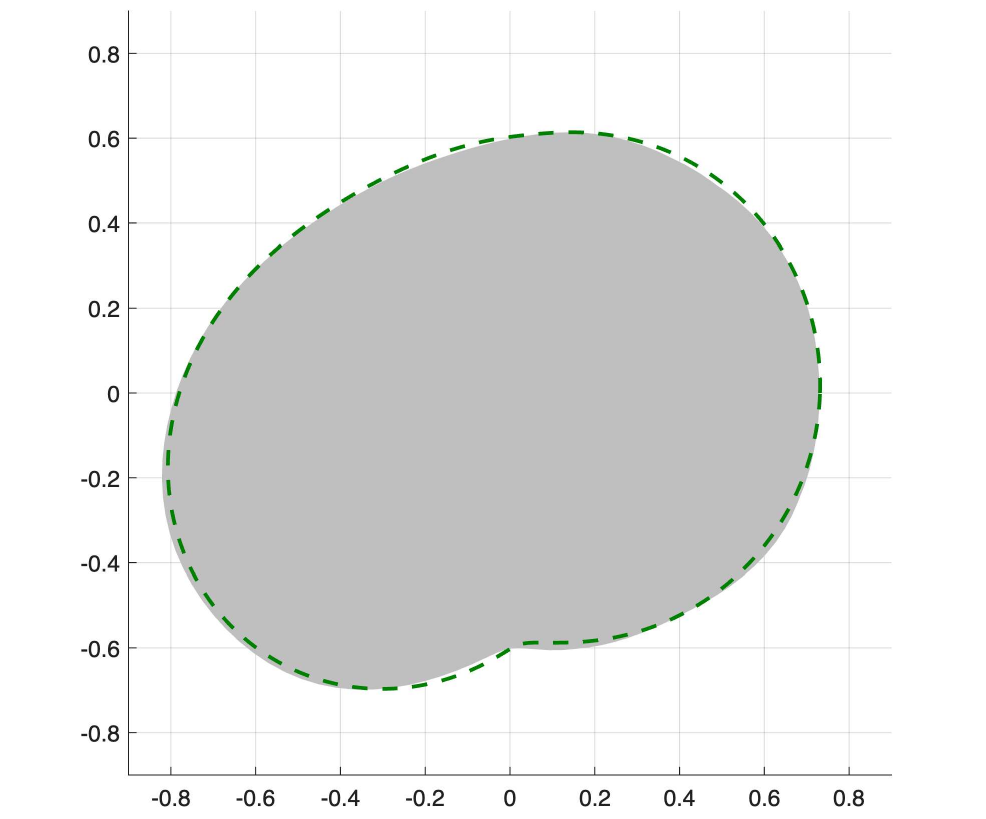}}
    \end{tabular} 
}\\[-1.2ex]

\subfigure[Reconstruction with $5\%$ noise. $\epsilon=0.015$.]{
    \begin{tabular}{cccc}
        {\includegraphics[width=0.21\textwidth]{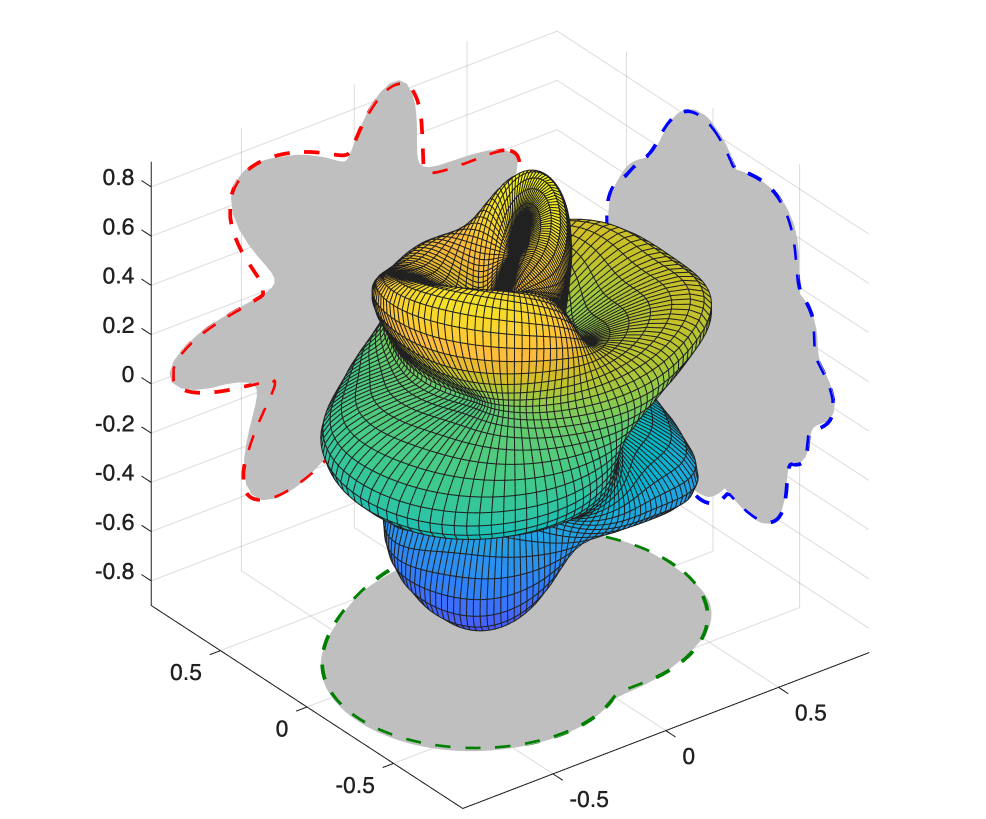}} &
        {\includegraphics[width=0.21\textwidth]{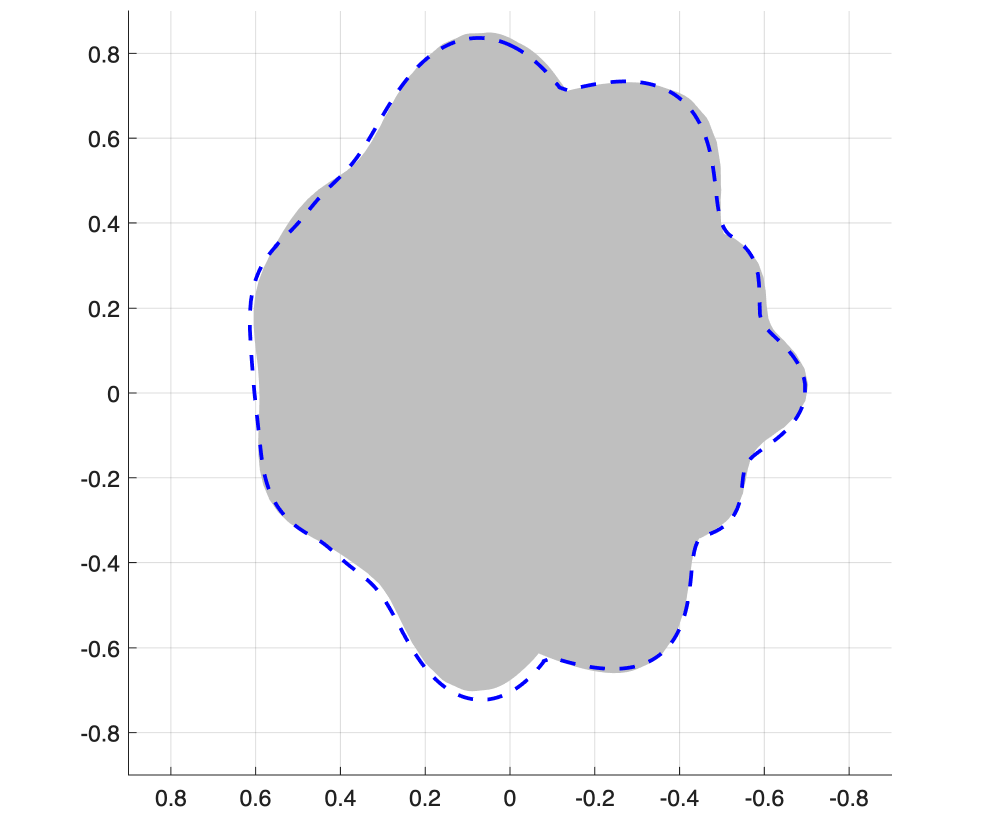}} &
        {\includegraphics[width=0.21\textwidth]{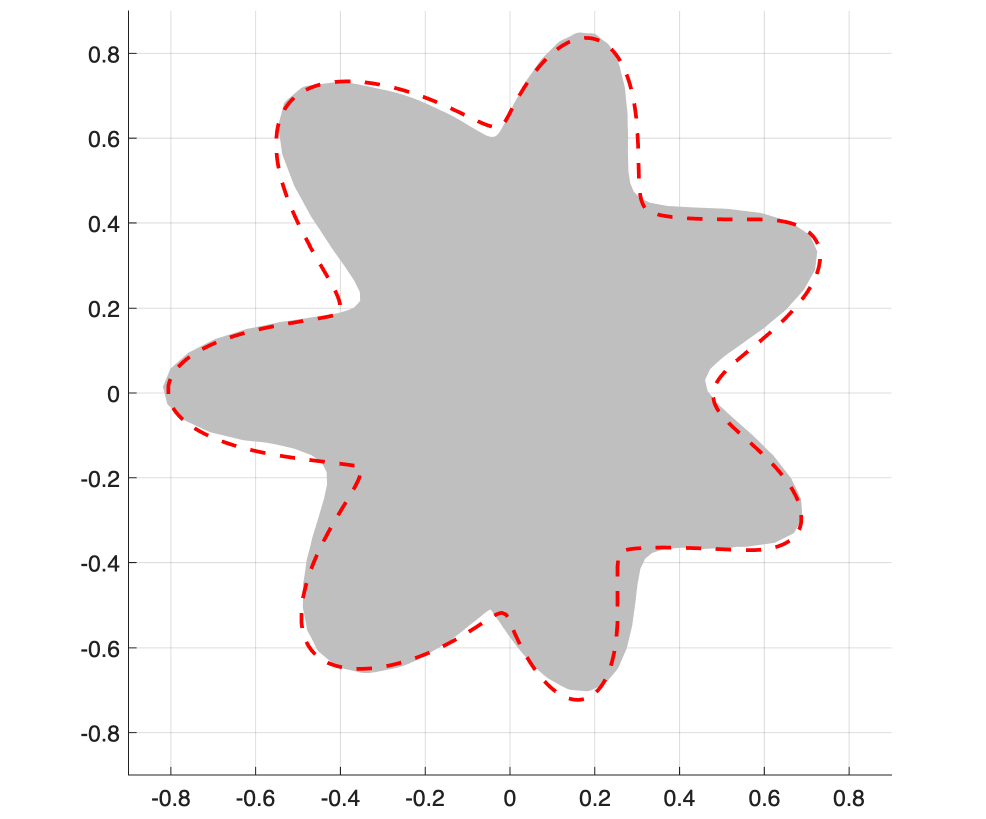}} &
        {\includegraphics[width=0.21\textwidth]{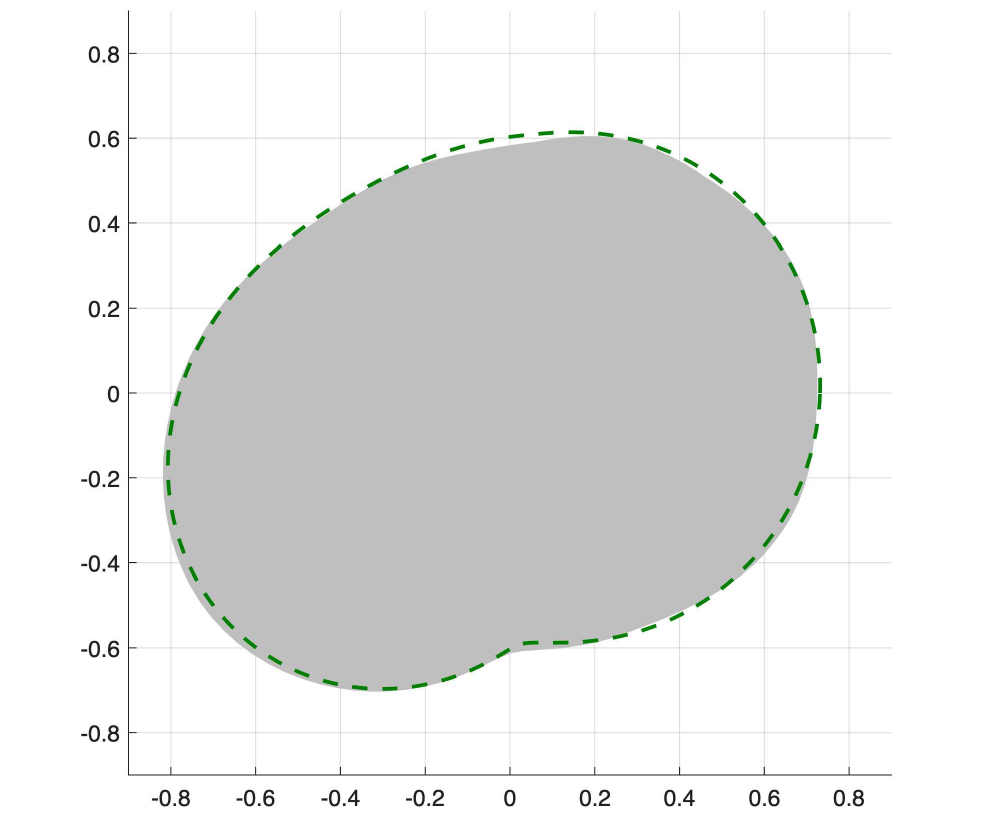}}
    \end{tabular} 
}\\[-1.2ex]

\subfigure[Reconstruction with $10\%$ noise. $\epsilon=0.03$.]{
    \begin{tabular}{cccc}
        {\includegraphics[width=0.21\textwidth]{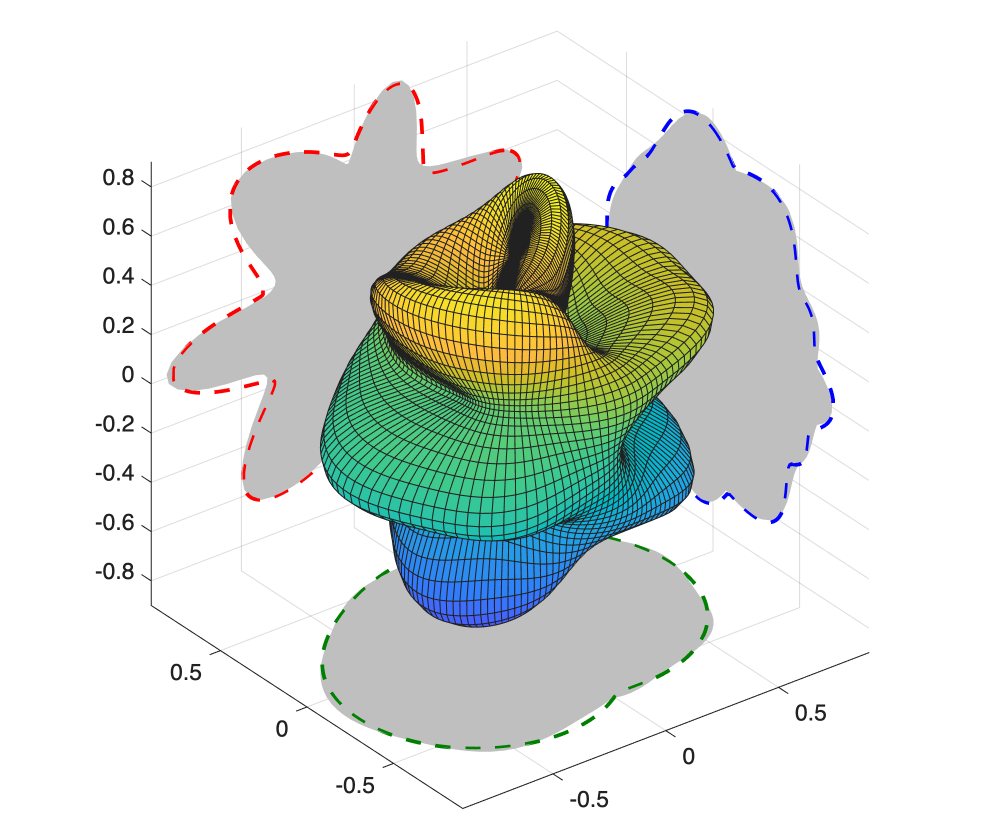}} &
        {\includegraphics[width=0.21\textwidth]{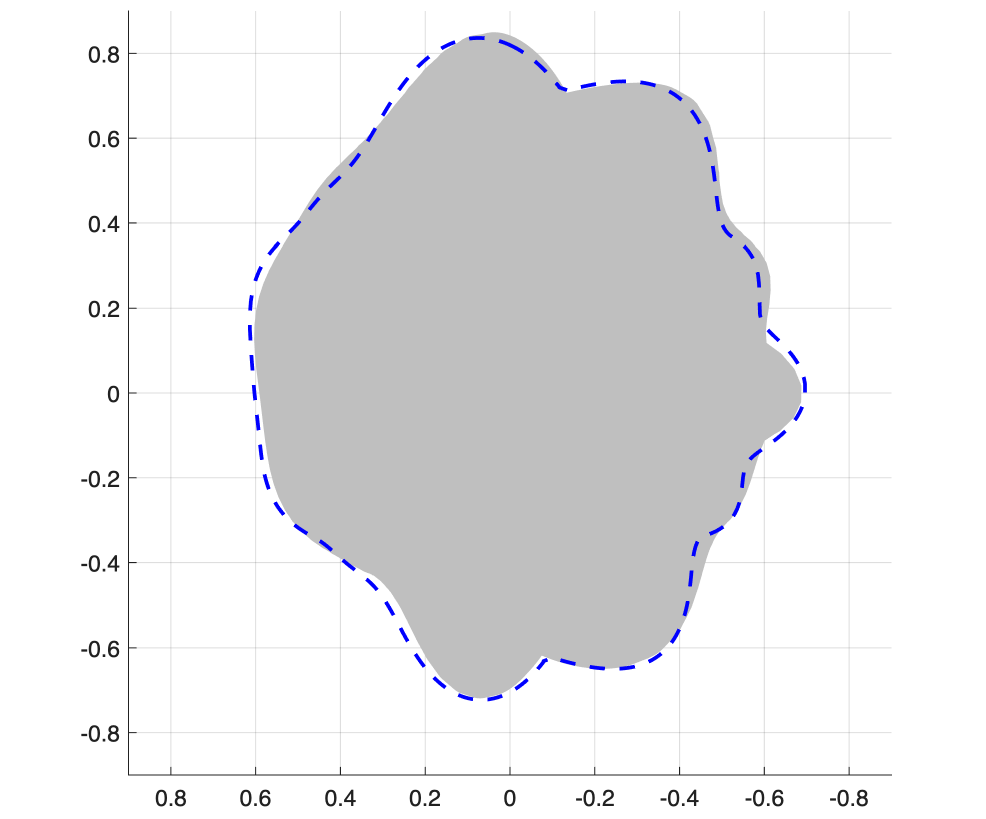}} &
        {\includegraphics[width=0.21\textwidth]{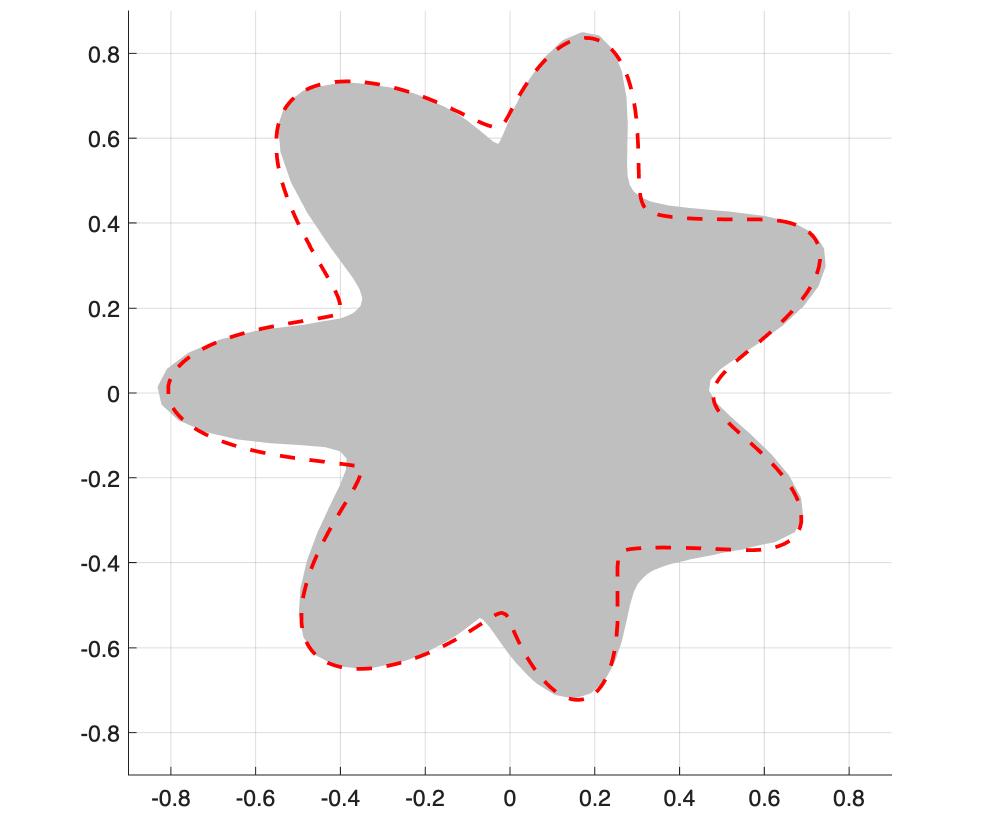}} &
        {\includegraphics[width=0.21\textwidth]{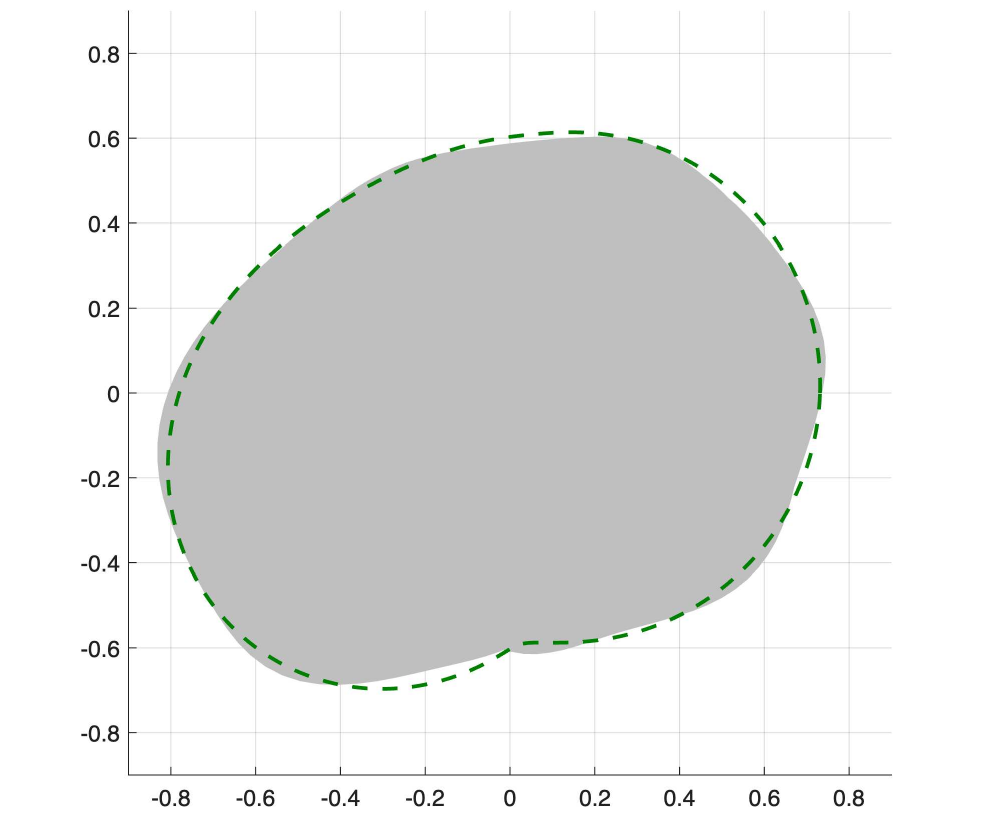}}
    \end{tabular} 
}\\[-1.2ex]
\caption{Reconstruction of a complex obstacle under the Dirichlet boundary condition from phaseless far-field data generated by four point sources located at $(0,0,\pm 4)^\top$ and $(\pm 4,0,0)^\top$ with different noise levels. $\pmb c^{(0)}=(0.1,-0.5,0.1)^{\top}$, $r^{(0)}=0.3$, $\kappa=5$.}\label{fig_ex7}
\end{figure}

\begin{figure}[!htbp]
    \centering
    \begin{tabular}{cccc}
        \subfigure[Initial surface]{\includegraphics[width=0.217\textwidth]{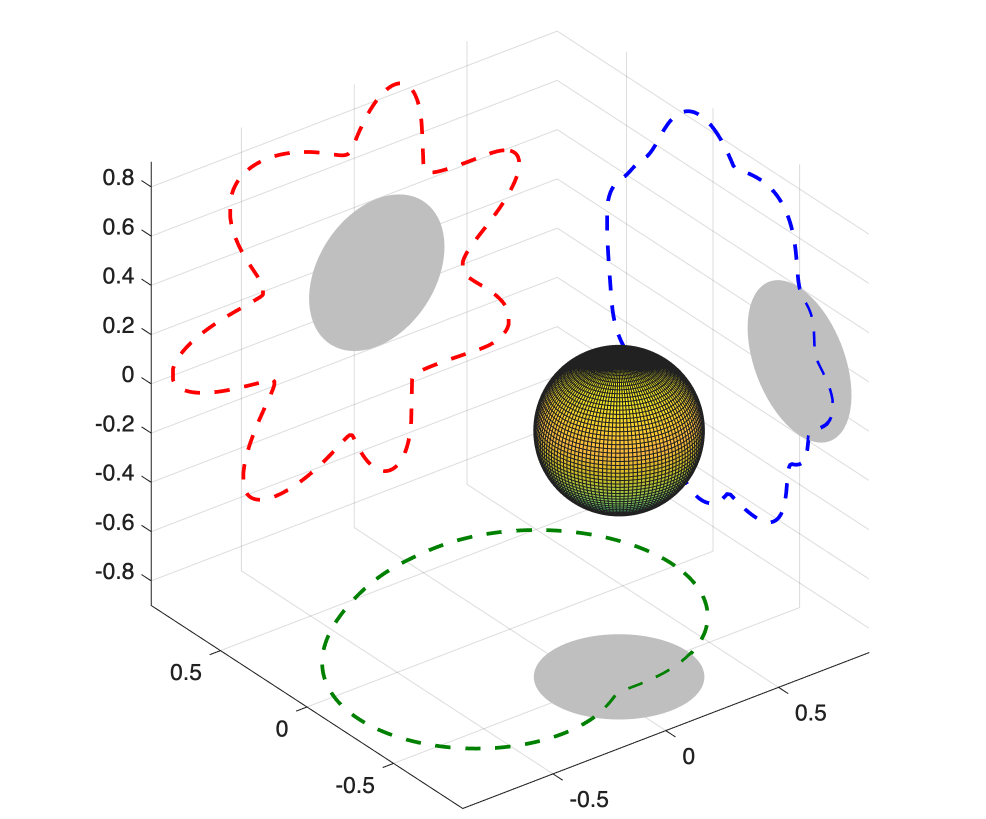}} &
        \subfigure[$M=0$]{\includegraphics[width=0.217\textwidth]{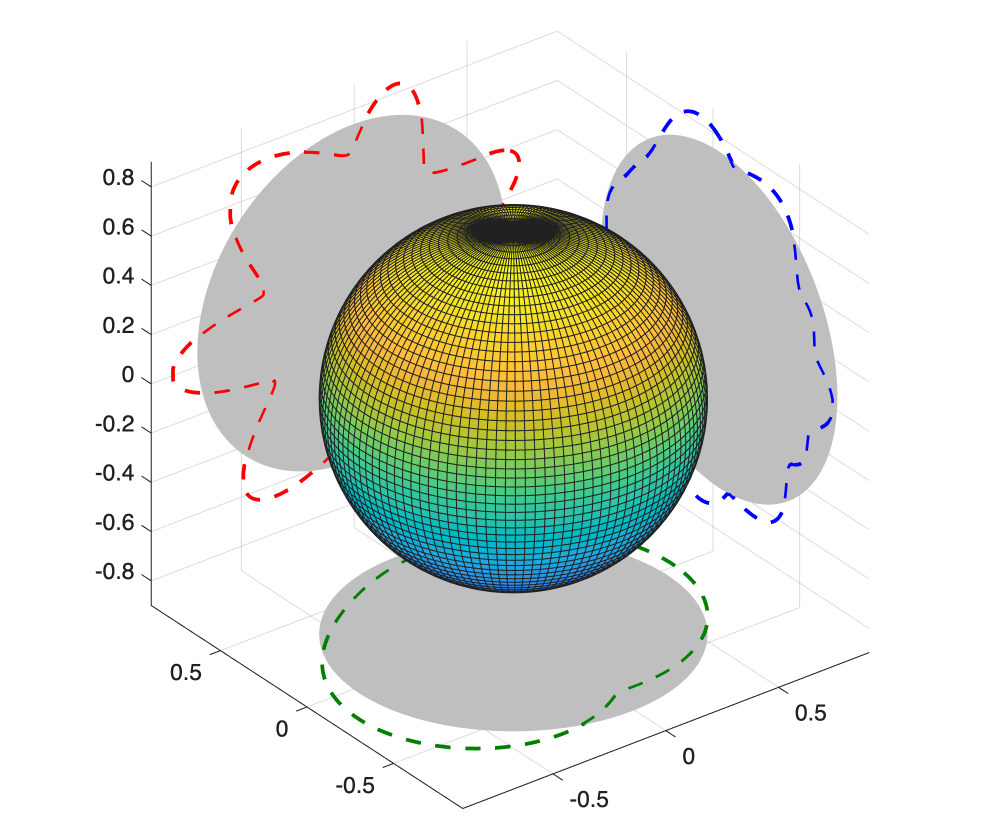}} &
        \subfigure[$M=3$]{\includegraphics[width=0.217\textwidth]{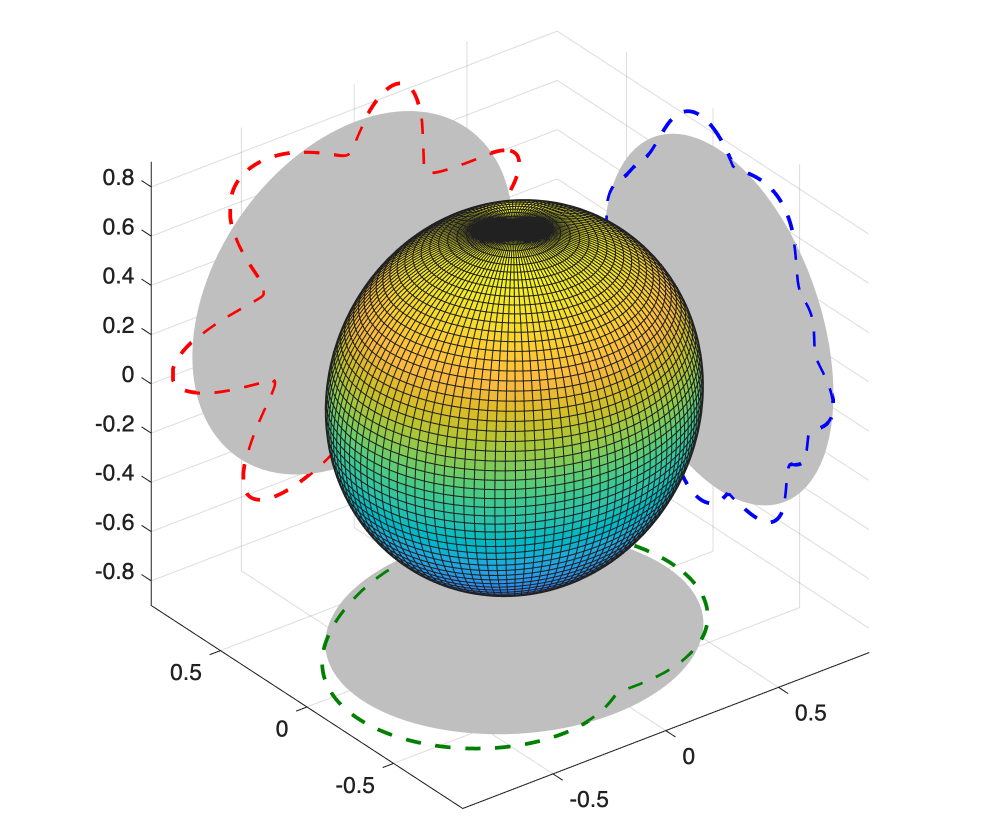}} &
        \subfigure[$M=6$]{\includegraphics[width=0.217\textwidth]{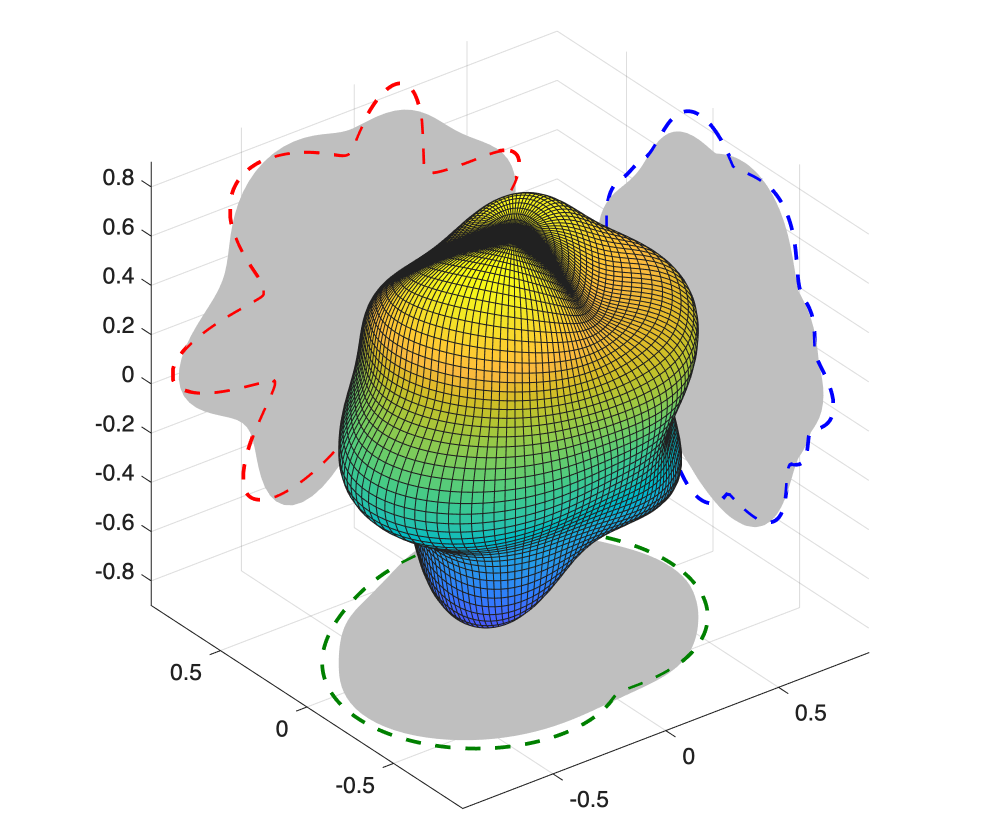}}
    \end{tabular}
    
    \begin{tabular}{cccc}
        \subfigure[$M=9$]{\includegraphics[width=0.217\textwidth]{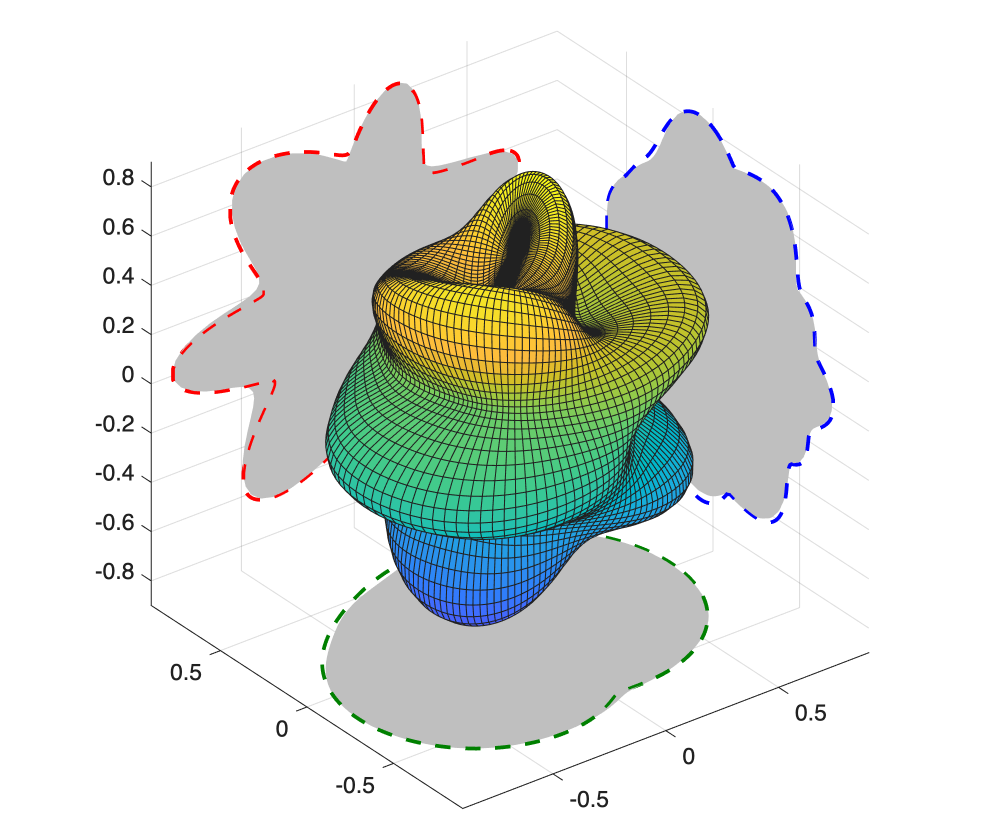}} &
        \subfigure[$M=12$]{\includegraphics[width=0.217\textwidth]{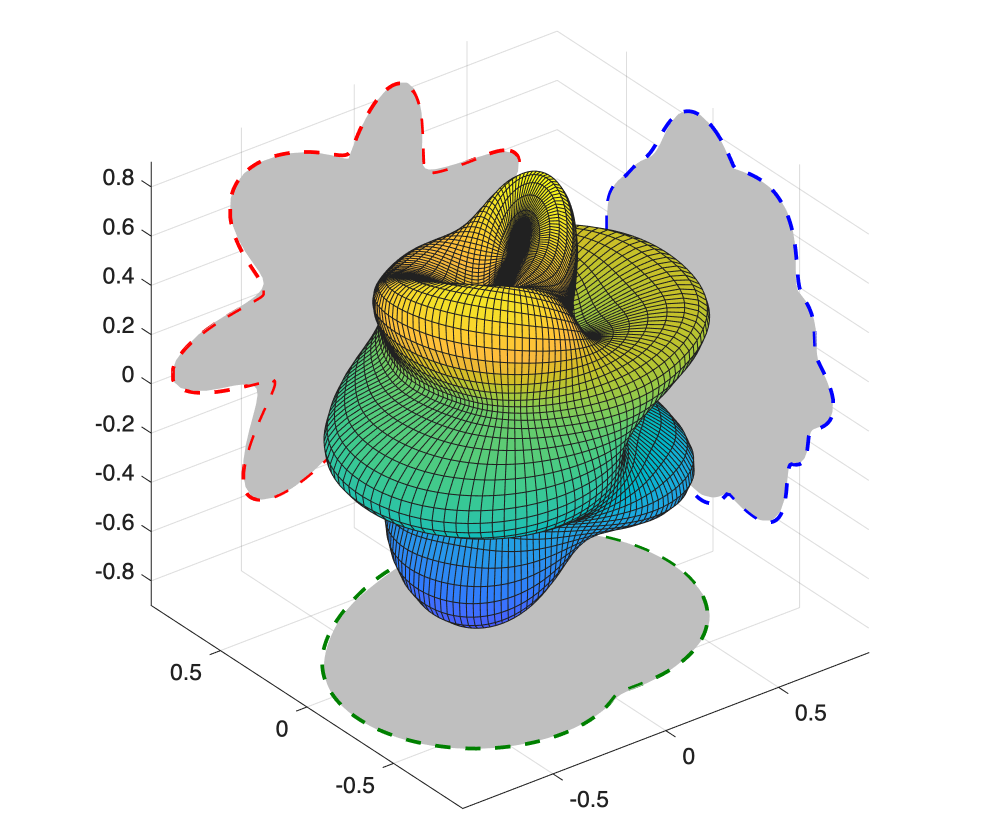}} &
        \subfigure[$M=15$]{\includegraphics[width=0.217\textwidth]{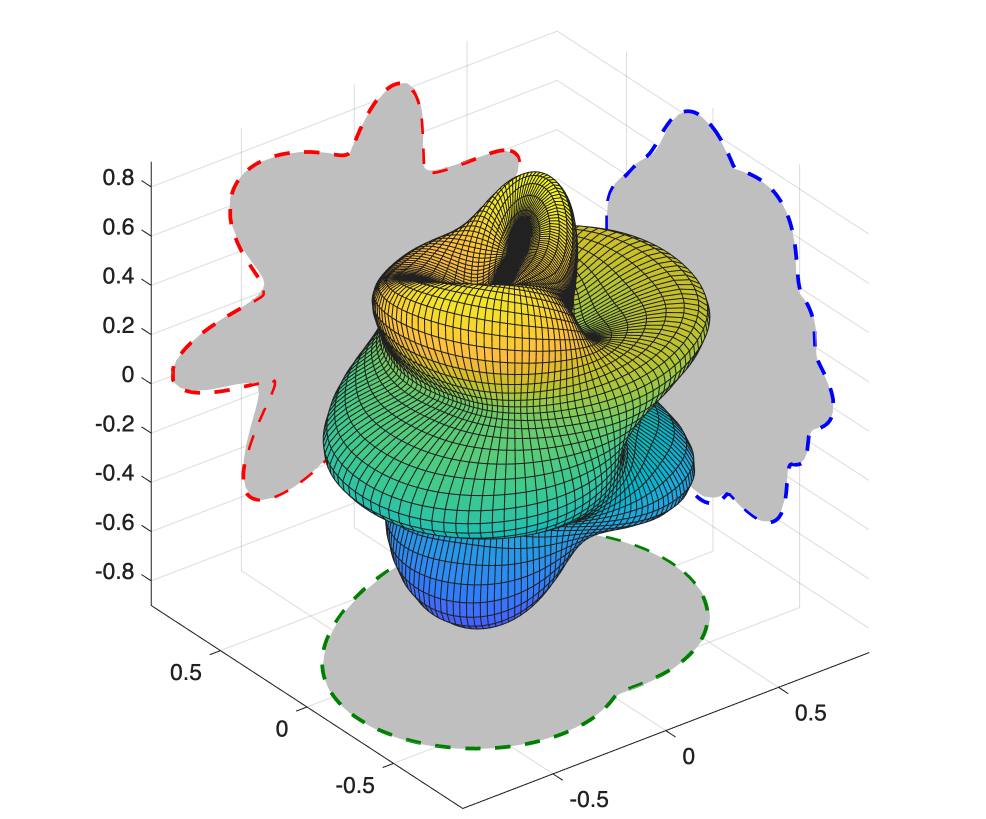}} &
        \subfigure[relative error]{\includegraphics[width=0.217\textwidth]{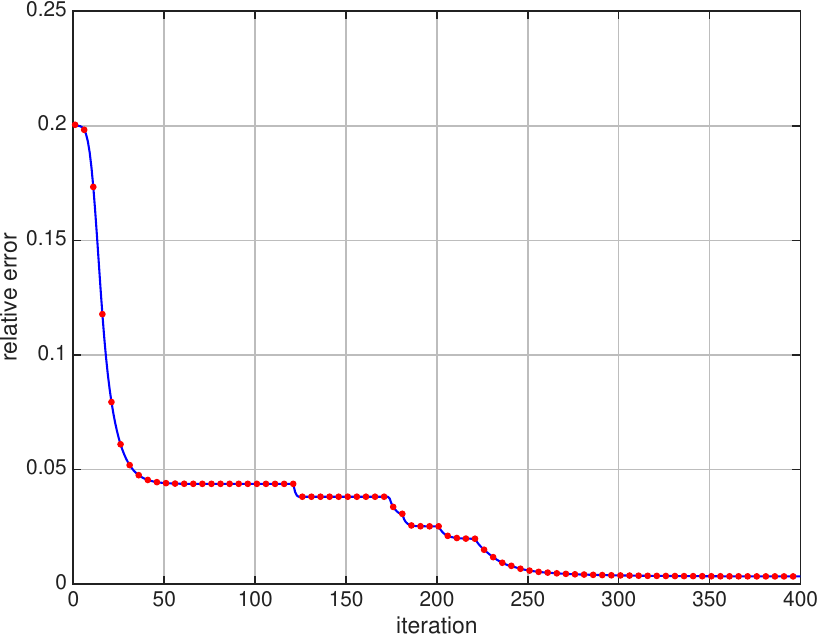}}
    \end{tabular}

    \caption{Reconstruction of a complex obstacle under the Dirichlet boundary condition from phaseless far-field data generated by four point sources located at $(0,0, \pm 4)^\top$ and $(\pm 4,0,0)^\top$ with $1\%$ noise. $\pmb c^{(0)}=(0.1,-0.5,0.1)^{\top}$, $r^{(0)}=0.3$, $\kappa=5$. Subfigures~(a)--(g) show the initial surface and the progressive reconstructions obtained as the truncation number $M$ increases. Subfigure~(h) reports the relative error with respect to the iteration index.}\label{fig_ex72}
\end{figure}

\vspace{2ex}
{\noindent\bf Example 7: Reconstructions of a complex obstacle under the Dirichlet boundary condition.}
\vspace{1ex}

To further assess the capability of the proposed approach for handling geometrically complicated three-dimensional obstacles, we consider the reconstruction of a complex obstacle 
under the Dirichlet boundary condition. Its boundary is described by the highly oscillatory star-shaped surface defined by the radial function
\[
r(\theta,\phi)=0.6\Big(1
+0.3\sin(7\theta)\cos\phi
+0.2\sin^{2}(3\theta)\sin(2\phi)
+0.1\cos\theta\Big).
\]
The surface exhibits strong angular oscillations in both $\theta$ and $\phi$, resulting in multiple localized peaks, dents, and nonconvex regions, making this example significantly more challenging than the previous ones.

To ensure the effectiveness of the reconstruction approach for this complex geometry, the computational parameters are refined as follows.  
The phaseless far-field measurements are taken at \(1800\) observation directions (i.e., \(\tilde{n}=30\)), and the corresponding data are generated by four point sources. 
The boundary integrals on \(\Gamma'\) are evaluated using \(1352\) quadrature points (i.e.\ \(n=25\)), and the same number of discrete nodes is employed on the current approximation of the boundary \(\Gamma\). 
The maximum truncation number is increased to \(M_{\max}=15\).

Figure~\ref{fig_ex7} presents the reconstructions obtained from phaseless far-field data with different noise levels.  
Subfigure~(a) shows the true obstacle from multiple viewpoints.  
Subfigures~(b)--(d) display the reconstructions corresponding to noise levels of $1\%$, $5\%$, and $10\%$, respectively.

In addition, Figure~\ref{fig_ex72} illustrates the reconstruction process with 1\% noise. Starting from the initial spherical guess, the approximation gradually improves as $M$ increases, capturing finer geometric features and progressively resolving the highly oscillatory components of the obstacle.
The relative error curve in Figure~\ref{fig_ex72} suggests a generally decreasing trend with respect to the iteration count, reflecting the gradual refinement of the reconstruction throughout the iterative process.

We also note that Example~7 involves substantially higher computational complexity due to the increased number of quadrature points, observation directions, and the larger truncation parameter $M_{\max}$, which together result in a significantly greater number of iterations.
As a consequence, this example requires noticeably more computational effort, taking about 40.34 seconds for 337 iterations, whereas the reconstructions in Examples~1--6 are completed within 1 second.

These results demonstrate the robustness of the proposed approach when applied to complex geometries, even under noisy phaseless measurements.

\vspace{2ex}
{\noindent\bf Example 8: Reconstruction of multiple obstacles with given initial centers.}
\vspace{1ex}

We consider the reconstruction of two sound-soft obstacles. The exact scatterer consists of two disjoint obstacles. The first one is a pinched ball-shaped obstacle centered at
\(
\pmb c_1=(-1.5,0.5,0.3)^\top,
\)
and the second one is a cushion-shaped obstacle centered at
\(
\pmb c_2=(1.5,-0.3,-0.2)^\top .
\)
The scattered-field data are generated at the wavenumber \(\kappa=5\) by six incident plane waves with directions
\(
\pmb d=(\pm1,0,0)^\top, (0,\pm1,0)^\top, (0,0,\pm1)^\top .
\)
The scattered-field data are measured on the observation sphere \(\Gamma_{B_R}\) with \(R=8\). We take \(\tilde n=20\), so that the number of measurement points is \(m=2\tilde n^2=800\). The relative noise level is \(\delta=1\%\).

 
For multiple obstacles, we assume that the number of obstacles and
initial estimates of their centers are available a priori. In this example,
we set \(\tilde N=2\) and choose
\[
\pmb c_1^{(0)}=(-1.3,\,0.4,\,0.2)^\top,
\qquad
\pmb c_2^{(0)}=(1.3,\,-0.2,\,-0.1)^\top
\]
as the initial guesses for the centers of the two obstacles.

For each obstacle, we introduce the homothetic surface
\[
\Gamma_i'=\pmb c_i+\varsigma(\Gamma_i-\pmb c_i), \qquad i=1,2 .
\]
For the \(t\)-th incident wave, the scattered field is represented as
\[
u^{\rm sc}_t(\pmb x)
=
\sum_{j=1}^{2}
\int_{\Gamma_j'}
G(\pmb x,\pmb y)g_{j,t}(\pmb y)\,ds(\pmb y),
\qquad t=1,\ldots,6 .
\]
Here \(g_{j,t}\) is the unknown density on \(\Gamma_j'\). For the sound-soft case, the coupled equivalent field equations are
\[
\begin{bmatrix}
S_{11} & S_{12}\\
S_{21} & S_{22}
\end{bmatrix}
\begin{bmatrix}
g_{1,t}\\
g_{2,t}
\end{bmatrix}
=
-
\begin{bmatrix}
u^{\rm inc}_t|_{\Gamma_1}\\
u^{\rm inc}_t|_{\Gamma_2}
\end{bmatrix},
\qquad t=1,\ldots,6,
\]
where
\[
(S_{ij}g_{j,t})(\pmb x)
=
\int_{\Gamma_j'}
G(\pmb x,\pmb y)g_{j,t}(\pmb y)\,ds(\pmb y),
\qquad
\pmb x\in\Gamma_i,\quad i,j=1,2 .
\]
The corresponding data equations on the observation sphere are
\[
K_1g_{1,t}+K_2g_{2,t}
=
u^{\rm sc}_{t,\delta}|_{\Gamma_{B_R}},
\qquad t=1,\ldots,6,
\]
with
\[
(K_jg_{j,t})(\pmb x)
=
\int_{\Gamma_j'}
G(\pmb x,\pmb y)g_{j,t}(\pmb y)\,ds(\pmb y),
\qquad
\pmb x\in\Gamma_{B_R},\quad j=1,2 .
\]

The initial guesses are two spheres of radius \(r^{(0)}=0.4\), centered at \(\pmb c_i^{(0)}\), \(i=1,2\). The parameters are
\(
\varsigma=0.9,
\alpha=10^{-8},
\lambda=10^{-8},
\rho=0.05 .
\)
The computation is divided into two stages. In the first stage, we set \(M=0\) and update only the centers for \(400\) iterations. In the second stage, the centers are fixed and the shapes are updated. The truncation number is increased from \(M=1\) to \(M=6\), and \(30\) iterations are used for each \(M\). The number of quadrature points on each \(\Gamma'_i\) is \(162\), and the number of discrete nodes on each current boundary \(\Gamma_i\) is \(338\).

Figure~\ref{fig_two_obstacles_recon} shows the final reconstruction. The result shows that, when the number of obstacles and rough initial centers are given, the coupled iterative method can recover both the locations and the shapes of the two obstacles from noisy scattered-field data.

\begin{figure}[!htbp]
\centering
\includegraphics[width=0.65\textwidth]{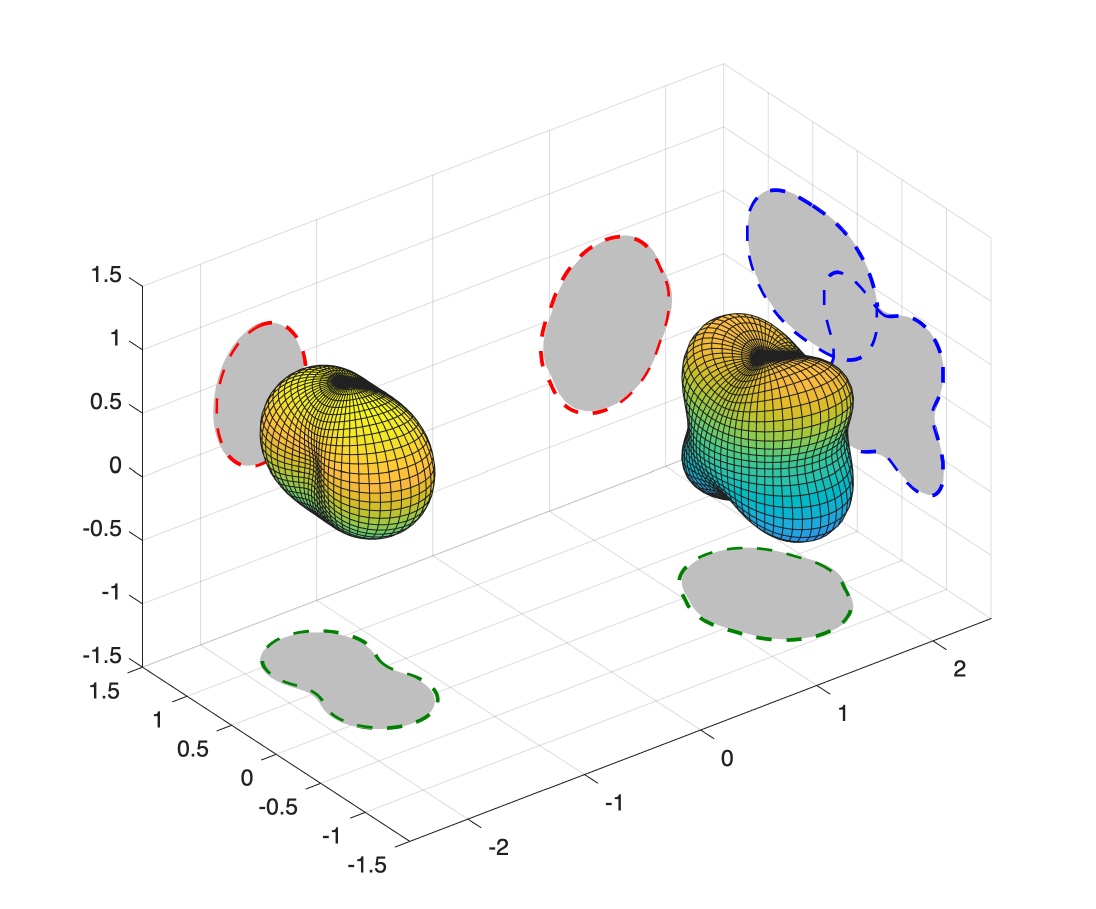}
\caption{Reconstruction of two sound-soft obstacles from \(1\%\) noisy
scattered-field data. We set \(\tilde N=2\) with initial centers
\(\pmb c_1^{(0)}=(-1.3,0.4,0.2)^\top\) and
\(\pmb c_2^{(0)}=(1.3,-0.2,-0.1)^\top\). The parameters are
\(\kappa=5\), \(r^{(0)}=0.4\), \(\varsigma=0.9\),
\(\alpha=\lambda=10^{-8}\), \(\rho=0.05\), \(M_{\rm max}=6\), and
\(\epsilon=0.0065\).}
\label{fig_two_obstacles_recon}
\end{figure}

\section{Conclusion}

This work has considered the three-dimensional inverse acoustic obstacle scattering problems with scattered field or phased/phaseless far-field data.
Based on the analytic continuation and the homothetic surface, we have proposed a highly efficient iterative approach that completely avoids dealing with singular integrals. 
It has been proved that the scattered field generated by the homothetic surface can arbitrarily approximate the exact one. 
Moreover, we established that the associated Fr$\acute{\rm e}$chet derivative is injective and has dense range, thereby ensuring the solvability of the inverse problem. 

Other possible directions include extending our method to the three-dimensional time-domain acoustic obstacle scattering problem, as well as to the inverse scattering problems for elastic waves in both frequency and time domains. 

Future work will extend the method to penetrable media with transmission conditions, non-smooth obstacles with corners, and configurations involving multiple scatterers.

\clearpage

\section*{Acknowledgements}
The work of H. Dong is supported in part by the NSFC Grant 12571454 and the National Key R\&D Program of China 2024YFA1012303.
The work of L. Zhao is supported in part by the NSFC Grant 12401565.


\begin{thebibliography}{99}
\bibitem{Ammari1}
H. Ammari, {\it An introduction to mathematics of emerging biomedical imaging}, Math\'ematiques \& Applications (Berlin), 62, Springer, Berlin, 2008;

\bibitem{Ammari2}
H. Ammari, G. Bao and J.~L. Fleming, An inverse source problem for Maxwell's equations in magnetoencephalography, SIAM J. Appl. Math. 62 (2002) 1369--1382.

\bibitem{bao05}
G. Bao, P. Li, Inverse medium scattering problems for electromagnetic waves, 
SIAM J. Appl. Math. 65 (2005) 2049--2066.

\bibitem{Cakoni}
M. Bonnet and F. Cakoni, Analysis of topological derivative as a tool for qualitative identification, Inverse Probl. 35 (2019) 104007.

\bibitem{Borden}
B. Borden, Mathematical problems in radar inverse scattering, Inverse Probl. 18 (2002) R1--R28.

\bibitem{borges20}
C. Borges, J. Lai, Inverse scattering reconstruction of a three-dimensional sound-soft axis-symmetric impenetrable object, 
Inverse Probl. 36 (2020) 105005.

\bibitem{gauss-newton}
A. Carpio, T. Dimiduk, F. Le~Lou\"er, M. Rap\'un, When topological derivatives met regularized Gauss-Newton iterations in holographic 3D imaging, J. Comput. Phys. 388 (2019) 224--251.


\bibitem{chang}
Y. Chang, Y. Guo, H. Liu, D. Zhang, Recovering source location, polarization, and shape of obstacle from elastic scattering data, J. Comput. Phys. 489 (2023) 112289.

\bibitem{RTM}
J. Chen, Z. Chen and G. Huang, Reverse time migration for extended obstacles: acoustic waves, Inverse Probl. 29 (2013) 085005.

\bibitem{chen2024}
J. Chen, B. Jin, H. Liu, Solving inverse obstacle scattering problem with latent surface representations, 
Inverse Probl. 40 (2024) 065013.


\bibitem{Colton}
D. Colton, R. Kress, Inverse Acoustic and Electromagnetic Scattering Theory, 
4th ed., Springer, New York, 2019.

\bibitem{Colton_ls}
D. Colton, J. Coyle, P. Monk, Recent developments in inverse acoustic scattering theory, 
SIAM Rev. 42 (2000) 369--414.

\bibitem{zhao22}
H. Dong, J. Lai, P. Li, Inverse obstacle scattering for elastic waves with phased or phaseless far-field data, SIAM J. Imaging Sci. 12 (2019) 809--838.

\bibitem{jcp22}
H. Dong, J. Lai and P. Li, A spectral boundary integral method for the elastic obstacle scattering problem in three dimensions, J. Comput. Phys. 469 (2022) 111546.

\bibitem{farhat02}
C. Farhat, R. Tezaur, R. Djellouli, On the solution of three-dimensional inverse obstacle acoustic scattering problems by a regularized Newton method, Inverse Probl. 18 (2002) 1229--1246.

\bibitem{ganesh2004}
M. Ganesh, I. Graham, A high-order algorithm for obstacle scattering in three dimensions, 
J. Comput. Phys. 198 (2004) 211--242.

\bibitem{Hagemann}
F. Hagemann, T. Arens, T. Betcke, F. Hettlich, Solving inverse electromagnetic scattering problems via domain derivatives, Inverse Probl. 35 (2019) 084005.

\bibitem{Hohage}
T. Hohage, Logarithmic convergence rates of the iteratively regularized Gauss-Newton method for an inverse potential and an inverse scattering problem, Inverse Probl. 13 (1997) 1279--1299.

\bibitem{Ivanyshyn2010}
O. Ivanyshyn, R. Kress, P. Serranho, Huygens' principle and iterative methods in inverse obstacle scattering, 
Adv. Comput. Math. 33 (2010) 413--429.

\bibitem{ivanshyn10}
O. Ivanyshyn, R. Kress, Identification of sound-soft 3D obstacles from phaseless data, Inverse Probl. Imaging 4 (2010) 131--149.

\bibitem{kang}
S. Kang and M. Lambert, Structure analysis of direct sampling method in 3D electromagnetic inverse problem: near- and far-field configuration, Inverse Probl. 37 (2021) 075002.

\bibitem{Kirsch_f}
A. Kirsch, Characterization of the shape of a scattering obstacle using the spectral data of the far field operator, 
Inverse Probl. 14 (1998) 1489--1512.

\bibitem{dec}
R. Kress and A. Zinn, On the numerical solution of the three-dimensional inverse obstacle scattering problem, J. Comput. Appl. Math. 42 (1992) 49--61.

\bibitem{Tikhonov}
R. Kress, Newton's method for inverse obstacle scattering meets the method of least squares, 
Inverse Probl. 19 (2003) S91--S104.

\bibitem{Louer1}
F. Le~Lou\"er and M.-L. Rap\'un, Detection of multiple impedance obstacles by non-iterative topological gradient based methods, J. Comput. Phys. 388 (2019) 534--560.

\bibitem{Louer2}
F. Le~Lou\"er and M.-L. Rap\'un, A boundary integral formulation and a topological energy-based method for an inverse 3D multiple scattering problem with sound-soft, sound-hard, penetrable, and absorbing objects, Comput. Methods Appl. Math. 22 (2022) no.~4, 915--943.

\bibitem{yuan}
P. Li and X. Yuan, Inverse obstacle scattering for elastic waves in three dimensions, Inverse Probl. Imaging 13 (2019) 545--573.

\bibitem{liukeyi24}
H. Liu and K. Liu, Direct imaging of inhomogeneities in a 3D shallow ocean waveguide with an icecap, J. Comput. Phys. 498 (2024) 112694.

\bibitem{Potthast_book}
R. Potthast, Point Sources and Multipoles in Inverse Scattering Theory, Chapman \& Hall/CRC, Boca Raton, FL, 2001.

\bibitem{serranho07}
P. Serranho, A hybrid method for inverse scattering for a sound-soft obstacle in $\mathbb{R}^3$, 
Inverse Probl. Imaging 1 (2007) 691--712.

\bibitem{Zou_ds}
K. Ito, B. Jin, J. Zou, A direct sampling method to an inverse medium scattering problem, 
Inverse Probl. 28 (2012) 025003.



\end{thebibliography}
\end{document}